\newtheorem{thm}{Theorem}[section]
\newtheorem{cor}[thm]{Corollary}
\newtheorem{lem}[thm]{Lemma}
\newtheorem{prop}[thm]{Proposition}
\newtheorem{rem}[thm]{Remark} 
\newtheorem{defn}[thm]{Definition}
\numberwithin{equation}{section}
\renewcommand{\glossarysection}[2][]{} 
\DeclarePairedDelimiter{\abs}{\lvert}{\rvert}
\DeclarePairedDelimiter{\norm}{\lVert}{\rVert}
\DeclarePairedDelimiter{\bra}{(}{)}
\DeclarePairedDelimiter{\pra}{[}{]}
\DeclarePairedDelimiter{\set}{\{}{\}}
\DeclarePairedDelimiter{\skp}{\langle}{\rangle}
\DeclareMathAlphabet{\mathup}{OT1}{\familydefault}{m}{n}
\newcommand{\dx}[1]{\mathop{}\!\mathup{d} #1}
\newcommand{\cC}{\ensuremath{\mathcal C}}
\newcommand{\cD}{\ensuremath{\mathcal D}}
\newcommand{\cE}{\ensuremath{\mathcal E}}
\newcommand{\cF}{\ensuremath{\mathcal F}}
\newcommand{\cG}{\ensuremath{\mathcal G}}
\newcommand{\cH}{\ensuremath{\mathcal H}}
\newcommand{\cI}{\ensuremath{\mathcal I}}
\newcommand{\cJ}{\ensuremath{\mathcal J}}
\newcommand{\cL}{\ensuremath{\mathcal L}}
\newcommand{\cT}{\ensuremath{\mathcal T}}
\newcommand{\cW}{\ensuremath{\mathcal W}}
\newcommand{\crF}{\ensuremath{\mathscr F}}
\newcommand{\stable}{\ensuremath{\mathup{s}}}
\newcommand{\unstable}{\ensuremath{\mathup{u}}}
\newcommand{\Leb}{\ensuremath{{L}}}
\newcommand{\SobH}{\ensuremath{{H}}}
\newcommand{\SobW}{\ensuremath{{\cW}}}
\newcommand{\HH}{\ensuremath{{\mathbb H}}}
\DeclareMathOperator*{\argmin}{arg\,min}
\newcommand{\N}{{\mathbb N}}
\newcommand{\R}{{\mathbb R}}
\newcommand{\T}{{\mathbb T}}
\newcommand{\Z}{{\mathbb Z}}
\newcommand{\B}{{\mathbb B}}
\newcommand{\Pro}{\mathcal{P}}
\newcommand{\Prac}{\mathcal{P}_{\mathup{ac}}}
\newcommand{\Sym}{\mathrm{Sym}}
\newcommand{\intom}[1]{\int #1 \dx{x}}
\newcommand{\iintom}[1]{\iint #1 \dx{x}\dx{y}}
\newcommand{\intT}[1]{\int #1 \dx{x}}
\newcommand{\infn}{{\mathbbm{1}}}
\DeclareMathOperator{\Ima}{Im}
\DeclareMathOperator{\codim}{codim}
\DeclareMathOperator{\ind}{ind}
\DeclareMathOperator{\dive}{\nabla \cdot}
\def\weak{\rightharpoonup}
\def\strong{\to}
\renewcommand{\tilde}{\widetilde}
\renewcommand{\hat}{\widehat}
\renewcommand{\bar}{\overline}
\newenvironment{tenumerate}[1][]
  {\enumerate[label=\textup(\alph*\textup),ref=(\alph*),#1]}
  {\endenumerate}
\def\XXint#1#2#3{{\setbox0=\hbox{$#1{#2#3}{\int}$}
     \vcenter{\hbox{$#2#3$}}\kern-.5\wd0}}
\theoremstyle{plain}
\title[The Mckean--Vlasov equation on the torus]{Long-time behaviour and phase transitions for the Mckean--Vlasov equation on the torus}
\author{J. A. Carrillo}
\author{R. S. Gvalani}
\author{G. A. Pavliotis}
\address{Department of Mathematics, Imperial College London, London SW7 2AZ}
\email{carrillo@imperial.ac.uk, rg1314@ic.ac.uk, g.pavliotis@imperial.ac.uk}
\author{A. Schlichting}
\address{Institut f\"ur Angewandte Mathematik, Universit\"at Bonn} 
\curraddr{Institut für Geometrie und Praktische Mathematik, RWTH Aachen} 
\email{schlichting@iam.uni-bonn.de, schlichting@igpm.rwth-aachen.de}
\thanks{JAC was partially supported by the EPSRC through grant number EP/P031587/1. RSG is funded by an Imperial College President's PhD Scholarship, partially through EPSRC Award Ref. 1676118. RSG also acknowledges the hospitality of RWTH Aachen University. GAP was partially supported by the EPSRC through grant numbers EP/P031587/1, EP/L024926/1, and EP/L020564/1. AS acknowledges the hospitality of Imperial College London.}
\date{}
\begin{document}

\begin{abstract}
We study the McKean-Vlasov equation 
\[
\partial_t \varrho= \beta^{-1} \Delta \varrho + \kappa \dive (\varrho \nabla (W \star \varrho)) \, ,
\]
with periodic boundary conditions on the torus. We first study the global asymptotic stability of the homogeneous steady state. We then focus our attention on the stationary system, and prove the existence of nontrivial solutions branching from the homogeneous steady state, through possibly infinitely many bifurcations, under appropriate assumptions on the interaction potential. We also provide sufficient conditions for  the existence of continuous and discontinuous phase transitions. Finally, we showcase these results by applying them to several examples of
interaction potentials such as the noisy Kuramoto model for synchronisation, the Keller--Segel model for bacterial chemotaxis, and the noisy Hegselmann--Krausse model for opinion dynamics. 
\end{abstract}
\subjclass[2010]{35Q83 (primary), 34K18, 35Q70, 35Q84, 82C22, 82B26 (secondary)}
\keywords{McKean--Vlasov equation, nonlocal PDEs, interacting particle systems, phase transitions, bifurcation theory}
\maketitle

{\small
\tableofcontents

}

\section{Introduction}\label{S:intro}
Systems of interacting particles arise in a myriad of applications ranging from opinion dynamics ~\cite{hegselmann2002opinion}, granular materials ~\cite{BCCP98,carrillo2003kinetic,BGG2013} and mathematical biology ~\cite{keller1971model,BCM} to statistical mechanics ~\cite{martzel2001mean}, galactic dynamics~\cite{binney2008}, droplet growth~\cite{conlon2017}, plasma physics~\cite{bittencourt1986fund}, and synchronisation ~\cite{kuramoto1981rhythms}. Apart from being of independent interest, these systems find applications in a diverse range of fields such as: particle methods in numerical analysis~\cite{delmoral2010intro}, consensus-based methods for global optimisation~\cite{carrillo2018analytical}, and nonlinear filtering~\cite{crisan1997}. They
have also been studied in the context of multiscale analysis~\cite{gomes2018mean}, in the presence of
memory-like effects and in a non-Markovian setting~\cite{duong2018mean}, and in the discrete setting of graphs~\cite{EFLS16}.

In this paper, we analyse the partial differential equation (PDE) associated to the system of interacting stochastic differential equations (SDEs) on $\T^d$, the torus of side length $L>0$, of the following form
\begin{align}
dX^i_t = -\frac{\kappa}{ N} \sum\limits_{i \neq j}^N\nabla W(X^i_t -X^j_t) \dx{t} + \sqrt{2 \beta^{-1}} dB^i_t \, ,
\label{eq:lang}
\end{align}
where the $X^i_t \in \T^d,i= 1\dots N $ represent the positions of the $N$ ``particles'', $W$ is a periodic
interaction potential, and the $B^i_t, i = 1 \dots N$ represent $N$ independent $\T^d$-valued Brownian motions. The constants $\kappa,\beta>0$ represent the strength of interaction and inverse temperature respectively. Since one of the two parameters is redundant, we keep $\beta$ fixed for the rest of the paper. It is clear that what we have described is a set of interacting overdamped Langevin equations. Based on the choice of $W(x)$, one can then obtain models for numerous phenomena from the physical, biological, and social sciences. We refer to~\cite{review,pareschi2013interacting,muntean2014collective,MT2014} and the references therein for a comprehensive list of such models.

Systems of interacting diffusions have been studied extensively. They were first analysed by McKean (cf.~\cite{mckean1966class,mckean1967propagation}) who noticed an interesting relation between them and a class of nonlinear parabolic partial differential equations. In particular, it is well known (cf.~\cite{oelschlager1984martingale,sznitman1991topics}) that for this class of SDEs one can pass to the so-called mean field limit: if we consider the empirical measure defined as follows
\begin{align}
\varrho^{(N)} : = \frac{1}{N} \sum\limits_{i=1}^N \delta_{X^i_t}, \quad\text{ with }\quad \operatorname{Law}(X_0:=(X^1_0 \cdots X^N_0 )) =\prod\limits_{i=1}^N \varrho_0(x_i) \, ,
\end{align}
then, provided that $W$ is smooth, as $N \to \infty$, $\mathbb{E}\bra{\varrho^{(N)}}$ converges in the sense of weak convergence of probability measures to some measure $\varrho$ satisfying the following nonlocal parabolic PDE
\begin{align}
\label{eq:mv1}
\begin{aligned}
\partial_t \varrho & = \beta^{-1} \Delta \varrho + \kappa \nabla \cdot (\varrho \nabla W \star \varrho )\,, \\
\varrho(x,0) &=\varrho_0(x)\, . 
\end{aligned}
\end{align}
 The above equation is commonly referred to as the McKean--Vlasov equation, the latter name stemming from the fact that it also arises as the overdamped limit of the Vlasov--Fokker--Planck equation. Equation~\eqref{eq:mv1}
can also be thought of as a nonlinear Fokker--Planck equation for the following nonlinear SDE, commonly referred to as the McKean SDE,
\begin{align}
dX_t = -\kappa(\nabla W \star \varrho)(X_t,t) \dx{t} + \sqrt{2 \beta^{-1}} dB_t \, ,
\end{align}
where $\varrho= \textrm{Law}(X_t)$. The PDE~\eqref{eq:mv1} itself has a very rich structure associated to it - we have the following free energy functional
\begin{align}\label{eq:freeenergyintro}
\crF_{\kappa}(\varrho) & = \beta ^{-1}\int\limits_{\T^d} \varrho \log \varrho \dx{x} + \frac{\kappa}{2}
\iint_{\T^d \times \T^d} W(x-y) \varrho(y) \varrho(x) \dx{y} \dx{x} = \beta^{-1}S(\varrho)  + \frac{\kappa}{2} \mathcal{E}(\varrho,\varrho) \ , 
\end{align}
where $S(\varrho)$ and $\cE(\varrho,\varrho)$ represent the entropy and interaction energy associated with $\varrho$ respectively.
It is well known, starting from the seminal work in~\cite{jordan1998variational,otto2001geometry}, that this equation belongs to a larger class of dissipative PDEs including the heat equation, the porous medium equation, and the aggregation equation, which can be written in the form
\begin{align}
\partial_t \varrho = \nabla \cdot \bra*{\varrho \nabla \frac{\delta \crF}{\delta \varrho}} \, ,
\end{align}
for some free energy $\crF$ and are gradient flows for the associated free energy functional with respect to the $d_2$ transportation distance defined on probability measures having finite second moment, see ~\cite{carrillo2003kinetic,villani2003topics}. We refer the reader to~\cite{ambrosio2008gradient,santam} for more information on the abstract theory of gradient flows in the space of probability measures.

Our goals are to study some aspects of the asymptotic behaviour and the stationary states of the McKean--Vlasov equation for a wide class of interaction potentials. In terms of the asymptotic behaviour, we analyse the stability conditions for the homogeneous steady state $1/L^d$ and the rate of convergence to equilbrium. We extend the $\Leb^2$-decay results of~\cite{chazelle2017well} to arbitrary dimensions and arbitrary sufficiently nice interactions and also provide sufficient conditions for convergence to equilibrium in relative entropy. 

The rest of the paper is devoted to the analysis of the properties of non-trivial stationary states of the Mckean--Vlasov system, i.e., nontrivial solutions of 
\begin{align}
 \beta^{-1} \Delta \varrho + \kappa \nabla \cdot (\varrho \nabla W \star \varrho )=0 \, . \label{eq:smv1}
\end{align}
Previous results in this direction include those by Tamura ~\cite{tamura1984asymptotic}, who provided some criteria for the existence of local bifurcations on the whole space by using tools from nonlinear functional analysis, in particular, the Crandall--Rabinowitz theorem. Unfortunately, his analysis depends crucially on the unphysical assumption that the interaction potential is an odd function. One of the main results of the present work is a complete, quantitative, local bifurcation analysis under physically realistic assumptions. Dawson ~\cite{dawson1983critical} studied for the first time the existence of nontrivial stationary states for a particular double-well confinement and Curie--Weiss interaction on the line. The existence of nontrivial stationary states or the bifurcation of nontrivial solutions from the homogeneous steady state is usually referred as phase transition in the literature. We also mention that more recently several authors ~\cite{tugaut2014phase,DFL,BCCD} looked at the existence of phase transitions in the whole space with different confinement and interactions. The most related work to us in the literature is due to Chayes and Panferov ~\cite{chayes2010mckean}, who studied the problem on the torus and provided some criteria for the existence of continuous and discontinuous phase transitions. 

In addition to presenting an existence and uniqueness theory for the evolution problem, we extend considerably the results of both~\cite{tamura1984asymptotic} and~\cite{chayes2010mckean}. We provide explicit criteria based on the Fourier coefficients of the interaction potential $W$ for the existence of local bifurcations by studying the implicit symmetry in the problem. In fact, we show that for carefully chosen potentials it is possible to have infinitely many bifurcation points. Additionally, we extend the results of~\cite{chayes2010mckean} and provide additional criteria for the existence of continuous and discontinuous phase transitions.

\subsection{Statement of main results}
We only state simplified versions of our results in one dimension, so as to avoid the use of notation that will be introduced later. We only need to define the cosine transform,
$\tilde{W}(k):= (2/L)^{1/2}\intom{W(x) \cos\bra*{\frac{2 \pi k}{L} x}}$ for $k \in \Z, k>0$. We work with
classical solutions of~\eqref{eq:mv1} which are constructed in~\cref{thm:wellp}.
\begin{thm}(Convergence to equilibrium) \label{thm:m1}
Let $\varrho$ be a classical solution of the Mckean--Vlasov equation~\eqref{eq:mv1} with smooth initial data and smooth, even, interaction potential $W$. Then we have:
\begin{tenumerate}
\item If $0 < \kappa< \frac{2 \pi}{3 \beta L \norm{\nabla W}_\infty}$, then $\norm*{\varrho(\cdot ,t)- \frac{1}{L}}_2 \to 0$, exponentially, as $t \to \infty$\label{thm:m1a},
\item If $\tilde{W}(k) \geq 0$ for all $k \in \Z, k>0$, or $0 < \kappa< \frac{2 \pi^2}{ \beta L^2 \norm{\Delta W}_\infty}$, then $
\cH\bra*{\varrho(\cdot ,t)|\frac{1}{L}} \to 0$, exponentially, as $t \to \infty$ \label{thm:m1b},
\end{tenumerate}
where $\cH\bra*{\varrho(\cdot ,t)|\frac{1}{L}}:= \intom{\varrho(\cdot ,t) \log \bra*{\frac{\varrho(\cdot ,t)}{\varrho_\infty}}}$ denotes the relative entropy.
\end{thm}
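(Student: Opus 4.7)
My plan for both parts is to differentiate an appropriate Lyapunov functional along the flow---the $L^2$ norm of the fluctuation $\eta:=\varrho-1/L$ for \ref{thm:m1a}, and the relative entropy itself for \ref{thm:m1b}---and absorb the cross term arising from the nonlinear interaction into the linear dissipation generated by the $\beta^{-1}\Delta$ part, using sharp estimates on the torus (Poincaré, Csiszár--Kullback--Pinsker, log-Sobolev). Throughout, the fact that $W$ is even and periodic gives $\int_{\T}\nabla W = 0$ and hence $\nabla W\star\varrho=\nabla W\star\eta$, while $\|\eta\|_1\le \|\varrho\|_1 + \|1/L\|_1 = 2$ since both are probability densities.

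For \ref{thm:m1a}, a direct differentiation and integration by parts yield
\begin{align*}
\frac{d}{dt}\tfrac12\|\eta\|_2^2 = -\beta^{-1}\|\nabla\eta\|_2^2 - \kappa\int_{\T}\nabla\eta\cdot\varrho\,(\nabla W\star\eta).
\end{align*}
Splitting $\varrho = 1/L + \eta$ inside the interaction integral and applying Young's convolution inequality in the forms $\|\nabla W\star\eta\|_2\le\|\nabla W\|_1\|\eta\|_2\le L\|\nabla W\|_\infty\|\eta\|_2$ and $\|\nabla W\star\eta\|_\infty\le\|\nabla W\|_\infty\|\eta\|_1\le 2\|\nabla W\|_\infty$, together with Cauchy--Schwarz, controls the two resulting pieces by $\|\nabla W\|_\infty\|\nabla\eta\|_2\|\eta\|_2$ and $2\|\nabla W\|_\infty\|\nabla\eta\|_2\|\eta\|_2$ respectively. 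The Poincaré inequality on the zero-mean subspace of $L^2(\T)$, namely $\|\eta\|_2\le (L/2\pi)\|\nabla\eta\|_2$, then converts the total dissipation estimate into
\begin{align*}
\frac{d}{dt}\tfrac12\|\eta\|_2^2\le -\Bigl(\beta^{-1}-\frac{3\kappa L\|\nabla W\|_\infty}{2\pi}\Bigr)\|\nabla\eta\|_2^2,
\end{align*}
which is strictly negative precisely under the assumed smallness, and one more application of Poincaré yields the exponential decay.

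For \ref{thm:m1b}, I rely on the free-energy structure: differentiating $\cH(\varrho|1/L) = \int\varrho\log(L\varrho)$ along \eqref{eq:mv1} and integrating by parts gives
\begin{align*}
\frac{d}{dt}\cH = -\beta^{-1}\int\frac{|\nabla\varrho|^2}{\varrho} - \kappa\int\nabla\varrho\cdot(\nabla W\star\varrho) = -\beta^{-1}I(\varrho) + \kappa\int\eta\,(\Delta W\star\eta),
\end{align*}
where $I(\varrho)$ denotes the Fisher information. Under the first hypothesis $\tilde W(k)\ge 0$, expanding in a real Fourier basis and using that convolution is diagonal in that basis rewrites the interaction term as a nonpositive sum of the form $-\kappa\sum_{k\neq 0}c(k)\tilde W(k)|\hat\eta(k)|^2$; this may therefore be discarded to leave $\tfrac{d}{dt}\cH\le -\beta^{-1}I(\varrho)$, and the logarithmic Sobolev inequality on $\T$ with Lebesgue reference closes the argument. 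Under the second hypothesis, the pointwise bound $|\int\eta\,(\Delta W\star\eta)|\le\|\Delta W\|_\infty\|\eta\|_1^2$ combined with Csiszár--Kullback--Pinsker $\|\eta\|_1^2\le 2\cH$ yields $\tfrac{d}{dt}\cH\le -\beta^{-1}I(\varrho) + 2\kappa\|\Delta W\|_\infty\cH$, and the log-Sobolev inequality on $\T$ with the sharp constant $L^2/(4\pi^2)$ (matching Poincaré, by Rothaus's theorem) again closes the Grönwall argument under the stated threshold on $\kappa$.

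The main difficulty I anticipate lies not in any single estimate but in ensuring that these dissipation identities hold classically for the solutions produced in \cref{thm:wellp}---so that the formal integrations by parts and the Fourier manipulations under $\tilde W\ge 0$ are fully justified---and that the initial Fisher information is finite; once the differential identities are in place, the nonlinear interaction is absorbed by the clean chain of convolution / Plancherel / Poincaré / LSI inequalities sketched above.
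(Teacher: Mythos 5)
Your proposal is correct and follows essentially the same route as the paper: for part \ref{thm:m1b} the paper's proof (see \cref{prop:review}) likewise differentiates the relative entropy, discards or bounds the interaction term via the Fourier expansion (the $H$-stable case) or via $\norm{\Delta W}_\infty$ together with the Csisz\'ar--Kullback--Pinsker inequality, and closes with the log-Sobolev inequality with constant $L^2/(4\pi^2)$ and Gr\"onwall. Part \ref{thm:m1a} is not proved in this version of the paper (it is deferred to the argument of~\cite{chazelle2017well} and the first arXiv version), but your $L^2$ energy estimate with the splitting $\varrho=1/L+\eta$, Young's convolution inequality, and Poincar\'e reproduces exactly the stated threshold $\kappa<\frac{2\pi}{3\beta L\norm{\nabla W}_\infty}$, so it is a faithful reconstruction of that omitted argument.
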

The previous theorem implies that the uniform state can fail to be the unique stationary solution only if the interaction potential has a negative Fourier mode, i.e., the interaction potential is not $H$-stable. Thus, the concept of $H$-stability introduced by Ruelle ~\cite{ruelle1999statistical} is relevant for the study of the stationary McKean--Vlasov equation as noticed in ~\cite{chayes2010mckean}. We have the following conditions for the existence of bifurcating branches of steady states.

\begin{thm}(Local bifurcations)\label{thm:m2}
Let $W$ be smooth and even and let $(1/L,\kappa)$ represent the trivial branch of solutions. Then every $k^* \in \Z, k^*>0$ such that
\begin{enumerate}
\item $\operatorname{card}\set*{k \in \Z, k>0 : \tilde{W}(k)=\tilde{W}(k^*)}=1$ ,
\item $\tilde{W}(k^*) <0$,
\end{enumerate}
leads to a bifurcation point $(1/L,\kappa_*)$ of the stationary McKean--Vlasov equation through the formula
   \begin{align}
     \kappa_*=-\frac{(2L)^{1/2}}{\beta \tilde{W}(k^*)} \, .
   \end{align}
\end{thm}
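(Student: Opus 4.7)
The plan is to apply the Crandall--Rabinowitz local bifurcation theorem to the stationary equation~\eqref{eq:smv1}. I would set $u := \varrho - 1/L$ and rewrite the problem as $F(u,\kappa) = 0$ with
\begin{equation*}
F(u,\kappa) := \beta^{-1}\Delta u + \kappa\,\nabla\!\cdot\!\bigl((1/L + u)\,\nabla W \star u\bigr),
\end{equation*}
regarded as a smooth map $X \times \R \to Y$, where $X$ and $Y$ are the closed subspaces of $H^2(\T)$ and $L^2(\T)$, respectively, consisting of \emph{even}, zero-mean functions. Restricting to even functions is essential: the translation symmetry of~\eqref{eq:smv1} produces an additional sine mode in the kernel of the full linearisation, violating the one-dimensionality hypothesis of Crandall--Rabinowitz; working on the even subspace breaks this symmetry while still containing a representative of every bifurcating orbit. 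The trivial branch $u \equiv 0$ is preserved for all $\kappa$.

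Next I would diagonalise the linearisation $L_\kappa := D_u F(0,\kappa)$ in the orthonormal cosine basis $e_k^c(x) := (2/L)^{1/2}\cos(2\pi k x/L)$. Since $W$ is even and periodic, $W \star (1/L)$ is a constant, so its gradient vanishes and $L_\kappa \phi = \beta^{-1}\Delta\phi + (\kappa/L)\Delta(W\star\phi)$. A direct computation with the cosine addition formula, exploiting evenness of $W$, yields $W \star e_k^c = (L/2)^{1/2}\,\tilde W(k)\,e_k^c$, so with $\lambda_k := (2\pi k/L)^2$ one obtains
\begin{equation*}
L_\kappa\, e_k^c = -\lambda_k\Bigl(\beta^{-1} + \tfrac{\kappa\,\tilde W(k)}{(2L)^{1/2}}\Bigr)\, e_k^c.
\end{equation*}
The bracket vanishes precisely when $\kappa = -(2L)^{1/2}/(\beta\tilde W(k))$, which at $k = k^*$ recovers the stated formula for $\kappa_*$; condition~(2) makes $\kappa_* > 0$, and condition~(1) ensures no other positive integer produces the same critical value. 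Restricted to the even zero-mean subspace we therefore have $\ker L_{\kappa_*} = \operatorname{span}\{e_{k^*}^c\}$. Because smoothness of $W$ makes $\phi \mapsto W\star\phi$ arbitrarily smoothing, $L_{\kappa_*}$ is a compact perturbation of the invertible operator $\beta^{-1}\Delta$ on zero-mean functions, hence Fredholm of index zero, so its range has codimension one and coincides with $\{e_{k^*}^c\}^\perp$.

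For the transversality condition I would compute
\begin{equation*}
\partial_\kappa L_\kappa\,e_{k^*}^c = -\lambda_{k^*}\,\frac{\tilde W(k^*)}{(2L)^{1/2}}\,e_{k^*}^c,
\end{equation*}
which, by condition~(2), is a \emph{nonzero} multiple of $e_{k^*}^c$ and therefore does not lie in $\operatorname{Range}(L_{\kappa_*})$. All hypotheses of Crandall--Rabinowitz are thus satisfied, yielding a local $C^1$-curve of nontrivial stationary solutions bifurcating from $(1/L,\kappa_*)$. The main technical step I expect is not the bifurcation computation but the functional-analytic setup: verifying that $u \mapsto \nabla\!\cdot\!(u\,\nabla W\star u)$ is a smooth (indeed real-analytic) map between the chosen spaces, which follows from smoothness of $W$ together with standard product estimates in $H^2$, and confirming that both evenness and zero mean are genuinely preserved by $F$, which is immediate once $W$ is even.
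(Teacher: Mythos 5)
Your argument is correct, but it runs along a genuinely different route from the paper's. You apply Crandall--Rabinowitz directly to the PDE operator $F(u,\kappa)=\beta^{-1}\Delta u+\kappa\nabla\cdot((1/L+u)\nabla W\star u)$ between the even, zero-mean subspaces of $H^2$ and $L^2$, obtaining Fredholmness as a compact (smoothing) perturbation of the invertible $\beta^{-1}\Delta$, and reading off the kernel and transversality from the diagonal action on the cosine basis; your eigenvalue bracket, critical value $\kappa_*=-(2L)^{1/2}/(\beta\tilde W(k^*))$, and the use of evenness to kill the translation-induced sine mode all match what is needed. The paper instead never touches the differential operator: it recasts stationary states as zeros of the Gibbs fixed-point map $\hat F(u,\kappa)=u+\varrho_\infty-Z^{-1}e^{-\beta\kappa W\star(u+\varrho_\infty)}$ on $\Leb^2_s(U)$ (justified via \cref{thm:wellpsPDE} and \cref{prop:tfae}), whose linearisation is $I-\kappa\hat T$ with $\hat T$ Hilbert--Schmidt, so Fredholmness of index zero is immediate, and transversality follows from $\skp{D^2_{\varrho\kappa}\hat F(0,\kappa)[v_0],v_0}=-\kappa^{-1}\norm{v_0}_2^2\neq0$. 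What the paper's formulation buys is (i) it works for $W$ merely in $\Leb^2_s$, which is what later allows the singular Keller--Segel potential, (ii) positivity and normalisation of the bifurcating states are built into the Gibbs map, and (iii) the Crandall--Rabinowitz expansion coefficients give the extra structural information stated in \cref{thm:c1bif} ($\kappa'(0)=0$, $\kappa''(0)=2\beta\kappa_*/(3\varrho_\infty)>0$, i.e.\ a supercritical pitchfork) and feed the Rabinowitz global alternative, since the nonlinearity is compact. What your PDE-level route buys is directness under the smoothness hypothesis of \cref{thm:m2}: no detour through the fixed-point characterisation is needed, the quadratic nonlinearity is trivially real-analytic on $H^2$ in one dimension, and zero mean is preserved because $F$ is in divergence form; you should only add the one-line remark that for $|s|$ small the bifurcating solutions $1/L+u(s)$ are positive (by $H^2\hookrightarrow L^\infty$), so they are indeed stationary probability densities, which is the sense in which the theorem is used later.
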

We are also able to sharpen sufficient conditions for the existence of continuous or discontinuous bifurcating branches. The following theorem is a simplified version of the exact statements that are presented in~\cref{thm:dctp} and~\cref{thm:spgap}.
\begin{thm}(Discontinuous and continuous phase transitions)\label{thm:m3}
 Let $W$ be smooth and even and assume the free energy $\crF_{\kappa,\beta}$ defined in~\eqref{eq:freeenergyintro} exhibits a transition point, $\kappa_c<\infty$, in the sense of~\cref{defn:tp}. Then we have the following two scenarios:
\begin{tenumerate}
\item If there exist strictly positive $k^a,k^b,k^c \in \Z$ with $\tilde{W}(k^a)\approx \tilde{W}(k^b)\approx \tilde{W}(k^c)\approx\min_k\tilde{W}(k)<0$ such that 
$k^a=k^b +k^c$, then $\kappa_c$ is a discontinuous transition point.\label{thm:m3a}
\item Let $k^\sharp = \argmin_k \tilde{W}(k)$ be uniquely defined with $\tilde{W}(k^\sharp)<0$ and $\kappa_\sharp=\sqrt{2L}/(\beta \tilde{W}(k^\sharp))$. Let $W_\alpha$ denote the potential obtained by multiplying all the negative  Fourier modes $\tilde{W}(k)$ except $\tilde{W}(k^\sharp)$ by some $\alpha \in(0,1]$. Then if $\alpha$ is made small enough, the transition point $\kappa_c$ is continuous and $\kappa_c=\kappa_\sharp$.\label{thm:m3b}
\end{tenumerate}
\end{thm}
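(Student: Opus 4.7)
\emph{Overall strategy.} My approach to both statements starts from the Taylor expansion of the free energy about the homogeneous state. Writing $\varrho = 1/L + \eta$ with $\intom{\eta} = 0$, a direct expansion of the entropy together with Parseval's identity in the cosine basis yields
\[
\crF_\kappa(1/L+\eta) - \crF_\kappa(1/L) = \tfrac{1}{2}\sum_{k \ge 1}\bigl(\beta^{-1} + c\,\kappa\,\tilde{W}(k)\bigr)|\hat{\eta}(k)|^2 - \tfrac{\beta^{-1}L^2}{6}\intom{\eta^3} + O(\|\eta\|^4),
\]
for a normalisation constant $c>0$. The quadratic form first becomes degenerate at $\kappa_\sharp = -\sqrt{2L}/(\beta\min_k\tilde{W}(k))$ and is strictly positive definite for $\kappa < \kappa_\sharp$; hence $1/L$ is a strict local minimiser of $\crF_\kappa$ whenever $\kappa<\kappa_\sharp$. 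The nature of the transition at $\kappa_c$ is then decided by whether the cubic term can drive the free energy below $\crF_\kappa(1/L)$ at some $\kappa<\kappa_\sharp$ (discontinuous case) or not (continuous case, with $\kappa_c=\kappa_\sharp$).

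\emph{Part (a).} For the resonant triple $k^a = k^b + k^c$ I plan to use the three-mode trial function
\[
\eta(x) = A\bigl[\cos(2\pi k^a x/L) + \cos(2\pi k^b x/L) + \cos(2\pi k^c x/L)\bigr].
\]
Product-to-sum identities give $\intom{\eta^3} = \tfrac{3L}{2}A^3$, while all other cubic integrals vanish (generically) because $k^{a,b,c}>0$, so the cubic contribution to the free energy is $-\tfrac{\beta^{-1}L^3}{4}A^3$. The quadratic part is bounded above by $\tfrac{3}{2}(\beta^{-1}+c\kappa\min_k\tilde{W}(k)+c\kappa\epsilon)A^2$, with $\epsilon := \max_j(\tilde{W}(k^j)-\min_k\tilde{W}(k))$; for $\kappa$ just below $\kappa_\sharp$ this prefactor is $O(\kappa_\sharp-\kappa+\epsilon)$, whereas the cubic coefficient is of order one. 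Optimising the univariate polynomial $\lambda A^2 - |\mu| A^3 + \nu A^4 = A^2(\lambda - |\mu|A + \nu A^2)$ over $A>0$, one sees that its minimum is strictly negative as soon as $\lambda < \mu^2/(4\nu)$; this condition can be enforced by taking $\epsilon$ and $\kappa_\sharp-\kappa$ sufficiently small, so $\crF_\kappa(1/L+\eta)<\crF_\kappa(1/L)$ at some $\kappa<\kappa_\sharp$. Hence $\kappa_c<\kappa_\sharp$; combined with the strict local minimality of $1/L$ for $\kappa<\kappa_\sharp$, any global minimiser at $\kappa_c^+$ must be bounded away from $1/L$, which is the definition of a discontinuous transition in the sense of Definition~\ref{defn:tp}.

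\emph{Part (b).} After the modification $W\mapsto W_\alpha$, the minimiser $k^\sharp$ of $\tilde{W}_\alpha$ remains unique and the gap $\tilde{W}_\alpha(k)-\tilde{W}_\alpha(k^\sharp)$ can be made of order $|\tilde{W}(k^\sharp)|$ uniformly in $k\ne k^\sharp$ by choosing $\alpha$ small. Theorem~\ref{thm:m2} then yields a smooth local branch bifurcating from $(1/L,\kappa_\sharp)$ with tangent $\cos(2\pi k^\sharp x/L)$. Because $\intom{\cos^3(2\pi k^\sharp x/L)} = 0$, the cubic coefficient in the Lyapunov--Schmidt reduction onto the critical mode vanishes, while the quartic coefficient $\tfrac{\beta^{-1}L^3}{12}\intom{\cos^4(2\pi k^\sharp x/L)}$ is strictly positive; the reduced equation is therefore of pitchfork type and the bifurcating branch depends continuously on $\kappa-\kappa_\sharp$. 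To conclude $\kappa_c=\kappa_\sharp$ I would show that for $\alpha$ small enough and every $\kappa<\kappa_\sharp$ the uniform state is the global minimiser of $\crF_\kappa$: the spectral gap makes the quadratic form strictly coercive, the cubic and quartic remainders are absorbed via Sobolev embedding (giving strict convexity in a neighbourhood of $1/L$), and a separate variational/compactness argument, exploiting that the negative Fourier spectrum of $W_\alpha$ shrinks with $\alpha$, rules out minimisers far from $1/L$.

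\emph{Main obstacles.} The crux of (a) is to sharply quantify how close the $\tilde{W}(k^j)$ must be to $\min_k\tilde{W}(k)$ and how small $\kappa_\sharp-\kappa$ must be for the cubic-versus-quartic balance to tip the free energy below that of $1/L$ with the honest (non-absorbed) quartic remainder; this delicate estimate is what underlies the explicit criterion of Theorem~\ref{thm:dctp}. The main technical step for (b) is the global statement that no nontrivial minimiser exists for $\kappa<\kappa_\sharp$ uniformly in $\alpha$: the local bifurcation only controls a neighbourhood of $1/L$, and the exclusion of far-field stationary states under the spectral-gap assumption is the content of Theorem~\ref{thm:spgap}.
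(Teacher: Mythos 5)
Your competitor construction in part (a) is essentially the paper's: the paper also perturbs $\varrho_\infty$ by a sum of the near-resonant modes and uses the identity $\int\cos(\ell x)\cos(m x)\cos(n x)\,\mathrm{d}x\neq 0$ exactly when one index is the sum of the other two to make the cubic term strictly negative. The genuine gap is in how you convert "there is a state of lower free energy" into "the transition is discontinuous". You work at some $\kappa<\kappa_\sharp$ and argue that strict positive-definiteness of the second variation makes $\varrho_\infty$ a strict local minimiser, so that minimisers for $\kappa\downarrow\kappa_c$ are bounded away from $\varrho_\infty$. In this infinite-dimensional setting positivity of the quadratic form does not give local minimality (the cubic entropy term $\int\eta^3$ is not controlled by $\|\eta\\|_2^2$ without an $L^\infty$ bound on the competitor), and even granting some local minimality you have not shown that the set of global minimisers stays $L^1$-separated from $\varrho_\infty$ as $\kappa\downarrow\kappa_c$, which is what \cref{defn:ctp} requires. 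The paper instead evaluates the competitor at $\kappa=\kappa_\sharp$ and invokes \cref{prop:CharactTP}\ref{prop:CharactTP:discont}, whose proof rests on the Chayes--Panferov facts that a lower-energy state at one $\kappa$ destroys minimality of $\varrho_\infty$ for all larger $\kappa$ and, crucially, that a \emph{continuous} transition can only occur at $\kappa_\sharp$ (\cref{prop:cpc}); since $\kappa_c<\kappa_\sharp$, discontinuity follows. You need either this lemma or a genuine quantitative replacement for it.

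For part (b) your plan identifies the right reduction (show $\varrho_\infty$ is the unique minimiser at $\kappa_\sharp$ for small $\alpha$, then conclude via \cref{prop:CharactTP}\ref{prop:CharactTP:cont}), but the step you describe as "a separate variational/compactness argument" is precisely the content that is missing, and the local pitchfork analysis does not supply it: the neighbourhood controlled by Crandall--Rabinowitz could a priori shrink as $\alpha\to 0$, and strict convexity near $\varrho_\infty$ plus vague compactness far away does not rule out nontrivial minimisers uniformly in $\alpha$. The paper's mechanism is different and specific: any minimiser is a stationary state, hence satisfies the fixed-point equation and obeys an $\alpha$-uniform $L^\infty$ bound; a two-parameter Lyapunov--Schmidt argument (\cref{lem:lbfour}) gives an $\alpha$-uniform lower bound on the critical Fourier mode $|\tilde\varrho(k^\sharp)|$ of any nontrivial stationary state at $\kappa_\sharp$; and the relative-entropy defect inequality (\cref{lem:entdef}) then shows $\crF_{\kappa_\sharp}(\varrho)-\crF_{\kappa_\sharp}(\varrho_\infty)>0$ once $\alpha$ is below an explicit threshold. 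Without an idea playing the role of \cref{lem:entdef} and of the uniform-in-$\alpha$ mode bound, your sketch of (b) does not close; as written it assumes the conclusion of \cref{thm:spgap} rather than proving it.
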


 The proof of the above theorem relies mainly on~\cref{prop:CharactTP} which states that if $\varrho_\infty$ is the unique minimiser of the free energy $\crF_\kappa$ at $\kappa=\kappa_\sharp$ then $\kappa_c=\kappa_\sharp$ is a continuous transition point; on the other hand if $\varrho_\infty$ is not the global minimiser of $\crF_\kappa$ at $\kappa=\kappa_\sharp$, then
 $\kappa_c<\kappa_\sharp$ and $\kappa_c$ is a discontinuous transition point. 
 
 We conclude the introduction with a figure to provide the reader with some more intuition about the spectral signature of continuous and discontinuous phase transitions. As it can be seen in Figure~\ref{fig:dcctp}, the results of \cref{thm:m3} essentially apply to two perturbative regimes. Figure~\ref{fig:dcctp}(a) shows the scenario for the existence of a discontinuous transition point in which there are multiple resonating/near-resonating dominant modes $k^a,k^b,k^c$ which satisfy the algebraic condition $k^a=k^b+k^c$ from~\cref{thm:m3}\ref{thm:m3a}. This condition allows us to construct a competitor state at $\kappa=\kappa_\sharp$ which has a lower value of $\crF_\kappa$ than $\varrho_\infty$ by controlling the sign of the higher order terms in the Taylor expansion of the free energy. The statement~\cref{thm:m3}\ref{thm:m3a} is then a direct consequence of~\cref{prop:CharactTP}.
 
 Figure~\ref{fig:dcctp}(b) shows the scenario in which there is one dominant negative mode and all other negative modes are restricted to a small neighbourhood of $0$. In this case, there exists a continuous transition point. The proof follows by showing that $\varrho_\infty$ is the unique minimiser of $\crF_\kappa$ at $\kappa=\kappa_\sharp$. For controlling the involved error terms, the neighbourhood needs to made by small, which is equivalent to making 
 $\alpha$ small in the statement of ~\cref{thm:m3}\ref{thm:m3b}. As it will become clear in~\autoref{S:thermodynamic}, the condition in~\cref{thm:m3}\ref{thm:m3b} is essentially an assumption on the size of the spectral gap of the linearised McKean--Vlasov operator. Again, applying~\cref{prop:CharactTP}, the result follows.
\begin{figure}[ht]
\centering
\begin{minipage}[c]{0.45\textwidth}
\centering
    \includegraphics[width=\linewidth]{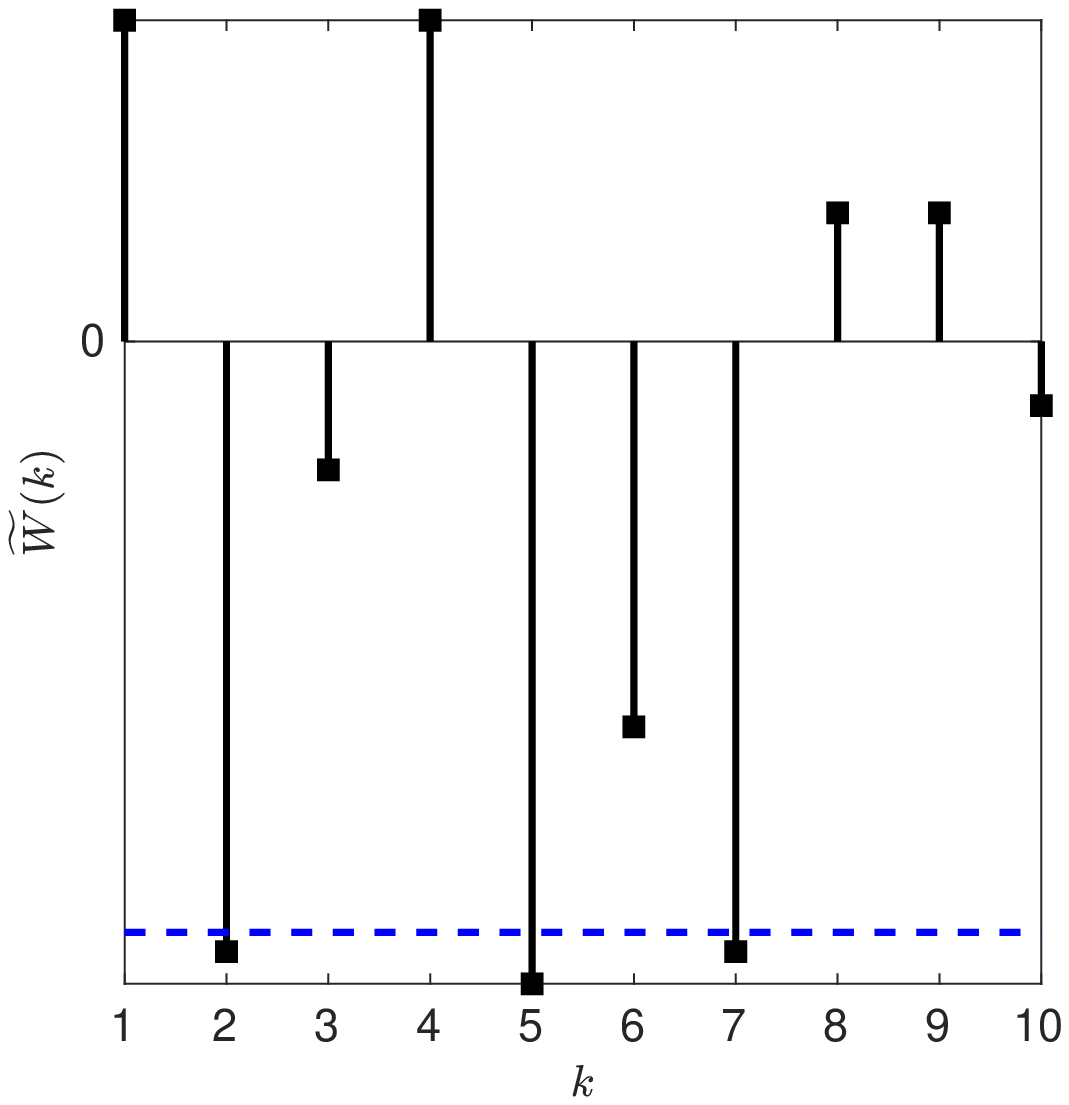}
     \caption*{(a)}
\end{minipage}
\begin{minipage}[c]{0.45\textwidth}
\centering
    \includegraphics[width=\linewidth]{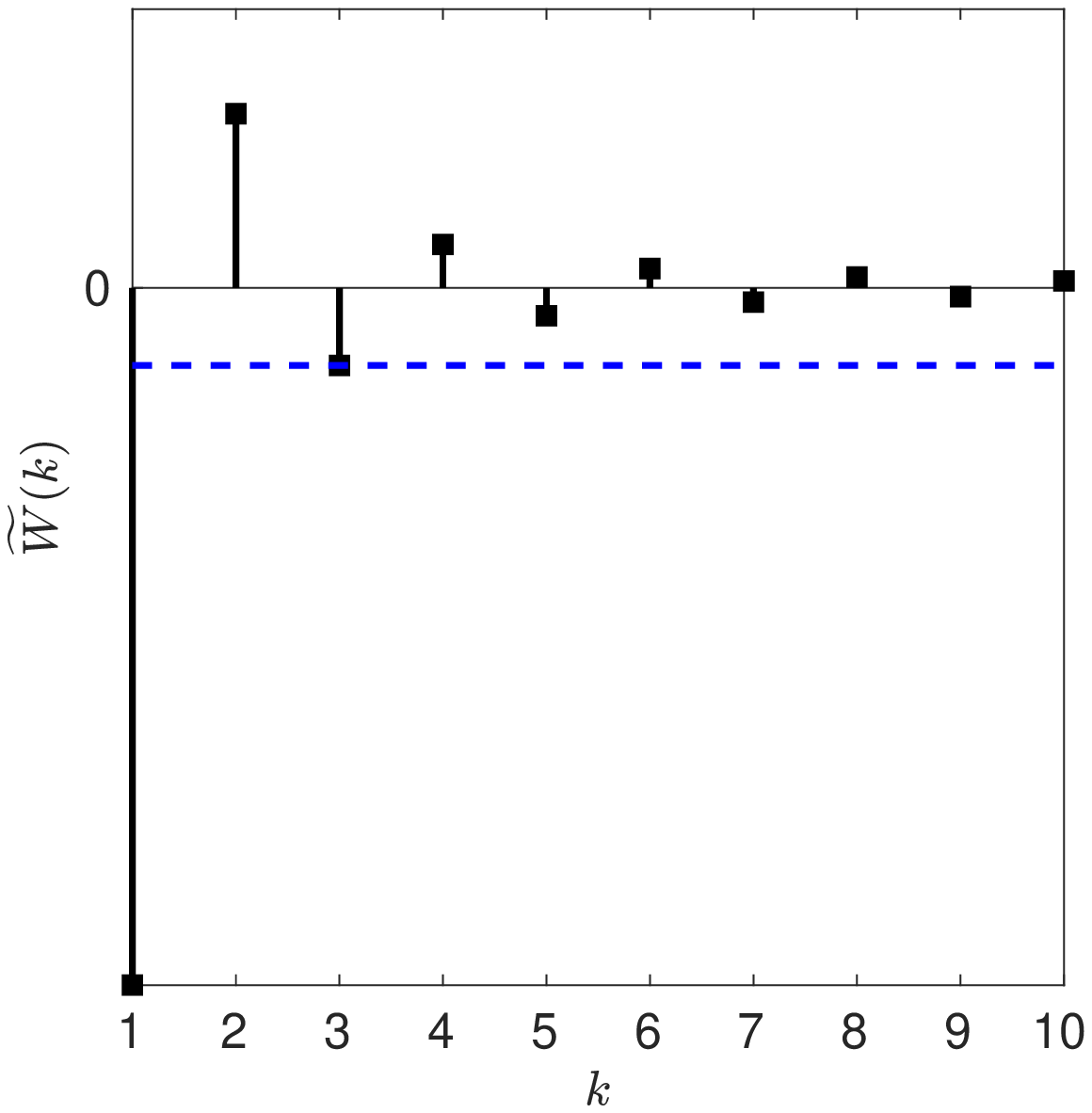}
    \caption*{(b)}
\end{minipage}
\caption{(a) The near-resonating modes scenario, in which the modes $k^a=7,k^b=5,k^c=2$ satisfy the algebraic condition $k^a=k^b+k^c$}; (b) The dominant mode scenario.
\label{fig:dcctp}
\end{figure}

This work provides a complete local and global bifurcation analysis for the Mckean--Vlasov equation on the torus.
This enables us to study phase transitions for several important models that have been introduced in the literature.
This is done in~\autoref{S:app}. In particular, we apply our results to the following examples: the noisy Kuramoto model for synchronisation, the Hegselmann--Krausse model for opinion dynamics, the Keller--Segel model for bacterial chemotaxis, the Onsager model for liquid crystal alignment, and the Barr\'e--Degond--Zatorska model for
interacting dynamical networks. As an example of the typical bifurcation diagram expected for this kind of system, 
we discuss the noisy Kuramoto model which has the interaction potential $W(x)=-(2/L)^{1/2}\cos(2 \pi x/L)$. For $\kappa$ sufficiently small,  the uniform state is the unique stationary solution. At some critical $\kappa=\kappa_c$ a clustered solution branches out from the uniform state and for all $\kappa>\kappa_c$ this clustered state is preferred solution, i.e., it is the global minimiser of the free energy, $\crF_{\kappa}$. The bifurcation diagram and a plot of the clustered solution can be seen in Figure~\ref{fig:kurbif}. The model is discussed in more detail in~\autoref{ss:kura}.
\begin{figure}[ht]
\centering
\begin{minipage}[c]{0.45\textwidth}
\centering
    \includegraphics[width=\linewidth]{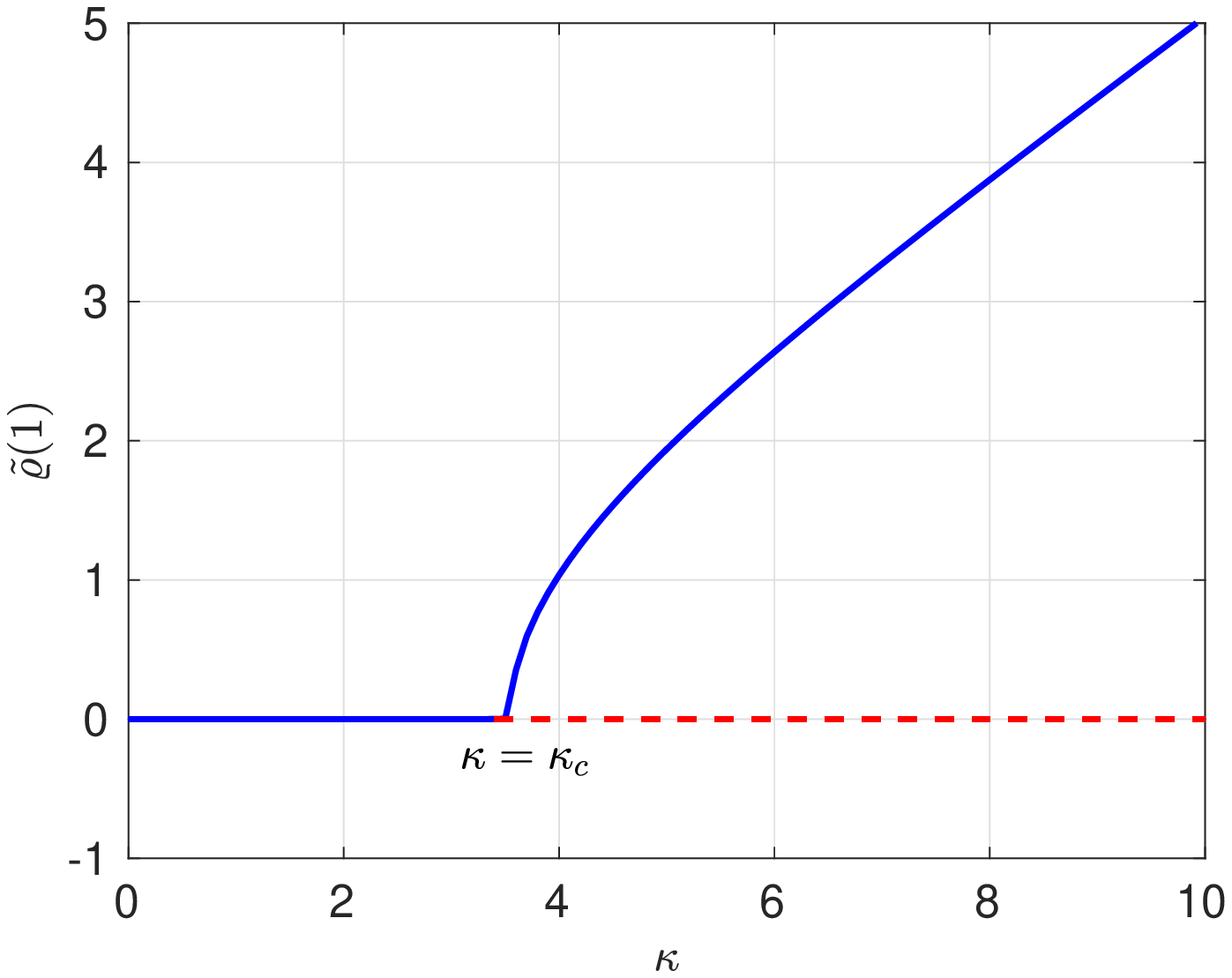}
    \caption*{(a)}
\end{minipage}
\begin{minipage}[c]{0.45\textwidth}
\centering
    \includegraphics[width=\linewidth]{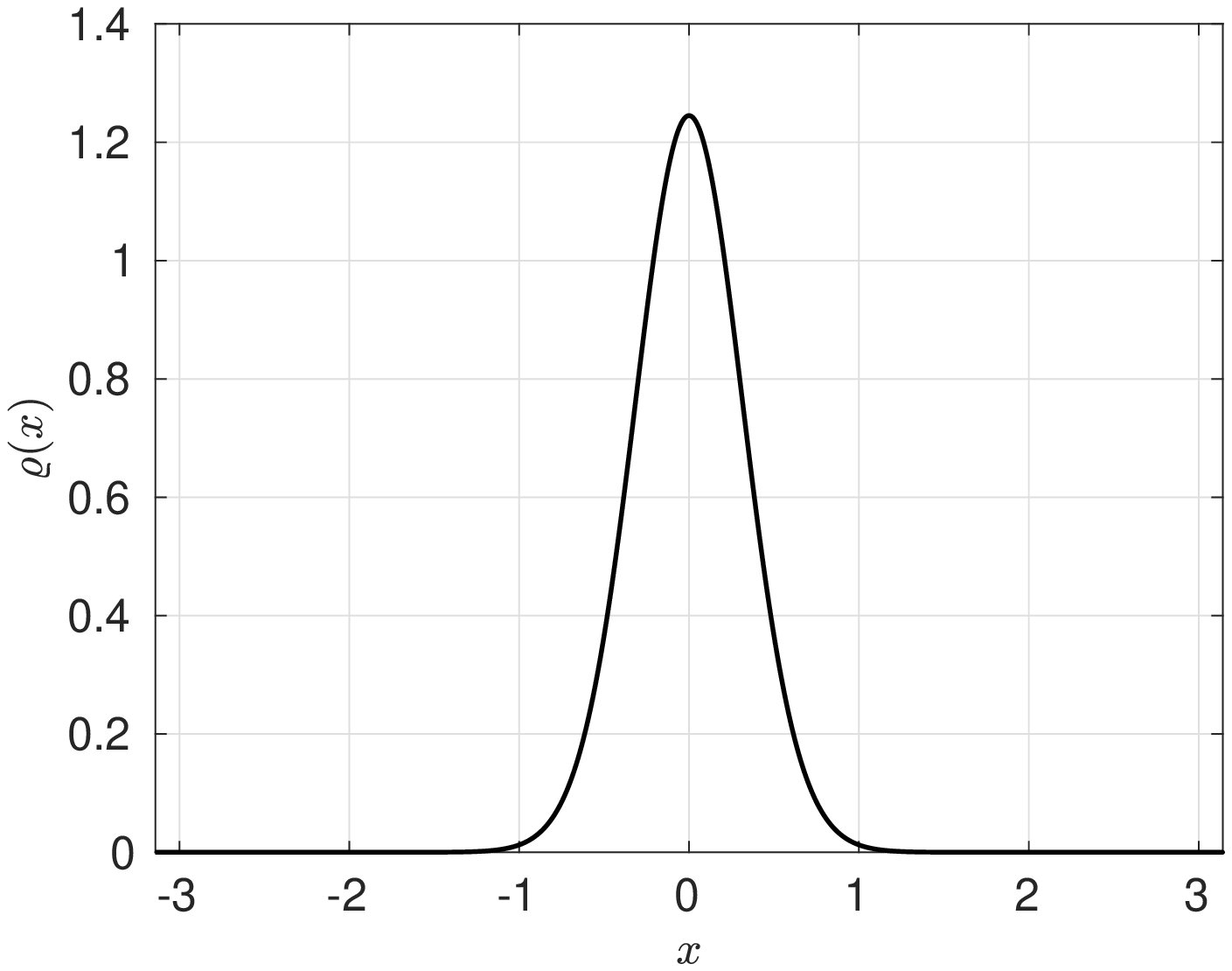}
     \caption*{(b)}
\end{minipage}
\caption{(a). The bifurcation diagram for the noisy Kuramoto system: the solid blue line denotes the stable branch
of solutions while the dotted red line denotes the unstable branch of solutions (b). An example of a clustered solution representing phase synchronisation of the oscillators}
\label{fig:kurbif}
\end{figure}
\subsection{Organisation of the paper}
The paper is organised in the following manner: In Section~\ref{S:not} we introduce the main notation and assumptions on the interaction potential $W$, state a basic existence and uniqueness theorem for classical solutions of the evolutionary problem and present a series of results about the
stationary problem and the associated free energy that we use for our later analysis. In Section~\ref{S:gas} we present the proof of~\cref{thm:m1}\ref{thm:m1b}, whereas the proof of ~\cref{thm:m1}\ref{thm:m1a} is similar to the argument in~\cite{chazelle2017well} and can be found in Version 1 of the arXiv manuscript. Additionally, we perform a linear stability analysis of the Mckean--Vlasov PDE about $1/L^d$. Section~\ref{S:lbt} is dedicated mainly to the the proof of~\cref{thm:m2}, including further details about the structure of the bifurcating branches and the structure of the global bifurcation diagram. In Section~\ref{S:thermodynamic} we give sufficient conditions for the existence of continuous and discontinuous phase transitions and we present the proofs of~\cref{thm:m3}\ref{thm:m3a} and~\cref{thm:m3}\ref{thm:m3b}, along with some supplementary results. In Section~\ref{S:app}, we apply our results to various models from the biological, physical and social sciences.

\section{Preliminaries}\label{S:not}
\subsection{Set up and notation}
Let $U=\R^{d}/ L \Z^d \hat{=} \left(-\frac{L}{2},\frac{L}{2}\right)^d \subset \R^d$ be the torus of size~$L>0$. We denote by $\N = \set{0,1,\dots}$ the nonnegative integers. Furthermore, we will denote by $\Pro(U)$ the space of Borel probability measures on $U$, by $\Prac(U)$ the subset of $\Pro(U)$ absolutely continuous w.r.t the Lebesgue measure, and by $\Prac^+(U)$ the subset of $\Prac(U)$ having strictly positive densities a.e. Additionally, $C^k(U)$ will denote the restriction to $U$ of all $L$-periodic and $k$-times continuously differentiable functions, $\cD(U)$ the space of test functions, and $\skp*{f,g}_\mu$ the $\Leb^2(U,\mu)$ inner product.

\subsection{Assumptions on \texorpdfstring{$W$}{W}}Throughout the subsequent discussion we will assume that $W(x)$ is at least integrable and coordinate-wise even, that is 
\begin{equation}
 \forall x\in \R^d  \ \forall i \in \set{1,\dots, d} : \qquad W(x_1, \dots ,x_i, \dots, x_d)=W(x_1, \dots ,-x_i, \dots, x_d) \, .
\end{equation}
For the evolutionary problem we will assume
\begin{equation}\label{ass:A}
  W \in \SobW^{2,\infty}(U) \, ,\tag{\textbf{A1}}
\end{equation}
while for the stationary problem we will assume
\begin{equation}\label{ass:B}
  W \in \SobH^1(U) \implies W \in \Leb^1(U) \qquad\text{and}\qquad W_- \in \Leb^\infty(U) \quad\text{with}\quad W_-(x)=\min \{0,W(x)\}  \tag{\textbf{A2}} \, ,
\end{equation}
where the $\Leb^p(U)$ with $1 \leq p \leq \infty$ represent the Lebesgue spaces and $\SobW^{k,p}(U)$ represent the periodic Sobolev spaces with $\SobH^k(U) = \SobW^{k,2}(U)$. Wherever required, weaker or stronger assumptions will be indicated in the text. As one may expect, the assumptions on $W(x)$ for the evolutionary and stationary problems to be the same, it is important to mention that these assumptions are in no way sharp and the aim of this paper is not to study low regularity theory for this class of PDEs.

For the space $\Leb^2(U)$ we define the orthonormal basis, $\{w_k\}_{k \in \Z^d}$, $k=(k_1 , k_2  , \ldots , k_d)$, as follows:
\begin{align}\label{e:def:wk}
w_k(x)= N_k\prod\limits_{i=1}^d w_{k_i}(x_i), \qquad\text{ where } \qquad
w_{k_i}(x_i)=
  \begin{cases}
    \cos\left(\frac{2 \pi k_i}{L} x_i\right) &  k_i>0, \\
    1 & k_i=0,  \\
    \sin\left(\frac{2 \pi k_i}{L} x_i\right)  & k_i<0, \\
  \end{cases}
\end{align}
and $N_k$ is defined as
\begin{align}
N_k:=\frac{1}{L^{d/2}}\prod\limits_{i=1}^d \left(2-\delta_{k_i , 0} \right)^{\frac{1}{2}}=: \frac{\Theta(k)}{L^{d/2}} \, ,\label{e:def:thetak}
\end{align}
where $\delta_{i,j}$ denotes the Kronecker delta. We then have the following form for the discrete Fourier transform of any $f \in \Leb^2(U)$
\begin{align}
\tilde{f}(k)&= \skp{f,w_k }, \qquad k \in \Z^d\, .
\end{align}
We denote by ``$\star$'' the convolution of any two functions, $f(x),g \in \Leb^2(U)$ and for $f(x)=W(x)$ we have the following representation in Fourier space:
\begin{align}
(W \star g)(y)= \sum\limits_{k \in \N^d}\tilde{W}(k) \frac{1}{N_k} \sum\limits_{\sigma \in \Sym(\Lambda)}\tilde{g}(\sigma(k))w_{\sigma(k)}(y)  \, .
\end{align}
 Here, we have used the fact that $W(x)$ is coordinate-wise even. $\Sym(\Lambda)$ represents the symmetry group of the product of two-point spaces $\Lambda=\{1,-1\}^d$, which acts on $\Z^d$ by pointwise multiplication, i.e., $(\sigma(k))_i=\sigma_i k_i, k \in \Z^d, \sigma \in \Sym(\Lambda)$. Another expression that we will use extensively in the sequel is the Fourier expansion
of the following bilinear form
\begin{align}\label{Fourier:Interaction}
\iint\limits_{U \times U} \! W(x-y)g(x) g(y) \dx{x} \dx{y} = \sum\limits_{k \in \N^d} \tilde{W}(k)\frac{1}{N_k}\sum\limits_{\sigma \in \Sym(\Lambda)}|\tilde{g}(\sigma(k))|^2 \, .
\end{align}
It will be useful to note that for any function $g(x)$ and $k \in \Z^d$ the sum $\sum_{\sigma \in \Sym(\Lambda)}|\tilde{g}(\sigma(k))|^2 $ is translation invariant, i.e., the value of the sum is the same for $g$ and $g_\tau(x) =g(x+\tau)$ for $\tau\in U$. In later sections we will also use the space $\Leb^2_s(U) \subset \Leb^2(U)$, which we define as the space of coordinate-wise even functions in $\Leb^2(U)$ given by 
\begin{equation}\label{eq:def:L2s}
  \Leb^2_s(U) = \set*{ f\in \Leb^2(U) : f(x_1,\dots,x_i,\dots,x_d)= f(x_1,\dots,-x_i,\dots, x_d) , i\in \set{1,\dots,d} , x\in U } \, . 
\end{equation}
It should be noted that any pointwise properties (like being coordinate-wise even) should be understood in a pointwise a.e. sense. The space $\Leb^2_s(U)$ is a closed subspace of $\Leb^2(U)$ and thus is a Hilbert space in its own right. It is also  easy to check that $\{w_k\}_{k \in \N^d} \subset
\{w_k\}_{k \in \Z^d}$ forms an orthonormal basis for $\Leb^2_s(U)$. If $g$ is assumed to be in $\Leb^2_s(U)$, then the above expressions reduce to,
\begin{align}
(W \star g)(y)&= \sum\limits_{k \in \N^d,k_i>0} \tilde{W}(k) \frac{1}{N_k} \tilde{g}(k)w_k(y)  \, ,\\
\iint\limits_{U \times U} \! W(x-y)g(x) g(y) \dx{x} \dx{y} &= \sum\limits_{k \in \N^d,k_i>0} \tilde{W}(k) \frac{1}{N_k} |\tilde{g}(k)|^2 \, .
\end{align}
In addition, the sign of the individual Fourier modes of $W$ is quite important in the subsequent analysis and we introduce the following definition.
\begin{defn}[$H$-stability]\label{def:Hstable}
  A function $W \in \Leb^2(U)$ is said to be $H$-stable, denoted by $W \in \HH_\stable$, if it has non-negative Fourier coefficients, i.e,
  \begin{align}
    \tilde{W}(k) \geq 0, \quad \forall k \in \mathbb{Z}^d  \ , 
  \end{align}
  where, $\tilde{W}_k=\skp{W,w_k}$. This is by~\eqref{Fourier:Interaction} equivalent to the condition that,
  \begin{align}
    \iint\limits_{U \times U} W (x-y) \eta(x) \eta(y) \dx{x} \dx{y} \geq 0, \qquad \forall \eta \in \Leb^2(U).
  \end{align}
  Thus every potential is decomposed into two parts $W(x)=W_\stable(x) + W_\unstable(x)$, where
  \begin{align}
        W_\stable(x)&= \sum\limits_{k \in \Z^d} \bra*{\skp{W,w_k}}_+  w_k(x)\qquad\text{and}\qquad  W_\unstable(x)=W(x)-W_\stable(x) \, .
  \end{align}
  Hereby, $(a)_+ = \max\set{0, a}$ (resp. $(a)_-= \min\set{0, a}$) denotes the positive (resp. negative) part for a real number $a\in \R$. We will denote a potential $W \in \Leb^2(U)$ which is not $H$-stable by $W \in \HH_s^c$.
 \end{defn}
 An immediate consequence of the identity~\eqref{Fourier:Interaction} is that $H$-stable potentials have nonnegative interaction energy. The above definition can be thought of as a continuous analogue of the notion of $H$-stability encountered in the  study of discrete systems (cf. \cite{ruelle1999statistical}). We refer to~\cite{canizo2015existence} for an example of the notion of $H$-stability applied to continuous systems. For the rest of
 the paper we will drop the subscript~$U$ under the integral sign and all integrals in space will be taken over $U$ unless specified otherwise. 

\subsection{Existence and uniqueness for the dynamics}
We present an existence and uniqueness result for the McKean--Vlasov equation and comment on the nontrivial parts of the proof. The proof is quite standard. Our result  is an extension of~\cite[Theorem 4.5]{chazelle2017well} since we consider all potentials $W$ satisfying Assumption ~\eqref{ass:A} in any dimension $d$, as opposed to~\cite[Theorem 4.5]{chazelle2017well} which deals with the  Hegselmann--Krause potential in one dimension. Additionally, we prove \emph{strict} positivity of solutions as opposed to the nonnegativity proved in~\cite{chazelle2017well}. We prove below the existence of classical solutions $\varrho(\cdot,t) \in C^2 (U)$ to the system
\begin{align} 
\label{eq:PDE}
\begin{alignedat}{3}
\frac{\partial \varrho}{\partial t}& = \beta^{-1}\Delta \varrho + \kappa \dive (\varrho\nabla W \star \varrho), \qquad &&(x,t) \in U \times (0,T] \ ; \\
\varrho(x,0) &=\varrho_0(x), \qquad && x \in U  \, . 
\end{alignedat}
\end{align}
\begin{thm}
Assume Assumption~\eqref{ass:A} holds, then for $\varrho_0  \in \SobH^{3+d}(U)\cap\Prac(U) $, there exists a unique classical solution $\varrho$ of \eqref{eq:PDE} such that $\varrho(\cdot,t) \in \Prac(U)\cap C^2(U)$ for all $t>0$. Additionally, $\varrho(\cdot,t)$ is strictly positive and has finite entropy, i.e,    $\varrho(\cdot ,t)>0$  and $S(\varrho(\cdot,t))< \infty$, for all $t>0$.
  \label{thm:wellp}
\end{thm}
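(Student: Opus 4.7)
My proof plan is a standard contraction argument combined with linear parabolic regularity and the strong maximum principle, exploiting the fact that convolution with $W\in \SobW^{2,\infty}(U)$ is a smoothing operation.

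First I would set up a Picard iteration at the level of the linear problem. Given $u\in C([0,T];\Leb^2(U))$ with mass $1$, define the drift $b_u(x,t):=-\kappa(\nabla W\star u)(x,t)$. Since $W\in\SobW^{2,\infty}(U)$ and convolution in $x$ commutes with differentiation, one has $b_u\in L^\infty([0,T];\SobW^{1,\infty}(U))$ with a bound depending only on $\norm{W}_{\SobW^{2,\infty}}$ and the mass of $u$. The linear Cauchy problem
\begin{equation*}
\partial_t\varrho=\beta^{-1}\Delta\varrho-\dive(\varrho\, b_u),\qquad \varrho(\cdot,0)=\varrho_0,
\end{equation*}
is then a uniformly parabolic equation with bounded Lipschitz coefficients, and standard results (e.g.\ Ladyzhenskaya--Solonnikov--Uraltseva, or heat--semigroup/Duhamel together with Schauder estimates) yield a unique solution $\varrho=:\Phi(u)$ that inherits the regularity of $\varrho_0$ and preserves total mass. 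Using the explicit convolution estimate $\norm{\nabla W\star(u_1-u_2)}_\infty\le \norm{\nabla W}_2\norm{u_1-u_2}_2$, one gets a Gronwall-type bound of the form $\norm{\Phi(u_1)-\Phi(u_2)}_{C([0,T];\Leb^2)}\le C(T)\norm{u_1-u_2}_{C([0,T];\Leb^2)}$ with $C(T)\to 0$ as $T\to 0$, so $\Phi$ is a contraction on a short interval and admits a fixed point. Uniqueness on arbitrary intervals follows by the same contraction estimate since $C(T)$ is locally controlled.

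Next I would promote this mild $\Leb^2$ solution to a classical one. The key point is that once a fixed point $\varrho$ exists, the drift $-\kappa\nabla W\star\varrho$ belongs to $L^\infty([0,T];\SobW^{1,\infty})$ automatically, so one can differentiate the equation and bootstrap: differentiating in $x$ and using that $\nabla^\alpha(\nabla W\star\varrho)=(\nabla^\alpha\nabla W)\star\varrho=(\nabla W)\star(\nabla^\alpha\varrho)$ up to the order permitted by $W$'s regularity, one obtains successive $\SobH^k$ energy estimates for $k\le 3+d$. By the Sobolev embedding $\SobH^{3+d}(U)\hookrightarrow C^2(U)$ the solution lies in $C^2(U)$ for each $t\ge 0$; parabolic regularity then gives $\varrho\in C^{2,1}(U\times(0,T])$. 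Global existence on $[0,\infty)$ is obtained because the $\Leb^2$ estimate does not blow up: the nonlinear term $\kappa\int\dive(\varrho\nabla W\star\varrho)\varrho\dx x$ is controlled by $\kappa\norm{\Delta W}_\infty\norm{\varrho}_2^2$, and one can iterate the local existence.

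For strict positivity I would rewrite the equation in non-divergence form,
\begin{equation*}
\partial_t\varrho-\beta^{-1}\Delta\varrho-\kappa(\nabla W\star\varrho)\cdot\nabla\varrho-\kappa(\Delta W\star\varrho)\varrho=0,
\end{equation*}
which, once $\varrho$ is known, is a linear parabolic equation with bounded coefficients (using $W\in\SobW^{2,\infty}$). Since $\varrho_0\ge 0$, $\varrho_0\not\equiv 0$, the classical parabolic strong maximum principle (or equivalently the Harnack inequality on the compact manifold $U$) forces $\varrho(\cdot,t)>0$ for every $t>0$; mass conservation comes from integrating the divergence form over $U$. Finally, since $\varrho(\cdot,t)\in C^2(U)$ is strictly positive on a compact set, it is bounded above and below by positive constants, so $\varrho\log\varrho$ is bounded and $S(\varrho(\cdot,t))<\infty$.

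The step I expect to require the most care is the regularity bootstrap that produces a $C^2$-in-space solution from the $\SobH^{3+d}$ initial data, because one has to track how convolution with $W$ transports derivatives between the two factors of the nonlinearity $\dive(\varrho\nabla W\star\varrho)$; the other ingredients (contraction, mass conservation, maximum principle, entropy bound) follow rather directly once the regularity is in place.
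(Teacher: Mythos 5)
Your proposal is correct and follows essentially the same route as the paper: both freeze the drift $\nabla W\star\varrho$ to reduce to linear parabolic problems, bootstrap $\SobH^k$ energy estimates (distributing derivatives between $W$ and $\varrho$ through the convolution, which is exactly where the paper's Leibniz-rule computation lives) up to $\SobH^{3+d}(U)\hookrightarrow C^2(U)$, obtain strict positivity from a Harnack/strong maximum principle applied to the ``frozen'' linear equation, and deduce finite entropy from the resulting two-sided bounds. The only real difference is bookkeeping: you close existence and uniqueness by a short-time Banach contraction in $C([0,T];\Leb^2(U))$ plus continuation via the non-blow-up of the $\Leb^2$ norm, whereas the paper passes to the limit along its sequence of linearised problems by compactness and proves uniqueness separately with a Gronwall estimate — the underlying bounds are the same.
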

The strategy of the proof is identical to that used in the proof of~\cite[Theorem 4.5]{chazelle2017well}. We construct a sequence of linear problems that approximate the McKean--Vlasov equation
\begin{align}
\frac{\partial \varrho_n}{\partial t} = \beta^{-1}\Delta \varrho_n + \kappa \dive (\varrho_n\nabla W \star \varrho_{n-1}) \quad &\textrm{ in } U \times (0,T] \ , \nonumber \\
\forall i \in \set{1,\dots, d} : \qquad \varrho_n(\cdot + L \mathbf{e}_i)=\varrho_n(\cdot) \quad &\textrm{ on } \partial U \times [0,T]\ , 
  \label{eq:nPDE} \\\nonumber
  \varrho=\varrho' \quad &\textrm{ in } U \times \{0\} \, ,
\end{align}
 which for smooth initial data, $\varrho'\in \Prac(U) \cap C^\infty(U)$ have unique smooth solutions. Similar apriori estimates to \cite{chazelle2017well}
 obtained using the $\SobW^{2,\infty}(U)$-regularity of $W$ allows us to 
 pass to the limit as $n \to \infty$ and recover weak solutions of the 
 McKean--Vlasov equation which are proved to be unique. Their regularity follows from bootstrapping and using the regularity of $W$ and the initial data.

 We now comment on the proof of strict positivity for classical solutions $\varrho(x,t)$ of~\eqref{eq:PDE}. The nonnegativity of the solutions
 follows from a similar argument to~\cite[Corollary 2.2]{chazelle2017well}. Consider now the ``frozen'' linearised version of the McKean--Vlasov equation, i.e.,
\begin{align}
\frac{\partial \vartheta}{\partial t}= \dive \left(\beta^{-1}\nabla \vartheta + \kappa \vartheta (\nabla W \star \varrho(x,t))\right)  
\end{align}
This is a linear parabolic PDE with uniformly bounded and continuous coefficients. Additionally, $\varrho(x,t)$ is a classical solution
to this PDE. Thus we have a Harnack's inequality of the following form (cf. 
~\cite[Theorems 8.1.1-8.1.3]{bogachev2015fokker} for sharp versions of this result)
\begin{align}
\sup_U \varrho(x,t_1) < C \inf_U \varrho(x,t_2) \ ,
\end{align}
for $0<t_1<t_2< \infty$ for some positive constant $C$.
 Since $\varrho(x,t)$ is nonnegative and $\norm{\varrho(x,t)}_{1}=1$ for all $0\leq t < \infty$, this implies that $\inf_U\varrho(x,t)$ is positive for any positive time. The fact that the entropy is finite follows from the fact that $\varrho(x,t)$ is positive and bounded above.

\subsection{Characterisation of the stationary solutions}\label{S:stationary}
In subsequent sections we will study the stationary solutions of the McKean--Vlasov equation \eqref{eq:PDE}, i.e, classical solutions $\varrho \in C^2 (U)$ of
\begin{align} 
\dive \left(\beta^{-1}\nabla\varrho + \kappa \varrho \nabla W \star \varrho \right)=0, \quad & x \in U \label{eq:sPDE} \, .
\end{align}
In this subsection we present standard results about the stationary McKean--Vlasov equation that will be useful for our later analysis. The main results in this section are~\cref{thm:wellpsPDE} which discusses the existence of solutions and their regularity,~\cref{prop:tfae} which connects stationary solutions to minimisers of the free energy, and~\cref{thm:dirmet} which discusses the existence of minimisers for the free energy.

 We start by discussing the existence and and regularity question for the stationary problem. The proof relies on the link between the stationary PDE and the fixed points of a nonlinear map as was discussed in~\cite{tamura1984asymptotic} and~\cite{dressler1987stationary}. 

 \begin{thm}[Existence, regularity, and strict positivity of solutions for the stationary problem] \label{thm:wellpsPDE} $ $\\
Consider the stationary McKean--Vlasov PDE  \eqref{eq:sPDE} such that Assumption \eqref{ass:B} holds. Then we have 
\begin{tenumerate}
\item There exists a weak solution, $\varrho \in \SobH^1(U) \cap \Prac(U)$ of \eqref{eq:sPDE} and any weak solution is a fixed point of the nonlinear map $\cT: \Prac(U) \to \Prac(U)$
\begin{align}\label{def:T}
\cT\!\varrho=\frac{1}{Z(\varrho,\kappa,\beta)}e^{-\beta \kappa W \star \varrho}, \quad\text{where}\quad Z(\varrho,\kappa,\beta)= \intom{e^{- \beta \kappa W \star \varrho}}\,.
\end{align}
\label{thm:ex}
\item Any weak solution $\varrho \in \SobH^1(U) \cap \Prac(U)$ is smooth and strictly positive ,i.e., $\varrho\in C^\infty(\bar{U})\cap
\Prac^+(U)$\,. \label{thm:re}
\end{tenumerate}
\end{thm}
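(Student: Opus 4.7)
First I would show that any weak solution $\varrho \in \SobH^1(U)\cap\Prac(U)$ is necessarily a fixed point of $\cT$. By Assumption~\eqref{ass:B} we have $W\in \SobH^1(U)\subset\Leb^2(U)$, so Cauchy--Schwarz gives $\varphi := W\star\varrho\in \Leb^\infty(U)$ with $\|\varphi\|_\infty \leq \|W\|_2\,\|\varrho\|_2$, and similarly $\nabla\varphi = \nabla W \star \varrho \in \Leb^\infty(U)$; hence $\varphi\in \SobW^{1,\infty}(U)$. The coefficient $a(x):=\beta^{-1}e^{-\beta\kappa\varphi(x)}$ is therefore bounded above and below by positive constants, and the substitution $\psi:=\varrho\,e^{\beta\kappa\varphi}$ lies in $\SobH^1(U)$ with $\nabla\psi = e^{\beta\kappa\varphi}(\nabla\varrho + \beta\kappa\varrho\nabla\varphi)$. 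Rewriting the weak formulation of~\eqref{eq:sPDE} in terms of $\psi$ yields
\begin{equation*}
\int_U a(x)\,\nabla\psi\cdot\nabla\eta \dx{x} = 0 \qquad\text{for all }\eta\in \SobH^1(U),
\end{equation*}
and testing with $\eta=\psi$ forces $\nabla\psi\equiv 0$. As $U$ is connected, $\psi$ is constant, and the normalisation $\int\varrho=1$ pins the constant to $1/Z(\varrho,\kappa,\beta)$, proving $\varrho=\cT\varrho$.

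\textbf{Plan for (a): existence.} I would then apply Schauder's fixed-point theorem to $\cT$ on a suitable convex set in $\Leb^2(U)$. Assumption~\eqref{ass:B} furnishes uniform two-sided bounds on the partition function: the inequality $W\star\varrho \geq -\|W_-\|_\infty$ on $\Prac(U)$ gives $Z\leq |U|\exp(\beta\kappa\|W_-\|_\infty)$, while Jensen together with the torus identity $\int_U W\star\varrho \dx{x}=\int_U W \dx{x}$ gives $Z\geq |U|\exp(-\beta\kappa |U|^{-1}\!\int W)$. Consequently $\cT\varrho$ is bounded in $\Leb^\infty(U)$ by a constant $C_0$ independent of $\varrho$, so the set $K:=\{f\in\Leb^2(U):f\geq 0,\,\int f=1,\,\|f\|_\infty\leq C_0\}$ is a closed, convex, bounded subset of $\Leb^2(U)$ invariant under $\cT$. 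Continuity of $\cT:K\to K$ in $\Leb^2$ follows from the elementary estimate $\|W\star(\varrho_n-\varrho)\|_\infty\leq \|W\|_2\,\|\varrho_n-\varrho\|_2$, and precompactness of $\cT(K)$ in $\Leb^2(U)$ follows from its uniform $\SobW^{1,\infty}$-bound via Rellich--Kondrachov. Schauder's theorem yields a fixed point $\varrho_\ast$, which the exponential formula places automatically in $\SobH^1(U)\cap\Prac(U)$; taking the logarithmic gradient of $\varrho_\ast = Z^{-1}e^{-\beta\kappa W\star\varrho_\ast}$ verifies that it satisfies~\eqref{eq:sPDE}.

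\textbf{Plan for (b).} Regularity follows by bootstrapping on the fixed-point formula. Inductively, if $\varrho\in\SobH^k(U)$ for some $k\geq 1$, then writing $\partial^{k+1}(W\star\varrho)=\partial W\star\partial^k\varrho$ and using that $\partial W\in\Leb^2(U)$ and $\partial^k\varrho\in\Leb^2(U)$, Young's inequality gives $\partial^{k+1}(W\star\varrho)\in\Leb^\infty(U)\subset\Leb^2(U)$, so $W\star\varrho\in\SobH^{k+1}(U)$. Composition with the smooth bounded map $s\mapsto e^{-\beta\kappa s}$ preserves Sobolev regularity, and dividing by the constant $Z$ yields $\varrho\in\SobH^{k+1}(U)$. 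Iterating gives $\varrho\in\bigcap_k\SobH^k(U)$, and Sobolev embedding delivers $\varrho\in C^\infty(\bar U)$. Strict positivity is then immediate: $W\star\varrho$ is continuous on the compact torus $\bar U$, so $\varrho\geq Z^{-1}\exp(-\beta\kappa\|W\star\varrho\|_\infty)>0$.

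\textbf{Expected main obstacle.} The most delicate point is the identification step in (a). Carrying out the substitution $\psi=\varrho\,e^{\beta\kappa\varphi}$ at the level of genuine $\SobH^1$ functions, and showing that the diffusion coefficient $a$ is uniformly elliptic, both rely on promoting $\varphi=W\star\varrho$ to $\SobW^{1,\infty}$. This is exactly what the Cauchy--Schwarz trick buys from the otherwise modest $\SobH^1$ regularity of both $W$ and $\varrho$; once this is in hand, the rest of the proof (Schauder's theorem for existence and bootstrapping for smoothness) proceeds by standard routes.
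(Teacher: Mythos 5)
Your proposal is correct and follows essentially the same route as the paper: identification of weak solutions with fixed points of $\cT$ via the change of variables $\psi=\varrho\,e^{\beta\kappa W\star\varrho}$ (the paper phrases this as uniqueness, up to normalisation, of the ``frozen'' linear problem after the substitution $\vartheta=h\,\cT\!\varrho$, which is the same idea), existence by Schauder's theorem on a closed convex set using the uniform $\Leb^\infty$/$\SobH^1$ bounds on $\cT\!\varrho$ and Rellich compactness, smoothness by bootstrapping the convolution--exponential formula, and positivity from the uniform bound on $W\star\varrho$. The only differences are cosmetic (an $\Leb^\infty$-ball instead of the paper's $\Leb^2$-ball, and testing directly with $\eta=\psi$ instead of invoking uniqueness of the frozen equation), so the argument is sound as written.
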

\begin{proof}
The weak formulation of \eqref{eq:sPDE} is
\begin{align}
-\beta^{-1}\intom{\nabla \varphi \cdot \nabla \varrho } -\kappa \intom{\varrho \nabla \varphi \cdot \nabla W \star \varrho}=0,
\quad \forall \varphi \in \SobH^1(U) \, ,
\label{eq:wsPDE}
\end{align}
where we look for solutions $\varrho \in \SobH^1(U) \cap \Prac(U)$. 
We have the following estimate on the map $\cT$ from~\eqref{def:T}
\begin{align}
\norm*{\cT\!\varrho}_2^2 \leq \norm*{\cT\!\varrho}_\infty \leq e^{\beta \kappa (\norm*{W_-}_\infty + \norm*{W}_1)} \, . \label{eq:linfT}
\end{align}
Thus it makes sense to search for fixed points of this equation in the set  $E:=\{\varrho \in \Leb^2(U) \cap \Prac(U):
\norm*{\varrho}_2^2 \leq e^{\beta \kappa (\norm*{W_-}_\infty + \norm*{W}_1)}\}$ as all fixed points must be in this set. It is easy to check that $E$ is a closed, convex subset of $\Leb^2(U)$. We can now redefine $\cT$ to act on $E$. Additionally, for any $\varrho \in E$, we have that
\begin{align}
\norm*{\cT\!\varrho}_{\SobH^1}^2 &= \norm*{\cT\!\varrho}_2^2 + \norm*{\nabla \cT\!\varrho}_2^2 
\leq \norm*{\cT\!\varrho}_\infty\left(1 + L^d\beta^2 \kappa^2 \norm*{\cT\!\varrho}_\infty^2\norm*{\nabla W}_2^2\right) \ ,\label{eq:H1T}
\end{align}
where we have used the fact that $W \in \SobH^1(U)$. Thus  using~\eqref{eq:linfT}, we have that $\cT(E) \subset E$ is uniformly bounded in $\SobH^1(U)$. By Rellich's compactness theorem, this implies that $\cT(E)$ is relatively compact in $\Leb^2(U)$, and therefore in $E$, since $E$ is closed. Furthermore, $\cT$ is Lipschitz continuous, i.e., we have for $\varrho_1,\varrho_2 \in E:$ $\norm*{\cT\!\varrho_1 -\cT\!\varrho_2}_2 \leq C\norm*{\varrho_1-\varrho_2}_2$, for some positive constant $C$. By the Schauder fixed point theorem, there exists a fixed point of $\varrho \in E$ of $\cT$ which by \eqref{eq:H1T} is in  $\SobH^1(U)$. Plugging this expression into the weak form of the PDE~\eqref{eq:wsPDE} we obtain~\ref{thm:ex}. Also note that fixed points of $\cT$ are bounded from below by $e^{-\beta \kappa (\norm*{W_-}_\infty + \norm*{W}_1 \norm*{\cT\!\varrho}_\infty )}$, proving the positivity of them.

Before proceeding to the proof of~\ref{thm:re}, we argue that every weak solution in $\SobH^1(U)\cap \Prac(U)$  is a fixed point of the  nonlinear map, $\cT$.
Consider the ``frozen'' version of the weak form in \eqref{eq:wsPDE},
\begin{align}
-\beta^{-1}\intom{\nabla \varphi \cdot \nabla \vartheta } -\kappa \intom{\vartheta \nabla \varphi \cdot \nabla W \star \varrho}=0 \ , \quad \forall \varphi \in \SobH^1(U) \, , \label{eq:fwf}
\end{align}
where $\varrho \in \SobH^1(U)\cap\Prac(U)$ is a weak solution of \eqref{eq:sPDE} and $\vartheta$ is the unknown function. The above equation is the weak form of a uniformly elliptic PDE whose associated bilinear form is coercive in the weighted space, $\SobH^1_0(U, \cT\!\varrho)$ where $\SobH^1_0(U)=\SobH^1(U) /\ \R$.
To see this, set $\vartheta(x)=h(x) \cT\!\varrho$. We then obtain the following integral formulation of the transformed PDE,
\begin{align}
-\beta^{-1}\intom{\nabla \varphi \cdot \nabla h \; \cT\!\varrho }=0 \ , \qquad \forall \varphi \in \SobH^1(U) \, .
\end{align}
Let $h_1$ and $h_2$ be two weak solutions of the above equation. By choosing $\varphi =h_1 -h_2 = h$, we obtain a unique weak solution to~\eqref{eq:fwf} up to normalisation. Here, we also used that $\cT\!\varrho$ has full support, since it is bounded from below.
Hence, if it is chosen to be a probability measure, it is unique. We observe that $\vartheta=\cT\!\varrho$ is such a weak solution, as is $\varrho$. This implies that any weak solution must be such that $\varrho=\cT\!\varrho$. 

We obtain regularity of solutions by observing that if $f \in \SobH^m(U), g \in \SobH^n(U)$,
then we have that $f \star g \in \SobW^{m+n,\infty}(U)$. Then we use a bootstrap
argument, i.e., $W \in \SobH^1(U), \varrho \in \SobH^1(U)$ implies that $\varrho
=\cT\!\varrho \in \SobW^{2,\infty}(U)$. This implies that $W \star \varrho \in \SobW^{3,\infty}(U)$
and so on and and so forth. Thus we have that $\varrho \in \SobH^m(U) \cup \SobW^{m,\infty}(U)$ for
any $m \in \N$.  The strict positivity follows from the lower bound on $\cT\!\varrho$.
\end{proof}
We already know that associated with this PDE we have a free energy functional $\crF_{\kappa}: \Prac^+(U) \to \R$ defined on the space $\Prac^+(U)$ of strictly positive absolutely continuous probability measures on $U$ by
\begin{align}\label{eq:FreeEnergy}
  \crF_{\kappa}(\varrho) & = \beta ^{-1}\int \varrho \log \varrho \dx{x} + \frac{\kappa}{2}
\iint W(x-y) \varrho(y) \varrho(x) \dx{y} \dx{x} \\
& = S_{\beta}(\varrho)  + \frac{\kappa}{2} \mathcal{E}(\varrho,\varrho) \ . \nonumber
\end{align}
Since, we regard $\beta$ as a fixed parameter, we omit it in the subscript on $\cF_\kappa$. 

The free energy $\crF_{\kappa}$ is a Lyapunov function for the evolution and its negative derivative along the flow is given by the entropy dissipation functional $\mathcal{J}_\kappa: \Prac^+(U)  \to \R^+ \cup \set{+\infty}$ with

\begin{align}\label{e:def:dissipation}
\mathcal{J}_{\kappa}(\varrho)= 
\begin{cases}\intT{\abs*{\nabla \log \frac{\varrho}{\exp\bra{- \beta \kappa W \star \varrho}}}^2 \varrho} \ , & \varrho \in \Prac^+(U) \cap \SobH^1(U) \\
+ \infty \ , & \textrm{otherwise} \, . 
\end{cases}
\end{align}

This follows from rewriting~\eqref{eq:PDE} as $\partial_t \varrho = \dive \bra*{ \varrho \bra*{\beta^{-1} \nabla \log \varrho + \nabla W \star \varrho}}$ and differentiating the free energy functional along the flow
\begin{equation}
  \frac{\dx{}}{\dx{t}} \crF_{\kappa}(\varrho) =  \int \bra*{ \beta^{-1} \log \varrho + \kappa W \star \varrho} \partial_t \varrho \dx{x}  = - \int \abs*{ \beta^{-1} \nabla \log \varrho + \kappa \nabla W \star \varrho}^2 \varrho \dx{x} = -\cJ_{\kappa}(\varrho(t)) \leq 0. 
\end{equation}
Finally we have the Gibbs state map $F_{\kappa}:\Prac(U) \to \Prac(U)$. This equation encodes the stationary states as fixed points of the nonlinear mapping $\cT$ from~\eqref{def:T} 
\begin{align}\label{eq:fixedPoint}
F_{\kappa}(\varrho)=\varrho-\cT\!\varrho= \varrho -\frac{1}{Z(\varrho,\kappa,\beta)}e^{-\beta \kappa W \star \varrho} \, , \qquad\text{where}\qquad  Z(\varrho,\kappa,\beta)= \intT{e^{- \beta \kappa W \star \varrho}} \ .
\end{align}
The identification of stationary states \eqref{eq:sPDE}, critical points of $\crF_{\kappa}$ and $\mathcal{J}_\kappa$, and zeros of $F_{\kappa}$ is given by the following proposition.
\begin{prop} \label{prop:tfae}
Assume $W(x)$ satisfies Assumption~\eqref{ass:B} and fix $\kappa >0$. Let $\varrho \in \Prac^+(U)$. Then the following statements are equivalent:
\begin{enumerate}
  \item $\varrho$ is a classical solution of the stationary McKean--Vlasov equation \eqref{eq:sPDE}.
  \item $\varrho$ is a zero of the map $F_\kappa(\varrho)$.
  \item $\varrho$ is a critical point of the free energy $\crF_\kappa(\varrho)$.
  \item $\varrho$ is a global minimiser of the entropy dissipation functional $\cJ_\kappa (\varrho)$.
\end{enumerate}
\end{prop}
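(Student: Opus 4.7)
The plan is to establish the equivalences via a short cycle, leaning heavily on~\cref{thm:wellpsPDE} for the regularity issues: every $\varrho\in\Prac^+(U)$ that satisfies any one of (1)–(4) will automatically be smooth and strictly positive, so all manipulations (taking logarithms, differentiating, integrating by parts) are unproblematic.

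First I would show (1)$\Leftrightarrow$(2). The direction (1)$\Rightarrow$(2) is exactly~\cref{thm:wellpsPDE}\ref{thm:ex}, since a classical solution is in particular a weak $\SobH^1\cap\Prac$-solution, hence a fixed point of $\cT$. For (2)$\Rightarrow$(1), I would start from $\varrho = Z^{-1}\exp(-\beta\kappa W\star\varrho)$; \cref{thm:wellpsPDE}\ref{thm:re} gives $\varrho\in C^\infty(\bar U)\cap\Prac^+(U)$, so I can take logarithms and differentiate to get $\nabla\log\varrho = -\beta\kappa\nabla W\star\varrho$. Multiplying by $\varrho$ yields $\beta^{-1}\nabla\varrho + \kappa\varrho\,\nabla W\star\varrho = 0$, and applying the divergence gives \eqref{eq:sPDE}.

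Next I would show (2)$\Leftrightarrow$(3). Compute the first variation of $\crF_\kappa$ under the mass constraint $\int\varrho=1$ by a standard perturbation $\varrho+\epsilon\eta$ with $\int\eta=0$ and $\eta$ supported where $\varrho>0$ (which is everywhere, by~\cref{thm:wellpsPDE}\ref{thm:re}). Using the symmetry of $W$, this yields
\begin{equation*}
\frac{\delta\crF_\kappa}{\delta\varrho}(\varrho) = \beta^{-1}\log\varrho + \beta^{-1} + \kappa\,W\star\varrho.
\end{equation*}
A critical point is then characterised by $\beta^{-1}\log\varrho + \kappa\,W\star\varrho \equiv c$ for some constant $c$, which is exactly the statement $\varrho = Z^{-1}\exp(-\beta\kappa W\star\varrho)$, i.e.\ $F_\kappa(\varrho)=0$.

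Finally, (2)$\Leftrightarrow$(4). By~\eqref{e:def:dissipation}, $\cJ_\kappa\geq 0$, and, crucially, $\cJ_\kappa$ does attain the value $0$ (e.g.\ at any fixed point of $\cT$, by the computation already used in (2)$\Rightarrow$(1)), so ``global minimiser'' coincides with ``zero of $\cJ_\kappa$''. Since $\varrho\in\Prac^+(U)$ is strictly positive, $\cJ_\kappa(\varrho)=0$ forces $\nabla\log\bra*{\varrho\,e^{\beta\kappa W\star\varrho}}=0$ a.e., hence $\varrho\,e^{\beta\kappa W\star\varrho}$ is constant, which upon normalisation gives $\varrho = \cT\!\varrho$; conversely any fixed point of $\cT$ yields $\cJ_\kappa(\varrho)=0$ by the same identity. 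The only mild care point throughout is justifying the use of smoothness/positivity, but this is supplied entirely by~\cref{thm:wellpsPDE}\ref{thm:re}, so there is no real obstacle — the argument is essentially a sequence of direct verifications.
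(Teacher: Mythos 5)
Your overall route is the same as the paper's: (1)$\Leftrightarrow$(2) through the fixed-point map $\cT$ and the regularity statement of \cref{thm:wellpsPDE}, (2)$\Leftrightarrow$(3) through the Euler--Lagrange equation, and (2)$\Leftrightarrow$(4) through nonnegativity of $\cJ_\kappa$ together with the full support of $\varrho$. The directions (1)$\Leftrightarrow$(2), (2)$\Rightarrow$(3), (2)$\Rightarrow$(4) and (4)$\Rightarrow$(2) are fine as you sketch them.

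The gap is in (3)$\Rightarrow$(2). You write that ``a critical point is then characterised by $\beta^{-1}\log\varrho + \kappa W\star\varrho \equiv c$'', but this is precisely the nontrivial content of that implication and you give no argument for it. The difficulty is that the admissible variations are constrained: you may only move inside $\Prac^+(U)$, and since an element of $\Prac^+(U)$ is merely positive a.e.\ (not bounded away from zero, and not yet known to be bounded), a perturbation $\varrho+\epsilon\eta$ with an \emph{arbitrary} bounded mean-zero $\eta$ need not stay nonnegative for any $\epsilon>0$; so one cannot simply invoke the fundamental lemma of the calculus of variations to conclude that the integrand is orthogonal to all mean-zero functions and hence constant. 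The paper resolves this by working with convex interpolants $\varrho_s=(1-s)\varrho+s\varrho_1$, $\varrho_1\in\Prac^+(U)$, which are automatically admissible, and then arguing by contradiction: if $\beta^{-1}\log\varrho+\kappa W\star\varrho$ were not a.e.\ constant, one chooses $\varrho_1$ concentrated (up to a small $\varepsilon$) on the super-level set $A$ where the integrand exceeds its $\varrho$-average and shows the one-sided derivative at $s=0$ is strictly positive, contradicting criticality. Some such argument (or the equivalent trick of testing with $\eta=\varrho\,(\phi-\int\phi\varrho\dx{x})$ for bounded $\phi$, using $\varrho>0$ a.e.) must be supplied. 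Relatedly, your blanket claim that any $\varrho$ satisfying one of (1)--(4) is smooth and strictly positive ``by \cref{thm:wellpsPDE}\ref{thm:re}'' is circular for statements (3) and (4): that theorem applies to solutions of the stationary equation, i.e.\ only after equivalence with (2) is established. For (4) your argument does not actually need smoothness (positivity a.e.\ is part of the hypothesis $\varrho\in\Prac^+(U)$, and $\cJ_\kappa(\varrho)<\infty$ already places $\varrho$ in $\SobH^1$), but for (3) the missing constrained-variation argument is where the work lies.
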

\begin{proof}\begin{itemize}[leftmargin=4.5em]
 \item[(1)$\Leftrightarrow$(2):] Observe that $\varrho$ is a zero of $F_\kappa(\varrho)$ if and only if it is a fixed point of $\cT$. Thus by part (a) of~\cref{thm:wellpsPDE} we have the desired equivalence. 
\item[(2)$\Rightarrow$(3):] The main observation for this is that
zeroes of $F_\kappa$ represent solutions of the Euler--Lagrange equations for
$\crF_\kappa$. Let $\varrho, \varrho_1 \in \Prac^+(U)$, we define the standard convex interpolant, $\varrho_s=(1-s)\varrho+s \varrho_1$, $s \in (0,1)$ such that $\crF(\varrho),\crF(\varrho_1)< \infty$. Then we have the following form of the Euler--Lagrange equations (which are well-defined for $\varrho,\varrho_1 \in \Prac^+(U)$),
\begin{align}
\left.\frac{\dx{}}{\dx{s}}\crF_\kappa(\varrho_s)\right\rvert_{s=0}=
\intT{\left(\beta^{-1}\log \varrho + \kappa W \star \varrho\right) \eta} =0 \ ,
\label{eq:eullag}
\end{align}
where $\eta = \varrho_1-\varrho$. Now if $\varrho$ is a zero of $F_\kappa$ it is easy to check that the above expression is zero
for any $\varrho_1 \in \Prac^+(U)$.
\item[(3)$\Rightarrow$(2):] On the other hand assume that $\varrho$ is a critical point. If the integrand $\beta^{-1}\log \varrho + \kappa W \star \varrho$ in \eqref{eq:eullag} is not constant a.e., we can find without loss of generality a set $A \in \mathcal{B}(U)$ of nonzero Lebesgue measure such that
\begin{align}
A := \left\{x \in U : \left(\beta^{-1}\log \varrho + \kappa W \star \varrho\right) > \int \bra*{\beta^{-1}\log \varrho + \kappa W \star \varrho} \varrho  \dx{y} \right\} \, .
\end{align}
We are now free to choose $\varrho_1 \in \Prac^+(U)$ to be
\begin{align}
\varrho_1 = \frac{1}{L^d}\bra[\big]{(1- \varepsilon) \chi_A(x) + \varepsilon \chi_A^c(x)} \ ,
\end{align}
for some $\varepsilon>0$. For this choice of $\varrho_1$, we have,
\begin{align}
\left.\frac{\dx{}}{\dx{s}}\crF_\kappa(\varrho_s)\right\rvert_{s=0}&=(1-\varepsilon) a + \varepsilon b \, ,  \\
\text{where } \qquad a &= \frac{1}{L^d} \int_{A}\bra*{ \left(\beta^{-1}\log \varrho + \kappa W \star \varrho\right) - \int \bra*{\beta^{-1}\log \varrho + \kappa W \star \varrho} \varrho  \dx{y}  }\dx{x} \, ,\\
\text{and }\qquad b &=\frac{1}{L^d} \int_{A^c}\bra*{ \left(\beta^{-1}\log \varrho + \kappa W \star \varrho\right) - \int \bra*{\beta^{-1}\log \varrho + \kappa W \star \varrho} \varrho  \dx{y}  }\dx{x} \, .
\end{align}
From our choice of the set $A$, it is clear that $a >0$ and $b \leq 0$. Since $\varepsilon$ can be made arbitrarily
small, $(1-\varepsilon)a + \varepsilon b$ can be made positive. Thus we have derived a contradiction since $\varrho$ is a critical point of $\crF_\kappa$ and therefore it must satisfy the Euler--Lagrange equations in \eqref{eq:eullag}. Thus the integrand must be constant a.e. from which we obtain (3)$\Rightarrow$(2). 

\item[(2)$\Rightarrow$(4):] Clearly, $\cJ_\kappa$ is nonnegative. Thus if $\cJ_\kappa(\varrho)=0$ for some $\varrho \in \Prac^+(U)$ then it is necessarily a global minimiser. Plugging in $\varrho$ for some zero of $F_\kappa$ finishes this implication.
\item[(4)$\Rightarrow$(2):] Now, any global minimiser $\varrho$ of
$\cJ_\kappa(\varrho)$ must satisfy $\cJ_\kappa(\varrho)=0$ since $\cJ_\kappa(\varrho_\infty)=0$. From the expression for $\cJ_\kappa(\varrho)$ and the fact that $\varrho$ has full support this is possible only if
\begin{align}
\nabla \frac{ \log \varrho}{e^{-\beta \kappa W \star \varrho}} =0, \quad \mathrm{a.e.} 
\end{align}
Thus, we have that, $\varrho - C e^{-\beta \kappa W \star \varrho}=0$, a.e., for some constant, $C>0$, which is given precisely by $Z(\varrho,\kappa,\beta)$ since $\varrho \in \Prac(U)$. Thus we have that $\varrho$ is a zero of $F_\kappa(\varrho)$ and the reverse implication, (4)$\Rightarrow$(2).
\qedhere
\end{itemize}
\end{proof}
The following lemma is taken from~\cite{chayes2010mckean} in which it is shown that for any unbounded $\varrho \in \Prac(U)$ there exists  a bounded $\varrho^\dagger \in\Prac(U)$ having a lower value of the free energy. 
\begin{lem}[\cite{chayes2010mckean}]
Assume  that $W$ satisfies Assumption~\eqref{ass:B} and  fix $\kappa \in (0, \infty)$. Then there exists  a positive constant $B_0<\infty$ such that for all $\varrho \in \Prac(U)$ with  $\norm*{\varrho}_{\infty} > B_0$  there  exists some $ \varrho^{\dagger} \in
\Prac(U)$ with $\norm*{\varrho^{\dagger}}_{\infty}\leq B_0 $ satisfying
\begin{align}
  \crF_{\kappa}(\varrho^{\dagger}) < \crF_{\kappa}(\varrho) \, .
\end{align}
\label{lem:1}
\end{lem}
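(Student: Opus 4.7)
The approach is an explicit truncation-and-redistribution construction: given $\varrho \in \Prac(U)$ with $\|\varrho\|_\infty > B_0$, I would produce a bounded competitor $\varrho^\dagger$ of strictly lower free energy. Concretely, fix a truncation level $B < B_0$ (e.g. $B = B_0 - 1/L^d$), set $A := \set{\varrho > B}$ with $a := \abs{A}$ and $M := \int_A \varrho\dx{x}$, and define
\[
\varrho^\dagger(x) := \min\bra{\varrho(x), B} + \mu, \qquad \mu := \frac{M - Ba}{L^d}.
\]
A direct verification gives $\int \varrho^\dagger \dx{x} = 1$ and $\|\varrho^\dagger\|_\infty \leq B + \mu \leq B_0$, since $\mu \leq 1/L^d$ (using $M \leq 1$ and $Ba \geq 0$).

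The comparison $\crF_\kappa(\varrho) - \crF_\kappa(\varrho^\dagger)$ then splits naturally into an entropy and an interaction contribution. For the entropy, I would exploit the strict convexity of $f(s) = s\log s$: the tangent-line inequality at $\varrho^\dagger$ together with $\int(\varrho - \varrho^\dagger)\dx{x} = 0$ already yields the qualitative bound $S(\varrho) - S(\varrho^\dagger) \geq 0$. The quantitative strengthening comes from Jensen's inequality applied on the concentrated set, $\int_A \varrho \log \varrho \dx{x} \geq M\log(M/a) \geq M \log B$, combined with the upper bound $S(\varrho^\dagger) \leq \log \|\varrho^\dagger\|_\infty \leq \log B_0$. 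Together these give an entropy gain essentially of order $M \log(\|\varrho\|_\infty / B_0)$, which is strictly positive precisely because $\varrho$ has mass above $B_0$.

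For the interaction term, setting $\eta := \varrho - \varrho^\dagger$ one writes $\cE(\varrho,\varrho) - \cE(\varrho^\dagger, \varrho^\dagger) = \cE(\eta, \varrho + \varrho^\dagger)$. Assumption~\eqref{ass:B} is crucial here: splitting $W = W_+ - W_-$ with $W_- \in \Leb^\infty(U)$ and $W \in \Leb^1(U)$, the negative part is controlled by $\|W_-\|_\infty \|\eta\|_1 \|\varrho + \varrho^\dagger\|_1$, while the positive part is handled via Young's inequality. Since $\|\eta\|_1 \leq 2(M - Ba)$, the interaction change is bounded in absolute value by a constant $C_W$ (depending only on $W$) times the excess mass $M - Ba$.

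Combining the two estimates and taking $B_0$ sufficiently large, the logarithmic entropy gain dominates the linear-in-excess-mass interaction cost, yielding $\crF_\kappa(\varrho^\dagger) < \crF_\kappa(\varrho)$. The main obstacle lies in the quantitative balance: a mere tangent-line bound on the entropy is not sufficient, so one must use both the strict convexity of $x \log x$ and the concentration of $\varrho$ on $A$ (through the Jensen lower bound) to extract a gain of the right order. In the borderline regime where $M - Ba$ is small but $\|\varrho\|_\infty$ is only slightly above $B_0$, an iterative truncation or a more delicate choice of the threshold $B$ depending on $\|\varrho\|_\infty$ should be used to ensure strict improvement.
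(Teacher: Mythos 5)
Your truncate-and-redistribute competitor $\varrho^\dagger=\min(\varrho,B)+\mu$ is a genuinely different construction from the paper's (which, following Chayes--Panferov, splits according to the mass $\varepsilon_B$ of $\set{\varrho\geq B}$: for $\varepsilon_B\geq\tfrac12$ it takes $\varrho^\dagger=\varrho_\infty$, and for $\varepsilon_B<\tfrac12$ it deletes the mass above level $B$ and renormalises multiplicatively, using the reduction to the case $\crF_\kappa(\varrho)\leq\crF_\kappa(\varrho_\infty)$ so that the renormalisation only costs $O(\varepsilon_B)$ and only $\norm{W_-}_\infty$ enters). Your route can be made to work, but as written it has a genuine gap precisely at the quantitative entropy step. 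The combination you propose, Jensen on $A$ giving $\int_A\varrho\log\varrho\geq M\log B$ together with $S(\varrho^\dagger)\leq\log\norm{\varrho^\dagger}_\infty\leq\log B_0$, cannot produce a gain proportional to the excess mass: $\log B_0$ is a fixed constant while $M$ may be arbitrarily small, so for a thin spike barely above $B_0$ your lower bound on $S(\varrho)-S(\varrho^\dagger)$ is negative, whereas the interaction cost you must beat is of order the excess mass $m:=M-Ba$; likewise a gain ``of order $M\log(\norm{\varrho}_\infty/B_0)$'' degenerates when $\norm{\varrho}_\infty$ is only slightly above $B_0$. You flag this borderline regime yourself and defer it to ``iterative truncation or a more delicate choice of $B$'', but that is exactly the step the proof needs. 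A second, smaller, issue is the interaction bound: Young's inequality does not control the $W_+$ part, since neither $W_+$ nor $\varrho+\varrho^\dagger$ is uniformly bounded under~\eqref{ass:B}; what saves you is that only a one-sided bound is required, the cross terms with the removed excess $(\varrho-B)_+$ are handled using only $W\geq-\norm{W_-}_\infty$, and the terms involving the constant increment $\mu$ only need $W\in\Leb^1$, so that $\cE(\varrho^\dagger,\varrho^\dagger)-\cE(\varrho,\varrho)\leq C_W\,m$ (an absolute-value bound of this form is neither available nor needed).

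The correct bookkeeping for your competitor is per unit of excess mass rather than global. On $A$, convexity of $s\mapsto s\log s$ (tangent line at $s=B+\mu$) gives a gain of at least $(1+\log(B+\mu))\int_A(\varrho-B-\mu)=(1+\log(B+\mu))(m-\mu a)$, and $\mu a\leq m/(BL^d)$ since $a=\abs{A}\leq 1/B$. On $A^c$ the loss is at most $\mu\int_{A^c}\bra*{1+\log(\varrho+\mu)}\dx{x}$, and here concavity of the logarithm (Jensen over $A^c$, using $\int_{A^c}(\varrho+\mu)\dx{x}\leq 2$ and $\abs{A^c}\geq L^d-1/B$) bounds it by $m\bra*{1+\log\bra*{2/\abs{A^c}}}\leq C_L\,m$; the point is that the redistributed mass lands at an \emph{average} density of order $1/L^d$, not at density $B$. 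Hence $S(\varrho)-S(\varrho^\dagger)\geq m\bra*{\log B-C_L'}$, which dominates $\tfrac{\beta\kappa}{2}C_W m$ once $B_0$ is chosen large (depending only on $\beta,\kappa,L,\norm{W_-}_\infty,\norm{W}_1$), and $m>0$ whenever $\norm{\varrho}_\infty>B_0>B$, giving strictness. With these two corrections your argument closes, and it is arguably more elementary than the paper's in that it needs no case distinction and no a priori comparison with $\crF_\kappa(\varrho_\infty)$; the paper's remove-and-renormalise argument, on the other hand, avoids all pointwise entropy bookkeeping at the price of the two-case structure.
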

The next lemma shows that minimisers of $\crF_\kappa(\varrho)$ over $\Prac(U)$ are attained in $\Prac^+(U)$.
\begin{lem}\label{lem:pos}
Assume $W(x)$ satisfies Assumption~\eqref{ass:B} and let $\varrho \in \Prac(U) \setminus \Prac^+(U)$. Then, there exists $\varrho^+ \in \Prac^+(U)$
such that,
\begin{align}
\crF_\kappa(\varrho^+) < \crF_\kappa(\varrho) \, .
\end{align}
\end{lem}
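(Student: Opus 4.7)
The plan is to take $\varrho^+$ to be a small convex combination of $\varrho$ with the uniform measure, namely
\[
  \varrho^+_\epsilon := (1-\epsilon)\varrho + \frac{\epsilon}{L^d}, \qquad \epsilon \in (0,1).
\]
This automatically belongs to $\Prac^+(U)$, so the task reduces to showing that $\crF_\kappa(\varrho^+_\epsilon) < \crF_\kappa(\varrho)$ for all sufficiently small $\epsilon > 0$.

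First I dispose of the case $\crF_\kappa(\varrho) = +\infty$ by choosing $\varrho^+ \equiv 1/L^d$; Assumption~\eqref{ass:B} ensures $W \in \Leb^1(U)$, so the entropy $S(\varrho^+) = -d\log L$ and interaction energy $\cE(\varrho^+,\varrho^+) = L^{-d}\intom{W}$ are both finite. In the generic case where $\crF_\kappa(\varrho) < \infty$ the argument hinges on an asymptotic expansion of $\crF_\kappa(\varrho^+_\epsilon) - \crF_\kappa(\varrho)$ as $\epsilon \downarrow 0$. The interaction energy piece is a quadratic polynomial in $\epsilon$: exploiting the fact that $\cE(\varrho, 1/L^d) = \cE(1/L^d, 1/L^d) = L^{-d}\intom{W} =: c$, a direct computation yields
\[
  \cE(\varrho^+_\epsilon, \varrho^+_\epsilon) - \cE(\varrho, \varrho) = (\epsilon^2 - 2\epsilon)\bigl(\cE(\varrho, \varrho) - c\bigr) = O(\epsilon).
\]

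The decisive contribution comes from the entropy, which picks up a singular logarithmic term supported on $A := \set*{\varrho = 0}$. By the hypothesis $\varrho \notin \Prac^+(U)$, this set has strictly positive Lebesgue measure. On $A$ one has $\varrho^+_\epsilon \log \varrho^+_\epsilon = (\epsilon/L^d)\log(\epsilon/L^d)$, while on $A^c$ the pointwise convexity estimate $\varrho^+_\epsilon \log \varrho^+_\epsilon \leq (1-\epsilon)\varrho \log \varrho + \epsilon L^{-d}\log L^{-d}$, combined with the finiteness of $S(\varrho)$, controls the change in entropy on $A^c$ by $O(\epsilon)$. Putting the two contributions together gives
\[
  S(\varrho^+_\epsilon) - S(\varrho) \leq \frac{\abs{A}\,\epsilon}{L^d}\log\epsilon + O(\epsilon).
\]
Since $\abs{A} > 0$, the negative $\epsilon \log \epsilon$ term strictly dominates every $O(\epsilon)$ contribution as $\epsilon \downarrow 0$, yielding $\crF_\kappa(\varrho^+_\epsilon) < \crF_\kappa(\varrho)$ for all $\epsilon$ small enough. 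Setting $\varrho^+ := \varrho^+_\epsilon$ for any such $\epsilon$ then finishes the argument. The only mildly delicate step is the elementary $O(\epsilon)$ bound on $A^c$; no compactness or variational input is needed.
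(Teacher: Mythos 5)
Your proof is correct and follows essentially the same strategy as the paper: perturb $\varrho$ by injecting mass of order $\epsilon$ onto the zero set $\set{\varrho=0}$ (which has positive measure) and let the resulting $\epsilon\log\epsilon$ entropy term dominate the $O(\epsilon)$ changes in the interaction energy and the remaining entropy, after disposing of the trivial case $\crF_\kappa(\varrho)=+\infty$. The only difference is cosmetic: the paper takes the competitor $\frac{1}{1+\epsilon|\B_0|}\bra*{\varrho+\epsilon\chi_{\B_0}}$ and estimates the cross terms via $\norm*{W}_1$ and $\norm*{W_-}_\infty$, whereas you interpolate with the uniform state, which turns the interaction-energy bookkeeping into an exact identity and uses pointwise convexity of $t\log t$ for the entropy on the complement.
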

\begin{proof}
Let $\B_0:=\{ x \in U:\varrho(x)=0\}$, then from assumption $\varrho \notin \Prac^+(U)$, it follows $|\B_0|>0$. We define the competitor state
\begin{align}
\varrho_\epsilon(x)= \frac{1}{1 + \epsilon |\B_0|}\bra*{\varrho(x) + \epsilon \chi_{\B_0}(x)} \in \Prac^+(U) 
\end{align}
and show that for $\epsilon>0$ sufficiently small $\varrho_\epsilon$ has smaller free energy. We first compute its entropy
\begin{align}
S(\varrho_\epsilon)  
&= \frac{1}{1 + \epsilon |\B_0|} \intom{\bra{\varrho + \epsilon \chi_{\B_0}} \log \bra{\varrho + \epsilon \chi_{\B_0}}} - 
\log(1 +\epsilon |\B_0|)  \\
&<  S(\varrho) -\frac{\epsilon |\B_0|}{1 +\epsilon |\B_0|} S(\varrho) + \frac{\epsilon |\B_0| \log \epsilon }{1 +\epsilon |\B_0|} <  S(\varrho) -\frac{\epsilon |\B_0|}{1 +\epsilon |\B_0|}\bra*{ S(\varrho_\infty) -  \log \epsilon } \, ,
\end{align}
where we have used the fact that $S(\varrho)> S(\varrho_\infty), \forall \varrho \in \Prac(U), \varrho \neq \varrho_\infty$.
For computing the interaction term, we use the fact that $\cE(\varrho,\varrho)> - \norm*{W_-}_\infty$ to estimate
\begin{align}
\frac{\kappa}{2} \cE(\varrho_\epsilon,\varrho_\epsilon) &=\frac{\kappa}{2}\iintom{W(x-y) \varrho_\epsilon(x)\varrho_\epsilon(y) } \\
& < \frac{\kappa}{2}\cE(\varrho,\varrho)  +  \frac{\kappa}{2}\bra*{\frac{1}{(1 + \epsilon |\B_0|)^2}-1} \cE(\varrho,\varrho)  + \kappa \norm*{W}_1 \frac{\epsilon}{(1 + \epsilon |\B_0|)^2} + \frac{\kappa}{2}\norm*{W}_1|\B_0| \frac{\epsilon^2}{(1 + \epsilon |\B_0|)^2} \\
& \leq \frac{\kappa}{2}\cE(\varrho,\varrho)  +\frac{\kappa}{2}\bra*{\frac{\epsilon |\B_0|(2 + \epsilon |\B_0|)}{(1 + \epsilon |\B_0|)^2}} \norm*{W_-}_\infty + \frac{\epsilon |\B_0|}{1 +\epsilon |\B_0|} C_1\\
&< \frac{\kappa}{2}\cE(\varrho,\varrho)  + \frac{\epsilon |\B_0|}{1 +\epsilon |\B_0|} (C_1 +  C_2)
\, ,
\end{align}
where $C_1,C_2<\infty$ depend on $W$ and $\B_0$ and we have chosen $\epsilon$ sufficiently small. Combining the two expressions together we obtain,
\begin{align}
\crF_\kappa(\varrho_\epsilon) < \crF_\kappa(\varrho) +\frac{\epsilon |\B_0| }{1 + \epsilon |\B_0|}\bra*{\beta^{-1} \log \epsilon -\beta^{-1} S(\varrho_\infty) + (C_1+C_2) \epsilon}  \, .
\end{align} 
Thus for $\epsilon$ sufficiently small but positive the logarithmic term will dominate and give us the required result.
\end{proof}
\begin{thm}[Existence of a minimiser~\cite{chayes2010mckean}]\label{thm:dirmet}
  Assume $W(x)$ satisfies Assumption~\eqref{ass:B}. For $\kappa \in (0,\infty)$ the free energy $\crF_{\kappa}(\varrho)$ has a smooth minimiser $\varrho_{\kappa} \in C^\infty(U) \cap \Prac^+(U)$.
  \label{thm:1}
\end{thm}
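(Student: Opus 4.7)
The plan is to apply the direct method of the calculus of variations, combined with \cref{lem:1} to get compactness of a minimising sequence and \cref{lem:pos} together with the regularity theory from \cref{thm:wellpsPDE}\ref{thm:re} to upgrade the minimiser.

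\textbf{Step 1: Boundedness below.} First I would verify that $\crF_\kappa$ is bounded below on $\Prac(U)$. By Jensen's inequality applied to $x \mapsto x \log x$ one has $S(\varrho) \geq \log(1/L^d)$, and Assumption~\eqref{ass:B} gives $\cE(\varrho,\varrho) \geq -\norm{W_-}_\infty$. Hence $\inf_{\varrho \in \Prac(U)} \crF_\kappa(\varrho) > -\infty$, and we may take a minimising sequence $(\varrho_n)_{n \in \N} \subset \Prac(U)$.

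\textbf{Step 2: Uniform $\Leb^\infty$-bound via \cref{lem:1}.} Applying \cref{lem:1} to each $\varrho_n$, I would replace $\varrho_n$ with $\varrho_n^\dagger \in \Prac(U)$ with $\norm{\varrho_n^\dagger}_\infty \leq B_0$ and $\crF_\kappa(\varrho_n^\dagger) \leq \crF_\kappa(\varrho_n)$. So we may assume without loss of generality that the minimising sequence is uniformly bounded in $\Leb^\infty(U)$, hence also in $\Leb^2(U)$.

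\textbf{Step 3: Extraction and passage to the limit.} By Banach--Alaoglu, up to a subsequence $\varrho_n \weak \varrho_\kappa$ in $\Leb^2(U)$ for some $\varrho_\kappa \in \Leb^2(U)$ with $\norm{\varrho_\kappa}_\infty \leq B_0$ (the $\Leb^\infty$-bound passes to the weak-$*$ limit). The constraint $\varrho_n \geq 0$ and $\intom{\varrho_n}=1$ is preserved in the limit, so $\varrho_\kappa \in \Prac(U)$. For the entropy, I would use the fact that $S(\varrho)=\beta^{-1}\intom{\varrho \log \varrho}$ is convex and strongly continuous on $\{\norm{\varrho}_\infty \leq B_0\}$, hence weakly lower semicontinuous in $\Leb^2(U)$. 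For the interaction energy, since $W \in \Leb^1(U)$ and $\varrho_n$ is uniformly bounded in $\Leb^\infty$, the map $\varrho \mapsto W \star \varrho$ is continuous under weak $\Leb^2$-convergence (Young's inequality bounds it in $\Leb^2$, and a standard approximation argument using smooth $W$'s and the uniform $\Leb^\infty$-bound yields continuity of $\cE(\varrho,\varrho)= \skp{\varrho, W\star \varrho}$). Together this gives $\crF_\kappa(\varrho_\kappa) \leq \liminf_{n\to\infty} \crF_\kappa(\varrho_n) = \inf \crF_\kappa$, so $\varrho_\kappa$ is a minimiser.

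\textbf{Step 4: Strict positivity and smoothness.} At this point $\varrho_\kappa \in \Prac(U)$ with $\norm{\varrho_\kappa}_\infty \leq B_0$, but it might vanish on a set of positive measure. However, \cref{lem:pos} produces a strictly lower free energy competitor whenever $\varrho_\kappa \notin \Prac^+(U)$, contradicting minimality. Hence $\varrho_\kappa \in \Prac^+(U)$. Since $\varrho_\kappa$ is then a critical point of $\crF_\kappa$ on $\Prac^+(U)$, \cref{prop:tfae} gives that $\varrho_\kappa$ is a fixed point of the map $\cT$ from~\eqref{def:T}. The explicit formula $\varrho_\kappa = \cT\varrho_\kappa$ together with $W \in \SobH^1(U)$ gives $\varrho_\kappa \in \SobH^1(U) \cap \Prac(U)$, and the bootstrapping argument in the proof of \cref{thm:wellpsPDE}\ref{thm:re} then upgrades $\varrho_\kappa$ to $C^\infty(U) \cap \Prac^+(U)$.

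The main obstacle I anticipate is Step 3: one must argue carefully that the interaction energy $\cE(\varrho_n,\varrho_n)$ is continuous under the chosen weak convergence, rather than merely lower semicontinuous, since $W$ is not assumed positive or even continuous. The uniform $\Leb^\infty$-bound from Step 2 is what makes this work, as it allows one to handle the (possibly sign-indefinite) bilinear form $\skp{\varrho, W\star \varrho}$ via Young's inequality and truncation/approximation of $W$ by $\Leb^2$ functions.
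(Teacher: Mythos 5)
Your proposal is correct and follows essentially the same route as the paper: the direct method with \cref{lem:1} providing a uniformly bounded minimising sequence, weak $\Leb^2$ compactness, lower semicontinuity of the entropy and weak continuity of the interaction energy, then \cref{lem:pos} for strict positivity and \cref{prop:tfae} together with \cref{thm:wellpsPDE} for smoothness. The only difference is that where you sketch the weak continuity of $\cE$ by hand (via the uniform $\Leb^\infty$ bound and approximation of $W$), the paper simply cites the corresponding result of Chayes and Panferov.
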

\begin{proof}
We start by noticing that we can control the entropy and interaction energy from below:
\begin{equation}
  S(\varrho) \geq \log \varrho_\infty \qquad\text{and}\qquad \mathcal{E}(\varrho,\varrho) \geq -\norm*{W_-}_\infty \,, \label{eq:elb}
\end{equation}
where the bound on the entropy follows from Jensen's inequality and the bound on the interaction energy follows from Assumption~\eqref{ass:B}.
Since by~\eqref{eq:elb}, $\crF_{\kappa}(\varrho)$ is bounded from below over $\Prac(U)$, there exists a minimising sequence $\{\varrho_j\}_{j=1}^\infty \subset  \Prac(U)$. Furthermore, by~\cref{lem:1}, the minimising sequence can be chosen such that $\{\varrho_j\}_{j=1}^\infty \subset  \Leb^2(U)$ with $\norm*{\varrho_j}_2 \leq B_0^{\frac{1}{2}}$, where $B_0$ 
is the constant from~\cref{lem:1}. Thus, there exists a subsequence which we continue to denote by $\{\varrho_j\}_{j=1}^\infty$ 
such that $\varrho_j \weak \varrho_{\kappa}$ weakly in $\Leb^2(U)$.
Clearly we have that $\intom{\varrho_\kappa}=1$. It is also easy to see that $\varrho_\kappa \geq 0$, a.e. Thus $\varrho_\kappa \in \Leb^2(U) \cap \Prac(U)$. The lower semicontinuity of $S(\varrho)$ follows from standard results (cf. ~\cite{jost1998calculus}, Lemma 4.3.1).  
Consider now the interaction energy term. For $W \in \Leb^1(U)$, the interaction energy is weakly continuous in $\Leb^2(U)$~\cite[Theorem 2.2, Equation (9)]{chayes2010mckean}. This implies that the free energy $\crF_\kappa(\varrho)$ has a minimiser $\varrho_\kappa$ over $\Prac(U)$. A direct consequence of this and~\cref{lem:pos} is that the minimisation problem is well-posed in $\Prac^+(U)$ since the minimiser $\varrho_\kappa$  must be attained in $\Prac^+(U)$. We can then use~\cref{thm:wellpsPDE} together with~\cref{prop:tfae} to argue that any such minimiser must be smooth.
\end{proof}
\begin{prop} \label{prop:con}
Assume $W $ satisfies Assumption~\eqref{ass:B} such that $W_\unstable$ is bounded from below, where $W_\unstable$ is the unstable part defined in~\cref{def:Hstable}. Then, for $\kappa \in \bra*{ 0,\kappa_{\mathrm{con}}}$, where $\kappa_{\mathrm{con}}:=\beta^{-1} \norm*{W_{\unstable-}}_\infty^{-1}$, the functional $\crF_\kappa(\varrho)$ is strictly convex on $\Prac(U)$, that is for all $s\in(0,1)$ holds
\begin{align}
\crF_{\kappa}\bra[\big]{(1-s) \varrho_1 + s \varrho_2} < (1-s) \crF_{\kappa}(\varrho_1) +s \crF_{\kappa}(\varrho_2)  \qquad \forall \varrho_1,\varrho_2 \in \Prac(U) \ . 
\end{align}
\end{prop}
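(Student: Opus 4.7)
The plan is to bound the chord defect
\[
D(s) := (1-s) \crF_\kappa(\varrho_1) + s \crF_\kappa(\varrho_2) - \crF_\kappa(\varrho_s)
\]
from below by a strictly positive quantity whenever $\varrho_1 \neq \varrho_2$ and $s \in (0,1)$. Without loss of generality I may assume $S(\varrho_1), S(\varrho_2) < \infty$, for otherwise the right-hand side equals $+\infty$ and the inequality is immediate. Setting $\eta = \varrho_1 - \varrho_2$, one has $\intom{\eta} = 0$, and by the bilinearity of $\cE$,
\[
D(s) = \beta^{-1} \bra*{(1-s) S(\varrho_1) + s S(\varrho_2) - S(\varrho_s)} + \tfrac{\kappa}{2}\, s(1-s)\, \cE(\eta,\eta).
\]
The entropy chord rewrites exactly as $(1-s)\cH(\varrho_1|\varrho_s) + s\cH(\varrho_2|\varrho_s)$. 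Since $\varrho_1 - \varrho_s = s \eta$ and $\varrho_2 - \varrho_s = -(1-s)\eta$, applying Pinsker's inequality $\cH(\mu|\nu) \geq \tfrac{1}{2}\norm{\mu-\nu}_{\Leb^1(U)}^2$ to each summand yields
\[
(1-s) S(\varrho_1) + s S(\varrho_2) - S(\varrho_s) \geq \tfrac{s(1-s)}{2}\, \norm{\eta}_{\Leb^1(U)}^2.
\]

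For the interaction term I split $W = W_\stable + W_\unstable$ as in \cref{def:Hstable}. By $H$-stability $\cE_\stable(\eta,\eta) \geq 0$, so $\cE(\eta,\eta) \geq \cE_\unstable(\eta,\eta)$, and the crucial estimate is
\[
\cE_\unstable(\eta,\eta) \geq -\norm*{W_{\unstable-}}_\infty\, \norm{\eta}_{\Leb^1(U)}^2.
\]
To prove it, set $M = \norm*{W_{\unstable-}}_\infty$, so that by the boundedness-from-below hypothesis the shifted kernel $K(x,y) := W_\unstable(x-y) + M$ is pointwise non-negative. The vanishing-mean condition $\intom{\eta} = 0$ gives $\iintom{K(x,y)\eta(x)\eta(y)} = \cE_\unstable(\eta,\eta)$. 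Decomposing $\eta = \eta_+ - \eta_-$ and using $K \geq 0$,
\[
\abs*{\cE_\unstable(\eta,\eta)} \leq \iintom{K(x,y)\, |\eta(x)|\, |\eta(y)|} = \cE_\unstable(|\eta|,|\eta|) + M\, \norm{\eta}_{\Leb^1(U)}^2 \leq M\, \norm{\eta}_{\Leb^1(U)}^2,
\]
where the final step uses $\cE_\unstable(f,f) \leq 0$ for every $f \in \Leb^2(U)$, a consequence of all Fourier coefficients of $W_\unstable$ being non-positive by construction.

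Combining both bounds,
\[
D(s) \geq \tfrac{s(1-s)}{2}\, \bra*{\beta^{-1} - \kappa\, \norm*{W_{\unstable-}}_\infty}\, \norm{\eta}_{\Leb^1(U)}^2,
\]
which is strictly positive for $\kappa \in (0,\kappa_{\mathrm{con}})$ whenever $\varrho_1 \neq \varrho_2$, and strict convexity follows. The main obstacle is the lower bound on $\cE_\unstable(\eta,\eta)$: since $W_\unstable$ need not be bounded from above, a direct $\Leb^1$--$\Leb^\infty$ Young estimate is unavailable. The shift-by-$M$ trick combined with the sign definiteness of $\cE_\unstable$ is precisely what turns the one-sided $\Leb^\infty$ bound on $W_{\unstable-}$ into a quadratic control by $\norm{\eta}_{\Leb^1(U)}^2$, and the balance between this constant and the entropic quadratic gain from Pinsker dictates the threshold $\kappa_{\mathrm{con}} = \beta^{-1} \norm*{W_{\unstable-}}_\infty^{-1}$.
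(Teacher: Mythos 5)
Your proof is correct, but it follows a genuinely different route from the paper's. The paper works with the convex interpolation $\varrho_s$ and bounds the second derivative: $\tfrac{\dx{}^2}{\dx{s^2}}\crF_\kappa(\varrho_s)=\beta^{-1}\intom{\eta^2/\varrho_s}+\kappa\intom{(W\star\eta)\eta}$, controls the entropy term from below by $\beta^{-1}\bra*{\intom{\abs{\eta}}}^2$ via Jensen's inequality with weight $\varrho_s$, discards $\cE_\stable(\eta,\eta)\ge 0$ by $H$-stability, and then invokes the lower bound $W_\unstable\ge -\norm*{W_{\unstable-}}_\infty$ to reach $(\beta^{-1}-\kappa\norm*{W_{\unstable-}}_\infty)\bra*{\intom{\abs{\eta}}}^2$. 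You instead estimate the chord defect directly: the exact identity expressing the entropy chord as $(1-s)\cH(\varrho_1|\varrho_s)+s\cH(\varrho_2|\varrho_s)$ plus the CKP/Pinsker inequality reproduces exactly the same quadratic gain $\tfrac{s(1-s)}{2}\beta^{-1}\norm{\eta}_1^2$ that the paper obtains by integrating its Hessian bound twice, so the threshold $\kappa_{\mathrm{con}}$ comes out identical; your approach has the mild advantage of never differentiating under the integral sign or requiring $\varrho_s>0$ pointwise. Your treatment of the unstable part is actually more careful than the paper's: the paper's phrase ``bound $W_\unstable$ from below'' cannot be applied pointwise inside the double integral because $\eta(x)\eta(y)$ changes sign, and your shift $K=W_\unstable+\norm*{W_{\unstable-}}_\infty\ge 0$ combined with the mean-zero property of $\eta$ and the sign-definiteness $\cE_\unstable(\abs{\eta},\abs{\eta})\le 0$ is precisely the rigorous fix. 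Two small caveats, neither fatal and both shared with the paper's own argument: the ``WLOG $S(\varrho_i)<\infty$'' step is only immediate when the left-hand side is finite (when both sides are $+\infty$ the strict inequality must simply be read as a statement on densities of finite free energy, as the paper implicitly does by working on $\Prac^+(U)$), and the Fourier-based sign claims for $\cE_\stable$ and $\cE_\unstable$ are stated in the paper for $\Leb^2$ arguments, so applying them to the merely integrable $\eta$, $\abs{\eta}$ strictly requires an approximation remark.
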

\begin{proof}
For $\varrho_1,\varrho_2 \in \Prac^+(U)$, let $\varrho(s)=(1-s) \varrho_1 + s \varrho_2, s \in (0,1)$ and $\eta=\varrho_2-\varrho_1$. Then we have
\begin{align}
\frac{\dx{}^2 }{\dx{s^2}} \crF_\kappa(\varrho_s)&=\beta^{-1}\intom{\frac{\eta^2}{\varrho_s}} + \kappa \intom{(W \star \eta) \eta} =\beta^{-1}\intom{\frac{\eta^2}{\varrho_s^2}\varrho_s} + \kappa \intom{((W_\stable +W_\unstable)\star \eta) \eta}\,.
\end{align}
Now, we apply Jensen's inequality and use the fact that $W_\stable\in \HH_\stable$ which gives us 
\begin{align}
\frac{\dx{}^2 }{\dx{s^2}} \crF_\kappa(\varrho_s)& \geq \beta^{-1}\left(\intom{|\eta|}\right)^2 + \kappa \intom{(W_\unstable \star \eta) \eta}\,.
\end{align}
Finally we bound $W_\unstable(x)$ from below to obtain,
\begin{align}
\frac{\dx{}^2 }{\dx{s^2}} \crF_\kappa(\varrho_s)& \geq\left(\beta^{-1}- \kappa \norm*{W_{\unstable-}}_\infty\right) \left(\intom{|\eta|}\right)^2 \, ,
\end{align}
showing the desired statement.
\end{proof}
\begin{rem}
It follows from the above result that if $W_\unstable\equiv 0$, i.e., $W\in \HH_\stable$, then $\crF_\kappa(\varrho)$ is strictly convex for all $\kappa \in (0,\infty)$. 
\end{rem}
\section{Global asymptotic stability}\label{S:gas}
\subsection{Trend to equilibrium in relative entropy}

In this section, we will use the free energy as defined in~\eqref{eq:FreeEnergy} to study the global asymptotic stability of the uniform state for the system~\eqref{eq:PDE}. By introducing the relative entropy
\begin{align}\label{e:def:RelEnt}
  \mathcal{H}(\varrho |\varrho_{\infty}) =  \intom{\varrho \log\left(\frac{\varrho}{\varrho_\infty}\right)}  \ ,
\end{align}
we observe the following identity between the free energy gap and the relative entropy
\begin{align}\label{FreeEnergyExcess}
  \crF_\kappa(\varrho)-\crF_\kappa(\varrho_{\infty}) = \beta^{-1}\mathcal{H}(\varrho | \varrho_\infty) + \frac{\kappa}{2}\mathcal{E}(\varrho-\varrho_\infty,\varrho-\varrho_\infty) \ . 
\end{align}
By directly differentiating the relative entropy~\eqref{e:def:RelEnt}, we obtain the rate of change of the relative entropy
\begin{align}
    \frac{\dx{}\mathcal{H}(\varrho | \varrho_\infty)}{\dx{t}}= - \beta^{-1}\intom{|\nabla\log \varrho|^2 \varrho} -\kappa \intom{\nabla( W \star \varrho) \cdot \nabla \varrho} \, .  \label{eq:relen}
\end{align}

\begin{prop}[Exponential stability and convergence in relative entropy]\label{prop:review}
  Let $\varrho_0 \in \Prac(U)\cap \SobH^{3+d}(U)$ with
$S(\varrho_0) <\infty$ and  $W \in \SobW^{2,\infty}(U)$. Then the classical solution $\varrho$ of \eqref{eq:PDE} is exponentially stable in relative entropy and it holds
\begin{equation}
  \cH(\varrho(\cdot,t) | \varrho_\infty) \leq \exp\pra*{\bra*{-\frac{4 \pi^2}{\beta L^2} +2 \kappa \norm*{\Delta W_\unstable}_{\infty}} t } \ \cH(\varrho_0 | \varrho_\infty) \, . 
\end{equation}
Especially, in the cases $W \in \HH_\stable$ for any $\beta,\kappa >0$ and if $W\notin \HH_\stable$ for $\beta \kappa < \frac{2\pi^2}{L^2 \norm*{\Delta W_\unstable}_{\infty}}$ it holds that we have exponentially fast convergence to the uniform state in relative entropy for any initial condition $\varrho_0 \in \Prac(U)\cap \SobH^{3+d}(U)$.
\end{prop}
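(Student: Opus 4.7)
The plan is to differentiate the relative entropy along the flow, control the entropy production by combining an integration-by-parts identity with the $H$-stable/unstable decomposition of $W$, and then close the estimate with Grönwall's inequality. Starting from the identity~\eqref{eq:relen} already displayed, the right-hand side splits into the Fisher information $-\beta^{-1}\intom{|\nabla \log \varrho|^2 \varrho}$ and the interaction term $-\kappa \intom{\nabla(W\star\varrho)\cdot \nabla\varrho}$. The Fisher information will be absorbed using the logarithmic Sobolev inequality on the torus, which yields $\intom{|\nabla \log \varrho|^2 \varrho} \geq (4\pi^2/L^2)\,\cH(\varrho|\varrho_\infty)$.

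For the interaction term, the classical regularity from~\cref{thm:wellp} justifies one integration by parts, after which I would use $\int \Delta W \dx{x} = 0$ (periodicity of $W$) to insert $\varrho - \varrho_\infty$ in both slots and obtain
\[
-\kappa \intom{\nabla(W\star\varrho)\cdot \nabla\varrho} = \kappa \iintom{\Delta W(x-y)(\varrho-\varrho_\infty)(x)(\varrho-\varrho_\infty)(y)}.
\]
Now decompose $W = W_\stable + W_\unstable$ as in~\cref{def:Hstable}. The Fourier identity~\eqref{Fourier:Interaction} applied to $\Delta W_\stable$, whose Fourier coefficients equal $-(2\pi|k|/L)^2 \tilde{W}_\stable(k) \leq 0$, shows that the stable contribution is nonpositive and may be discarded. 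For the unstable part, the pointwise bound $\norm{\Delta W_\unstable}_\infty$ combined with Fubini's theorem gives
\[
\left|\iintom{\Delta W_\unstable(x-y)(\varrho-\varrho_\infty)(x)(\varrho-\varrho_\infty)(y)}\right| \leq \norm{\Delta W_\unstable}_\infty \norm{\varrho-\varrho_\infty}_1^2,
\]
and the Csiszár--Kullback--Pinsker inequality $\norm{\varrho-\varrho_\infty}_1^2 \leq 2\,\cH(\varrho|\varrho_\infty)$ converts this $\Leb^1$ deviation into a bound by the relative entropy itself.

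Assembling the pieces yields the differential inequality
\[
\frac{\dx{}}{\dx{t}}\cH(\varrho|\varrho_\infty) \leq \left(-\frac{4\pi^2}{\beta L^2} + 2\kappa \norm{\Delta W_\unstable}_\infty\right)\cH(\varrho|\varrho_\infty),
\]
and Grönwall's lemma then delivers the stated exponential bound. The two special cases follow at once: when $W \in \HH_\stable$ the unstable part vanishes and the rate $-4\pi^2/(\beta L^2)$ is negative for all $\kappa,\beta > 0$, whereas when $W \notin \HH_\stable$ the hypothesis $\beta\kappa < 2\pi^2/(L^2 \norm{\Delta W_\unstable}_\infty)$ is precisely what forces the exponent to be strictly negative.

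The main conceptual step — and essentially the only non-routine one — is the $H$-stable/unstable splitting of the interaction term. Without it, one would be forced to bound the dissipation by $\norm{\Delta W}_\infty$, and the stable modes of $W$ would needlessly degrade the convergence rate. Recognising that after integration by parts the stable part contributes with the \emph{favourable} sign (because $\tilde{W}_\stable(k) \geq 0$ multiplies the nonpositive Laplacian symbol) is what makes the threshold depend only on $\norm{\Delta W_\unstable}_\infty$ and, in particular, yields unconditional convergence for all $H$-stable potentials.
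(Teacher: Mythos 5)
Your proposal is correct and follows essentially the same route as the paper's proof: differentiate the relative entropy, integrate by parts, control the Fisher information by the logarithmic Sobolev inequality, discard the stable part of the interaction via the Fourier representation, bound the unstable part by $\norm{\Delta W_\unstable}_\infty \norm{\varrho-\varrho_\infty}_1^2$ together with the Csisz\'ar--Kullback--Pinsker inequality, and conclude with Gr\"onwall. The only (cosmetic) difference is the order of operations: you centre both slots to $\varrho-\varrho_\infty$ first using that $\Delta W$ has mean zero and then split off $W_\stable$, whereas the paper discards the stable modes in Fourier first and only then uses the mean-zero property of $\Delta W_\unstable$ to centre.
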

\begin{proof}[Proof of~\cref{thm:m1}\ref{thm:m1b}]
  We know the solution $\varrho$ is classical, thus $\mathcal{H}(\varrho(\cdot,t) | \varrho_\infty) \in C^1(0,\infty)$. Using~\eqref{eq:relen}, we obtain with another integration by parts
   \begin{align}
     \frac{\dx{}}{\dx{t}} \mathcal{H}(\varrho|\varrho_\infty) = - \beta^{-1}\intom{|\nabla\log \varrho|^2 \varrho} + \kappa \intom{\Delta W \star \varrho\  \varrho},
     \quad \forall t \in (0,\infty) \, . 
     \label{eq:reldec}
   \end{align}
   The first term is the Fisher information and can be controlled by a log-Sobolev inequality of the form
\begin{align}
  \mathcal{H}(\varrho | \varrho_\infty ) \leq \frac{L^2}{4 \pi^2}\intom{|\nabla \log \varrho|^2 \varrho } \ ,
  \label{eq:LSI} 
\end{align}
   Now, we rewrite the interaction term in its Fourier series by~\eqref{Fourier:Interaction}, estimate it in terms of the unstable modes and transform it back to position space
   \begin{align}
     \intom{\Delta W \star \varrho\  \varrho} &= -\frac{ 4\pi^2}{L^2} \sum_{k \in \N^d} \abs{k}^2 \tilde{W}(k)\frac{1}{N_k}\sum_{\sigma \in \Sym(\Lambda)}|\tilde{g}(\sigma(k))|^2 \\
     &\leq - \frac{ 4\pi^2}{L^2} \sum_{k \in \N^d} \abs{k}^2 \tilde{W}_\unstable(k)\frac{1}{N_k}\sum_{\sigma \in \Sym(\Lambda)}|\tilde{g}(\sigma(k))|^2 \\ &= \int \Delta W_\unstable \star \varrho \ \varrho \dx{x} \, .
   \end{align}
   Now, we use the fact that $\Delta W_\unstable$ has mean zero to replace $\varrho$ by $\varrho- \varrho_\infty$ and estimate
   \begin{align}
     \int \Delta W_\unstable \star \varrho \ \varrho \dx{x} &\leq \norm*{\Delta W_\unstable \star (\varrho -\varrho_\infty)}_\infty \norm*{ \varrho - \varrho_\infty}_1 \leq \norm*{\Delta W_\unstable}_\infty \norm*{ \varrho - \varrho_\infty}_1^2 \,.
   \end{align}
   The above term can be controlled using the CKP inequality in the following
   way
   \begin{align}
\norm*{\varrho - \varrho_\infty}_{1} \leq \sqrt{2 \mathcal{H}(\varrho|\varrho_\infty)} \label{eq:CKP} \, . 
\end{align}
   In combination with~\eqref{eq:LSI} and~\eqref{eq:CKP}, we obtain the bound
   \[
     \frac{\dx{}}{\dx{t}} \mathcal{H}(\varrho|\varrho_\infty) \leq \bra*{- \frac{4 \pi^2}{\beta L^2} + 2\kappa\norm*{\Delta W_\unstable}_\infty} \cH(\varrho | \varrho_\infty) \, . 
   \]
   Finally, by Gronwall's inequality, we have the desired result.
\end{proof}
\begin{rem}
  For the case of the noisy Hegselmann--Krausse model studied in ~\cite{chazelle2017well}, we have $W(x)=\int_{0}^y \phi(|x|)x \dx{y}$ with $\phi(|x|)=\infn_{|x|\leq R}$. We can estimate by the same arguments $\norm*{W_\unstable''(x)}_{\infty}\leq \norm*{W''(x)}_{\infty}=R$. Thus for 
  $\kappa<\frac{2 \pi^2}{\beta L^2}$, we have exponential convergence to equilibrium. See~\autoref{S:Hegselmann} for a detailed analysis of this model. 
\end{rem}
\begin{rem}
  By the improved entropy defect estimate of \cref{lem:entdef}, the above statement could be slightly improved under more specific assumptions on the unstable modes of the potential. For the moment, we want to keep the presentation as concise as possible and refer to \autoref{S:thermodynamic} for the details.
\end{rem}
\subsection{Linear stability analysis}\label{ssec:lsa}
We start this subsection by linearising the stationary Mckean--Vlasov equation around some stationary solution, $\varrho_\kappa$.
We obtain the following linear integrodifferential operator:
\begin{align}
  \cL w := \beta^{-1} \Delta w+ \kappa\dive \bra[\big]{ \varrho_\kappa \nabla (W \star  w)}
  + \kappa \dive\bra[\big]{w \nabla(W \star \varrho_\kappa)}\, .  
\end{align}
If we pick $\varrho_\kappa$ to be the uniforms state $\varrho_\infty$ the above expression reduces to
\begin{align}
  \cL w := \beta^{-1} \Delta w+ \kappa\varrho_\infty   \Delta(W \star  w) \, .
\end{align}
 We are now interested in studying the spectrum of this operator over mean zero $\Leb^2(U)$ functions, $\Leb^2_0(U)$. From the classical theory for symmetric elliptic operators, it follows that the eigenfunctions of this system form an orthonormal basis in $\Leb^2_0(U)$ given by $\{ L^{-\frac{d}{2}} e^{i \frac{2 \pi}{L} k' \cdot x}\}_{k' \in \Z^d\setminus\set{\mathbf{0}}}$ with the eigenvalues given by
 \begin{align}
\lambda_{k'}= \bra*{-\beta^{-1}\bra*{\frac{2 \pi |k'|}{L}}^2 - \kappa L^{-d/2} \bra*{\frac{2 \pi |k'|}{L}}^2 \hat{W}(k') } \, ,
 \end{align}
where $\hat{W}(k')= L^{-\frac{d}{2}}\intom{W(x)  e^{-i \frac{2 \pi}{L} k' \cdot x}}$.  One can check that we have the following relationship 
\begin{align}
\hat{W}(k') = \frac{1}{\Theta(k)} \tilde{W}(k) \,,  \qquad k_\ell= |k'_\ell|, \ k \in \N^d \, ,
\end{align}
where $\Theta(k)$ is as defined in~\eqref{e:def:thetak}. To obtain the above expression we have used the fact that $W$ is coordinate-wise even, which implies that
\begin{align}
\intom{W(x)  e^{-i \frac{2 \pi}{L} k' \cdot x}}&=\intom{W(x)  e^{-i \frac{2 \pi}{L} k' \cdot x}}\\
&=\intom{W(x)  \prod\limits_{\ell=1}^d \bra*{\cos\bra*{\frac{2 \pi k'_\ell x}{L} } + i \sin\bra*{\frac{2 \pi k'_\ell x}{L} }  }} \\
&= \intom{W(x)  \prod\limits_{\ell=1}^d \bra*{\cos\bra*{\frac{2 \pi k'_\ell x}{L} }  }} \, .
\end{align}
Thus, we have the following expression for the value of the parameter $\kappa_\sharp$ at which the first eigenvalue of~$\cL$
crosses the imaginary axis:
\begin{align}
\kappa_\sharp = -\frac{L^{d/2}\Theta(k)}{\beta \min_{k \in \N^d\setminus\set{\mathbf{0}}} \tilde{W}(k)} \, 
\label{eq:koc}.
\end{align}
We will refer to $\kappa_\sharp$ as the \textbf{point of critical stability}. We denote by $k^\sharp$ the critical wave number (if it is unique)
and define it as:
\begin{align}
k^\sharp:= \argmin_{k \in \N^d\setminus\set{\mathbf{0}}} \tilde{W}(k)  \, 
\label{eq:poc}.
\end{align} 


\section{Bifurcation theory}\label{S:lbt}
For the local bifurcation analysis, it is convenient to rewrite the fixed point equation~\eqref{eq:fixedPoint} of the nonlinear mapping~\eqref{def:T} by making the parameter $\kappa \in (0,\infty)$ explicit. Hence, in this section we consider the nonlinear map $F:\Leb^2_s(U) \times \R^+ \to \Leb^2_s(U)$ defined as
\begin{align}
  F(\varrho,\kappa)=F_\kappa(\varrho) = \varrho - \frac{1}{Z} e^{-\beta \kappa W \star \varrho}, \qquad\text{where}\qquad Z =\intT{e^{-\beta \kappa W \star \varrho}} \ ,
  \label{eq:kirkwoodmonroe}
\end{align}
where  $\beta > 0$ is fixed, and $W \in \Leb^2_s(U)$ with $\Leb^2_s(U)$, the space of coordinate-wise even and square integrable functions as defined in~\eqref{eq:def:L2s}.

 The purpose of this section is to study the bifurcation problem:
\begin{align}
 F(\varrho,\kappa)=0 \, .
 \end{align} 
 Any zero of $F(\varrho,\kappa)$ is also a coordinate-wise even fixed point of $\cT:\Prac \to \Prac$. The converse is true if~$W$ satisfies Assumption~\eqref{ass:B}. We do not make this assumption for the whole section as we want the bifurcation theory to be valid for more singular potentials, e.g., the Keller--Segel model which we treat in a later section. It is also clear that the map
 $F(\varrho,\kappa)$ is translation invariant on the whole space $\Leb^2_s(U)$, i.e., if $\varrho$ is a zero of $F(\varrho,\kappa)$ then so is any translate $\varrho(\cdot - y)$ of $\varrho(\cdot)$ for any $y\in U$. This is the motivation for the restriction of $F$ to the space $\Leb^2_s(U)$. We will further justify our choice of the space $\Leb^2_s(U)$ in~\cref{lem:lbfour}. 
 
 The first result is an easy consequence of the characterisation of stationary solutions from~\autoref{S:stationary}, but could be also derived by standard contraction mapping argument on the map $F$ as done in
~\cite[Theorem 4.1]{tamura1984asymptotic} and ~\cite[Theorem 3]{messer1982statistical}.   
\begin{prop}
Assume $W(x)$ satisfies Assumption~\eqref{ass:B}. Then, for $\kappa$ sufficiently small, the uniform state $\varrho_\infty$ is the only solution of $F(\varrho,\kappa)=0$.
\begin{proof}
 \cref{prop:con} implies that $\crF_\kappa(\varrho)$ is
 strictly convex for $\kappa< \kappa_{\mathrm{con}} = \beta^{-1} \norm*{W_\unstable}_\infty^{-1}$. Hence, using~\cref{thm:dirmet}, it has a unique minimiser and exactly one critical point. This implies from \cref{prop:tfae} that $F(\varrho,\kappa)$ has a unique solution. 
\end{proof}
\end{prop}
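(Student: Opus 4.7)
The natural plan is to identify zeros of $F(\cdot,\kappa)$ with critical points of the free energy $\crF_\kappa$ via~\cref{prop:tfae}, and then use the strict convexity of $\crF_\kappa$ for small $\kappa$ provided by~\cref{prop:con}. Observe first that $\varrho_\infty$ is always a zero of $F(\cdot,\kappa)$: since $W$ has zero mean, $W\star \varrho_\infty$ is constant, so $\cT\varrho_\infty = \varrho_\infty$. The task therefore reduces to showing that $\crF_\kappa$ admits a unique critical point in $\Prac^+(U)$.

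The concrete steps are as follows. First, I would set the threshold $\kappa_0 := \kappa_{\mathrm{con}} = \beta^{-1}\norm{W_{\unstable-}}_\infty^{-1}$ from~\cref{prop:con}, so that $\crF_\kappa$ is strictly convex on $\Prac(U)$ for every $\kappa \in (0,\kappa_0)$. Second, existence of a minimiser $\varrho_\kappa \in \Prac^+(U)$ follows from~\cref{thm:dirmet}, and strict convexity forces uniqueness of this minimiser. Third, I would upgrade this to uniqueness of critical points: if $\varrho_1 \neq \varrho_\kappa$ were another critical point, the scalar function $s \mapsto \crF_\kappa((1-s)\varrho_\kappa + s\varrho_1)$ would be strictly convex on $[0,1]$ with vanishing derivative at both endpoints, a contradiction. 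Finally, \cref{prop:tfae} translates this back to a unique zero of $F(\cdot,\kappa)$, which must coincide with $\varrho_\infty$.

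The main subtlety I expect is a mismatch of domains: \cref{prop:con} states strict convexity on $\Prac(U)$, while \cref{prop:tfae} identifies zeros of $F_\kappa$ with critical points inside $\Prac^+(U)$. This gap is bridged by~\cref{lem:pos}, which guarantees that no minimiser or critical point can sit in $\Prac(U) \setminus \Prac^+(U)$. As a cross-check I would also consider a direct contraction argument in the spirit of~\cite{tamura1984asymptotic,messer1982statistical}, applying the Banach fixed point theorem to $\cT$ on a suitable closed subset of $\Leb^2_s(U)$ using a Lipschitz estimate of the form $\norm{\cT\varrho_1 - \cT\varrho_2}_\infty \lesssim \beta\kappa\norm{W}_\infty \norm{\varrho_1 - \varrho_2}_1$. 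The convexity route is preferable because its threshold $\kappa_{\mathrm{con}}$ depends only on $\norm{W_{\unstable-}}_\infty$ rather than on the full norm of $W$, which is sharper and automatically covers the $H$-stable case $W_\unstable \equiv 0$ where $\crF_\kappa$ is strictly convex for every $\kappa>0$.
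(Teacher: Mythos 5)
Your proposal is correct and follows essentially the same route as the paper: strict convexity of $\crF_\kappa$ for $\kappa<\kappa_{\mathrm{con}}$ from~\cref{prop:con}, existence of a minimiser from~\cref{thm:dirmet}, and the equivalence of~\cref{prop:tfae} to conclude uniqueness of the zero of $F(\cdot,\kappa)$; your extra details (the endpoint-derivative argument and the $\Prac^+(U)$ bridge via~\cref{lem:pos}) merely make explicit what the paper leaves implicit. The alternative contraction-mapping argument you mention as a cross-check is likewise acknowledged in the paper as the approach of~\cite{tamura1984asymptotic,messer1982statistical}.
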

We use the trivial branch of solutions $F(\varrho_\infty,\kappa)=0,  \kappa \in(0,\infty)$ with $\varrho_\infty \equiv 1/L^d$ to centre the map and define for any $u\in \Leb^2_s(U)$
\begin{equation}\label{eq:hatF}
  \hat{F}(u,\kappa)=F(u + \varrho_\infty,\kappa) \, . 
\end{equation}
In this way, we have $\hat{F}(0,\kappa)=0$. We compute the Fr\'echet
derivatives of this map for variations $w_1,w_2, w_3\in \Leb_s^2(U)$
\begin{align}
  D_\varrho(\hat{F}(0,\kappa))[w_1] &= w_1 + \beta \kappa \varrho_\infty (W \star w_1) - \beta \kappa \varrho_\infty^2 \intT{(W \star w_1)(x)} \, , \label{eq:dr} \displaybreak[0]\\
  D_\kappa(\hat{F}(0,\kappa)) &=0  \, ,\label{eq:dk} \displaybreak[0]\\
  D^2_{\varrho \kappa}(\hat{F}(0,\kappa))[w_1] &= \varrho_\infty (W \star w_1) - \varrho_\infty^2 \intT{(W \star w_1) (x)} -\varrho_\infty^2 W \star D_\varrho ( \hat{F}(0,\kappa))[w_1] \, , \label{eq:drk} \displaybreak[0]\\
  D^2_{\varrho\varrho}(\hat{F}(0,\kappa))[w_1,w_2]&= \beta \kappa (w_2- D_\varrho( \hat{F}(0,\kappa))[w_2])(W \star w_1) 
  \label{eq:drr}  \\
  &-\beta \kappa \varrho_\infty  (w_2- D_\varrho ( \hat{F}(0,\kappa)) [w_2]) \intT{W \star w_1(x)}   \nonumber \\ 
  &-\beta \kappa \varrho_\infty \intT{W \star w_1(x) (w_2- D_\varrho ( \hat{F}(0,\kappa)) [w_2])(x)}  \, ,  \nonumber \displaybreak[0]\\
  D^3_{\varrho \varrho \varrho}\hat{F}(0,\kappa)[w_1,w_2,w_3] &= -\beta \kappa D^2_{\varrho \varrho}\hat{F}(0,\kappa)[w_2,w_3](W 
  \star w_1) \label{eq:drrr}  \\
  &+ \beta \kappa \varrho_\infty (D^2_{\varrho \varrho}\hat{F}(0,\kappa)[w_2,w_3]) \intT{(W \star w_1)(x)} \nonumber \\ 
    &- \beta \kappa (w_2 - D_\varrho \hat{F}(0,\kappa)[w_2]) \intT{(W \star w_1)(x) (w_3 - D_\varrho \hat{F}(0,\kappa)[w_3])(x)}
    \\ \nonumber 
    &- \beta \kappa (w_3 - D_\varrho \hat{F}(0,\kappa)[w_3]) \intT{(W \star w_1)(x) (w_2 - D_\varrho \hat{F}(0,\kappa)[w_2])(x)}
    \\ \nonumber
    &+ \beta \kappa \varrho_\infty  \intT{(W \star w_1)(x)(D^2_{\varrho \varrho}\hat{F}(0,\kappa)[w_2,w_3])(x)} \, .
\end{align}
We have the following characterisation of the local bifurcations of $\hat{F}$:
\begin{thm} \label{thm:c1bif}
  Consider $\hat{F}: \Leb^2_s(U) \times \R^+ \to \Leb^2_s(U)$ as defined in~\eqref{eq:kirkwoodmonroe} with $W \in \Leb^2_s(U)$.
  Assume there exists $k^* \in \N^d$, such that:
   \begin{enumerate}
     \item $\operatorname{card}\set*{k : \frac{\tilde{W}(k)}{\Theta(k)}=\frac{\tilde{W}(k^*)}{\Theta(k^*)}}=1$\,.
     \item $\tilde{W}(k^*) <0$\,.
   \end{enumerate}
   Then $(0,\kappa_*)\in \Leb^2_s(U) \times \R^+ $ is a bifurcation point of $\hat{F}(\varrho,\kappa)=0$ where
   \begin{align}
     \kappa_*=-\frac{L^{\frac{d}{2}} \Theta(k^*)}{\beta \tilde{W}(k^*)} \label{eq:thetak}\, .
   \end{align}
   In addition, there exists a branch of solutions of the form
   \begin{align}
     \varrho_{*}(s)&= \varrho_\infty + s w_{k^*} + r(s w_{k^*},\kappa(s)) \, , \label{eq:branchstructure} 
   \end{align}
where $w_{k^*} \in \Leb^2_s(U)$ defined in~\eqref{e:def:wk}, $s \in (-\delta,\delta)$ for some $\delta>0$, and $\kappa:(-\delta,\delta) \to V$ is a twice continuously differentiable function in a neighbourhood $V$ of $\kappa_*$ with $\kappa(0)=\kappa_*$. Moreover,  it holds $\kappa'(0)=0$, $\kappa''(0)=\frac{2\beta \kappa_*}{3 \varrho_\infty}>0$, and $\varrho_*$ is the only nontrivial solution in a neighbourhood of $(0,\kappa_*)$ in $\Leb^2_s(U) \times \R$.

Specifically, the error $r:\operatorname{span} [w_{k^*}] \times V \to (\operatorname{span} [w_{k^*}])^{\perp}\subset \Leb^2_s(U)$ is a map satisfying
\begin{gather}\label{eq:rprop}
 \forall s \in (-\delta,\delta): \quad r(s w_{k^*},\kappa(s)) \in \Leb^{\infty}(U) \qquad  \text{with}\qquad r(0,0)=0  \, , \\
\text{and}\qquad \lim_{|s| \to 0} \frac{\norm*{r(s w_{k^*},\kappa(s))}_2}{|s|}=0 \, .\nonumber
\end{gather}
\end{thm}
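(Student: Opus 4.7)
\emph{Strategy.} The plan is to apply the Crandall--Rabinowitz bifurcation theorem to the centered map $\hat F:\Leb^2_s(U)\times\R^+ \to \Leb^2_s(U)$ defined in~\eqref{eq:hatF}, whose zero set contains the trivial branch $\{(0,\kappa):\kappa\in\R^+\}$. The five hypotheses to verify are: (i) $\hat F$ is $C^3$ near $(0,\kappa_*)$; (ii) $\hat F(0,\kappa)=0$ identically; (iii) $\ker D_\varrho\hat F(0,\kappa_*)=\operatorname{span}\{w_{k^*}\}$; (iv) $D_\varrho\hat F(0,\kappa_*)$ is Fredholm of index zero; (v) the transversality condition $D^2_{\varrho\kappa}\hat F(0,\kappa_*)[w_{k^*}]\notin\operatorname{Range}(D_\varrho\hat F(0,\kappa_*))$. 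Items (i)--(ii) are immediate from~\eqref{eq:kirkwoodmonroe}, the smoothness of the exponential, and the continuity of $W\star\,\cdot\,$ from $\Leb^2$ to $\Leb^\infty$. The restriction to $\Leb^2_s(U)$ is essential to quotient out the continuous translation symmetry of $F$, which would otherwise make the kernel $d$-dimensional and void hypothesis~(iii).

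\emph{Kernel, Fredholm property, and transversality.} Using the convolution formula on $\Leb^2_s(U)$, the Fr\'echet derivative~\eqref{eq:dr} is diagonal in the basis $\{w_k\}_{k\in\N^d}$: for $k\neq 0$ one has $D_\varrho\hat F(0,\kappa)\,w_k=\lambda_k(\kappa)\,w_k$ with
\[
  \lambda_k(\kappa)=1+\beta\kappa\,\varrho_\infty\,\frac{L^{d/2}\,\tilde W(k)}{\Theta(k)},
\]
since the mean-zero correction in~\eqref{eq:dr} drops out. The definition of $\kappa_*$ in~\eqref{eq:thetak} is equivalent to $\lambda_{k^*}(\kappa_*)=0$, while assumption~(1) guarantees $\lambda_k(\kappa_*)\neq 0$ for all $k\neq k^*$, yielding hypothesis~(iii). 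Because $W\in\Leb^2_s(U)$, the convolution $w\mapsto W\star w$ is Hilbert--Schmidt, hence compact; therefore $D_\varrho\hat F(0,\kappa_*)$ is a compact perturbation of the identity, Fredholm of index zero, with $\operatorname{Range}=\{w_{k^*}\}^\perp$. To verify transversality, I evaluate~\eqref{eq:drk} at $(0,\kappa_*)$ with $w_1=w_{k^*}$: the last term vanishes because $D_\varrho\hat F(0,\kappa_*)[w_{k^*}]=0$ and the middle one vanishes because $\int W\star w_{k^*}=0$, leaving
\[
  D^2_{\varrho\kappa}\hat F(0,\kappa_*)[w_{k^*}]=\varrho_\infty\,\frac{L^{d/2}\,\tilde W(k^*)}{\Theta(k^*)}\,w_{k^*}\neq 0,
\]
which is collinear with $w_{k^*}$ and hence not in $\operatorname{Range}$. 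Crandall--Rabinowitz then produces the curve $s\mapsto(sw_{k^*}+r(sw_{k^*},\kappa(s)),\kappa(s))$ with $r$ taking values in $\{w_{k^*}\}^\perp$, $\kappa\in C^2$ (since $\hat F\in C^3$), $\norm*{r}_2=o(|s|)$, together with the local uniqueness statement.

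\emph{The coefficients $\kappa'(0)$ and $\kappa''(0)$.} These are obtained via the usual Lyapunov--Schmidt reduction. Writing $u(s)=sw_{k^*}+s^2 v_2+s^3 v_3+\cdots$ with $v_j\in\{w_{k^*}\}^\perp$ and $\kappa(s)=\kappa_*+s\kappa'(0)+\tfrac{1}{2}s^2\kappa''(0)+\cdots$, plugging into $\hat F(u(s),\kappa(s))=0$ and projecting onto $w_{k^*}$ at orders $s^2$ and $s^3$ gives explicit formulas. For $\kappa'(0)$, \eqref{eq:drr} simplifies at $(0,\kappa_*)$ to
\[
  D^2_{\varrho\varrho}\hat F(0,\kappa_*)[w_{k^*},w_{k^*}]=\beta\kappa_*\,\frac{L^{d/2}\tilde W(k^*)}{\Theta(k^*)}\,\bigl(w_{k^*}^2-\varrho_\infty\bigr),
\]
whose $w_{k^*}$-projection equals a multiple of $\int w_{k^*}^3\dx{x}$; since $w_{k^*}$ is a product of cosines and $\int\cos^3\theta\,d\theta=0$, this projection vanishes, giving $\kappa'(0)=0$. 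The order-$s^3$ identity involves the corrector $v_2$ solving $D_\varrho\hat F(0,\kappa_*)v_2=-\tfrac{1}{2}D^2_{\varrho\varrho}\hat F(0,\kappa_*)[w_{k^*},w_{k^*}]$ in $\{w_{k^*}\}^\perp$, together with $D^3_{\varrho\varrho\varrho}\hat F[w_{k^*}]^3$ from~\eqref{eq:drrr}. Expanding $v_2$ in Fourier modes using $\cos^2\theta=\tfrac{1+\cos 2\theta}{2}$ and pairing against $w_{k^*}$ via $\cos\theta\cos 2\theta=\tfrac{\cos\theta+\cos 3\theta}{2}$, all contributions collapse to $\kappa''(0)=2\beta\kappa_*/(3\varrho_\infty)>0$.

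\emph{Regularity of $r$ and main difficulty.} For $r(sw_{k^*},\kappa(s))\in\Leb^\infty(U)$, note that $\varrho_*(s)$ is a zero of $F$, so $\varrho_*(s)=Z^{-1}\exp(-\beta\kappa(s)W\star\varrho_*(s))$; since $W,\varrho_*(s)\in\Leb^2(U)$, Cauchy--Schwarz gives $W\star\varrho_*(s)\in\Leb^\infty(U)$, so $\varrho_*(s)\in\Leb^\infty(U)$ and thus $r=\varrho_*(s)-\varrho_\infty-sw_{k^*}\in\Leb^\infty(U)$. The real labour is in the explicit computation of $\kappa''(0)$: one must track the second-order corrector $v_2$ and evaluate the cubic pairing $\langle w_{k^*},\,D^2_{\varrho\varrho}\hat F(0,\kappa_*)[w_{k^*},v_2]\rangle$ together with the $w_{k^*}$-projection of $D^3_{\varrho\varrho\varrho}\hat F(0,\kappa_*)[w_{k^*}]^3$ from~\eqref{eq:drrr}; every step is elementary Fourier algebra, but the bookkeeping is the principal obstacle.
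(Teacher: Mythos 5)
Your setup coincides with the paper's: you centre the map, diagonalise $D_\varrho\hat F(0,\kappa)$ in the basis $\{w_k\}_{k\in\N^d}$, use the Hilbert--Schmidt (compactness) property of $w\mapsto W\star w$ to get the Fredholm index-zero property, identify the one-dimensional kernel $\operatorname{span}[w_{k^*}]$ at $\kappa_*$ from hypotheses (1)--(2), and check transversality by evaluating $D^2_{\varrho\kappa}\hat F(0,\kappa_*)[w_{k^*}]$, which is equivalent to the paper's computation $\skp*{D^2_{\varrho\kappa}\hat F(0,\kappa_*)[v_0],v_0}=-\kappa_*^{-1}\norm*{v_0}_2^2\neq 0$; the conclusion $\kappa'(0)=0$ from $\skp*{w_{k^*}^2-\varrho_\infty,w_{k^*}}=\int w_{k^*}^3\dx{x}=0$ matches the paper's use of the bifurcation formula (I.6.3) of Kielh\"ofer, and your $\Leb^\infty$ bound on $r$ via the fixed-point representation and the $o(|s|)$ estimate are fine. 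Up to this point the proposal is correct and essentially the paper's argument.

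The genuine gap is $\kappa''(0)$. You state that, after introducing the corrector $v_2$ solving $D_\varrho\hat F(0,\kappa_*)v_2=-\tfrac12 D^2_{\varrho\varrho}\hat F(0,\kappa_*)[w_{k^*},w_{k^*}]$ in $\set{w_{k^*}}^\perp$, ``all contributions collapse to $\kappa''(0)=2\beta\kappa_*/(3\varrho_\infty)$'', but you never carry out this computation, and you explicitly defer it as ``the principal obstacle''. This is precisely the quantitative content of the theorem beyond a bare application of Crandall--Rabinowitz (and it is what later yields the local supercriticality of the branch). On your route you must actually (a) determine $v_2$ (note $D^2_{\varrho\varrho}\hat F(0,\kappa_*)[w_{k^*},w_{k^*}]$ is a multiple of $w_{k^*}^2-\varrho_\infty$, so $v_2$ lives on the modes $w_m$ with $m_i\in\set{0,2k^*_i}$, with coefficients given by the reciprocals of the nonzero eigenvalues of $D_\varrho\hat F(0,\kappa_*)$), and (b) evaluate both $\skp*{D^2_{\varrho\varrho}\hat F(0,\kappa_*)[w_{k^*},v_2],w_{k^*}}$ and $\skp*{D^3_{\varrho\varrho\varrho}\hat F(0,\kappa_*)[w_{k^*},w_{k^*},w_{k^*}],w_{k^*}}$ and combine them with $\skp*{D^2_{\varrho\kappa}\hat F(0,\kappa_*)[w_{k^*}],w_{k^*}}$ in the order-$s^3$ identity. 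None of this bookkeeping is performed, so the claimed value and positivity of $\kappa''(0)$ are asserted rather than proved. The paper sidesteps the corrector entirely by invoking Kielh\"ofer's formula (I.6.11), which expresses $\kappa''(0)$ through $D^3_{\varrho\varrho\varrho}\hat F$ and $D^2_{\varrho\kappa}\hat F$ alone, and then evaluates \eqref{eq:drrr} and \eqref{eq:drk} at $(0,\kappa_*)$; to close your argument you must either perform the $v_2$ computation explicitly (and show how its contribution enters or cancels) or derive/cite such a formula as the paper does.
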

\begin{proof}[Proof of~\cref{thm:m2}]
The proof of this theorem relies on the Crandall--Rabinowitz theorem~\cite{crandall1971bif}, which for the convenience of the reader is included in Appendix \ref{S:lyapschmid}. Before we proceed it is convenient to rewrite $D_\varrho\hat{F}$ from~\eqref{eq:dr} as
\begin{align}\label{eq:hatFhatT}
  D_\varrho(\hat{F}(0,\kappa))&= I - \kappa \hat{T}  \ ,
\end{align}
where $\hat{T}: \Leb^2_s(U) \to \Leb^2_s(U)$ is defined for $w\in \Leb^2_s(U)$ by
\begin{align}\label{def:Th}
  (\hat{T} w)(x) = \beta \bra*{ -\varrho_\infty (W \star w)(x) + \varrho_\infty^2 \int (W \star w)(y) \dx{y} } \, . 
\end{align}
Using the above expression one checks that the linear operator $\hat{T}$ is Hilbert--Schmidt with $\norm*{\hat{T}}_{\mathrm{HS}}^2 = \sum_{k \in \mathbb{N}^d}\norm*{\hat{T}w_k}_2^2 < \infty$, where $\{w_k\}_{k \in \N^d}$ is the orthonormal basis of $\Leb^2_s(U)$ as defined earlier. Thus, $I - \kappa \hat{T}$ is Fredholm by ~\cite[Corollary 4.3.8]{davies2007linear}. Since the index of a Fredholm operator is homotopy invariant (cf. Theorem 4.3.11 ~\cite{davies2007linear}), we show that the mapping $\kappa \mapsto (I-  \kappa\hat{T})$ is norm-continuous:
\begin{align}
  \norm*{I- \kappa_1 \hat{T} -I + \kappa_2 \hat{T}} = |\kappa_2 -  \kappa_1| \norm*{\hat{T}} \, .
\end{align}
Thus, the index satisfies $\ind\bra{I-\kappa\hat{T}}= \ind\bra*{I}=0$. We diagonalize $I -\kappa\hat{T}$ with respect to $\set{w_k}_{k\in \N^d}$
\begin{align} \label{eq:diagT}
  (I-\kappa\hat{T})w_k(x)=
  \begin{cases}
      \qquad 1  &,   \forall i=1 \dots d: k_i =0 \, ,  \\
    \left( 1 + \beta \kappa \frac{\tilde{W}_k}{L^{d/2} \Theta(k)}\right) w_k(x) &, \text{ else.} 
  \end{cases}  
\end{align}
Now it is easy to see that if Condition (1) in the statement of the theorem is satisfied, then $\dim \ker (I - \kappa \hat{T})=1$ for $\kappa=  \kappa_*$. Indeed, if Condition~(1) is satisfied, we have $\ker(I-\kappa_* \hat{T})= \mathrm{span}[w_{k^*}]$ and Condition~(2) ensures that $\kappa_*$ is positive.

Thus Condition~(1) of Theorem \ref{thm:cr} is satisfied. Since $\Ima (I -  \kappa \hat{T})$ is closed we have that $\Ima(I-\kappa \hat{T})=\ker (I - \kappa \hat{T}^*)^{\perp}$, with $\hat{T}^*$ denoting the adjoint. It is easy to check that if $v_0 \in \ker (I-\kappa \hat{T})$, $v_0 \not\equiv 0 $ then $v_0 \in \ker (I-\kappa \hat{T}^*) $. Then, by differentiating \eqref{eq:hatFhatT} in $\kappa$ and using $v_0 \in \ker (I-\kappa \hat{T})$ , we get the identity
\begin{align}
  \skp*{D^2_{\varrho\kappa}(\hat{F}(0, \kappa))[v_0],v_0}&= -\skp*{\hat{T} v_0,v_0}
  =-\kappa^{-1} \norm*{v_0}_2^2 \neq 0 \, ,  
\end{align}
since $v_0 \not\equiv 0$ by assumption.This implies that $D^2_{\varrho \kappa}(\hat{F}(0, \kappa))[v_0] \notin \ker (I-\kappa \hat{T}^*)^{\perp}$. Thus condition (2) of \cref{thm:cr} is also satisfied. Thus we can now apply \cref{thm:cr} and use \eqref{eq:dk} to obtain \eqref{eq:branchstructure}. 

Before proceeding, it is useful to characterize $\Ima (I -\kappa_* \hat{T})$. By using \eqref{eq:diagT}, we can see that we have the following orthogonal decomposition of $\Leb^2_s(U)$ into
\begin{align}
\Leb^2_s(U)= \mathrm{span}[w_{k^*}] \oplus \Ima(I -\kappa_* \hat{T} ) \, . 
\end{align}

Using the identity ~\cite[(I.6.3)]{kielhofer2006bifurcation} it follows that $\kappa'(0)=0$ provided that $D^2_{\varrho \varrho}\hat{F}(0,\kappa)[w_{k^*},w_{k^*}] \in \Ima (I -\kappa_* \hat{T} )$. Thus it is sufficient to check that 
\begin{align}
\skp*{D^2_{\varrho \varrho}\hat{F}(0,\kappa)[w_{k^*},w_{k^*}],w_{k^*}}= \skp*{\beta \kappa \tilde{W}(k^*)\left[w_{k^*}^2\left(\frac{L}{2}\right)^{d/2}-\left(\frac{1}{2L}\right)^{d/2}\right], w_{k^*}} =0 \ ,
\end{align}
where we have used \eqref{eq:drr} and the fact that $\intT{w_{k^*}^3}=0$. Thus we conclude that $\kappa'(0)=0$. Likewise, from~\cite[(I.6.11)]{kielhofer2006bifurcation}, we also have that 
\begin{align}
\kappa''(0)&= -\frac{\skp*{D^3_{\varrho \varrho \varrho}\hat{F}(0,\kappa_*)[w_{k^*},w_{k^*},w_{k^*}],w_{k^*}}}{3\skp*{D^2_{\varrho \kappa}\hat{F}(0,\kappa_*)[w_{k^*}],w_{k^*}}} = \frac{2 \beta \kappa_* \tilde{W}(k^*)(L/2)^{d/2}}{3 \varrho_\infty
\tilde{W}(k^*)(L/2)^{d/2}} = \frac{2\beta \kappa_*}{3\varrho_\infty} >0 \ ,
\end{align}
where we have used \eqref{eq:drk} and \eqref{eq:drrr}. The first two properties of \eqref{eq:rprop} follow from \cref{thm:cr}. To prove the third property in \eqref{eq:rprop}, we observe that
\begin{align}
\lim_{|s| +|\kappa(s)-\kappa_*| \to 0} \frac{\norm*{r_1(s\hat{v_0},\kappa(s))}_2}{|s| +|\kappa(s)-\kappa_*|}=0 \, . 
\end{align}
Since $\kappa'(0)=0$, we also have $\lim_{|s|\to 0}\frac{|\kappa(s)-\kappa_*|}{|s|}=0$.
Thus, we conclude
\begin{align}
\lim_{|s| \to 0} \frac{\norm*{r(s w_{k^*},\kappa(s))}_2}{|s|}&= \lim_{|s| \to 0} \frac{\norm*{r(s w_{k^*},\kappa(s))}_2}{|s| +|\kappa(s)-\kappa_*|}\left(\lim_{|s|\to 0}\frac{|s| +|\kappa(s)-\kappa_*|}{|s|}\right) =0 \ ,
\end{align}
where we have used the fact from~\cref{thm:cr} that $\kappa$ is continuously differentiable. This completes the proof.
\end{proof}
The statment of~\cref{thm:c1bif} becomes more transparent in one dimension:
\begin{cor}
Fix $U=(-L/2,L/2)$ and consider $\hat{F}: \Leb^2_s(U) \times \R^+ \to \Leb^2_s(U)$ as defined in~\eqref{eq:kirkwoodmonroe} with $W \in \Leb^2_s(U)$. Assume that there exists $k^* \in \N$, such that:
   \begin{enumerate}
     \item $\operatorname{card}\set*{k : \tilde{W}(k)=\tilde{W}(k^*)}=1$\,.
     \item $\tilde{W}(k^*) <0$\,.
   \end{enumerate}
   Then $(0,\kappa_*)$ is a bifurcation point of $\hat{F}(\varrho,\kappa)=0$, where
   \begin{align}
     \kappa_*=-\frac{ (2L)^{\frac{1}{2}} }{\beta \tilde{W}(k^*)}  ; 
   \end{align}
   that is, there exists a branch of solutions having the following form:
   \begin{align}
     \varrho_{*}(s)&= \frac{1}{L}+ s \sqrt{\frac{2}{L}}\cos\bra*{\frac{2 \pi k^* x }{L}} + o(s), \quad s \in(-\delta,\delta) \, , 
   \end{align}
   with all the other properties of the branch being the same as~\cref{thm:c1bif}.
\end{cor}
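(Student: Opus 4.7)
The plan is to simply specialize Theorem~\ref{thm:c1bif} to the one-dimensional case and verify that the hypotheses and formulas of the corollary reduce to those of the theorem. Since the theorem has already established the abstract bifurcation result for general $d$, all that is required is bookkeeping of the constants $\Theta(k)$, $N_k$, and $w_k$ in dimension one.

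First I would note that for $d=1$ and $k^* \in \N$ with $k^* > 0$, the normalization factor from~\eqref{e:def:thetak} reduces to $\Theta(k^*) = (2-\delta_{k^*,0})^{1/2} = \sqrt{2}$. In particular $\Theta(k)=\sqrt{2}$ for every positive $k$, so the hypothesis
\[
 \operatorname{card}\set[\big]{k : \tilde{W}(k)/\Theta(k)=\tilde{W}(k^*)/\Theta(k^*)}=1
\]
of Theorem~\ref{thm:c1bif} is equivalent (for positive modes) to the hypothesis $\operatorname{card}\set{k:\tilde{W}(k)=\tilde{W}(k^*)}=1$ stated in the corollary. The sign condition $\tilde{W}(k^*)<0$ is identical in both statements.

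Next I would substitute these values into the formula for the bifurcation parameter from~\eqref{eq:thetak}:
\[
 \kappa_* \;=\; -\frac{L^{1/2}\,\Theta(k^*)}{\beta\,\tilde{W}(k^*)} \;=\; -\frac{L^{1/2}\sqrt{2}}{\beta\,\tilde{W}(k^*)} \;=\; -\frac{(2L)^{1/2}}{\beta\,\tilde{W}(k^*)},
\]
which matches the corollary. For the eigenfunction, from~\eqref{e:def:wk} we have $w_{k^*}(x)=N_{k^*}\cos(2\pi k^* x/L)$ with $N_{k^*}=\Theta(k^*)/L^{1/2}=\sqrt{2/L}$, exactly as it appears in the displayed branch. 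Plugging the explicit form of $w_{k^*}$ into~\eqref{eq:branchstructure} and absorbing $r(s w_{k^*},\kappa(s))$ into an $o(s)$ term using the third property in~\eqref{eq:rprop} yields
\[
 \varrho_*(s) \;=\; \frac{1}{L} + s\sqrt{\frac{2}{L}}\cos\!\bra*{\frac{2\pi k^* x}{L}} + o(s).
\]

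Finally, the remaining assertions of the corollary (twice continuous differentiability of $\kappa(\cdot)$, the vanishing of $\kappa'(0)$, the explicit value $\kappa''(0)=2\beta\kappa_*/(3\varrho_\infty)$ with $\varrho_\infty=1/L$, and local uniqueness of the branch) are transferred verbatim from Theorem~\ref{thm:c1bif}. Since this is a direct specialization, I anticipate no real obstacle: the only subtle point is ensuring that the hypothesis on the Fourier modes in the corollary correctly captures the general condition when $d=1$, and this is handled by the observation $\Theta(k)\equiv\sqrt{2}$ on the positive integers.
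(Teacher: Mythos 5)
Your proposal is correct and coincides with the paper's (implicit) argument: the corollary is stated precisely as the $d=1$ specialization of Theorem~\ref{thm:c1bif}, and your bookkeeping of $\Theta(k^*)=\sqrt{2}$, $N_{k^*}=\sqrt{2/L}$, the resulting formula $\kappa_*=-(2L)^{1/2}/(\beta\tilde{W}(k^*))$, and the absorption of $r(sw_{k^*},\kappa(s))$ into $o(s)$ via~\eqref{eq:rprop} is exactly what is needed. Your remark that $\Theta(k)\equiv\sqrt{2}$ on the positive integers, so the corollary's mode condition matches the theorem's (the $k=0$ mode never enters the kernel), correctly handles the only point of potential ambiguity.
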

\begin{rem}
It should also be noted that one can obtain the existence of bifurcations with higher-dimensional kernels as well, i.e, when $\dim(\ker (\hat{T})) >1$. Since $\hat{T}$ is self adjoint, for any eigenvalue its algebraic and geometric multiplicities are the same. From~\cite[Theorem 28.1]{deimling1985nonlinear} it follows that any characteristic values (the reciprocals of the eigenvalues of $\hat{T}$) of odd algebraic multiplicity correspond to a bifurcation point. This implies that we could replace Condition (1) in~\cref{thm:c1bif} with $\operatorname{card}\set*{k : \frac{\tilde{W}(k)}{\Theta(k)}=\frac{\tilde{W}(k^*)}{\Theta(k^*)}}=m$, where $m$ is odd. However, it is not easy to obtain detailed information about the structure and regularity of the bifurcating branches in this case. 
\end{rem}
\begin{rem}~\label{rem:inj}
  Condition (1) of \cref{thm:c1bif} is in particular satisfied for an interaction potential $W \in \Leb^2_s(U)$ if the map $\tilde{W} : \N^d \to \R$ is injective. In this case, every $k_\alpha \in \N^d$ such that $\tilde{W}(k) <0$, corresponds to a unique bifurcation point $ \kappa_\alpha$ of  $F(\varrho,\kappa)$ through the relation~\eqref{eq:thetak}.  For example consider the interaction potential $W(x)=x^2/2$. In this case $\tilde{W}$  is injective and therefore the system has infinitely many bifurcation points. On the other hand, when $W(x)=-w_k(x)$ for some $k \in \N^d$, the system has only one bifurcation point. 
\end{rem}
\begin{rem} \label{rem:l2pi}
In dimensions higher than one, the space $\Leb^2_s(U)$ may not be small enough for our purposes, i.e., it is
possible that the potential may have additional symmetries. For instance, the potential could be exchangeable, that is $W(x)=W(\Pi(x))$ for all possible permutations~$\Pi$ of the $d$ coordinates. 
In this case it is easy to check that $\skp*{W,w_k}=
\skp*{W,w_{\Pi(k)}}$ for all $k \in \N^d$. We can then define the equivalence relation, $k \sim k'$ if $k' = \Pi(k)$ for some permutation $\Pi$ and write $\pra*{k}$ for the corresponding equivalence class. Thus, the consequence of $W(x)$ having this symmetry is that the value $\tilde{W}(k)/\Theta(k)$ is constant on $\pra{k}$. This implies that kernel of $D_\varrho\hat{F}$ is can never be one-dimensional. We can quotient out this symmetry by defining the space $\Leb^2_{\operatorname{ex}}(U)=\operatorname{span}\set{w_{\pra{k}}}$, where
$\{w_{\pra{k}}\}$ is an orthonormal basis defined by
\begin{align}
w_{\pra{k}}= \frac{1}{\sqrt{\sharp\pra{k}}}\sum_{\ell \in \pra{k}} w_{\ell}(x), \quad k \in \N^d \, ,
\end{align}
where $\sharp\pra{k}$ denotes the cardinality of the equivalence class $\pra{k}$. Then $\hat{F}:\Leb^2_{\operatorname{ex}}(U) \times \R^+ \to \Leb^2_{\operatorname{ex}}(U)$ is a well-defined mapping. Then, the results of \cref{thm:c1bif} carry over to $\hat{F}$ defined this way for $W \in \Leb^2_{\operatorname{ex}}(U)$ and the corresponding orthonormal basis $\{w_{\pra{k}}\}_{k\in \N}$. In this case the conditions read as follows
   \begin{enumerate}
     \item $\operatorname{card}\set*{ \pra*{k} : \frac{\tilde{W}([k])}{\Theta([k])} = \frac{\tilde{W}([k^*])}{\Theta([k^*])}} = 1 $ \, ,
     \item $\tilde{W}([k^*]) <0$ \, ,
   \end{enumerate}
with $\tilde{W}([k])=\tilde{W}(k),\Theta([k])=\Theta(k)$ for any $k \in \pra{k}$. The bifurcation point is given by
   \begin{align}
     \kappa_*=-\frac{L^{\frac{d}{2}} \Theta(\pra{k^*})}{\beta \tilde{W}(\pra{k^*})} \,.
   \end{align}
\end{rem}
\begin{rem}
Consider the following interaction potential
\begin{align}
W_s(x)= -\sum_{k=1}^\infty \frac{1}{|k|^{2s}}w_k(x), \quad s \geq 1 \,.
\end{align}
It is straightforward to check that $W_s(x)$ belongs to $\SobH^s(U)$ and thus to $C(\bar{U})$. Additionally, $W_s(x) \to -w_1(x)$ uniformly as $s \to \infty$. One can check now that, for any $s>1$, $W_s(x)$ satisfies the conditions of~\cref{thm:c1bif} for all
$k \in \N, k \neq 0$ and thus the trivial branch of the system has infinitely many bifurcation points. However, as mentioned in~\cref{rem:inj}, the system $W(x)=-w_1(x)$ has only one bifurcation point. This can be explained by the fact that as $s\to \infty$
all bifurcation points of $W_s(x)$ except one are pushed to infinity. This example illustrates however that two potentials may ``look'' similar but their associated  bifurcation structure may be entirely different. Therefore, approximating potentials, even uniformly,
by some dense subset, may not reveal all the information about the bifurcation structure of the limiting system.
\end{rem}
If we now assume that $W$ satisfies assumption~\eqref{ass:B} we can see that the zeros of $F(\varrho,\kappa)$ are  fixed points of the map $\cT$ which by~\cref{prop:tfae} are equivalent to smooth solutions of the stationary McKean--Vlasov equation. \cref{thm:c1bif} also provides us information about the structure of the branches, i.e., if $w_k(x)$ is the mode such that $k\in \N^d$ satisfies the conditions of \cref{thm:c1bif}, then to leading order the nontrivial solution is of the form $\varrho_\infty + s w_k(x)$. One may think of this as a ``proto-cluster'', with the nodes of $w_k(x)$ corresponding to the  positions of the peaks and valleys of the cluster. 

So far the analysis in this section has been local. We conclude this section by providing a characterisation of the global structure of the bifurcation diagram for $\hat{F}$ as defined in~\eqref{eq:hatF}.
\begin{prop}
Let  $V$ be an open neighbourhood of $(0,\kappa_*)$ in $\Leb^2_s(U) \times \R$, where $(0,\kappa_*)$ is a bifurcation point of the map $\hat{F}$ in the sense of~\cref{thm:c1bif}. We denote by $\cC_V$  the set of nontrivial solutions of $\hat{F}(\varrho,\kappa)=0$ in $V$ and by $\cC_{V,\kappa_*}$ the connected component of $\overline{\cC_V}$ containing $(0,\kappa_*)$. Then $\cC_{V,\kappa_*}$ has at least one of the following two
properties:
\begin{enumerate}
\item $\cC_{V,\kappa_*} \cap \partial V \neq \emptyset$\,.
\item $\cC_{V,\kappa_*}$ contains an odd number of characteristic values of $\hat{T}$, $(0,\kappa_i) \neq (0,\kappa_*)$, which have odd algebraic multiplicity\,.
\end{enumerate} 

\end{prop}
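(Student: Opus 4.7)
The plan is to recognise this as a direct application of Rabinowitz's global bifurcation theorem, in the local (bounded open set) form; see for example~\cite[Theorem II.3.3]{kielhofer2006bifurcation} or~\cite[Theorem 29.1]{deimling1985nonlinear}. First I would rewrite
\[
  \hat{F}(\varrho,\kappa) = (I-\kappa\hat{T})\varrho + H(\varrho,\kappa) \, ,
\]
where $\hat{T}$ is the operator from~\eqref{def:Th} and $H(\varrho,\kappa):=\hat{F}(\varrho,\kappa)-(I-\kappa\hat{T})\varrho$. From the expressions~\eqref{eq:dr}--\eqref{eq:drrr} it is clear that $H$ is smooth in $(\varrho,\kappa)$, with $H(0,\kappa)=0$ and $D_\varrho H(0,\kappa)=0$; moreover $H$ factors through the convolution $W\star\varrho$ composed with the smooth nonlinear Gibbs map $\cT$ and is therefore compact. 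Together with the Hilbert--Schmidt property of $\hat{T}$ established in the proof of~\cref{thm:c1bif}, this places $\hat{F}$ in the standard ``identity plus compact'' framework of Leray--Schauder degree theory.

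Next I would verify the odd-crossing hypothesis at $\kappa_*$. The diagonalisation~\eqref{eq:diagT} combined with Condition~(1) of~\cref{thm:c1bif} gives $\ker(I-\kappa_*\hat{T})=\operatorname{span}[w_{k^*}]$, a one-dimensional kernel. Since $W$ is coordinate-wise even, the convolution operator $w\mapsto W\star w$ is self-adjoint on $\Leb^2_s(U)$, hence so is $\hat{T}$, and the algebraic and geometric multiplicities of the eigenvalue $1/\kappa_*$ coincide and are equal to~$1$. In particular the Leray--Schauder index $\operatorname{ind}(\hat{F}(\cdot,\kappa),0)$ jumps as $\kappa$ crosses $\kappa_*$, which is the input required by Rabinowitz's theorem. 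By the same reasoning, the additional characteristic values on the trivial branch $\set{0}\times\R$ contribute to the degree only when their algebraic multiplicity is odd, which matches the second alternative in the statement.

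With these ingredients the topological argument is classical. Suppose, for contradiction, that both alternatives fail: $\cC_{V,\kappa_*}\cap\partial V=\emptyset$ and $\cC_{V,\kappa_*}$ meets only an even number of other characteristic values of $\hat{T}$ of odd algebraic multiplicity. Then by compactness of $\cC_{V,\kappa_*}$ one can enclose it in an open bounded set $\Omega\subset V$ disjoint from the trivial branch outside a finite collection of intervals around these characteristic values, and such that $\hat{F}$ is zero-free on $\partial\Omega$. Computing the Leray--Schauder degree $\deg(\hat{F}(\cdot,\kappa),\Omega_\kappa,0)$ for $\kappa$-slices below and above the projected range of $\cC_{V,\kappa_*}$, invariance under homotopy outside $\cC_{V,\kappa_*}$ forces the total parity of crossings inside $\Omega$ to be even, contradicting the parity induced by $\kappa_*$ itself, which is odd. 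Hence one of the two alternatives must hold.

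The main obstacle is the careful bookkeeping of the Leray--Schauder index jumps along $\set{0}\times\R$: one needs to argue that the only contributions to the total degree change on $\Omega$ arise from characteristic values of $\hat{T}$ on the trivial branch lying inside the closure of the projection of $\cC_{V,\kappa_*}$, and that points of even algebraic multiplicity contribute trivially. Once the map is written in the compact-plus-identity form and the simplicity of $\kappa_*$ is recorded, no further structural assumptions on $W$ beyond $W\in\Leb^2_s(U)$ are needed, and the proof reduces to the standard Rabinowitz continuation argument applied to the open set $V$.
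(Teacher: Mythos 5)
Your proposal is correct and follows essentially the same route as the paper: write $\hat{F}(\varrho,\kappa)=(I-\kappa\hat{T})\varrho$ plus a completely continuous remainder that is $o(\norm{\varrho}_2)$ uniformly in $\kappa$, observe via the diagonalisation of $\hat{T}$ and its self-adjointness that the characteristic value $\kappa_*$ is simple and hence of odd algebraic multiplicity, and invoke the Rabinowitz alternative (\cref{thm:rabalt}). The only differences are ones of detail rather than approach: the paper establishes the complete continuity of the remainder explicitly (Lipschitz and translation estimates plus the Kolmogorov--Riesz theorem) where you assert it, and it uses the Rabinowitz alternative as a black box rather than re-sketching its degree-theoretic proof, whose hard part (the separation argument needed to construct the admissible open set $\Omega$) your sketch leaves open but which the cited theorem already covers.
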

\begin{proof}
The proof follows from the direct application of the so-called Rabinowitz alternative ~\cite[Theorem 29.1]{deimling1985nonlinear} which we have included as~\cref{thm:rabalt} for the convenience of the reader. It is easy to check that the map $\hat{F}$ can be written in the following form,
\begin{align}
\hat{F}(\varrho,\kappa)= \varrho - \kappa \hat{T}\varrho + G(\varrho,\kappa) \, ,
\end{align}
with $\hat{T}$ as defined in~\eqref{def:Th}, and
\begin{align}
G(\varrho,\kappa)= \varrho_\infty - \frac{1}{Z}e^{-\beta \kappa W \star \varrho} + \kappa \hat{T} \varrho \, .
\end{align}
We now need to show that $G$ is completely continuous and $o\bra*{\norm*{\varrho}_2}$ uniformly in $\kappa$ as $\norm*{\varrho}_2 \to 0 $.
For the first result, it is enough to show that $G$ is compact since $\Leb^2_s(U)$ is reflexive. We establish the following estimate: 
\begin{align}
\norm*{G(\varrho_1,\kappa) - G(\varrho_2,\kappa)}_2 &\leq\frac{1}{Z(\varrho_2)} \norm*{e^{-\beta \kappa W \star \varrho_2} -e^{-\beta \kappa W \star \varrho_1}}_2 + \frac{\norm*{e^{-\beta \kappa W \star \varrho_1}}_\infty}{Z(\varrho_2) Z(\varrho_2)} L^{d/2}\abs*{ Z(\varrho_2)-Z(\varrho_1)}  +  \\ &+\kappa\norm*{\hat{T}(\varrho_2-\varrho_1)}_2 \\
& \leq \frac{\beta \kappa}{L^{d/2}}e^{\beta \kappa \norm*{W}_2 \norm*{\varrho_2}_2  }\bra*{1+ e^{2\beta \kappa \norm*{W}_2 \norm*{\varrho_1}_2  }} \norm*{W \star (\varrho_2-\varrho_1)}_\infty  \\ & +\frac{2\beta \kappa}{L^{d/2}} \norm*{W \star (\varrho_2-\varrho_1)}_\infty \,.
\end{align}
Now setting $\varrho_2=\varrho$ and $G(\varrho_1,\kappa)=\tau G(\varrho,\kappa)=  G(\tau \varrho,\kappa)$(with $\tau f(x+\tau)$) in the above expression we obtain
\begin{align}
\norm*{G(\varrho,\kappa) - \tau G( \varrho,\kappa)}_2& \leq C_\kappa  \norm*{W \star\varrho - \tau W \star\varrho }_\infty \,  \label{eq:l2comp}.
\end{align}
Similarly we can also deduce the following estimate by bounding $W \star (\varrho_2-\varrho_1)$ from above:
\begin{align}
\norm*{G(\varrho_1,\kappa) - G(\varrho_2,\kappa)}_2 &\leq C_\kappa \norm*{W}_2 \norm*{\varrho_1-\varrho_2}_2 \, . \label{eq:spohn}\\
\end{align}
In the above two expressions, $C_\kappa$ is a constant which tends to 0 as $\kappa \to 0$. Setting $\varrho_2=0$ in~\eqref{eq:spohn}, it follows that $G$ is a bounded map on $\Leb^2(U)$. Together with this and~\eqref{eq:l2comp}, and using the fact that the convolution is uniformly continuous, one can check that that $G(A)$ satisfies the conditions of the Kolmogorov--Riesz theorem, where $A$ is any bounded subset of $\Leb^2_s(U)$. Thus $G$ is compact. The fact that $G$ is $o\bra*{\norm*{\varrho}_2}$  follows by Taylor expanding $e^{-\beta \kappa W \star \varrho}/Z$. 

One can now check that if condition (1) of~\cref{thm:c1bif} is satisfied for some $k \in \N^d$, the associated eigenvalue $\kappa^{-1}$(which could be negative) of $\hat{T}$ is simple, i.e., it has algebraic multiplicity one. This implies that all bifurcation points predicted by \cref{thm:c1bif} are associated with simple eigenvalues of $\hat{T}$. Thus, we can apply \cref{thm:rabalt} to complete the proof. 
\end{proof}
\section{Phase transitions for the McKean--Vlasov equation}\label{S:thermodynamic}
%
We know from~\cref{prop:con} that $\varrho_\infty$ is the unique minimiser of the free energy for $\kappa$ sufficiently small. We are interested in studying under what criteria there is a change in the qualitative structure of the set of minimisers of $\crF_\kappa$. For the rest of this section we will assume that $W$ satisfies Assumption~\eqref{ass:B}, i.e, $W \in \SobH^1(U)$ and bounded below. We build on and extend the notions introduced by ~\cite{chayes2010mckean}. The first definition introduces what we mean by a transition point.
\begin{defn}[Transition point] \label{defn:tp}
A parameter value $\kappa_c >0$ is said to be a \emph{transition point} of $\crF_\kappa$ if it satisfies the following conditions:
\begin{enumerate}
\item For $0<\kappa < \kappa_c$, $\varrho_\infty$ is the unique minimiser of $\crF_\kappa(\varrho)$\,.
\item For $\kappa=\kappa_c$, $\varrho_\infty$ is a minimiser of $\crF_\kappa(\varrho)$\,.
\item For $\kappa>\kappa_c$, there exists some $ \varrho_\kappa \in \Prac^+(U)$, not equal to $\varrho_\infty$, such that $\varrho_\kappa$ is a minimiser of $\crF_\kappa(\varrho)$\,.
\end{enumerate}
\end{defn}
In the present work, we are only interested in the first transition point by increasing $\kappa$ starting from~$0$, also called the lower transition point. To convince the reader that the above definition makes sense we include the following result from~\cite{chayes2010mckean}.
\begin{prop}[{\cite[Proposition 2.8]{chayes2010mckean}}]~\label{prop:2.8cp}
Assume $W \in \HH_\stable^c$ and suppose that for some $\kappa_T <\infty$ there exists $\varrho_{\kappa_T} \in \Prac^+(U)$ not equal to $\varrho_\infty$ such that:
\begin{align}
\crF_{\kappa_T}(\varrho_{\kappa_T}) \leq \crF_{\kappa_T}(\varrho_\infty)\, .
\end{align}
Then, for all $\kappa>\kappa_T$, $\varrho_\infty$ no longer minimises the free energy. 
\end{prop}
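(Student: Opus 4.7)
The strategy is to exploit the fact that, for any fixed $\varrho$, the free energy $\crF_\kappa(\varrho) = \beta^{-1} S(\varrho) + \frac{\kappa}{2} \cE(\varrho,\varrho)$ is affine in $\kappa$, and then track how the assumed inequality at $\kappa_T$ propagates to $\kappa > \kappa_T$. The natural competitor at every $\kappa > \kappa_T$ will be the state $\varrho_{\kappa_T}$ itself.

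Concretely, I would set $\Delta S := S(\varrho_{\kappa_T}) - S(\varrho_\infty)$ and $\Delta \cE := \cE(\varrho_{\kappa_T},\varrho_{\kappa_T}) - \cE(\varrho_\infty,\varrho_\infty)$. Jensen's inequality (as in~\eqref{eq:elb}) gives $S(\varrho) \geq S(\varrho_\infty) = \log \varrho_\infty$ with equality iff $\varrho = \varrho_\infty$ a.e., so $\Delta S > 0$ strictly (it is also finite since the assumed upper bound on $\crF_{\kappa_T}(\varrho_{\kappa_T})$ together with the lower bound $\cE \geq -\norm*{W_-}_\infty$ controls it). Rearranging the hypothesis
\begin{equation*}
\beta^{-1}\Delta S + \tfrac{\kappa_T}{2} \Delta \cE = \crF_{\kappa_T}(\varrho_{\kappa_T}) - \crF_{\kappa_T}(\varrho_\infty) \leq 0
\end{equation*}
then forces $\Delta \cE \leq -\tfrac{2}{\beta \kappa_T} \Delta S < 0$; the inequality becomes \emph{strict} precisely because $\Delta S$ is strictly positive.

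For any $\kappa > \kappa_T$ I then decompose
\begin{equation*}
\crF_\kappa(\varrho_{\kappa_T}) - \crF_\kappa(\varrho_\infty) = \bra*{\beta^{-1}\Delta S + \tfrac{\kappa_T}{2}\Delta \cE} + \tfrac{\kappa - \kappa_T}{2}\Delta \cE.
\end{equation*}
The first bracket is $\leq 0$ by assumption, and the second term is strictly negative because $\kappa - \kappa_T > 0$ and $\Delta \cE < 0$. Hence $\crF_\kappa(\varrho_{\kappa_T}) < \crF_\kappa(\varrho_\infty)$ for every $\kappa > \kappa_T$, so $\varrho_\infty$ cannot be a minimiser of $\crF_\kappa$ on that range.

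There is essentially no obstacle: the only subtlety worth flagging is ensuring that $\Delta S$ is finite and strictly positive (so that $\Delta \cE < 0$ strictly rather than merely $\leq 0$), which uses $\varrho_{\kappa_T} \in \Prac^+(U)$ together with Jensen's inequality and the lower bound on the interaction energy from Assumption~\eqref{ass:B}. The assumption $W \in \HH_\stable^c$ is not actually needed for the implication itself, but it is consistent with the setting: if $W$ were $H$-stable, then $\crF_\kappa$ would be strictly convex for all $\kappa > 0$ by~\cref{prop:con} and no such $\varrho_{\kappa_T}$ could exist, making the hypothesis vacuous.
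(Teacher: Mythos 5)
Your proof is correct: the linearity of $\crF_\kappa(\varrho)$ in $\kappa$, the strict inequality $S(\varrho_{\kappa_T})>S(\varrho_\infty)$ from Jensen (with finiteness of $S$ and $\cE$ forced by the hypothesis and the lower bound $\cE\geq-\norm*{W_-}_\infty$), and the resulting strict negativity of $\Delta\cE$ give exactly $\crF_\kappa(\varrho_{\kappa_T})<\crF_\kappa(\varrho_\infty)$ for all $\kappa>\kappa_T$. The paper does not reprove this statement but imports it from Chayes--Panferov, whose argument is the same monotonicity-in-$\kappa$ reasoning, so your route matches the intended one.
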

In addition, the following result from~\cite{gates1970van} shows that $H$-stability of the potential is a necessary and sufficient condition for the nonexistence of a transition point. 
\begin{prop}[{\cite{gates1970van}}] \label{prop:tpex}
$\crF_\kappa$ has a transition point at some $\kappa=\kappa_c<\infty$ if and only if $W \in \HH_\stable^c$. Additionally for $\kappa>\kappa_\sharp$, with $\kappa_\sharp$ the point of critical stability as defined in~\eqref{eq:koc} in~\autoref{ssec:lsa}, $\varrho_\infty$ is not the minimiser of $\crF_\kappa$.
\end{prop}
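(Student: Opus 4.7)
First, I would handle the easy direction. If $W\in\HH_\stable$, then $W_\unstable\equiv 0$, so $\kappa_{\mathrm{con}}=\beta^{-1}\norm{W_{\unstable-}}_\infty^{-1}=+\infty$, and~\cref{prop:con} together with~\cref{thm:dirmet} forces $\varrho_\infty$ to be the unique minimiser of $\crF_\kappa$ for every $\kappa>0$, which rules out the existence of any transition point in the sense of~\cref{defn:tp}.

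For the converse and the $\kappa_\sharp$ bound I would rely on a single Fourier-mode perturbation of $\varrho_\infty$, which produces both at once. Since $W\notin\HH_\stable$, choose $k^\sharp$ realising the minimum in~\eqref{eq:koc}, so $\tilde W(k^\sharp)<0$, and for small $\epsilon>0$ set $\varrho_\epsilon:=\varrho_\infty+\epsilon w_{k^\sharp}\in\Prac^+(U)$; this lies in $\Prac^+(U)$ because $w_{k^\sharp}$ is bounded, mean-zero and $L^2$-normalised. A second-order Taylor expansion of $x\mapsto x\log x$ around $\varrho_\infty$, using $\int w_{k^\sharp}=0$ and $\int w_{k^\sharp}^2=1$, gives $S(\varrho_\epsilon)=S(\varrho_\infty)+\tfrac{\epsilon^2 L^d}{2}+O(\epsilon^3)$. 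The Fourier identity~\eqref{Fourier:Interaction} applied to $\varrho_\epsilon-\varrho_\infty=\epsilon w_{k^\sharp}$, combined with the vanishing of the cross term $\cE(\varrho_\infty,w_{k^\sharp})=0$, yields $\cE(\varrho_\epsilon,\varrho_\epsilon)-\cE(\varrho_\infty,\varrho_\infty)=\epsilon^2\tilde W(k^\sharp)L^{d/2}/\Theta(k^\sharp)$. Assembling the two contributions,
\begin{equation*}
\crF_\kappa(\varrho_\epsilon)-\crF_\kappa(\varrho_\infty)=\tfrac{\epsilon^2}{2}\Bigl(\tfrac{L^d}{\beta}+\tfrac{\kappa\,\tilde W(k^\sharp)L^{d/2}}{\Theta(k^\sharp)}\Bigr)+O(\epsilon^3),
\end{equation*}
and the bracketed term is strictly negative precisely when $\kappa>\kappa_\sharp$; for $\epsilon$ small enough this beats $\varrho_\infty$, which is exactly the \emph{Additionally} assertion.

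To extract a transition point I would set $\kappa_c:=\sup\set{\kappa>0:\varrho_\infty\text{ is a minimiser of }\crF_\kappa}$. By~\cref{prop:con} on $(0,\kappa_{\mathrm{con}})$ together with the perturbation above, $0<\kappa_c\le\kappa_\sharp<\infty$. The contrapositive of~\cref{prop:2.8cp} makes the set $\set{\kappa:\varrho_\infty\text{ minimises}}$ downward closed, and linearity of $\kappa\mapsto\crF_\kappa(\varrho)$ at fixed $\varrho$ makes it closed from above, so it equals $(0,\kappa_c]$. This immediately gives properties (2) and (3) of~\cref{defn:tp}: at $\kappa=\kappa_c$ the uniform state is a minimiser, and for $\kappa>\kappa_c$ any minimiser produced by~\cref{thm:dirmet} is necessarily distinct from $\varrho_\infty$.

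The main obstacle is property (1), the \emph{uniqueness} of $\varrho_\infty$ as minimiser on $(0,\kappa_c)$, because strict convexity of $\crF_\kappa$ can fail beyond $\kappa_{\mathrm{con}}$ so a direct convexity argument is unavailable on the whole interval. I would argue by contradiction: if some $\varrho^\star\neq\varrho_\infty$ were also a minimiser at some $\kappa<\kappa_c$, then $\crF_\kappa(\varrho^\star)=\crF_\kappa(\varrho_\infty)$ triggers~\cref{prop:2.8cp} with $\kappa_T=\kappa$, forcing $\varrho_\infty$ to fail minimisation throughout $(\kappa,\infty)$, in particular at $\kappa_c$, which contradicts $\kappa_c$ belonging to the set $(0,\kappa_c]$ identified above. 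This propagation step, using~\cref{prop:2.8cp} rather than convexity, is the decisive (and most delicate) ingredient of the proof.
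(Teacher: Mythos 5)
Your argument is correct, but note that the paper does not actually prove this proposition: it is imported wholesale from Gates--Penrose \cite{gates1970van} (in the setting of the van der Waals limit), so there is no internal proof to compare against. What you have done is give a self-contained derivation from the paper's own toolbox, and it holds together: the easy direction via \cref{prop:con} and \cref{thm:dirmet} (strict convexity for all $\kappa$ when $W_\unstable\equiv 0$, so the unique critical point $\varrho_\infty$ is always the unique minimiser); the ``Additionally'' part via the single-mode competitor $\varrho_\infty+\epsilon w_{k^\sharp}$, whose second-order expansion $\tfrac{\epsilon^2}{2}\bigl(L^d/\beta+\kappa\,\tilde W(k^\sharp)L^{d/2}/\Theta(k^\sharp)\bigr)+O(\epsilon^3)$ is exactly the second variation underlying the linear stability analysis of \autoref{ssec:lsa}; and the extraction of $\kappa_c$ as a supremum, with downward closedness and uniqueness below $\kappa_c$ coming from the contrapositive of \cref{prop:2.8cp}, membership of $\kappa_c$ itself from the affineness of $\kappa\mapsto\crF_\kappa(\varrho)$ at fixed $\varrho$, and property (3) from \cref{thm:dirmet}. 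This is in the same spirit as the way the paper later exploits \cref{prop:2.8cp} and \cref{prop:cpni} in the proof of \cref{prop:CharactTP}, so your route is a natural reconstruction rather than a genuinely different method.

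Two small points worth making explicit. First, your choice of $k^\sharp$ presupposes that non-$H$-stability is witnessed at some $k\in\N^d\setminus\set{\mathbf{0}}$; if the only negative Fourier coefficient were $\tilde W(0)$, then $\kappa_\sharp$ in \eqref{eq:koc} is not positive and in fact no transition occurs, since the zeroth mode only shifts $\crF_\kappa$ by a $\varrho$-independent constant. So the ``if and only if'' must be read modulo the constant mode (equivalently, after normalising $W$ to mean zero), which is how the paper uses the statement; a sentence acknowledging this would make the proof airtight. Second, in \eqref{eq:koc} the normalisation $\Theta(k)$ should be evaluated at the minimiser of $\tilde W(k)/\Theta(k)$ (as the eigenvalue computation in \autoref{ssec:lsa} shows); your bracket-negativity criterion is consistent with that reading, but you should state that this is the interpretation of $\kappa_\sharp$ you are using. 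Also, when invoking \cref{prop:2.8cp} you need the competing minimiser to lie in $\Prac^+(U)$, which follows from \cref{lem:pos} (or the smoothness of minimisers in \cref{thm:dirmet}); this is worth a clause.
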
  
From this result it follows directly that if the system possesses a transition point $\kappa_c$, $\varrho_\infty$ can no longer be a minimiser beyond this point. We are also interested in understanding how this transition occurs. In the infinite-dimensional setting it is not always possible to obtain a well-defined order parameter for the system characterizing first and second order phase transitions in the sense of statistical physics. For this reason, it may be better to define such transitions in terms of discontinuity in some norm or metric.
\begin{defn}[Continuous and discontinuous transition point] \label{defn:ctp}
A transition point  $\kappa_c >0$ is said to be a \emph{continuous transition point} of $\crF_\kappa$ if it satisfies the following conditions:
\begin{enumerate}
\item For $\kappa=\kappa_c$, $\varrho_\infty$ is the unique minimiser of $\crF_\kappa(\varrho)$\,.
\item Given any family of minimisers, $\{\varrho_\kappa|\kappa> \kappa_c \}$, we have that
\begin{align}
\limsup_{\kappa \downarrow \kappa_c} \norm*{\varrho_\kappa-\varrho_\infty}_1=0 \, .
\end{align}
\end{enumerate}
A transition point $\kappa_c$ which is not continuous is said to be \emph{discontinuous}.
\end{defn}

We now include a series of results from~\cite{chayes2010mckean} that we need
for our subsequent analysis.
\begin{prop}[\cite{chayes2010mckean}]\label{prop:cpni}
$\min\limits_{\varrho \in \Prac(U)} \crF_\kappa(\varrho) - \frac{1}{2}\kappa \cE(\varrho_\infty,\varrho_\infty)$ is nonincreasing in $\kappa$.
\end{prop}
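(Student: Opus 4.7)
The plan is to rewrite
\[
  g(\kappa) := \min_{\varrho \in \Prac(U)} \crF_\kappa(\varrho) - \tfrac{\kappa}{2}\cE(\varrho_\infty,\varrho_\infty)
  = \min_{\varrho \in \Prac(U)} \Bigl( \beta^{-1} S(\varrho) + \tfrac{\kappa}{2}\bigl[\cE(\varrho,\varrho) - \cE(\varrho_\infty,\varrho_\infty)\bigr] \Bigr) \ ,
\]
so that $g$ is a pointwise infimum of affine functions of $\kappa$ whose slope is $\frac{1}{2}[\cE(\varrho,\varrho) - \cE(\varrho_\infty,\varrho_\infty)]$. Note that by \cref{thm:dirmet} the minimum is attained by some $\varrho_\kappa\in \Prac^+(U)$.

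The first and main step is to show that at any minimiser $\varrho_\kappa$ of $\crF_\kappa$ one has
\[
  \cE(\varrho_\kappa,\varrho_\kappa) \leq \cE(\varrho_\infty,\varrho_\infty) \ .
\]
This follows from inserting the competitor $\varrho_\infty$: since $\crF_\kappa(\varrho_\kappa) \leq \crF_\kappa(\varrho_\infty)$, one gets
\[
  \tfrac{\kappa}{2}\bigl[\cE(\varrho_\kappa,\varrho_\kappa) - \cE(\varrho_\infty,\varrho_\infty)\bigr] \leq \beta^{-1}\bigl[S(\varrho_\infty) - S(\varrho_\kappa)\bigr] \leq 0 \ ,
\]
where the last inequality is Jensen's inequality (equivalently, $S$ is minimised by the uniform state, as already recorded in \eqref{eq:elb}).

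The second step is an envelope argument: for $0 < \kappa_1 < \kappa_2$, using $\varrho_{\kappa_1}$ as a test function at $\kappa_2$ and the key bound from the first step,
\[
  g(\kappa_2) \leq \beta^{-1} S(\varrho_{\kappa_1}) + \tfrac{\kappa_2}{2}\bigl[\cE(\varrho_{\kappa_1},\varrho_{\kappa_1}) - \cE(\varrho_\infty,\varrho_\infty)\bigr] \leq \beta^{-1} S(\varrho_{\kappa_1}) + \tfrac{\kappa_1}{2}\bigl[\cE(\varrho_{\kappa_1},\varrho_{\kappa_1}) - \cE(\varrho_\infty,\varrho_\infty)\bigr] = g(\kappa_1) \ ,
\]
which is the desired monotonicity. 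There is essentially no obstacle here: the only delicate point is ensuring a minimiser exists so that the first step makes sense, which is guaranteed by \cref{thm:dirmet}; the rest is the standard envelope trick combined with the fact that $\varrho_\infty$ is the entropy minimiser.
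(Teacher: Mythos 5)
Your proof is correct. The paper does not actually prove \cref{prop:cpni} — it is quoted from \cite{chayes2010mckean} without proof — and your argument is exactly the standard one behind that citation: the comparison $\crF_\kappa(\varrho_\kappa)\le\crF_\kappa(\varrho_\infty)$ together with $S(\varrho_\kappa)\ge S(\varrho_\infty)$ (Jensen, as in \eqref{eq:elb}) gives $\cE(\varrho_\kappa,\varrho_\kappa)\le\cE(\varrho_\infty,\varrho_\infty)$ at any minimiser (which exists by \cref{thm:dirmet}), and then the envelope comparison between $\kappa_1<\kappa_2$ yields the monotonicity; this is just the discrete version of the envelope-theorem identity $\frac{\dx{}}{\dx{\kappa}}\min_\varrho\crF_\kappa = \tfrac12\cE(\varrho_\kappa,\varrho_\kappa)\le\tfrac12\cE(\varrho_\infty,\varrho_\infty)$.
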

\begin{prop}[\cite{chayes2010mckean}]\label{prop:cpdc}
Assume $W \in \HH_s^c$ and that condition (2) of~\cref{defn:ctp} is violated. 
Then there exists a discontinuous transition point $\kappa_c < \infty$ and 
some $\varrho_{\kappa_c} \neq \varrho_\infty$ such that $\crF_{\kappa_c}(\varrho_{\kappa_c})=\crF_{\kappa_c}(\varrho_\infty)$.
\end{prop}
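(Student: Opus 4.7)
The plan is to extract a nontrivial minimizer at $\kappa_c$ as the strong $L^2$-limit of a sequence of minimizers approaching $\kappa_c$ from above. By \cref{prop:tpex}, the assumption $W\in\HH_s^c$ guarantees the existence of a transition point $\kappa_c<\infty$. The violation of condition~(2) of \cref{defn:ctp} provides a sequence $\kappa_n\downarrow \kappa_c$ together with minimizers $\varrho_{\kappa_n}$ of $\crF_{\kappa_n}$ such that $\norm*{\varrho_{\kappa_n}-\varrho_\infty}_1\geq \delta >0$ for some fixed $\delta>0$.

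By \cref{prop:tfae}, each minimizer is a fixed point of $\cT$, and the bound \eqref{eq:linfT} gives $\norm*{\varrho_{\kappa_n}}_\infty\leq e^{\beta\kappa_n(\norm*{W_-}_\infty+\norm*{W}_1)}$, which is uniformly bounded since $\kappa_n$ lies in a compact interval. Feeding this back into \eqref{eq:H1T} yields a uniform $\SobH^1$-bound on the family $\{\varrho_{\kappa_n}\}$, and Rellich's theorem produces a subsequence (not relabeled) with $\varrho_{\kappa_n}\to \varrho_{\kappa_c}$ strongly in $\Leb^2(U)$, and hence in $\Leb^1(U)$. In particular $\varrho_{\kappa_c}\in\Prac(U)$ and
\[
\norm*{\varrho_{\kappa_c}-\varrho_\infty}_1 \;=\; \lim_{n\to\infty} \norm*{\varrho_{\kappa_n}-\varrho_\infty}_1 \;\geq\; \delta,
\]
so $\varrho_{\kappa_c}\neq \varrho_\infty$.

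To establish $\crF_{\kappa_c}(\varrho_{\kappa_c}) = \crF_{\kappa_c}(\varrho_\infty)$, one compares $\varrho_{\kappa_n}$ with $\varrho_\infty$ at $\kappa_n$:
\[
\crF_{\kappa_c}(\varrho_{\kappa_n}) \;=\; \crF_{\kappa_n}(\varrho_{\kappa_n}) + \tfrac{\kappa_c-\kappa_n}{2}\cE(\varrho_{\kappa_n},\varrho_{\kappa_n}) \;\leq\; \crF_{\kappa_n}(\varrho_\infty) + \tfrac{\kappa_c-\kappa_n}{2}\cE(\varrho_{\kappa_n},\varrho_{\kappa_n}).
\]
Because $\cE(\varrho_{\kappa_n},\varrho_{\kappa_n})$ is uniformly bounded, sending $n\to\infty$ gives $\limsup_n \crF_{\kappa_c}(\varrho_{\kappa_n}) \leq \crF_{\kappa_c}(\varrho_\infty)$. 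Lower semicontinuity of the entropy under strong $\Leb^2$-convergence together with the $\Leb^2$-continuity of the interaction energy (invoked in the proof of \cref{thm:dirmet}) then yields $\crF_{\kappa_c}(\varrho_{\kappa_c}) \leq \crF_{\kappa_c}(\varrho_\infty)$, and the reverse inequality follows because $\varrho_\infty$ is already a minimizer at $\kappa_c$ by condition~(2) of~\cref{defn:tp}. Discontinuity of the transition point is then immediate from the definition, witnessed by the family $\{\varrho_{\kappa_n}\}$.

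The main obstacle is passing from a weak to a strong mode of convergence: plain weak-$\Leb^2$ compactness would not preserve the $\delta$-gap in $\Leb^1$, as the functional $\varrho\mapsto\norm*{\varrho-\varrho_\infty}_1$ is only lower semicontinuous. The resolution is to exploit the fixed-point identity $\varrho_{\kappa_n}=\cT\varrho_{\kappa_n}$, which promotes the uniform $\Leb^\infty$-bound to $\SobH^1$-regularity and thence to strong $\Leb^2$-compactness via Rellich. Everything else amounts to standard semicontinuity together with the comparison estimate above.
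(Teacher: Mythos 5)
The paper does not prove this proposition at all: it is quoted verbatim from the cited source~\cite{chayes2010mckean}, so there is no in-paper argument to compare against. Judged on its own terms, your proof is correct and uses only tools the paper itself supplies. The extraction of a sequence $\kappa_n\downarrow\kappa_c$ of minimisers with $\norm{\varrho_{\kappa_n}-\varrho_\infty}_1\geq\delta$ is the right reading of the violated condition; identifying minimisers with fixed points of $\cT$ via \cref{prop:tfae} and \cref{thm:dirmet} legitimises the uniform bounds \eqref{eq:linfT} and \eqref{eq:H1T} (Assumption~\eqref{ass:B} is in force throughout the section, so $\nabla W\in\Leb^2$ is available), and Rellich then gives strong $\Leb^2$, hence $\Leb^1$, convergence, which preserves the $\delta$-gap and rules out $\varrho_{\kappa_c}=\varrho_\infty$ — you correctly spotted that plain weak compactness would lose exactly this point. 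The energy comparison $\crF_{\kappa_c}(\varrho_{\kappa_n})=\crF_{\kappa_n}(\varrho_{\kappa_n})+\tfrac{\kappa_c-\kappa_n}{2}\cE(\varrho_{\kappa_n},\varrho_{\kappa_n})\leq\crF_{\kappa_n}(\varrho_\infty)+o(1)$ is sound because the uniform $\Leb^\infty$ bound and $W\in\Leb^1$ control $\cE$, lower semicontinuity of $S$ plus continuity of $\cE$ under the established convergence give $\crF_{\kappa_c}(\varrho_{\kappa_c})\leq\crF_{\kappa_c}(\varrho_\infty)$, and the reverse inequality is exactly condition~(2) of \cref{defn:tp} for the transition point guaranteed by \cref{prop:tpex}; discontinuity is then definitional. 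This reconstructs, in slightly different technical clothing, the compactness-plus-energy-comparison argument of Chayes--Panferov; the main independent choice is your use of the fixed-point identity to upgrade to strong compactness, which is a clean way to secure the nontriviality of the limit.
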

\begin{prop}[\cite{chayes2010mckean}]\label{prop:cpc}
Assume $W \in \HH_s^c$ and that the free energy $\crF_\kappa$ exhibits
a continuous transition point at some $\kappa_c < \infty$. Then it follows that $\kappa_c=\kappa_\sharp$.
\end{prop}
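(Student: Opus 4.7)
The plan is to establish $\kappa_c=\kappa_\sharp$ by proving the two inequalities separately. The direction $\kappa_c\leq\kappa_\sharp$ is immediate: by~\cref{prop:tpex}, $\varrho_\infty$ fails to be a minimiser of $\crF_\kappa$ for every $\kappa>\kappa_\sharp$, which combined with condition~(1) of~\cref{defn:tp} rules out $\kappa_c>\kappa_\sharp$. The substance lies in the reverse inequality $\kappa_c\geq\kappa_\sharp$.

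For $\kappa_c\geq\kappa_\sharp$ I would argue by contradiction: assuming $\kappa_c<\kappa_\sharp$, pick a sequence $\kappa_n\downarrow\kappa_c$ with $\kappa_n\in(\kappa_c,\kappa_\sharp)$ together with nontrivial minimisers $\varrho_n$ of $\crF_{\kappa_n}$ (available by~\cref{defn:tp}(3)), and set $\eta_n:=\varrho_n-\varrho_\infty$. Condition~(2) of~\cref{defn:ctp} gives $\norm*{\eta_n}_1\to 0$, which I would upgrade in two steps. First, the uniform sup-norm bound $\norm*{\varrho_n}_\infty\leq B_0$ from~\cref{lem:1} combined with $\norm*{\eta_n}_2^2\leq(B_0+\varrho_\infty)\norm*{\eta_n}_1$ gives $\Leb^2$-convergence. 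Second, the fixed-point identity $\varrho_n=\cT\!\varrho_n$ from~\cref{thm:wellpsPDE}\ref{thm:ex} combined with $\norm*{W\star\eta_n}_\infty\leq\norm*{W}_2\norm*{\eta_n}_2\to 0$ (valid since $W\in\SobH^1(U)\subset\Leb^2(U)$) propagates this to uniform convergence, so that $\norm*{\eta_n}_\infty\to 0$.

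With uniform convergence in hand, the next step is a second-order Taylor expansion of $\crF_{\kappa_n}$ around $\varrho_\infty$. Using $\int\eta_n=0$ and a third-order expansion of $u\mapsto u\log u$, one obtains
\[
  \crF_{\kappa_n}(\varrho_n)-\crF_{\kappa_n}(\varrho_\infty)=\frac{1}{2}Q_{\kappa_n}(\eta_n)+R(\eta_n),\qquad Q_\kappa(\eta):=\frac{\beta^{-1}}{\varrho_\infty}\norm*{\eta}_2^2+\kappa\,\cE(\eta,\eta),
\]
where the interaction part is exactly quadratic and the remainder satisfies $\abs*{R(\eta_n)}\leq C\norm*{\eta_n}_\infty\norm*{\eta_n}_2^2=o(\norm*{\eta_n}_2^2)$. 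Reading off~\eqref{Fourier:Interaction} together with the definition~\eqref{eq:koc} of $\kappa_\sharp$, the Hessian $Q_\kappa$ obeys the coercivity bound $Q_\kappa(\eta)\geq(\beta^{-1}/\varrho_\infty)(1-\kappa/\kappa_\sharp)\norm*{\eta}_2^2$ for every mean-zero $\eta\in\Leb^2(U)$. At $\kappa=\kappa_n$ the gap $1-\kappa_n/\kappa_\sharp\geq1-\kappa_c/\kappa_\sharp>0$ is uniform, so the quadratic term dominates the remainder for $n$ large, yielding $\crF_{\kappa_n}(\varrho_n)>\crF_{\kappa_n}(\varrho_\infty)$. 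Since $\eta_n\not\equiv 0$, this contradicts the minimality of $\varrho_n$.

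The main obstacle is the upgrade from $\Leb^1$- to $\Leb^\infty$-convergence of the minimisers, which is indispensable for dominating the cubic remainder in the entropy expansion; this I would handle via the sup-norm bound of~\cref{lem:1} and the smoothing action of convolution with $W$ inherent in the fixed-point identity $\varrho_n=\cT\!\varrho_n$. Once that is in place, the contradiction reduces to a quantitative restatement of the linear stability analysis carried out in~\autoref{ssec:lsa}.
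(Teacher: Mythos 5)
Your argument is correct, and since the paper does not prove this proposition itself (it is imported from Chayes--Panferov), what you have written is a self-contained proof along the natural lines suggested by the paper's own linear stability analysis in~\autoref{ssec:lsa}: the easy direction $\kappa_c\le\kappa_\sharp$ from~\cref{prop:tpex} and~\cref{defn:tp}(1), and for the converse the exact identity~\eqref{FreeEnergyExcess}, the quadratic expansion of the entropy about $\varrho_\infty$, and the spectral lower bound $Q_\kappa(\eta)\ge\beta^{-1}\varrho_\infty^{-1}(1-\kappa/\kappa_\sharp)\norm*{\eta}_2^2$ for mean-zero $\eta$, which I checked is consistent with~\eqref{Fourier:Interaction} and~\eqref{eq:koc}. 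The key technical step, upgrading the $\Leb^1$-convergence granted by~\cref{defn:ctp}(2) to $\Leb^\infty$-convergence via $\varrho_n=\cT\!\varrho_n$ and $\norm*{W\star\eta_n}_\infty\le\norm*{W}_2\norm*{\eta_n}_2$, is exactly what is needed to dominate the cubic remainder, and it is valid under Assumption~\eqref{ass:B}.

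Two small repairs, neither of which is a genuine gap. First, the inequality ``$1-\kappa_n/\kappa_\sharp\ge 1-\kappa_c/\kappa_\sharp$'' is reversed as written, since $\kappa_n>\kappa_c$; what you need (and what holds) is that for $n$ large one has $\kappa_n\le\tfrac{1}{2}(\kappa_c+\kappa_\sharp)$, so $1-\kappa_n/\kappa_\sharp\ge\tfrac{1}{2}(1-\kappa_c/\kappa_\sharp)>0$ uniformly along the tail of the sequence. Second, the constant $B_0$ in~\cref{lem:1} is produced for a \emph{fixed} $\kappa$, so invoking it along the sequence $\kappa_n$ requires a word about uniformity; the cleanest fix is to bypass~\cref{lem:1} altogether and get the uniform sup bound directly from the fixed-point identity via~\eqref{eq:linfT}, which gives $\norm*{\varrho_n}_\infty\le e^{\beta\kappa_n(\norm*{W_-}_\infty+\norm*{W}_1)}$, uniformly bounded since the $\kappa_n$ lie in a bounded interval. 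With these cosmetic adjustments the contradiction $\crF_{\kappa_n}(\varrho_n)>\crF_{\kappa_n}(\varrho_\infty)$ for nontrivial minimisers $\varrho_n$ goes through and yields $\kappa_c\ge\kappa_\sharp$ as claimed.
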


By combining certain properties of transition points with the previous analysis on critical stability in~\autoref{ssec:lsa}, we obtain more streamlined sufficient conditions for the identification of transition points, which is the basis for the proof of \cref{thm:m3}, or more precisely~\cref{thm:dctp} and~\cref{thm:spgap}. 
\begin{prop}\label{prop:CharactTP}
  Let $\crF_\kappa$ have a transition point at some $\kappa_c<\infty$ and let $\kappa_\sharp$ denote the point of critical stability defined in~\autoref{ssec:lsa}. 
  \begin{tenumerate}
   \item If $\varrho_\infty$ is the unique minimiser of $\crF_{\kappa_\sharp}$, then $\kappa_c = \kappa_{\sharp}$ is a continuous transition point.  \label{prop:CharactTP:cont}
   \item If $\varrho_\infty$ is not a global minimiser of $\crF_{\kappa_\sharp}$, then $\kappa_c < \kappa_{\sharp}$ and $\kappa_c$ is a discontinuous transition point. \label{prop:CharactTP:discont}
  \end{tenumerate}
\end{prop}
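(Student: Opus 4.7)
The plan is to pin down $\kappa_c = \kappa_\sharp$ in part (a) and then deduce the $\Leb^1$-convergence required by Definition~\ref{defn:ctp}(2), while in part (b) I construct an explicit competitor that forces $\kappa_c<\kappa_\sharp$ and use Proposition~\ref{prop:cpc} to exclude a continuous transition.

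For part (a), the inequality $\kappa_c \geq \kappa_\sharp$ follows since, if $\kappa_c < \kappa_\sharp$, Definition~\ref{defn:tp}(3) applied at $\kappa = \kappa_\sharp$ would produce a minimiser different from $\varrho_\infty$, contradicting uniqueness. The reverse bound $\kappa_c\leq \kappa_\sharp$ is forced by Proposition~\ref{prop:tpex}, which forbids $\varrho_\infty$ from being a minimiser on $(\kappa_\sharp,\infty)$. To obtain continuity, take $\kappa_n\downarrow \kappa_c$ with corresponding minimisers $\varrho_n$ and expand the free energy linearly in $\kappa$:
\[
  \crF_{\kappa_c}(\varrho_n) = \crF_{\kappa_n}(\varrho_n) + \tfrac{\kappa_c-\kappa_n}{2}\cE(\varrho_n,\varrho_n) \leq \crF_{\kappa_c}(\varrho_\infty) + C\abs*{\kappa_n-\kappa_c},
\]
with $C$ uniform thanks to the $\Leb^\infty$-bound on minimisers from Lemma~\ref{lem:1}. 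The same $\Leb^\infty$-bound gives weak $\Leb^2$-precompactness of $\{\varrho_n\}$; combined with lower semicontinuity of the entropy and the weak $\Leb^2$-continuity of $\cE$ for $W\in \Leb^1$ used inside Theorem~\ref{thm:dirmet}, any weak limit $\varrho_*$ satisfies $\crF_{\kappa_c}(\varrho_*)\leq \crF_{\kappa_c}(\varrho_\infty)$, hence $\varrho_*=\varrho_\infty$ by uniqueness. The whole sequence therefore weakly converges to $\varrho_\infty$; passing the displayed inequality to the limit gives $\crF_{\kappa_c}(\varrho_n)\to \crF_{\kappa_c}(\varrho_\infty)$, and subtracting the (by weak continuity) convergent interaction part leaves $S(\varrho_n)\to S(\varrho_\infty)$, i.e.\ $\cH(\varrho_n | \varrho_\infty)\to 0$. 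The CKP inequality~\eqref{eq:CKP} then converts this into $\norm*{\varrho_n-\varrho_\infty}_1\to 0$.

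For part (b), pick $\varrho_\dagger\in \Prac^+(U)$ with $\crF_{\kappa_\sharp}(\varrho_\dagger) < \crF_{\kappa_\sharp}(\varrho_\infty)$ and examine
\[
  g(\kappa) := \crF_\kappa(\varrho_\dagger) - \crF_\kappa(\varrho_\infty),
\]
which is affine in $\kappa$ with $g(0)\geq 0$ (the entropy is uniquely minimised by $\varrho_\infty$) and $g(\kappa_\sharp) < 0$. Continuity therefore yields $\kappa_0\in [0,\kappa_\sharp)$ with $g(\kappa_0)\leq 0$, so Proposition~\ref{prop:2.8cp} applied at $\kappa_0$ rules out $\varrho_\infty$ as a minimiser for every $\kappa>\kappa_0$, forcing $\kappa_c\leq \kappa_0<\kappa_\sharp$. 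If $\kappa_c$ were a continuous transition point, Proposition~\ref{prop:cpc} would demand $\kappa_c=\kappa_\sharp$, contradicting the strict inequality just obtained; hence $\kappa_c$ must be discontinuous.

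The main technical obstacle is the upgrade in part (a) from weak-$\Leb^2$ to strong-$\Leb^1$ convergence of $\varrho_n$. It hinges on a two-sided squeeze of $\crF_{\kappa_c}(\varrho_n)$: the upper bound from the linear-in-$\kappa$ expansion and the lower bound from lsc together with uniqueness of the minimiser. Only once both bounds collapse to $\crF_{\kappa_c}(\varrho_\infty)$ does the weakly continuous interaction term cancel and leave the entropy difference, which is the exact quantity controlling the $\Leb^1$-distance through the CKP inequality.
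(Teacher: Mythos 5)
Your proposal is correct. Part~(b) follows the paper's own route almost verbatim: produce a competitor beating $\varrho_\infty$ at $\kappa_\sharp$, use continuity in $\kappa$ together with \cref{prop:2.8cp} and \cref{prop:tpex} to force $\kappa_c<\kappa_\sharp$, and then exclude continuity via \cref{prop:cpc}. Part~(a) is where you genuinely diverge. The paper first shows that $\kappa_\sharp$ itself satisfies the three conditions of \cref{defn:tp} (using \cref{prop:cpni}, \cref{prop:2.8cp}, \cref{prop:tpex}) and then obtains the $\Leb^1$-continuity indirectly, by contradiction: if condition (2) of \cref{defn:ctp} failed, \cref{prop:cpdc} would yield a nontrivial minimiser at $\kappa_\sharp$ with the same free energy as $\varrho_\infty$, contradicting uniqueness. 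You instead identify $\kappa_c=\kappa_\sharp$ from the assumed transition point plus \cref{prop:tpex}, and prove the $\Leb^1$-convergence directly by a compactness/lower-semicontinuity squeeze (weak $\Leb^2$-compactness of the minimisers, lsc of the entropy, weak continuity of $\cE$ for $W\in\Leb^1$, then entropy convergence and the CKP inequality~\eqref{eq:CKP}). This buys a self-contained argument that does not rely on \cref{prop:cpdc}, at the price of re-deriving ingredients that Chayes--Panferov package inside that proposition; the paper's version is shorter because it outsources exactly this step. One small repair: the uniform bound on $\cE(\varrho_n,\varrho_n)$ should not be attributed to \cref{lem:1}, whose constant $B_0$ is defined for a \emph{fixed} $\kappa$; since minimisers are fixed points of $\cT$ by \cref{prop:tfae} and \cref{thm:wellpsPDE}, the bound~\eqref{eq:linfT} gives $\norm*{\varrho_n}_\infty\le e^{\beta\kappa_n(\norm*{W_-}_\infty+\norm*{W}_1)}$, which is uniform for $\kappa_n$ in a bounded neighbourhood of $\kappa_c$ and is the cleaner way to get your constant $C$.
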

\begin{rem}
The statements of~\cref{prop:CharactTP}\ref{prop:CharactTP:cont} and ~\cref{prop:CharactTP}\ref{prop:CharactTP:discont}
are only necessary conditions for the characterisation of transition points. In particular, they are not logical complements of each other, i.e., $\varrho_\infty$ could be a
global minimiser of $\crF_{\kappa_\sharp}$ without being the unique one or vice versa.
\end{rem}
\begin{proof} 
  A consequence of the assumption in the first statement~\ref{prop:CharactTP:cont} of the proposition is that $\varrho_\infty$ is the unique minimiser for all $\kappa\leq \kappa_\sharp$. Indeed, from~\cref{prop:cpni}, we know that  $\min_{\varrho \in \Prac(U)}\crF_\kappa \leq \crF_{\kappa_c}(\varrho_\infty)$ for $\kappa \leq \kappa_c$. Thus, if $\varrho_\infty$ is the unique minimiser at some $\kappa=\kappa_c$, it must be a minimiser for all $\kappa \leq \kappa_c$.  In fact, using~\cref{prop:2.8cp} we can assert that $\varrho_\infty$ is the unique minimiser of $\crF_\kappa$ for all $\kappa \leq \kappa_c$. Indeed, if this were not the case then there exists
  some $\varrho_{\kappa_T}\in \Prac^+(U)$ not equal to $\varrho_\infty$ such that $\crF_{\kappa_T}(\varrho_{\kappa_T}) = \crF_{\kappa_T}(\varrho_\infty)$ for some $\kappa_T<\kappa_\sharp$. ~\cref{prop:2.8cp} then tells us that $\varrho_\infty$ can no longer be a minimiser for any $\kappa>\kappa_T$, which is a contradiction. It follows that conditions (1) and (2) from \cref{defn:tp} are satisfied. That condition (3) is satisfied follows directly from~\cref{prop:tpex}. This implies that $\kappa_\sharp$ satisfies the three conditions of being a transition point.
  
  Now, we have to verify condition (2) of \cref{defn:ctp} (condition (1) is already satisfied from the statement of the proposition). Assume condition (2) doesn't hold, i.e., there exists a family of minimisers  $\{\varrho_\kappa|\kappa> \kappa_c \}$ of $\crF_\kappa(\varrho)$ such that  $\limsup_{\kappa \downarrow \kappa_c} \norm*{\varrho_\kappa-\varrho_\infty}_1\neq0$. Then we know from~\cref{prop:cpdc} that there exists some $\varrho_{\kappa_c} \in \Prac^+(U)$ not equal to $\varrho_\infty$  such that it is a minimiser of the free energy $\crF_\kappa(\varrho)$ at $\kappa=\kappa_c$. Applied in the present setting with $\kappa_c=\kappa_\sharp$, we would deduce that $\varrho_\infty$ is no longer the unique minimiser of $\crF_{\kappa_\sharp}(\varrho)$, in contradiction to statement~\ref{prop:CharactTP:cont} of the proposition. Thus both conditions (1) and (2) of~\cref{defn:ctp} are satisfied from which it follows that $\kappa_c=\kappa_\sharp$ is a continuous transition point.

  To prove the second statement~\ref{prop:CharactTP:discont} of the proposition, let $\varrho$ be such that $\crF_{\kappa_\sharp}(\varrho) < \crF_{\kappa_\sharp}(\varrho_\infty)$. Then for any $\kappa$ close enough to $\kappa_\sharp$, we also have $\crF_{\kappa}(\varrho) < \crF_{\kappa}(\varrho_\infty)$. Hence by a combination of \cref{prop:2.8cp} and \cref{prop:tpex} there exists a transition point $\kappa_c < \kappa_\sharp$ and, in particular $\kappa_\sharp$, cannot be a transition point. From~\cref{prop:cpc}, we have the fact that if $\kappa_c$ is a continuous transition point of $\crF_{\kappa}$, then necessarily $\kappa_c =\kappa_\sharp$. This implies that $\kappa_c< \kappa_\sharp$ cannot be a continuous transition point.  
\end{proof}
Before proceeding to present the main results of this section, we remind the reader that for
the rest of the paper $\kappa_c$ denotes a transition point, $\kappa_\sharp$ denotes the point of critical stability, and $\kappa_*$ denotes a bifurcation point. 
\subsection{Discontinuous transition points}
We provide below a characterisation of potentials which exhibit discontinuous transition points, which proves~\cref{thm:m3}\ref{thm:m3a}.
\begin{defn}\label{def:del}
Assume $W\in \HH_\stable^c$ and let $K^{\delta}:=\left\{k' \in \N^d\setminus\set{\mathbf{0}}: \frac{\tilde{W}(k')}{\Theta(k')}\leq \min_{k \in \N^d\setminus\set{\mathbf{0}}} \frac{\tilde{W}(k)}{\Theta(k)} +\delta  \right\}$ for some $\delta \geq 0$. We define $\delta_*$ to be the smallest value, if it exists, of $\delta$ for which the following condition is satisfied:

\begin{equation}\label{eq:c1}
  \text{there exist } k^a,k^b, k^c \in K^{\delta_*} , \text{ such that } k^a=k^b + k^c \,  \tag{\textbf{C1}}. 
\end{equation}
\end{defn}
\begin{thm}\label{thm:dctp}
Let $W(x)$ be as in~\cref{def:del}. Then if $\delta_*$ exists and is sufficiently small, $\crF_\kappa$ exhibits a discontinuous transition point at some $\kappa_c<\kappa_\sharp$. 
\end{thm}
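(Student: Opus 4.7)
The plan is to invoke Proposition~\ref{prop:CharactTP}\ref{prop:CharactTP:discont}, which reduces the task to exhibiting a competitor $\varrho^\dagger \in \Prac^+(U)$ with $\crF_{\kappa_\sharp}(\varrho^\dagger) < \crF_{\kappa_\sharp}(\varrho_\infty)$. (A transition point exists in the first place by Proposition~\ref{prop:tpex} because Definition~\ref{def:del} already places us in the regime $W \in \HH_\stable^c$.) The algebraic resonance $k^a = k^b + k^c$ built into condition (C1) is tailor-made for a three-mode perturbation ansatz. First I would set
\begin{align*}
\varrho_\epsilon := \varrho_\infty + \epsilon\, \eta_0, \qquad \eta_0 := w_{k^a} + w_{k^b} + w_{k^c},
\end{align*}
which belongs to $\Prac^+(U)$ for all $\epsilon$ sufficiently small since $\eta_0$ is bounded and has zero mean.

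Using the identity~\eqref{FreeEnergyExcess} together with the Taylor expansion $(1+x)\log(1+x) = x + \tfrac{x^2}{2} - \tfrac{x^3}{6} + O(x^4)$, the free energy difference splits into a quadratic, cubic, and higher-order part. The quadratic part decouples thanks to orthogonality of the $\{w_k\}$ and the Fourier representation~\eqref{Fourier:Interaction}. Writing $\lambda_j := \tilde W(k^j)/\Theta(k^j)$ and $\lambda := \tilde W(k^\sharp)/\Theta(k^\sharp) = \min_k \tilde W(k)/\Theta(k)$, the definition of $\kappa_\sharp$ in~\eqref{eq:koc} turns the quadratic contribution into
\begin{align*}
\frac{\epsilon^2 L^d}{2\beta}\sum_{j\in\{a,b,c\}}\frac{\lambda - \lambda_j}{\lambda} \;\leq\; \frac{3L^d}{2\beta|\lambda|}\,\delta_*\, \epsilon^2,
\end{align*}
which is non-negative but controlled by $\delta_*$ precisely because $k^a,k^b,k^c \in K^{\delta_*}$.

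The cubic correction $-\tfrac{\epsilon^3}{6\beta\varrho_\infty^2}\int \eta_0^3\dx{x}$ is the key ingredient. The pure cubes $\int w_k^3\dx{x}$ vanish, the mixed squares $\int w_{k^i}^2 w_{k^j}\dx{x}$ vanish unless $2k^i = \pm k^j$ (a coincidence that can be ruled out by a small generic adjustment of the chosen modes, or absorbed into the quartic remainder), and a direct product-to-sum calculation gives $\int w_{k^a}w_{k^b}w_{k^c}\dx{x} = C_0 > 0$ whenever $k^a = k^b + k^c$. Hence, collecting the terms,
\begin{align*}
\crF_{\kappa_\sharp}(\varrho_\epsilon) - \crF_{\kappa_\sharp}(\varrho_\infty) \;\leq\; A\,\delta_*\,\epsilon^2 - B\,\epsilon^3 + C\,\epsilon^4,
\end{align*}
with positive constants $A,B,C$ independent of $\epsilon$ and $\delta_*$. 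Choosing first $\epsilon$ small enough that the quartic remainder is dominated by the cubic, and then $\delta_*$ small enough so that $A\delta_* < B\epsilon/2$, the right-hand side is strictly negative, so $\varrho_\infty$ is not a global minimiser of $\crF_{\kappa_\sharp}$. Proposition~\ref{prop:CharactTP}\ref{prop:CharactTP:discont} then yields a discontinuous transition point $\kappa_c < \kappa_\sharp$.

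The main obstacle will be controlling the cubic integral cleanly: I need the resonant triple to have a definite sign with a quantitative lower bound $C_0$, and to verify that any secondary terms coming from coincidences of the form $2k^i = \pm k^j$ are either absent or absorbed into the quartic. Once the sign of the cubic is secured, the rest is an elementary balancing of $\epsilon$ against $\delta_*$.
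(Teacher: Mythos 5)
Your proposal is correct and follows essentially the same route as the paper: invoke \cref{prop:CharactTP}\ref{prop:CharactTP:discont}, perturb $\varrho_\infty$ by the (near-)resonant modes at $\kappa=\kappa_\sharp$, control the quadratic term by $\delta_*$, and let the strictly positive resonant cubic term dominate (the paper uses all modes of $K^{\delta_*}$ and couples the scales via $\epsilon=\delta_*^{1/2}$, but the mechanism is identical). Your only lingering worry is unnecessary: the coincidence terms $\int w_{k^i}^2 w_{k^j}$ with $2k^i=k^j$ are themselves nonnegative (indeed positive, and are exactly the degenerate resonances the paper allows), so they only reinforce the sign of the cubic and need neither a generic adjustment of the modes nor absorption into the quartic remainder.
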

\begin{proof}
We know already from~\cref{prop:tpex} that the system possesses a transition point $\kappa_c$. We are going to use \cref{prop:CharactTP}~\ref{prop:CharactTP:discont} and construct a competitor $\varrho \in \Prac^+(U)$ which has a lower value of the free energy than $\varrho_\infty$ at $\kappa=\kappa_\sharp$. Let
\begin{align}
\varrho= \varrho_\infty\bra[\bigg]{1 + \epsilon \sum_{k \in K^{\delta_*} }w_{k}} \in \Prac^+(U) \ ,
\end{align}
for some $\epsilon >0$, sufficiently small. We denote by $|K^{\delta_*}|$ the cardinality of $K^{\delta_*}$, which is necessarily finite as $W \in \Leb^2(U)$. Expanding about $\varrho_\infty$ we obtain
\begin{align}
\beta^{-1}S(\varrho)&= \beta^{-1}\left(S(\varrho_\infty) +\frac{|K^{\delta_*}|}{2}\varrho_\infty \epsilon^2 -\frac{\varrho_\infty}{3}\intom{\epsilon^3\bra[\bigg]{\sum_{k \in K^{\delta_*} }w_{k}}^3 } + o(\epsilon^3) \right) \\
\textrm{and} \qquad
\frac{\kappa_\sharp}{2}\mathcal{E}(\varrho,\varrho)&\leq\frac{\kappa_\sharp}{2}\mathcal{E}(\varrho_\infty,\varrho_\infty)
+ \frac{\kappa_\sharp\epsilon^2 |K^{\delta_*}| \varrho_\infty^2}{2} \min_{k \in \N^d \setminus\set{\mathbf{0}}} \frac{\tilde{W}(k)}{\Theta(k)} L^{d/2}
+\frac{\kappa_\sharp\epsilon^2 |K^{\delta_*}| \delta_*}{2 L^{3d/2}} \, .
\end{align} 
Using the fact that $\kappa_\sharp \min\limits_{k \in \N^d\setminus\set{\mathbf{0}}}\frac{\tilde{W}(k)}{\Theta(k)}=-\beta^{-1}L^{d/2}\ $, we obtain,
\begin{align}
\crF_{\kappa_\sharp}(\varrho)&\leq\crF_{\kappa_\sharp}(\varrho_\infty) -\frac{\epsilon^3 \varrho_\infty}{3 \beta}\intom{\bra[\bigg]{\sum\limits_{k \in K^{\delta_*} }w_{k}}^3 } - \frac{\epsilon^2 \delta_* \varrho_\infty |K^{\delta_*}| }{2\beta} \bra*{\min\limits_{k \in \N^d \setminus\set{\mathbf{0}}} \frac{\tilde{W}(k)}{\Theta(k)}}^{-1} + o(\epsilon^3) \, .
\end{align}
Setting $\epsilon=\delta_*^{\frac{1}{2}}$ (if $\delta_* >0$, otherwise we stop here), we obtain
\begin{align}
\crF_{\kappa_\sharp}(\varrho)&\leq\crF_{\kappa_\sharp}(\varrho_\infty) -\frac{\delta_*^{\frac{3}{2}}\varrho_\infty}{3 \beta}\intom{\bra[\bigg]{\sum\limits_{k \in K^{\delta_*} }w_{k}}^3 } + \frac{\delta_*^{2}\varrho_\infty |K^{\delta_*}| }{2\beta} \abs*{\min\limits_{k \in \N^d \setminus\set{\mathbf{0}}} \frac{\tilde{W}(k)}{\Theta(k)}}^{-1} + o(\delta_*^{\frac{3}{2}})  \, .
\label{eq:defup}
\end{align} 
One can now check that under condition~\eqref{eq:c1}, it holds that
\[
  \intom{
\bra[\bigg]{\sum\limits_{k \in K^{\delta_*}}w_{k}}^3 } > a>0 \, ,
\]
 where the constant $a$ is independent of $\delta_*$. Indeed, the cube of the sum of $n$ numbers $a_i$, $i=1, \dots, n$ consists of only three types of terms, namely: $a_i^3$, $a_i^2 a_j$ and $a_i a_j a_k$. Setting the $a_i=w_{s(i)}$, with $s(i) \in K^{\delta_*}$, one can check that the first type of term will always integrate to zero.  The other two will take nonzero and in fact positive values if and only if condition~\eqref{eq:c1} is satisfied.  This follows from the fact that 
 \[
 \int_{-\pi}^{\pi} \,\cos({\ell x})\cos(m x) \cos(nx) \! \dx{x}= \frac{\pi}{2}\bra*{\delta_{\ell+m,n}+\delta_{m+n,\ell} + \delta_{n+\ell,m}}\, .
\]
 Thus, for $\delta_*$ sufficiently small considering the fact that $|K^{\delta_*}| \geq 2$ and is nonincreasing as $\delta_*$ decreases, $\varrho$ has smaller free energy and $\varrho_\infty$ is not a minimiser at $\kappa=\kappa_\sharp$.
\end{proof}
\begin{rem}
The case of the above result for $\delta_*=0$ can be thought of as the pure resonance case. In this case the set $K^0$ will denote the set of all resonant modes. Similarly, the above result for $\delta_*$ small but positive can be thought of as the near resonance case.
\end{rem}
The corollary below tells us that if we have a have a sequence of potentials whose Fourier modes grow closer to each other then it will eventually  have a discontinuous transition point, as long as the potentials do not lose mass too fast.
\begin{cor}~\label{cor:gamma}
Let $\{W^n\}_{n \in \N} \in \HH_\stable^c$ be a sequence of interaction potentials such that $\delta_*(n) \to 0$ as $n \to \infty$,
where $\delta_*$ is as defined in~\cref{def:del}. Assume further that  for all $n$ greater than some $N \in \N$, there exists a constant $C>0$ such that $\abs*{\min\limits_{k \in \N^d\setminus\set{\mathbf{0}}} \frac{\tilde{W^n}(k)}{\Theta(k)}} \geq C \delta_*(n)^{ \gamma}$ for some $\gamma<1/2$. Then for $n$ sufficiently large, the associated free energy $\cF^n_\kappa(\varrho)$ possesses a discontinuous transition point at some $\kappa_c^n < \kappa_\sharp^n$.
\end{cor}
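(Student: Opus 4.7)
The plan is to apply \cref{prop:CharactTP}\ref{prop:CharactTP:discont} to each $W^n$ individually, by repeating the competitor construction from the proof of \cref{thm:dctp} but with quantitative control over the error terms. The existence of a transition point $\kappa_c^n < \infty$ is already guaranteed by $W^n \in \HH_\stable^c$ via \cref{prop:tpex}, so the real work is to show that $\varrho_\infty$ fails to be a global minimiser at $\kappa = \kappa_\sharp^n$ once $n$ is large enough.

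Concretely, for each $n$ I would take the ansatz
\begin{equation*}
  \varrho^n = \varrho_\infty\biggl(1 + \delta_*(n)^{1/2} \sum_{k \in K^{\delta_*(n)}} w_k\biggr) \in \Prac^+(U),
\end{equation*}
which is precisely the choice $\epsilon = \delta_*(n)^{1/2}$ used in the proof of \cref{thm:dctp}. Expanding $\crF^n_{\kappa_\sharp^n}(\varrho^n)$ around $\varrho_\infty$ exactly as in that proof, the leading negative term coming from the entropy is
\begin{equation*}
  -\frac{\delta_*(n)^{3/2}\varrho_\infty}{3\beta}\int \biggl(\sum_{k \in K^{\delta_*(n)}} w_k\biggr)^3 \dx{x},
\end{equation*}
which, by condition~\eqref{eq:c1}, is bounded above by $-c\,\delta_*(n)^{3/2}$ for a constant $c>0$ depending only on $L,d$ and on the existence of at least one admissible triple $(k^a,k^b,k^c)$. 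The quadratic error term from balancing the entropy against the interaction energy equals
\begin{equation*}
  \frac{\delta_*(n)^{2}\,\varrho_\infty\,|K^{\delta_*(n)}|}{2\beta}\,\Bigl|\min_{k \in \N^d \setminus\{\mathbf{0}\}}\tilde{W^n}(k)/\Theta(k)\Bigr|^{-1},
\end{equation*}
and under the hypothesis $|\min_k \tilde{W^n}(k)/\Theta(k)| \geq C \delta_*(n)^\gamma$, this is bounded by $C'\,\delta_*(n)^{2-\gamma}\,|K^{\delta_*(n)}|$.

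Since $\gamma < 1/2$, we have $2-\gamma > 3/2$, so provided $|K^{\delta_*(n)}|$ does not grow faster than $\delta_*(n)^{\gamma - 1/2}$ (which is implied, for instance, by a uniform $\Leb^2$ bound on $W^n$ that keeps the number of near-minimal Fourier modes controlled), the cubic term dominates both the quadratic error and the $o(\delta_*(n)^{3/2})$ remainder for $n$ sufficiently large. This gives $\crF^n_{\kappa_\sharp^n}(\varrho^n) < \crF^n_{\kappa_\sharp^n}(\varrho_\infty)$, so $\varrho_\infty$ is not a global minimiser of $\crF^n_{\kappa_\sharp^n}$, and \cref{prop:CharactTP}\ref{prop:CharactTP:discont} yields a discontinuous transition point $\kappa_c^n < \kappa_\sharp^n$.

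The main obstacle I anticipate is precisely the control of $|K^{\delta_*(n)}|$: the cardinality of the near-minimal Fourier spectrum can a priori grow with $n$, and the bookkeeping above needs this growth to be sub-polynomial relative to $\delta_*(n)^{1/2 - \gamma}$. One way to dispose of this cleanly is to observe that since $W^n \in \Leb^2(U)$, Parseval forces $\tilde{W}^n(k)\to 0$ as $|k|\to\infty$, so $K^{\delta_*(n)}$ is always finite; the explicit lower bound on $|\min_k \tilde{W^n}(k)/\Theta(k)|$ combined with Parseval then yields an upper bound on $|K^{\delta_*(n)}|$ of the form $|K^{\delta_*(n)}| \leq C\,\delta_*(n)^{-2\gamma}\|W^n\|_2^2$, and requiring $\|W^n\|_2$ to stay bounded (which should be part of the natural setting of the corollary) gives room to absorb this factor since $2 - \gamma - 2\gamma = 2 - 3\gamma > 1/2 > 0$ when $\gamma < 1/2$. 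This verification, rather than the construction itself, is the delicate part.
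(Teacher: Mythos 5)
Your construction is exactly the paper's: the published proof simply returns to estimate \eqref{eq:defup} from the proof of \cref{thm:dctp} with $\epsilon=\delta_*(n)^{1/2}$, inserts the hypothesis $\abs{\min_k \tilde{W^n}(k)/\Theta(k)}\geq C\delta_*(n)^{\gamma}$ to turn the quadratic error into one of order $\delta_*(n)^{2-\gamma}$, and concludes from $2-\gamma>3/2$ (i.e.\ $\gamma<1/2$) together with the uniform positive lower bound on the cubic integral; the factor $\abs{K^{\delta_*(n)}}$ and the $o(\delta_*^{3/2})$ remainder are treated there as uniformly controlled in $n$ (``the error term is independent of the potential $W^n$''). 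So your first three paragraphs reproduce the paper's argument, and your correctly stated side condition --- that $\abs{K^{\delta_*(n)}}$ must grow no faster than $\delta_*(n)^{\gamma-1/2}$ --- is precisely what the paper uses implicitly.

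The one place where you go beyond the paper, the Parseval-based verification of that side condition, contains a quantitative slip. The bound $\abs{K^{\delta_*(n)}}\lesssim \delta_*(n)^{-2\gamma}\norm{W^n}_2^2$ is fine, but it makes the quadratic term of order $\delta_*(n)^{2-3\gamma}$, and what is required is that this be $o(\delta_*(n)^{3/2})$ so that the cubic gain dominates; comparing $2-3\gamma$ with $1/2>0$ is not the relevant comparison. The correct requirement $2-3\gamma>3/2$ forces $\gamma<1/6$, so your fix does not cover the full range $\gamma<1/2$ asserted in the corollary. To close this for all $\gamma<1/2$ you must either impose (as the paper tacitly does) that $\abs{K^{\delta_*(n)}}$ stays bounded, i.e.\ exactly your earlier condition, or argue that the cubic integral itself grows with the number of resonant triples --- which condition \eqref{eq:c1} alone does not guarantee. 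This is a defect only in the verification paragraph; the main construction and its use of \cref{prop:CharactTP}\ref{prop:CharactTP:discont} are the same as in the paper.
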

\begin{proof}
We return to estimate~\eqref{eq:defup} from the proof of~\cref{thm:dctp}
\begin{align}
\crF^n_{\kappa_\sharp}(\varrho)&\leq\crF^n_{\kappa_\sharp}(\varrho_\infty) -\frac{\delta_*^{\frac{3}{2}}\varrho_\infty}{3 \beta}\intom{\bra[\bigg]{\sum\limits_{k \in K^{\delta_*} }w_{k}}^3 } + \frac{\delta_*^{2}\varrho_\infty |K^{\delta_*}| }{2\beta} \abs*{\min\limits_{k \in \N^d\setminus\set{\mathbf{0}} } \frac{\tilde{W^n}(k)}{\Theta(k)}}^{-1} + o(\delta_*^{\frac{3}{2}})  \, ,
\end{align} 
 where we have suppressed the dependence of $\delta_*$ on $n$. We also note that the error term is independent of the potential $W^n$. Using our assumption on the potential (for $n>N$), we have
 \begin{align}
\crF^n_{\kappa_\sharp}(\varrho)&<\crF^n_{\kappa_\sharp}(\varrho_\infty) -\frac{\delta_*^{\frac{3}{2}}\varrho_\infty}{3 \beta}\intom{\bra[\bigg]{\sum\limits_{k \in K^{\delta_*} }w_{k}}^3 } + \frac{\delta_*^{2- \gamma}\varrho_\infty |K^{\delta_*}| }{2\beta}  + o(\delta_*^{\frac{3}{2}})  \, .
\end{align}
Since $\gamma<1/2$ and $\delta_* \to 0 $ as $n \to \infty$, the result follows.
\end{proof}
To conclude our discussion of discontinuous transition points, we present the following corollary to provide some more intuition of  the types of interaction potentials that exhibit a discontinuous transition point.
\begin{cor}~\label{cor:delta}
Let $\{W^n\}_{n \in \N} $ be a sequence of interaction potentials with $ \norm{W^n}_1 =C>0$  for all $ n \in \N$ such that $W^n\to -C \delta_0$  in the sense of distributions as $n \to \infty$. Then for $n$ large enough, the associated free energy $\cF^n_\kappa(\varrho)$ possesses a discontinuous transition point at some $\kappa_c^n < \kappa_\sharp^n$. 
\end{cor}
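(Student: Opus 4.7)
The plan is to apply~\cref{cor:gamma}, so I would verify (i) that $W^n \in \HH_\stable^c$ for $n$ large, (ii) that $\delta_*(n) \to 0$, and (iii) the lower bound $\abs*{\min_k \tilde{W^n}(k)/\Theta(k)} \geq C' \delta_*(n)^{\gamma}$ for some $\gamma < 1/2$. Since every $w_k$ is smooth, distributional convergence $W^n \to -C\delta_0$ gives, for each $k\in\N^d\setminus\set{\mathbf{0}}$,
\begin{equation*}
  \tilde{W^n}(k) \;=\; \intom{W^n\, w_k} \;\longrightarrow\; -C\, w_k(0) \;=\; -C\, N_k \,,
\end{equation*}
equivalently $\tilde{W^n}(k)/\Theta(k) \to -C/L^{d/2}$ pointwise in $k$. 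In particular $\tilde{W^n}(e_1)<0$ for $n$ large, giving (i) and ensuring that $\kappa_\sharp^n$ and $\delta_*(n)$ are well-defined.

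For (ii), I would combine this pointwise limit with the uniform lower bound arising from $\norm*{w_k}_\infty = N_k$ and $\norm*{W^n}_1 = C$, namely $\abs*{\tilde{W^n}(k)} \leq C N_k$, equivalently
\begin{equation*}
  \frac{\tilde{W^n}(k)}{\Theta(k)} \;\geq\; -\frac{C}{L^{d/2}} \qquad \text{for every } k \in \N^d\setminus\set{\mathbf{0}} \text{ and every } n.
\end{equation*}
Combined with the pointwise limit at $k=e_1$, this pinches $\min_{k} \tilde{W^n}(k)/\Theta(k)$ to $-C/L^{d/2}$ as $n\to\infty$. Exploiting the trivial resonance $2e_1 = e_1 + e_1$ with $e_1=(1,0,\ldots,0)$, I would then observe that for any $\varepsilon>0$ the pointwise convergence at $e_1$ and $2e_1$ together with the uniform lower bound places both modes in $K^{\varepsilon}$ for $n$ large, so condition~\eqref{eq:c1} is satisfied with $\delta=\varepsilon$; hence $\delta_*(n)\to 0$. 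Step (iii) is then immediate: the minimum converges to the strictly negative value $-C/L^{d/2}$, so for $n$ large it is bounded away from zero and in particular dominates $\delta_*(n)^{1/4}$. Applying~\cref{cor:gamma} with $\gamma=1/4$ produces the desired discontinuous transition point $\kappa_c^n < \kappa_\sharp^n$.

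The only non-mechanical step is the passage from pointwise convergence of Fourier coefficients to control of $\min_{k}\tilde{W^n}(k)/\Theta(k)$: a priori nothing prevents the minimum from being attained at some $k = k(n)\to\infty$ and drifting far below the value at any fixed mode. The hypothesis $\norm*{W^n}_1 = C$ is precisely what rules this out, through the elementary H\"older estimate above; I expect this to be the conceptually decisive observation, after which the rest is bookkeeping of the sets $K^{\delta_*(n)}$ and a direct invocation of~\cref{cor:gamma}.
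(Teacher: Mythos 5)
Your proposal is correct and follows essentially the same route as the paper: both use the H\"older bound $\tilde{W^n}(k)\geq -CN_k$ from $\norm{W^n}_1=C$ together with distributional convergence at a fixed mode and its double (your $e_1$, $2e_1$; the paper's $k$, $2k$) to force $\delta_*(n)\to 0$ while the minimum of $\tilde{W^n}(k)/\Theta(k)$ stays pinned near $-C/L^{d/2}$, and then invoke \cref{cor:gamma}. Nothing further is needed.
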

\begin{proof}
Note first that we have not included the assumption $W^n \in \HH_\stable^c$ as eventually this must be the case if the potentials
converge to a negative Dirac measure. Now we just need to check that the other conditions of~\cref{cor:gamma} hold true. We have the following estimate
\begin{align}
\tilde{W^n}(k) \geq -C N_k  \implies \frac{\tilde{W^n}(k)}{\Theta(k)} \geq  -C L^{-d/2} \, \label{eq:minWb},
\end{align} 
for all $k \in \N^d \setminus \set{\mathbf{0}}$. From the convergence to the Dirac measure it follows that for 
any $\epsilon>0$ we can find an $N$ large enough such that $\frac{\tilde{W^n}(k)}{\Theta(k)},\frac{\tilde{W^n}(2 k)}{\Theta(2 k)} \in \bra*{-C L^{-d/2},-C L^{-d/2}+\epsilon}$ for all $n >N$, for some $k \in \N^d \setminus \set{\mathbf{0}}$. This and~\eqref{eq:minWb} tells us that $\delta_* \leq \epsilon$ and since $\epsilon $ is arbitrary $\delta_* \to 0$ as $n \to \infty$. From similar arguments we assert that for all $n>N$, $\bra[\bigg]{\min\limits_{k \in \N^d\setminus\set{\mathbf{0}}} \frac{\tilde{W^n}(k)}{\Theta(k)}}< -C L^{-d/2}+\epsilon$. Thus we have that $\abs*{\bra[\bigg]{\min\limits_{k \in \N^d\setminus\set{\mathbf{0}}} \frac{\tilde{W^n}(k)}{\Theta(k)}}}> C \frac{2^{d/2}}{L^{d/2}}-\epsilon$ for $n >N$. Since the conditions of~\cref{cor:gamma}  are satisfied, we have the desired result.
\end{proof}
\begin{rem}
%
As examples of potentials that satisfy the conditions of~\cref{cor:delta}
, we have the negative Dirichlet kernel $W^n(x)=-1 -2 \sum_{k=1}^{n} w_k(x)$, the negative F\'ejer kernel
$W^n(x)=-\frac{1}{n}\bra*{\frac{1- w_n(x)}{1- w_1(x)}}$, and any appropriately scaled negative mollifier. 
\end{rem}
\subsection{Continuous transition points}
We now present a couple of technical lemmas starting with a functional inequality that gives a bound on the defect in the Gibbs inequality from below by the size of individual Fourier modes. These will be useful for the characterisation of continuous transition points provided in~\cref{thm:spgap} and, in particular, in the proof of \cref{thm:m3}\ref{thm:m3b}.
\begin{lem} \label{lem:entdef}
Let $(\Omega, \Sigma,\mu)$ be a probability space and $\left\{  w_k\right\}_{k\in \N}$ be any orthonormal basis for $\Leb^2(\Omega,\mu)$. Assume that $f\in \Leb^2(\Omega, \mu)$ is a probability density with respect to $\mu$, that is $f$ is nonegative and $\int f \dx{\mu}=1$, then we have, for any $b\in \R$ and any $k\in \Z$, the following estimate,
\begin{align}\label{RelEnt:L2:bound}
\cH(f \mu | \mu ) \geq  -\log  \int_{\Omega}  \exp\bra[\big]{ b \skp{f,w_k }_\mu w_k(x)}\dx{\mu}  + b \abs{\skp{
f,w_k}_\mu}^2     \ ,
\end{align}
In particular, let $\Omega= U$, $\mu=\varrho_\infty$ and $w_k$ is as defined in~\eqref{e:def:wk}. Moreover, for any $k \in \Z^d\setminus\set{\mathbf{0}}$ let $n=n(k) = \abs*{\set{i: k_i \ne 0}}$ denote the number of nonzero entries. Then, there exists a strictly increasing function $\cG: \R^+\to \R^+$ with $\cG(0)=0$ such that it holds
\begin{align}\label{H:bound}
\mathcal{H}(\varrho| \varrho_\infty)- C(n(k)) \tfrac{L^d }{2}|\tilde{\varrho}(k)|^2 \geq \cG(|\tilde{\varrho}(k)|) \ ,
\end{align}
where the constant $C(n) > 0$ for is given by $C(1)=C(2)=1$ and for $n>2$ by
\begin{equation}
  C(n) = \frac{(n/2)^n}{(n-1)^{n-1}} < 1 \, . 
\end{equation}
\end{lem}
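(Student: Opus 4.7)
The first inequality~\eqref{RelEnt:L2:bound} I would deduce directly from the Gibbs (Donsker--Varadhan) variational representation of the relative entropy,
\begin{equation*}
  \mathcal{H}(f\mu|\mu) = \sup_{g}\Bigl\{ \int g f \dx{\mu} - \log \int e^{g}\dx{\mu} \Bigr\},
\end{equation*}
where the supremum runs over bounded measurable $g$. Plugging in the concrete test function $g(x)= b\,\langle f, w_k\rangle_\mu\, w_k(x)$ and using that $\int g f \dx{\mu} = b |\langle f, w_k\rangle_\mu|^{2}$ gives exactly the right-hand side of~\eqref{RelEnt:L2:bound}. No choice of $b$ is needed at this stage: we just read off the variational lower bound.

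To derive the concrete torus estimate~\eqref{H:bound} I would first transfer the setting. Let $\mu=\varrho_\infty\dx{x}$ and $f = L^{d}\varrho$, so that $f$ is a probability density with respect to $\mu$, and let $W_k = L^{d/2}w_k$, which is orthonormal in $L^{2}(U,\mu)$. Then $\langle f,W_k\rangle_\mu = L^{d/2}\tilde{\varrho}(k)$. Writing $a := \tilde\varrho(k)$ and applying the first part of the lemma with this basis, the bound takes the shape
\begin{equation*}
\mathcal{H}(\varrho | \varrho_\infty) \;\ge\; b L^{d} |a|^{2} - \log \int e^{b L^{d} a\, w_k(x)}\dx{\mu} \, ,
\end{equation*}
and it remains to estimate the log-moment generating function on the right. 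The key structural ingredients are: (i) $w_k$ has zero mean with respect to $\mu$; (ii) $\int w_k^{2}\dx{\mu} = 1/L^{d}$; and (iii) the pointwise bound $|w_k(x)|\le N_k = 2^{n/2}/L^{d/2}$, where $n=n(k)$ is the number of nonzero entries of $k$. In addition, because $w_k= N_k \prod_{i}w_{k_i}$ is a product of cosines and sines in distinct variables, all odd moments of $w_k$ under $\mu$ vanish and the even moments factor as $\int w_k^{2\ell}\dx{\mu} = L^{-d\ell}\bigl(\binom{2\ell}{\ell}/2^{\ell}\bigr)^{n}/\binom{2\ell}{\ell}$, which lets the series $\int e^{sw_k}\dx{\mu}$ be estimated term by term.

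Using these ingredients I would establish an upper bound of the form $\log \int e^{s w_k}\dx{\mu}\le q_n(s)$ where $q_n$ has the expansion $q_n(s)= s^{2}/(2L^{d}) + O(s^{4})$ near zero and satisfies a sharp exponential-type bound of Bennett/Bernstein type away from zero, tuned so that its Legendre transform at the point $L^{d}|a|$ equals $C(n) L^{d}|a|^{2}/2$ plus a strictly increasing correction. Optimising $b$ in the Gibbs inequality is then equivalent to computing this Legendre transform, and the specific constant $C(n) = (n/2)^n/(n-1)^{n-1}$ arises precisely from the stationarity condition for the Bennett function on the quadratic/exponential interpolation between $\|w_k\|_{L^2(\mu)}^{2}$ and $\|w_k\|_\infty$. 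The cases $n=1,2$ are limit cases in which the asymmetry of $w_k$ (e.g., the fact that $\int e^{sw_k}\dx\mu = I_0(s\sqrt{2/L^d})$ for $n=1$, which satisfies $I_0(c)\le e^{c^2/4}$) leaves enough slack to preserve $C(n) = 1$; the strictly positive remainder $\mathcal{G}(|a|)$ is obtained from the strictly negative quartic (and higher) corrections in the log-MGF.

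The main obstacle I anticipate is in this last step: identifying the sharp sub-exponential estimate on $\log\int e^{s w_k}\dx\mu$ that simultaneously (a) recovers the leading quadratic coefficient $1/(2L^d)$ without loss, (b) reproduces the explicit combinatorial constant $C(n)=(n/2)^n/(n-1)^{n-1}$ after Legendre transform, and (c) leaves a strictly increasing, strictly positive remainder $\mathcal{G}$ valid on all of $\mathbb{R}^{+}$ (not only near the origin). A crude Hoeffding-type bound $\log\int e^{s w_k}\dx\mu\le s^{2} N_k^{2}/2$ loses a factor $2^{n}$ in the quadratic term and is therefore insufficient; the improvement must use the actual variance $1/L^{d}$ (not just the $L^\infty$-bound $N_k$) combined with the absence of odd moments. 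Extending the bound globally will then follow from convexity and from the fact that the ratio $\phi_n(|a|)/|a|^{2}$ of the Legendre transform to the quadratic term is monotone, yielding the required strictly increasing $\mathcal{G}$.
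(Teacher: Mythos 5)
Your treatment of \eqref{RelEnt:L2:bound} is correct and coincides with the paper's: the Donsker--Varadhan representation you invoke is just the Fenchel dual the paper uses, with the test function $g=b\skp{f,w_k}_\mu w_k$ normalised by subtracting $\log\int e^{b\skp{f,w_k}_\mu w_k}\dx{\mu}$. The reduction of \eqref{H:bound} to controlling the log-moment generating function $\psi(s)=\log\int e^{s w_k}\dx{\mu}$, followed by a tuned choice of $b$, is also the right skeleton.

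The gap is that the single estimate carrying the whole content of the lemma --- an upper bound $\psi(s)\le \tfrac{s^2}{2C(n)L^d}-(\text{strictly increasing defect})$ valid for all $s$, with the exact constant $C(n)=(n/2)^n/(n-1)^{n-1}$ --- is never established: you list the properties it should have and flag it yourself as the main obstacle. Moreover, the mechanism you propose would not deliver it. A Bennett/Bernstein bound built from the variance $1/L^d$ and the sup bound $N_k=2^{n/2}L^{-d/2}$ has Cram\'er transform $\tfrac{\sigma^2}{M^2}h(aM/\sigma^2)$ with $h(u)=(1+u)\log(1+u)-u<u^2/2$, so it never exceeds the pure quadratic $\tfrac{L^d}{2}a^2$ (hence cannot give $C(1)=C(2)=1$ with a positive remainder except via the special $I_0$ bound you mention for $n=1$), and for $n\ge 3$ it is quantitatively too weak: at $|a|$ near the admissible extreme $N_k$ it gives roughly $n\log 2$, below the required $C(n)2^{n-1}=n^n/\bra{2(n-1)^{n-1}}$. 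The constant $C(n)$ is not a Bennett stationarity value. In the paper it arises from writing the MGF exactly as the even series $\mathcal{I}_n(z)=\sum_{l\ge 0} z^{2l}\,((2l)!)^{n-1}/\bra{(l!)^{2n}2^{2ln}}$ and proving that $\exp\bra{\lambda z^2/2^{n+1}}/\mathcal{I}_n(z)$ is strictly increasing, with $\lambda=1/C(n)$, via the criterion that the ratio of the Taylor coefficients of numerator and denominator be increasing; that requirement reduces to $\lambda^{1/(n-1)}\ge \sup_{l\ge 1}(2l+1)/(l+1)^{n/(n-1)}$, whose supremum is attained at $l=(n-2)/2$ and yields exactly $(n-1)^{n-1}/(n/2)^n$. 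With this monotone gap in hand the paper takes the single choice $b=L^d/\lambda$ (no Legendre optimisation is needed) and reads off both the quadratic term and $\cG$. The ``monotonicity of $\phi_n(|a|)/|a|^2$'' you appeal to at the end is precisely this unproved step. (Minor slip: the even moments are $\int w_k^{2\ell}\dx{\mu}=L^{-d\ell}2^{-n\ell}\binom{2\ell}{\ell}^{n}$, not $L^{-d\ell}\binom{2\ell}{\ell}^{n-1}2^{-n\ell}$.)
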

%
\begin{defn} \label{defn:astable}
Assume that $W \in \HH_\stable^c $ has one dominant negative mode, i.e., there exists a unique $ k^\sharp \in \N^d$ such that $\frac{\tilde{W}(k^\sharp)}{\Theta(k^\sharp)}=\min_{k \in \N^d}\frac{\tilde{W}(k)}{\Theta(k)}$(as defined in~\eqref{eq:poc}). We define the $\alpha$-stabilised potential $W_\alpha(x)$ as follows
\begin{align}
W_\alpha(x)=\skp*{W,w_{k^\sharp}}w_{k^\sharp}(x)+\alpha(W_\unstable(x)- \skp*{W,w_{k^\sharp}}w_{k^\sharp}(x))  + W_\stable(x)   \, ,
\end{align}
 where $\alpha \in [0,1]$, $W_\stable(x), W_\unstable(x)$ are as defined in~\cref{def:Hstable}, and $W_1(x)=W(x)$.  
\end{defn}
The above definition puts into context the discussion around Figure~\ref{fig:dcctp}(a) in~\autoref{S:intro}, i.e., the $\alpha$-stabilised potential $W_\alpha$ pushes all negative modes except the dominant one to some small
neighbourhood of $0$. We define the fixed point equation associated with the interaction potential $W_\alpha$ to be
\begin{align}
F_\kappa(\varrho,\alpha)= \varrho(x)- \frac{1}{Z}e^{-\beta \kappa W_\alpha \star \varrho} \,.
\end{align}
\begin{lem}\label{lem:lbfour}
Let $W_\alpha(x)$ be as in \cref{defn:astable} and let $\cC \subset \Prac^+(U)$ denote the set of nontrivial solutions of $F_{\kappa_\sharp}(\varrho,\alpha)=0$ for $\alpha \in [0,\alpha^*) \subset [0,1]$. Then, for $\alpha^*$ sufficiently small, we have the uniform lower bound $\sum\limits_{\sigma \in \Sym(\Lambda)}|\tilde{\varrho}(\sigma(k^\sharp))|^2 >c$ for all $\varrho \in \cC$ and for some $c>0$ independent of $\alpha \in [0,\alpha^*)$.
\end{lem}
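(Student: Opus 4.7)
The plan is to argue by contradiction, combining a compactness argument for the Gibbs map with the local bifurcation structure provided by~\cref{thm:c1bif}. Suppose the claim fails: there exist sequences $\alpha_n \downarrow 0$ in $[0,1]$ and nontrivial $\varrho_n \in \Prac^+(U)$ satisfying $\varrho_n = \cT_{\alpha_n}\varrho_n$ with $\sum_{\sigma \in \Sym(\Lambda)} \abs{\tilde\varrho_n(\sigma(k^\sharp))}^2 \to 0$.

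First I will extract a convergent subsequence. The $\Leb^\infty$- and $\SobH^1$-bounds~\eqref{eq:linfT}--\eqref{eq:H1T} used in the proof of~\cref{thm:wellpsPDE} are uniform in $\alpha \in [0,1]$ since $W_\alpha$ varies continuously in $\SobH^1(U) \cap \Leb^\infty(U)$, so Rellich's theorem provides a subsequence with $\varrho_n \to \varrho^*$ in $\Leb^2(U)$ for some $\varrho^* \in \Prac(U)$. Continuity of $\cT_\alpha$ in $(\varrho,\alpha)$ together with $W_{\alpha_n} \to W_0$ then yields $\varrho^* = \cT_0 \varrho^*$, while $\Leb^2$-convergence of Fourier coefficients gives $\tilde\varrho^*(\sigma(k^\sharp)) = 0$ for every $\sigma \in \Sym(\Lambda)$.

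Next I will identify $\varrho^* = \varrho_\infty$. By~\cref{defn:astable}, $W_0 = \tilde W(k^\sharp) w_{k^\sharp} + W_\stable$, so $k^\sharp$ is the only negative Fourier mode of $W_0$. Since $\tilde\varrho^*(\sigma(k^\sharp)) = 0$, the convolution representation underlying~\eqref{Fourier:Interaction} collapses to $W_0 \star \varrho^* = W_\stable \star \varrho^*$, and hence $\varrho^*$ is a fixed point of the Gibbs map for the $H$-stable potential $W_\stable$. Applying~\cref{prop:con} to $W_\stable$ (for which the unstable part is identically $0$, so $\kappa_{\mathrm{con}} = \infty$) gives strict convexity of the corresponding free energy on $\Prac(U)$ for every $\kappa > 0$, and its unique critical point must be $\varrho_\infty$ by~\cref{prop:tfae}. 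Therefore $\varrho^* = \varrho_\infty$.

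To conclude, I will derive a contradiction from $\varrho_n \to \varrho_\infty$ by a Lyapunov--Schmidt reduction that is uniform in $\alpha$. Because $\tilde W_\alpha(k^\sharp) = \tilde W(k^\sharp)$ is independent of $\alpha$, the linearisation $I - \kappa_\sharp \hat T_\alpha$ retains kernel $\operatorname{span}\set{w_{k^\sharp}}$, and for $\alpha$ small the complementary eigenvalues $1 + \beta\kappa_\sharp \tilde W_\alpha(k)/(L^{d/2}\Theta(k))$ are bounded below by $1-\alpha > 0$, yielding a uniform spectral gap on $(\operatorname{span}\set{w_{k^\sharp}})^\perp$. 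The implicit function theorem then produces, uniformly in $\alpha$ small, a smooth $v(s,\alpha)$ with $v(0,\alpha) = 0$ and a scalar reduction $g(s,\alpha) = \skp{\hat F(s w_{k^\sharp} + v(s,\alpha),\kappa_\sharp,\alpha), w_{k^\sharp}}$ satisfying $g(0,\alpha) = g_s(0,\alpha) = g_{ss}(0,\alpha) = 0$; the last identity follows from $\intT{w_{k^\sharp}^3} = 0$ exactly as in the derivation of $\kappa'(0) = 0$ in~\cref{thm:c1bif}. Factoring $g(s,\alpha) = s^3 \tilde g(s,\alpha)$ with $\tilde g(0,0) > 0$ (the analogue of $\kappa''(0) > 0$), continuity forces $\tilde g > 0$ on a neighbourhood of $(0,0)$, so $g(s,\alpha) = 0$ with $(s,\alpha)$ small compels $s = 0$, i.e.\ $\varrho = \varrho_\infty$. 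Since $\varrho_n \to \varrho_\infty$, eventually $\varrho_n$ lies in this neighbourhood and must coincide with $\varrho_\infty$, contradicting its nontriviality. The main obstacle will be making the Lyapunov--Schmidt reduction genuinely uniform in $\alpha$: I will need to verify that the implicit function construction produces an $\alpha$-independent neighbourhood and that the vanishing of the first three $s$-derivatives of $g$ at $s=0$, together with the positivity of $g_{sss}(0,\alpha)$, persist for all small $\alpha$ rather than only at $\alpha = 0$.
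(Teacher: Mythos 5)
Your overall architecture is genuinely different from the paper's: you argue by contradiction via compactness of the fixed-point set (uniform in $\alpha$), identify the limit as $\varrho_\infty$ through the $H$-stability of $W_\stable$ and strict convexity (\cref{prop:con}, \cref{prop:tfae}), and then invoke a uniform-in-$\alpha$ local uniqueness statement; the paper instead runs a two-parameter Crandall--Rabinowitz/Lyapunov--Schmidt argument treating $(s,\alpha)$ jointly as kernel variables and never uses compactness. Steps (1)--(3) of your plan are sound (with the minor caveat that the uniform bounds on the fixed points come from $\norm{W_{\alpha-}}_\infty$, $\norm{W_\alpha}_1$, $\norm{\nabla W_\alpha}_2$ under the boundedness-below hypotheses in force in \cref{thm:spgap}, not from $W_\alpha\in\Leb^\infty$), and your cubic factorisation $g(s,\alpha)=s^3\tilde g(s,\alpha)$ is a clean device for producing an $\alpha$-independent neighbourhood: the facts it rests on ($\tilde{W}_\alpha(k^\sharp)$ independent of $\alpha$, complementary eigenvalues bounded below by $1-\alpha$, $\intT{w_{k^\sharp}^3}=0$) are all correct, and only nonvanishing of $g_{sss}(0,0)$, not its sign, is needed.

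The genuine gap is in the final step, and it is the translation symmetry that the paper spends the first half of its proof handling. Your reduction presumes that $I-\kappa_\sharp\hat{T}_\alpha$ has one-dimensional kernel $\operatorname{span}\set{w_{k^\sharp}}$, but this holds only on the symmetric subspace $\Leb^2_s(U)$ (or on $\Leb^2_{k^\sharp}(U)$); on the space where the solutions $\varrho_n\in\Prac^+(U)$ actually live, the kernel is $\operatorname{span}\set{w_{\sigma(k^\sharp)}:\sigma\in\Sym(\Lambda)}$, of dimension $2^{n(k^\sharp)}\geq 2$ (in $d=1$ the sine mode $w_{-k^\sharp}$ is also a null direction), reflecting the translation invariance of $F_{\kappa}(\cdot,\alpha)$. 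The $\varrho_n$ need not be coordinate-wise even, so you cannot simply restrict to $\Leb^2_s(U)$; and the smaller space $\Leb^2_{k^\sharp}(U)$, on which the kernel is one-dimensional, is not invariant under the Gibbs map (the exponential regenerates the discarded modes), which is precisely why the paper introduces the projected map $F^*$ and proves Fredholmness and local uniqueness for it. As written, your scalar equation $g(s,\alpha)=0$ therefore does not govern the $\varrho_n$, and the conclusion that $\varrho_n$ eventually equals $\varrho_\infty$ does not follow. The repair is available because both your contradiction hypothesis and the desired bound are translation invariant: translate each $\varrho_n$ so that $\tilde{\varrho}_n(\sigma(k^\sharp))=0$ for all $\sigma\neq e$ (immediate when $n(k^\sharp)=1$, which is the setting the paper needs) and carry out your uniform reduction for the projected map $F^*$ on $\Leb^2_{k^\sharp}(U)$, or else perform the Lyapunov--Schmidt with the full $2^{n(k^\sharp)}$-dimensional kernel and account for the translation orbit of nontrivial solutions. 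Without one of these, the proof is incomplete.
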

We are now in the position to give the precise statement of \cref{thm:m3}\ref{thm:m3b} and prove it. We present the proofs of~\cref{lem:entdef} and~\cref{lem:lbfour} after the proof of~\cref{thm:spgap}.
\begin{thm}\label{thm:spgap}
Let $W_\alpha(x)$ be as in \cref{defn:astable} such that $ \Theta(k^\sharp) \leq 2$ where $\Theta(k)$ is as defined in~\eqref{e:def:thetak}. Assume further that $W_\unstable$ and $W_\stable$ are bounded below. Then, for $\alpha$ sufficiently small, the system exhibits a continuous transition point at $\kappa_c=\kappa_\sharp$.
\end{thm}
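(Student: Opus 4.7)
The plan is to apply \cref{prop:CharactTP}\ref{prop:CharactTP:cont}: it suffices to show that for $\alpha$ sufficiently small, $\varrho_\infty$ is the \emph{unique} minimiser of $\crF_{\kappa_\sharp}$, since then the continuous transition point sits exactly at the point of critical stability. Suppose for contradiction that some $\varrho \neq \varrho_\infty$ also minimises $\crF_{\kappa_\sharp}$. Then by \cref{lem:pos}, \cref{thm:wellpsPDE}, and \cref{prop:tfae}, $\varrho$ is a smooth, strictly positive, non-trivial stationary solution, and \cref{lem:lbfour} supplies the uniform lower bound $\sum_{\sigma \in \Sym(\Lambda)}|\tilde\varrho(\sigma(k^\sharp))|^2 \geq c > 0$ for every $\alpha \in [0,\alpha^*)$. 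The contradiction will come from forcing $\crF_{\kappa_\sharp}(\varrho) > \crF_{\kappa_\sharp}(\varrho_\infty)$.

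\textbf{Fourier decomposition and the dominant-mode cancellation.} Using \eqref{FreeEnergyExcess} and expanding the interaction energy via \eqref{Fourier:Interaction} applied to $g := \varrho - \varrho_\infty$, I split $\cE_{W_\alpha}(g,g)$ into (i) the dominant mode $k^\sharp$, (ii) the remaining unstable modes, whose contribution carries the prefactor $\alpha$ from \cref{defn:astable}, and (iii) the stable modes, which give a non-negative contribution and may be dropped. The defining relation $\kappa_\sharp \tilde{W}(k^\sharp)/\Theta(k^\sharp) = -L^{d/2}/\beta$ from \eqref{eq:koc} gives $(\kappa_\sharp/2)\,\tilde{W}(k^\sharp)/N_{k^\sharp} = -L^d/(2\beta)$, so the $k^\sharp$-mode contribution to the free-energy difference equals $-L^d/(2\beta)$ times the orbit sum $\sum_\sigma |\tilde\varrho(\sigma(k^\sharp))|^2$; this is exactly the quadratic quantity that the leading part of the entropy expansion must match, consistent with $\kappa_\sharp$ being a point of marginal linear stability for $\varrho_\infty$.

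\textbf{The improved entropy defect.} Since both $\crF$ and the orbit sum are translation-invariant, $\varrho$ may be translated to simplify its Fourier content on the $k^\sharp$-orbit. Under the hypothesis $\Theta(k^\sharp) \leq 2$, equivalently $n(k^\sharp) \leq 2$ in the notation of \cref{lem:entdef}, the $n(k^\sharp)$ translation directions associated with the non-zero components of $k^\sharp$ yield a linear system on the at most two independent complex orbit phases whose coefficient matrix is invertible (the nonzero components of $k^\sharp$ being nonzero integers), allowing us to concentrate the orbit-projection of $\varrho$ on the cosine-type mode $w_{k^\sharp}$. Applying the sharper part of \cref{lem:entdef} to $k = k^\sharp$, for which $C(n(k^\sharp)) = 1$, yields
\begin{equation*}
  \cH(\varrho|\varrho_\infty) \geq \tfrac{L^d}{2}|\tilde\varrho(k^\sharp)|^2 + \cG(|\tilde\varrho(k^\sharp)|),
\end{equation*}
so that the quadratic term cancels the dominant-mode interaction exactly and the strictly positive remainder $\beta^{-1}\cG(|\tilde\varrho(k^\sharp)|)$ survives. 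The lower bound from \cref{lem:lbfour}, together with the translation reduction, implies that $|\tilde\varrho(k^\sharp)|$ stays bounded below by some constant depending only on $c$, $d$, and $n(k^\sharp)$, so this remainder is at least some $\beta^{-1} g_0 > 0$ uniformly in $\alpha \in [0,\alpha^*)$.

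\textbf{Control of the $\alpha$-perturbation and conclusion.} The contribution of the non-dominant unstable modes is controlled via Young's inequality,
\begin{equation*}
  \bigl|\cE_{W_\unstable - \tilde{W}(k^\sharp) w_{k^\sharp}}(g,g)\bigr| \leq \|W_\unstable - \tilde{W}(k^\sharp) w_{k^\sharp}\|_1\,\|g\|_2^2 \leq C_W,
\end{equation*}
where the $L^2$-bound on $g$ comes from the $L^\infty$ bound $B_0$ of \cref{lem:1} on minimisers, which is available because the lower-boundedness assumption on $W_\stable$ and $W_\unstable$ guarantees that $W_\alpha$ still satisfies \eqref{ass:B}. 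Combining everything,
\begin{equation*}
  \crF_{\kappa_\sharp}(\varrho) - \crF_{\kappa_\sharp}(\varrho_\infty) \geq \beta^{-1} g_0 - \tfrac{\alpha \kappa_\sharp C_W}{2},
\end{equation*}
which is strictly positive once $\alpha < \min\{\alpha^*, 2g_0/(\beta \kappa_\sharp C_W)\}$, contradicting the minimality of $\varrho$. Hence $\varrho_\infty$ is the unique minimiser of $\crF_{\kappa_\sharp}$ and the theorem follows from \cref{prop:CharactTP}\ref{prop:CharactTP:cont}. The most delicate step is the translation-based reduction in the third paragraph: the hypothesis $\Theta(k^\sharp) \leq 2$ is exactly what ensures that $n(k^\sharp)$ translation directions suffice to zero out the $2^{n(k^\sharp)-1}$ independent orbit phases, reducing the orbit sum to a single mode on which the sharp form of \cref{lem:entdef} can be deployed.
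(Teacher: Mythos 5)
Your argument is correct and follows essentially the same route as the paper: reduce to uniqueness of the minimiser at $\kappa_\sharp$ via \cref{prop:CharactTP}, translate so that the $k^\sharp$-orbit mass sits on $w_{k^\sharp}$, cancel the dominant-mode interaction against the quadratic part of the entropy using \cref{lem:entdef} (where $\Theta(k^\sharp)\leq 2$ gives $C(n(k^\sharp))=1$), keep the strictly positive remainder $\cG$ bounded below uniformly in $\alpha$ via \cref{lem:lbfour}, and absorb the remaining unstable modes into an $O(\alpha)$ term. The only cosmetic deviations are the contradiction framing (the paper estimates all critical points directly) and that you bound $\norm{\varrho}_2^2$ through the constant $B_0$ of \cref{lem:1} rather than through the fixed-point estimate \eqref{eq:linfT}; in either case one should record that this bound can be chosen uniformly in $\alpha\in[0,\alpha^*)$, which the paper checks explicitly.
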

\begin{proof}
By~\cref{prop:CharactTP}~\ref{prop:CharactTP:cont}, it is sufficient to show that at the point of
critical stability $\kappa_\sharp$, i.e.,
\begin{align}
 \kappa_\sharp=\kappa_c=
-\frac{L^{\frac{d}{2}}\Theta(k)}{\beta \tilde{W_\alpha}(k^\sharp)}=-\frac{L^{\frac{d}{2}}\Theta(k)}{\beta \tilde{W}(k^\sharp)} \, ,
\end{align}
 the uniform state $\varrho_\infty$ is the unique minimiser, for $\alpha$ small enough. Let $\varrho$ be any solution of $F_{\kappa_\sharp}(\varrho,\alpha)=0$, i.e., a critical point of $\crF_{\kappa_\sharp}$ (cf.~\cref{prop:tfae}). Then we have
\begin{align}
\crF(\varrho)-\crF(\varrho_\infty)&= \beta^{-1}\mathcal{H(\varrho|\varrho_\infty)} + \frac{\kappa_\sharp}{2}\mathcal{E}(\varrho-\varrho_\infty,\varrho-\varrho_\infty) \\
&= \beta^{-1}\mathcal{H(\varrho|\varrho_\infty)}  + \frac{\kappa_\sharp}{2} L^{d/2} \frac{\tilde{W}(k^\sharp)}{\Theta(k^\sharp)}\left(\sum\limits_{\sigma \in \Sym(\Lambda)}|\tilde{\varrho}(\sigma(k^\sharp))|^2 \right) \\ \nonumber
&\qquad + \frac{\kappa_\sharp}{2} L^{d/2} \sum\limits_{k \in \N^d, k \neq k^\sharp }\frac{\tilde{W_\alpha}(k)}{\Theta(k)}\left(\sum\limits_{\sigma \in \Sym(\Lambda)}|\tilde{\varrho}(\sigma(k))|^2 \right) \, .
\end{align}
We can translate $\varrho$ w.l.o.g so that $\varrho(\sigma(k^\sharp))=0, \forall \sigma \in (\Sym(\Lambda)-e)$ and throw away all positive $\tilde{W_\alpha}(k)$. A consequence of this is that $|\tilde{\varrho}(k^\sharp)|^2 = \sum\limits_{\sigma \in \Sym(\Lambda)}|\tilde{\varrho}(\sigma(k^\sharp))|^2$. Thus we obtain
\begin{align}
\crF(\varrho)-\crF(\varrho_\infty)&\geq \beta^{-1}\left(\mathcal{H(\varrho|\varrho_\infty)}  - \frac{L^d}{2}|\tilde{\varrho}(k^\sharp)|^2\right) \\
&\qquad +\frac{\beta^{-1}L^d}{2} \sum\limits_{k \in \N^d, k \neq k^\sharp }\bra*{\frac{\tilde{W_\alpha}(k)\Theta(k^\sharp)}{\Theta(k)\tilde{W}(k^\sharp)}}_-\left(\sum\limits_{ \sigma \in \Sym(\Lambda)} |\tilde{\varrho}(\sigma(k))|^2 \right) \, . 
\end{align}
Since $\tilde{W}_\alpha(k)=\alpha\tilde{W}(k) $ for all $k \in \N^d, k \neq k^\sharp$ with $\tilde{W}(k)<0$ and
by definition $\tilde{W}(k)/\Theta(k) \geq \tilde{W}(k^\sharp)/\Theta(k^\sharp)$, we can obtain the estimate
\begin{align}
\crF(\varrho)-\crF(\varrho_\infty)&\geq  \beta^{-1} \left(\mathcal{H(\varrho|\varrho_\infty)}  - \frac{ L^d}{2}|\tilde{\varrho}(k^\sharp)|^2\right) -\frac{ \alpha\beta^{-1} L^d}{2} \sum\limits_{k \in \N^d, k \neq k^\sharp}\left(\sum\limits_{\sigma \in \Sym(\Lambda)}|\tilde{\varrho}(\sigma(k))|^2 \right) \, .
\end{align}
We apply \cref{lem:entdef} to the first term on the right hand side
\begin{align}
  \crF(\varrho)-\crF(\varrho_\infty)&> \beta^{-1} \bra*{ \cG(|\tilde\varrho(k^\sharp)|) - \frac{\alpha L^d}{2} \norm*{\varrho}_2^2} \, . 
\end{align}
Here, we use that the fact that the assumption that $ \Theta(k^\sharp) \leq 2$ is equivalent to $n(k^\sharp)\leq 2$, where $n(k^\sharp)$ is the number of nonzero components in $k^\sharp$ as defined in the statement of~\cref{lem:lbfour}. Now, we use the result of~\cref{lem:entdef} with the constant $c$ and the monotonicity of the function $\cG$ to further estimate
\begin{align}
\crF(\varrho)-\crF(\varrho_\infty)&> \beta^{-1}\left( \cG(c) -\frac{ \alpha L^d}{2}\norm*{\varrho}_2^2\right) \, ,
\end{align}
where $c$ is precisely the constant from \cref{lem:lbfour} for $\alpha \in [0,\alpha^*)$.
Since $\varrho$ is a zero of $F_{\kappa_\sharp}(\varrho,\alpha)=0$ we have the following estimate
\begin{align}
\norm*{\varrho}_2^2 \leq \norm*{\varrho}_\infty \stackrel{\eqref{eq:linfT}}{\leq} \exp \bra*{\beta \kappa \bra*{\norm*{W_{\alpha-}}_\infty + \norm*{W_{\alpha}}_1 }} \leq \exp \bra*{\beta \kappa \bra*{\norm*{W_{\alpha-}}_\infty + L^{-d}\norm*{W_{\alpha}}_2 }} \, .
\end{align}
If we restrict $\alpha$ to $[0,\alpha^*)$ as in~\cref{lem:lbfour}, we can obtain the following estimates on the norms of $W_\alpha$:
\begin{align}
\norm*{W_{\alpha -}}_\infty  &\leq \norm*{W_{\stable-}}_\infty + \norm*{W_{\unstable-}}_\infty +(\alpha +1 )|\tilde{W}(k^\sharp)| \\
& \leq \norm*{W_{\stable -}}_\infty + \norm*{W_{\unstable -}}_\infty +(\alpha^* +1 )|\tilde{W}(k^\sharp)| \,, \\
\text{and}\qquad 
\norm*{W_{\alpha}}_2^2 &= \norm*{W_\stable}_2^2 + \alpha^2\norm*{W_\unstable}_2^2 + (1-\alpha)^2 |\tilde{W}(k^\sharp)|^2 \\
&\leq \norm*{W_\stable}_2^2 + (\alpha^*)^2\norm*{W_\unstable}_2^2 +  |\tilde{W}(k^\sharp)|^2 \, .
\end{align}
Thus for $\alpha \in [0,\alpha^*)$ we have $\norm*{\varrho}_2^2<c_1$ for some positive constant $c_1$ independent of $\alpha$.  
Thus, for $\alpha < \frac{2\cG(c)}{L^dc_1}$, the result holds.
\end{proof}
\begin{proof}[Proof of~\cref{lem:entdef}]
Using its Fenchel dual, the relative entropy has the following formulation 
\begin{align}\label{eq:RelEntDual}
\cH(f \mu | \mu) = \sup_{g \in \Leb^2(\Omega,\mu)} \set*{ \int f g \dx\mu
:  \int e^{g} \dx\mu \leq 1 } \, . 
\end{align}
From here a lower bound is obtained by choosing, for $b\in \R$ arbitrary,
\[
 g(x) =b \skp{f,w_k}_\mu w_k(x) - \log \int \exp\bra[\big]{b \skp{f,w_k}_\mu w_k(x)} \dx{\mu}.
\]
It is easy to check that $\int e^{g} \dx\mu = 1$ and hence $g$ is admissible in~\eqref{eq:RelEntDual}. The estimate~\eqref{RelEnt:L2:bound} follows by plugging this specific choice of $g$ into~\eqref{eq:RelEntDual}
\begin{align}\label{RelEnt:L2:bound:p}
\cH(f \mu | \mu) & \geq -\log \int \exp\bra[\big]{b \skp{f,w_k}_\mu w_k(x)} \dx{\mu} + b |\skp{f,w_k}_\mu|^2 \,.
\end{align}
In the special case $\Omega=U$ and $\mu=\varrho_\infty$, setting $f= \frac{\varrho}{\varrho_\infty}$,  we obtain from~\eqref{RelEnt:L2:bound:p} the lower bound
\begin{align}
\mathcal{H}(\varrho|\varrho_\infty) \geq -\log \int \exp\bra*{b \tilde{\varrho}(k)w_k(x)} \varrho_{\infty}\dx{x} +  b |\tilde{\varrho}(k)|^2  \, . 
\end{align}
We can pick $b=\alpha L^d$ for some $\alpha>0$ and set $y=L^{d/2} 2^{n/2} \tilde{\varrho}(k)$. We thus obtain,
\begin{align}
\mathcal{H}(\varrho|\varrho_\infty) \geq \frac{\alpha y^2 }{2^n}  - \log\bra[\Bigg]{\varrho_\infty \intom{e^{\alpha y \prod_{i=1}^{n} \cos(2 \pi k_i x_i /L) }} } \ ,
\end{align}
where the $w_{k_i}(x_i)$ are as defined previously and $n \geq 1$ represents the number of $k_i \neq 0$.
 Setting $x_i=\frac{L}{2 \pi k_i} \theta_i$ for all $k_i \neq 0$,  we arrive at
\begin{equation}
\mathcal{H}(\varrho|\varrho_\infty) \geq \frac{\alpha y^2 }{2^n}  - \log \bra*{\frac{1}{2^n \pi^n} \int_{[0,2 \pi]^n}
\! \exp\bra*{\alpha y \prod_{i=1}^{n} \cos (\theta_i)} \prod\limits_{j=1}^{n} \dx{\theta_j}}
 \, . \label{H:bound:p0} 
\end{equation}
We introduce the function
\begin{align}
\mathcal{I}_n(z)&=\frac{1}{2^n \pi^n} \int_{[0,2 \pi]^n}
 \exp\bra*{z \prod_{i=1}^{n} \cos (\theta_i)} \prod\limits_{j=1}^{n} \dx{\theta_j} = \sum_{l=0}^\infty \frac{z^{2l}}{(2l)!} \bra*{\frac{1}{\pi} \int_0^{\pi} \cos(\theta)^{2l} \dx{\theta}}^n \\
 &= \sum_{l=0}^\infty z^{2l} \frac{((2l)!)^{n-1}}{(l!)^{2n} 2^{2 l n} }  \, .
\end{align}
We will show that
\begin{equation}\label{e:mathcalI:lb}
 \tilde\cG(z) =\frac{\lambda z^2}{2^{n+1}} - \log \mathcal{I}_n(z)  \qquad\text{with}\qquad \lambda = \lambda(n) =
 \begin{cases}
  1 &, n\in \set{1,2} \\
  \frac{(n-1)^{n-1}}{(n/2)^n} &, n >2
 \end{cases} \, , 
\end{equation}
is strictly increasing in $z$ with $\tilde\cG(0)=0$.
Once we have shown~\eqref{e:mathcalI:lb}, the proof concludes by combining this with~\eqref{H:bound:p0} to
deduce that
\[
  \mathcal{H}(\varrho|\varrho_\infty) -  \tilde\cG(\alpha y) \geq \frac{\alpha y^2 }{2^n} - \frac{\lambda \alpha^2 y^2}{2^{n+1}} = \bra*{2- \alpha \lambda}\alpha \frac{y^2}{2^{n+1}} \stackrel{\alpha=\lambda^{-1}}{=} \frac{y^2}{\lambda \, 2^{n+1}} ,
\]
from where the result~\eqref{H:bound} follows by setting $\cG(y) = \tilde\cG(y/\lambda)$. 

It is left now to show~\eqref{e:mathcalI:lb}. 
For its validity, it is sufficient to note that $\cI_n(0)=1$ and to show that $\exp\bra*{\lambda z^2/(\lambda 2^{n+1})}/\mathcal{I}_n(z)$  is strictly increasing in $z$. A sufficient condition for the monotonicity of this quotient is that quotient of the coefficients of the individual power series expansion of numerator and denominator are also increasing (cf.~\cite[Theorem 4.4]{Heikkala2009}, ~\cite{Biernacki1955}). First of all, we observe that the odd coefficients are zero. We are left to investigate 
\begin{align}
 \frac{\bra*{\exp\bra*{\lambda z^2/2^{n+1}}}_{2l}}{(\mathcal{I}_n(z))_{2l}} \ &= \ \frac{(l!)^{2n} 2^{2ln} \lambda^{l}}{((2l)! )^{n-1} 2^{(n+1)l} l!} = \frac{(l!)^{2n-1} 2^{l(n-1)}\lambda^{l}}{((2l)!)^{n-1}}\\
 &= \begin{cases}
    \hspace{.5in}l! &, n= 1  \\
   \bra*{ \frac{(l!)^{1+\frac{n}{n-1}} 2^l \lambda^{l/(n-1)}}{(2l)!}}^{n-1} =: (a_l)^{n-1} &, n>1
  \end{cases}\, .
\end{align}
In the case $n=1$, the monotonicity follows by the above representation. For $n>1$, we consider
\begin{align}
 \frac{a_{l+1}}{a_l} = \frac{\lambda^{1/(n-1)} (l+1)^{1+\frac{n}{n-1}} 2}{(2l+2)(2l+1)} = \frac{\lambda^{1/(n-1)} (l+1)^{\frac{n}{n-1}}}{2l+1}.
\end{align}
We need to find a $\lambda$ such that the above expression is greater than or equal to $1$.
Hence, we obtain
\[
  \lambda^{1/(n-1)} = \sup_{l\geq 1} \frac{2l+1}{(l+1)^{\frac{n}{n-1}}} = \frac{n-1}{\bra*{n/2}^{\frac{n}{n-1}}},
\]
where we note that the $\sup$ is attained for $l = \frac{n-2}{2}$, hence proving~\eqref{e:mathcalI:lb}.

\end{proof}
\begin{proof}[Proof of~\cref{lem:lbfour}]
For the first part of the proof, we fix $\alpha \in [0,\alpha^*)$. Then, we know that $\kappa=\kappa_\sharp$ independent of $\alpha$ is a bifurcation point, i.e., it satisfies the conditions of \cref{thm:c1bif}. Then one can check that the same set of arguments can be applied in the larger space~$\Leb^2_{k^\sharp}(U)$ instead of $\Leb^2_s(U)$, where $\Leb^2_{k^\sharp}=\{f \in \Leb^2(U): \skp*{f,w_{\sigma(k^\sharp)}}=0, \forall \sigma \in \Sym(\Lambda), \sigma \neq e \}$, where $e$ represents the identity element. For fixed $\alpha$, we consider the map, $\overline{F} : \Leb^2_{k^\sharp}(U)  \times \R^+ \to \Leb^2(U)$, $(\varrho ,\kappa) \mapsto F_\kappa(\varrho,\alpha)$ and note that any $\varrho$ such that $\overline{F}(\varrho,\kappa)=0$ is obviously in $\Leb^2_{k^\sharp}(U)$. Additionally, any zero of $\overline{F}$ defined above is also a zero of $F^* :\Leb^2_{k^\sharp}(U) \times \R^+ \to \Leb^2_{k^\sharp}(U)$, which is defined as
\begin{align}
F^*(\varrho,\kappa)=\overline{F}(\varrho,\kappa) - \sum\limits_{\sigma \in \Sym(\Lambda),\sigma\neq e}\skp*{\overline{F}(\varrho,\kappa), w_{\sigma(k)}(x)}w_{\sigma(k)}(x)\, .
\end{align}
One can also notice that $F^*(\varrho)$ does not change any of the local properties of $\overline{F}(\varrho)$ near $\varrho_\infty$, i.e,
$D_\varrho F^*(\varrho_\infty,\kappa)=\left.D_\varrho \overline{F}(\varrho_\infty,\kappa)\right|_{\Leb^2_{k^\sharp}}$ and 
$D^2_{\varrho\kappa} F^*(\varrho_\infty,\kappa)=\left.D^2_{\varrho\kappa} \overline{F}(\varrho_\infty,\kappa)\right|_{\Leb^2_{k^\sharp}}$.
The advantage of defining $F^*$ in this way is that the Fr\'echet derivative of the map is then Fredholm with index zero, which is not the case with $\overline{F}$.  We also know from \cref{thm:c1bif} that $\overline{F}$ has at least one nontrivial solution $\varrho_\kappa \in \Leb^2_s(U)$ in a neighbourhood  of $(\varrho_\infty,\kappa_\sharp)$. We can now apply the same bifurcation argument to $F^*$ to obtain that $F^*$ has exactly one nontrivial solution in some neighbourhood of $(\varrho_\infty,\kappa_\sharp)$. Since every zero of $\overline{F}$ is a zero of $F^*$ it follows that $\varrho_\kappa$ is this nontrivial zero in some neighbourhood of $(\varrho_\infty,\kappa_\sharp)$ and that $\overline{F}$ has only one nontrivial solution in this neighbourhood. Thus the problem of studying bifurcations of $\overline{F}$ is reduced to that of studying bifurcations of $F^*$. This justifies our choice in~\autoref{S:lbt} to study the bifurcations of $\hat{F}$ in the space $\Leb^2_s(U)$ as all bifurcations from the trivial branch lie either in this space or its translates. 

Now, since we need a lower bound which is uniform in $\alpha$, we redefine $F^*$ to be a function of $\alpha$, i.e.,
$F^*: X \times \R^+ \to \Leb^2_{k^\sharp}(U)$, where $X:= \Leb^2_{k^\sharp}(U) \times \R $  is Banach space equipped with the norm $\norm*{\cdot}_2 + |\cdot|$ and $f=(\varrho,\alpha) \in X$ a typical element of the space. We will now show that due to the particular structure of the problem one can still apply a Crandall--Rabinowitz type argument and obtain existence of local 
bifurcations. What follows below is a description of the Lyapunov--Schmidt decomposition for the map $F^*$ and a slightly modified version of the proof of the Crandall--Rabinowitz theorem as presented in~\cite{kielhofer2006bifurcation}.

We recentre the map as in the proof of~\cref{thm:c1bif} and linearise the map $F^*$ about $((0,0),\kappa_\sharp)$. We also note that $F^*((0,\alpha),\kappa)=0$, for all $\kappa \in (0,\infty),\alpha \in[0,\alpha*)$ and it is precisely this fact that will help us apply a Crandall--Rabinowitz type argument. Before we start out analysis, we write out the exact form of $F^*$ for the convenience of the readers
\begin{align}
F^*(f,\kappa)= \varrho(x) +\varrho_\infty - \frac{1}{Z}e^{-\beta \kappa W_\alpha \star \varrho} - \sum\limits_{\sigma \in \Sym(\Lambda),\sigma\neq e}\skp*{\varrho(x) - \frac{1}{Z}e^{-\beta \kappa W_\alpha \star \varrho} , w_{\sigma(k^\sharp)}(x)}w_{\sigma(k^\sharp)}(x) \, .
\end{align} 
It is clear that $D_fF^*(f,\kappa)= \begin{pmatrix}D_\varrho F^* & D_\alpha F^* \end{pmatrix} \in L(X,L^2_{k^\sharp})$, the space of linear operators from $X$ to $\Leb^2_{k^\sharp}(U)$, with
\begin{align}
D_\varrho F^*((0,0),\kappa_\sharp)[w_1]	&= w_1 + \beta \kappa_\sharp \varrho_\infty (W_0 \star w_1) - \beta \kappa_\sharp \varrho_\infty^2 \intT{(W_0 \star w_1)(x)} \, ,\\
D_\alpha F^*((0,0),\kappa_\sharp) &=0 \, ,
\end{align} 
where $w_1 \in \Leb^2_{k^\sharp}(U)$. We will also need $D^2_{f \kappa}F^*(f,\kappa)= \begin{pmatrix}D_{\varrho\kappa} F^* & D_{\alpha \kappa}F^* \end{pmatrix}$, with
\begin{align}
D^2_{\varrho \kappa} F^*((0,0),\kappa_\sharp)[w_1]	&= \varrho_\infty (W_0 \star w_1) - \varrho_\infty^2 \intT{(W_0 \star w_1) (x)} \\
&\qquad -\varrho_\infty^2 W_0 \star D_\varrho ( F^*((0,0),\kappa_\sharp))[w_1] \, , \label{eq:drks}\\
D^2_{\alpha \kappa} F^*((0,0),\kappa_\sharp) &=0 \, .
\end{align}
Then by using the arguments of \cref{thm:c1bif}, we see that $N:=\ker (D_f F^*((0,0),\kappa_\sharp))=\mathrm{span}[w_{k^\sharp}] \times \R \tilde{=} \R^2$ and $Z_0 := R^\perp= (\Ima (D_f F^*((0,0),\kappa_\sharp)))^\perp= \mathrm{span}[w_{k^\sharp}]$. Thus,
$D_f F^*((0,0),\kappa_\sharp))$ is Fredholm and we have the following decompositions into complementary subspaces,
\begin{align}
X&= N \oplus X_0 \, ,\\
\Leb^2_{k^\sharp}(U)&= R \oplus Z_0  \, .
\end{align}
Given these decompositions, we define the following projection operators,
\begin{align}
P&:X \to N , & (\varrho,\alpha) &\mapsto \bra*{\tilde{\varrho}(k^\sharp)w_{k^\sharp}(x),\alpha}\, ,\\
Q&:\Leb^2_{k^\sharp}(U) \to Z_0 \ , & \varrho &\mapsto  \tilde{\varrho}(k^\sharp) w_{k^\sharp}(x) \, .
\end{align}
By introducing the splitting $v=Pf$, $w= (I -P)f$, we can solve $F^*(f,\kappa)=0$ individually on complementary subspaces
\begin{align}
G(v,w,\kappa)&:=(I -Q) F^*(v + w,\kappa)=0 \, ,\\
\Phi(v,w,\kappa)&:=Q F^*(v + w,\kappa)=0 \, .
\end{align}
As in \cref{thm:lyapschmid}, one can check that $D_wG((0,0),(0,0),\kappa_\sharp)= (I-Q)D_fF^*((0,0),\kappa_\sharp):
X_0 \to R$ is a homeomorphism. Thus, applying the implicit function theorem, there exist neighbourhoods $U$ of $((0,0),\kappa_\sharp)$ in $N \times \R$ and $V$ of $(0,0)$ in $X_0$ along with  a $C^1$ function $\Psi:U \to V$ such 
that every solution of $G(v,w,\kappa)=0$ in $U\times V$ is of the form $(v,\kappa,\Psi(v,\kappa))$ with $\Psi((0,0),\kappa_\sharp)=
(0,0)$. Thus in $U$ we are left to solve,
\begin{align}
\Phi(v,\kappa)&:=Q F^*(v + \Psi(v,\kappa),\kappa)=0 \, .
\end{align}
It is also straightforward to show that $D_\kappa \Psi((0,0),\kappa)=0$. Indeed,
\begin{align}
D_\kappa(I-Q) F^*(v + \Psi(v,\kappa),\kappa)&=0\\
(I-Q) (D_\kappa F^*(v + \Psi(v,\kappa),\kappa) + D_\varrho F^*(v + \Psi(v,\kappa),\kappa)D_\kappa \Psi(v,\kappa) ) &=0
\end{align}
Setting $v=(0,0)$ and $\kappa=\kappa_\sharp$ one can see that $D_\kappa F^*((0,0),\kappa_\sharp)=0$ and since $(D_\kappa \Psi((0,0),\kappa),0) \in X_0$ which is complementary to $N$ giving $D_\kappa \Psi((0,0),\kappa_\sharp),0)=0$. Using an argument similar to the above one, one can show that $D_{\tilde{\varrho}(k^\sharp)} \Psi((0,0),\kappa_\sharp)=0 \in L(N,X_0)$.

 Since a typical element of $N$ can be represented by $(\tilde{\varrho}(k^\sharp),\alpha)=(s,\alpha)$ we proceed by rewriting $\Phi$ as follows,
\begin{align}
\tilde{\Phi}((s,\alpha),\kappa)= \int_0^1 \! \frac{d}{dt} \Phi((t s w_{k^\sharp} ,\alpha),\kappa)\,\dx{t}  = \int_0^1 \! D_s \Phi((t s w_{k^\sharp} ,\alpha),\kappa) w_{k^\sharp}\,\dx{t} \,,
\end{align}
where we have used the fact that $\Phi((0,\alpha),\kappa)=0$, since $\varrho_\infty$ is always a trivial solution. Now,
$\tilde{\Phi}:\R^2 \times \R \to \R$ is the map, which we analyse in the neighbourhood $U$ and nontrivial solutions correspond to $s \neq0$. Let $\hat{v}= (ts w_{k^\sharp},\alpha) \in N$, then we compute,
\begin{align}
D_\kappa D_s\Phi(\hat{v},\kappa)w_{k^\sharp}&=D_\kappa \bra*{Q D_\varrho F^*(\hat{v}+\Psi(\hat{v},\kappa),\kappa)(w_{k^\sharp} + D_s \Psi(\hat{v},\kappa))w_{k^\sharp}} \\
&= QD^2_{\varrho\varrho} F^*(\hat{v}+\Psi(\hat{v},\kappa),\kappa) [w_{k^\sharp} + D_s \Psi(\hat{v},\kappa))w_{k^\sharp},D_\kappa \Psi(\hat{v},\kappa)] \\
&\qquad + Q D_\varrho F^*(\hat{v}+\Psi(\hat{v},\kappa),\kappa)D^2_{\varrho\kappa} \Psi(\hat{v},\kappa) w_{k^\sharp}
\\
&\qquad + Q D^2_{\varrho\kappa} F^*(\hat{v}+\Psi(\hat{v},\kappa),\kappa) (w_{k^\sharp} + D_s \Psi(\hat{v},\kappa))w_{k^\sharp}) \, .
\end{align}
Setting $\hat{v}=(0,0)$ and $\kappa=\kappa_\sharp$, we see that the first term of the above expression is zero because $D_\kappa \Psi((0,0),\kappa)=0$ and the second term is zero because $Q$ maps the range of $ D_\varrho F^*((0,0),\kappa_\sharp)$ to zero. Noting that $D_s \Psi((0,0),\kappa_\sharp))=D_{\tilde{\varrho}(k^\sharp)} \Psi((0,0),\kappa_\sharp)=0$, we finally have
\begin{align}
\frac{\dx{}}{\dx{}\kappa}\tilde{\Phi}((0,0),\kappa_\sharp) &= Q D^2_{\varrho\kappa} F^*((0,0),\kappa_\sharp) w_{k^\sharp} \neq 0 \, .
\end{align}
Thus we can apply the implicit function theorem to obtain a function $C^1(V_1;V_2)$, $\varphi(s,\alpha)$  such that  $\tilde{\Phi}((s,\alpha),\varphi(s,\alpha))=0$, where $V_1$ and $V_2$ are neighbourhoods of $(0,0)$ and $\kappa_\sharp$ respectively and 
$V_1 \times V_2 \subset U$. Additionally in $V_1 \times V_2$ every solution of $\tilde{\Phi}$ (and hence $\Phi$) is of the form
$((s,\alpha),\varphi(s,\alpha))$ and $\varphi((0,\alpha))=\kappa_\sharp$. We know however from \cref{thm:c1bif}, that we could apply the same set of arguments for fixed $\alpha \in [0,1]$ to obtain single locally increasing branches which, at least for some small neighbourhood around $0$, must coincide with $\varphi(s,\alpha)$. Thus, we now know that for each $\alpha \in[0,1]$, we can find $\epsilon_\alpha>0$ such that $\varphi(s,\alpha)>\kappa_\sharp$ for $0<|s|<\epsilon_\alpha$. Now, let $\alpha \in [0,\alpha^*)=A$. If we show that $\inf_A \epsilon_\alpha =\epsilon'>0$ for $\alpha^*$ small enough, we can conclude the proof.  To see this, set $V_1'= V_1 \cap (-\epsilon',\epsilon')\times[0,\alpha^*)$ and observe that   $((s,\alpha),\varphi(s,\alpha))$ are the only
solutions in $V_1' \times V_2$ and $\varphi(s,\alpha)=\kappa_\sharp$ implies $(s,\alpha)=(0,\alpha)$. Thus in $V_1'$, $(0,\alpha)$ is the only solution of the bifurcation equation which would provide the desired result. Assume now that there exists no $\alpha^*$, such that $\inf_A \epsilon_\alpha >0$. It is straightforward to check that this would violate the continuity of $\varphi$ since $\epsilon_0>0$.
\end{proof}
As an immediate consequence of~\cref{thm:spgap} we have:
\begin{cor}\label{cor:unest}
Let $W_\alpha(x)$ be as in \cref{defn:astable} such that  $W_\unstable$ and $W_\stable$ are bounded below. Then, for $\alpha$ sufficiently small, $\varrho_\infty$ is the unique minimiser of the free energy $\mathcal{F}_\kappa(\varrho)$ for 
$\kappa \in (0, C(n)\kappa_\sharp]$, where $C(n)$ is as defined in~\cref{lem:entdef}.
\end{cor}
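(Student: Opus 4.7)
The corollary extends~\cref{thm:spgap} by removing the restriction $\Theta(k^\sharp)\leq 2$ (equivalently $n(k^\sharp)\leq 2$), at the price of narrowing the interval of uniqueness from $(0,\kappa_\sharp]$ to $(0,C(n)\kappa_\sharp]$. The plan is to rerun the proof of~\cref{thm:spgap} at the endpoint $\kappa=C(n)\kappa_\sharp$ rather than at $\kappa_\sharp$; scaling $\kappa$ down by the factor $C(n)$ replaces the coefficient of $|\tilde\varrho(k^\sharp)|^2$ in the Fourier expansion of the interaction energy by $C(n)L^d/(2\beta)$, which is precisely the constant required by~\cref{lem:entdef} in arbitrary dimension $n$.

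For any critical point $\varrho$ of $\crF_\kappa$ with $\kappa=C(n)\kappa_\sharp$, I would translate so that $\sum_\sigma|\tilde\varrho(\sigma(k^\sharp))|^2=|\tilde\varrho(k^\sharp)|^2$ and bound the contributions of the other negative modes of $W_\alpha$, which all carry a factor $\alpha$ and satisfy $\tilde W(k)/\Theta(k)\geq \tilde W(k^\sharp)/\Theta(k^\sharp)$. Using $\kappa_\sharp\tilde W(k^\sharp)/\Theta(k^\sharp)=-L^{d/2}/\beta$, this gives
\begin{align*}
\crF_\kappa(\varrho)-\crF_\kappa(\varrho_\infty) \geq \beta^{-1}\bra*{\cH(\varrho|\varrho_\infty) - \frac{C(n)L^d}{2}|\tilde\varrho(k^\sharp)|^2} - \frac{\alpha C(n) L^d}{2\beta}\|\varrho\|_2^2\,.
\end{align*}
Applying~\cref{lem:entdef} to the bracketed term yields the lower bound $\beta^{-1}\cG(|\tilde\varrho(k^\sharp)|)$, and the endgame of~\cref{thm:spgap}'s proof then carries over: the uniform lower bound $|\tilde\varrho(k^\sharp)|\geq c>0$ from~\cref{lem:lbfour} together with the $\alpha$-independent $L^2$ estimate $\|\varrho\|_2^2\leq c_1$, obtained via the fixed-point identity $\varrho=\cT\varrho$ and~\eqref{eq:linfT} (which uses that both $W_\stable$ and $W_\unstable$ are bounded from below), let me choose $\alpha<2\cG(c)/(C(n)L^d c_1)$ and conclude $\crF_{C(n)\kappa_\sharp}(\varrho)>\crF_{C(n)\kappa_\sharp}(\varrho_\infty)$ for every nontrivial critical point $\varrho$. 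Hence $\varrho_\infty$ is the unique minimiser at $\kappa=C(n)\kappa_\sharp$ via~\cref{prop:tfae} and~\cref{thm:dirmet}.

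Uniqueness on the whole interval $(0,C(n)\kappa_\sharp]$ then follows by a standard monotonicity argument based on~\cref{prop:2.8cp}: were $\varrho_\infty$ not the unique minimiser at some $\kappa_1<C(n)\kappa_\sharp$, it would cease to be a minimiser for every $\kappa>\kappa_1$, contradicting the endpoint result. The most delicate point of the plan is the invocation of~\cref{lem:lbfour} at $\kappa=C(n)\kappa_\sharp$ rather than at the bifurcation value $\kappa_\sharp$; since that lemma's Lyapunov--Schmidt reduction is carried out jointly in $(\varrho,\kappa,\alpha)$ in a full neighbourhood of $(\varrho_\infty,\kappa_\sharp,0)$ in $\Leb^2_{k^\sharp}(U)\times \R^+\times[0,1]$, and no further bifurcation from $\varrho_\infty$ occurs on the open interval $(C(n)\kappa_\sharp,\kappa_\sharp)$ for $\alpha$ small, I expect the uniform lower bound $|\tilde\varrho(k^\sharp)|\geq c$ to propagate to any nontrivial branch reaching down to $\kappa=C(n)\kappa_\sharp$ once $\alpha$ is taken small enough.
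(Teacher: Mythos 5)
Your proposal is correct and is essentially the paper's own argument: the paper disposes of this corollary in a single line, stating that the proof follows the same arguments as \cref{thm:spgap} with $\kappa_\sharp$ replaced by $C(n)\kappa_\sharp$, which is precisely the rerun you describe, the point being that at $\kappa=C(n)\kappa_\sharp$ the coefficient of $|\tilde\varrho(k^\sharp)|^2$ becomes $C(n)L^d/2$ and so \cref{lem:entdef} applies without the restriction $\Theta(k^\sharp)\leq 2$. Your explicit handling of the remaining ingredients (the uniform bound from \cref{lem:lbfour} at the shifted value of $\kappa$, the $\alpha$-independent $L^2$ bound, and the monotonicity step via \cref{prop:2.8cp} to cover all of $(0,C(n)\kappa_\sharp]$) is in fact more detailed than what the paper writes.
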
  
\begin{proof}
The proof follows the same arguments as~\cref{thm:spgap} with $\kappa_\sharp$ replaced by $C(n)\kappa_\sharp$.
\end{proof}
A natural question to ask now is how the estimate from~\cref{cor:unest} compares to the one obtained in~\cref{prop:con}
by the convexity argument, i.e., how does $C(n)\kappa_\sharp$ compare to $\kappa_{\mathrm{con}}$. It is easier to make this comparison whenever we can explicitly compute $\norm*{W_{\unstable-}}_\infty$. Assume, $W=W_0$, i.e, $W$ has only one negative mode, say $w_{k^\sharp}$, then we have
\begin{align}
\frac{C(n)\kappa_\sharp}{\kappa_{\mathrm{con}}}=2^{n}C(n)=
\begin{cases}
2^n & n=1,2 \\
\frac{n^n}{(n-1)^{n-1}} & n>2
\end{cases}\, ,
\end{align}
with $n=n(k^\sharp)$ as defined in~\cref{lem:entdef}.  Thus, for all $n\geq 1$, we have that $C(n)\kappa_\sharp>\kappa_{\mathrm{con}}$. From this we conclude that, for this choice of $W$, ~\cref{cor:unest} provides a sharper estimate on the range of $\kappa$ for which the uniform state is a unique minimiser of the free energy.
\begin{rem}
\cref{thm:spgap} indicates that if the linearised McKean--Vlasov operator $\cL$, has a sufficiently large spectral gap $\lambda$, then (assuming all other conditions are satisfied) the system exhibits a continuous transition point. Indeed, the spectral gap of $\cL:\Leb^2_0(U) \to \Leb^2_0(U)$ at $\kappa=\kappa_\sharp$ associated with
the interaction potential $W_\alpha$ can be computed as 
\begin{align}
\lambda=\min_{k \in \N^d, k \neq k^\sharp}\bra*{-\beta^{-1}\bra*{\frac{2 \pi |k|}{L}}^2 - \kappa_\sharp L^{-d/2} \bra*{\frac{2 \pi |k|}{L}}^2 \frac{\tilde{W_\alpha}(k)}{\Theta(k)} } ,
\end{align}
Let us assume that $|\lambda|>C_1$ for some constant $C_1>0$. This implies that for all $k \in \N^d$ such that
$\tilde{W}(k)<0$ it must hold 
\begin{align}
\alpha <\frac{\bra*{ \beta^{-1} - C_1\frac{L^2}{4 \pi^2 |k|^2} }\Theta(k)}{\kappa_\sharp L^{-d/2} \abs{\tilde{W}(k)}}
\, .
\end{align}
It is easy to see then that $\lambda$ being sufficiently large is equivalent to $\alpha$ being sufficiently small.
\end{rem}
We conclude this section with the following useful proposition which provides us with a comparison principle
for interaction potentials to check if they possess continuous transition points.
\begin{prop}\label{lem:comp}
Let $W\in \HH_\stable^c$ be an interaction potential such that the associated free energy $\crF^W_\kappa(\varrho)$ has a continuous transition point. Additionally, assume that $G\in \HH_\stable^c$ is such that $\argmin_{k \in \N^d/ \set{\mathbf{0}}}\tilde{G}(k)=\argmin_{k \in \N^d/ \set{\mathbf{0}}}\tilde{W}(k)= k^\sharp$ and $\tilde{G}(k^\sharp)=\tilde{W}(k^\sharp)$ with $\tilde{G}(k)\geq\tilde{W}(k)$ for all $k \neq k^\sharp,k \in \N^d$. Then $\crF^G_{\kappa}(\varrho)$ exhibits a continuous transition point. 
\end{prop}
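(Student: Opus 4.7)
The plan is to combine the Fourier representation of the interaction energy from~\eqref{Fourier:Interaction} with the characterisation of continuous transition points in~\cref{prop:CharactTP}\ref{prop:CharactTP:cont}. Since both $W$ and $G$ are not $H$-stable, \cref{prop:tpex} guarantees each has a transition point $\kappa_c^W, \kappa_c^G < \infty$. Furthermore, because $\tilde{G}(k^\sharp)=\tilde{W}(k^\sharp)$, $\tilde G(k)\geq \tilde W(k)$ for $k \neq k^\sharp$, and $k^\sharp$ is the joint minimiser of $\tilde W$ and $\tilde G$ (from which it follows that $k^\sharp$ is also the joint minimiser of $\tilde W/\Theta$ and $\tilde G/\Theta$), the points of critical stability coincide, i.e., $\kappa_\sharp^W = \kappa_\sharp^G =: \kappa_\sharp$.

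The key step is the following free energy comparison: for any $\varrho \in \Prac^+(U)$,
\begin{align}
\crF^G_{\kappa}(\varrho) - \crF^G_{\kappa}(\varrho_\infty) \geq \crF^W_{\kappa}(\varrho) - \crF^W_{\kappa}(\varrho_\infty) \, .
\end{align}
To see this, use~\eqref{FreeEnergyExcess}: the relative entropy piece is independent of the potential, so the difference reduces to
\begin{align}
\frac{\kappa}{2}\bra*{\cE^G - \cE^W}(\varrho - \varrho_\infty, \varrho - \varrho_\infty) = \frac{\kappa}{2}\sum_{k \in \N^d\setminus\set{\mathbf{0}}} \frac{\tilde{G}(k)-\tilde{W}(k)}{N_k}\sum_{\sigma \in \Sym(\Lambda)}\abs[\big]{\widetilde{(\varrho-\varrho_\infty)}(\sigma(k))}^2 \, ,
\end{align}
by the Fourier expansion~\eqref{Fourier:Interaction}, noting that the $k=\mathbf{0}$ mode of $\varrho - \varrho_\infty$ vanishes. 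Since $\tilde{G}(k)-\tilde{W}(k) \geq 0$ for all $k \in \N^d\setminus\set{\mathbf{0}}$ (including equality at $k^\sharp$), this sum is nonnegative.

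By hypothesis, $\crF^W$ exhibits a continuous transition point. Then~\cref{prop:cpc} forces $\kappa_c^W = \kappa_\sharp$, and by the contrapositive of~\cref{prop:CharactTP}\ref{prop:CharactTP:discont}, $\varrho_\infty$ must be a global minimiser of $\crF^W_{\kappa_\sharp}$. Moreover, uniqueness of $\varrho_\infty$ as the minimiser at $\kappa_\sharp$ is precisely the condition in~\cref{prop:CharactTP}\ref{prop:CharactTP:cont} which yields continuity of the transition; if uniqueness failed, then by~\cref{prop:cpdc} the transition would be discontinuous. Applying the comparison then to any $\varrho \neq \varrho_\infty$ at $\kappa = \kappa_\sharp$ yields
\begin{align}
\crF^G_{\kappa_\sharp}(\varrho) - \crF^G_{\kappa_\sharp}(\varrho_\infty) \geq \crF^W_{\kappa_\sharp}(\varrho) - \crF^W_{\kappa_\sharp}(\varrho_\infty) > 0 \, ,
\end{align}
so $\varrho_\infty$ is the unique minimiser of $\crF^G_{\kappa_\sharp}$. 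Invoking~\cref{prop:CharactTP}\ref{prop:CharactTP:cont} once more with the transition point $\kappa_c^G$ of $G$ furnished by~\cref{prop:tpex} concludes that $\kappa_c^G = \kappa_\sharp$ is a continuous transition point.

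The argument is largely bookkeeping once the comparison inequality is in place; the only mild subtlety is ensuring that the critical stability thresholds match, which relies on the joint minimiser hypothesis together with $\tilde{G}(k^\sharp)=\tilde{W}(k^\sharp)$. No genuine obstacle arises, and no new estimates beyond those already developed for~\cref{thm:spgap} and~\cref{prop:CharactTP} are needed.
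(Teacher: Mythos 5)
Your proposal is correct and is essentially the paper's own argument: the same Fourier-mode comparison showing $\crF^G_{\kappa_\sharp}(\varrho)-\crF^G_{\kappa_\sharp}(\varrho_\infty) = \bra*{\crF^W_{\kappa_\sharp}(\varrho)-\crF^W_{\kappa_\sharp}(\varrho_\infty)} + \tfrac{\kappa_\sharp}{2}\cE^{G-W}(\varrho-\varrho_\infty,\varrho-\varrho_\infty)$ with the second term nonnegative, followed by the same reduction via \cref{prop:CharactTP}\ref{prop:CharactTP:cont} to uniqueness of $\varrho_\infty$ as minimiser of $\crF^G_{\kappa_\sharp}$. The only quibble is your roundabout justification that $\varrho_\infty$ is the unique minimiser of $\crF^W_{\kappa_\sharp}$ (the appeal to \cref{prop:cpdc} is not quite the right hypothesis); this follows directly from condition (1) of \cref{defn:ctp} combined with \cref{prop:cpc}, which is what the paper implicitly uses.
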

\begin{proof}
As in the proof of~\cref{thm:spgap}, it is sufficient to show that at $\kappa=\kappa_\sharp$, the free energy $\crF^G_{\kappa_\sharp}(\varrho)$ has $\varrho_\infty$ as its unique minimiser. Noting that given the assumptions on $G$, the value of $\kappa_\sharp$ is the same for $G$ and $W$, we have for $\varrho \neq \varrho_\infty, \varrho \in \Leb^2(U) \cap \Prac(U)$
\begin{align}
\crF^G_{\kappa_\sharp}(\varrho)-\crF^G_{\kappa_\sharp}(\varrho_\infty) &= \beta^{-1}\cH(\varrho|\varrho_\infty) +
\frac{\kappa_\sharp}{2}\cE^G(\varrho-\varrho_\infty,\varrho-\varrho_\infty) \\
&=\beta^{-1}\cH(\varrho|\varrho_\infty) +
\frac{\kappa_\sharp}{2}\cE^W(\varrho-\varrho_\infty,\varrho-\varrho_\infty) + \frac{\kappa_\sharp}{2}\cE^{G-W}(\varrho-\varrho_\infty,\varrho-\varrho_\infty) \\
&= \bra*{\crF^W_{\kappa_\sharp}(\varrho)-\crF^W_{\kappa_\sharp}(\varrho_\infty)}+ \frac{\kappa_\sharp}{2}\cE^{G-W}(\varrho-\varrho_\infty,\varrho-\varrho_\infty)  \, ,
\end{align}
where $\cE^W(\varrho,\varrho)=\iint W(x-y ) \varrho(x) \varrho(y) \dx{x} \dx{y}$. Using the fact that the term in the brackets must be strictly positive, since the free energy $\crF^W_{\kappa_\sharp}(\varrho)$ associated to $W$ possesses a continuous transition point, we obtain
\begin{align}
\crF^G_{\kappa_\sharp}(\varrho)-\crF^G_{\kappa_\sharp}(\varrho_\infty) &>\frac{\kappa_\sharp}{2}\cE^{G-W}(\varrho-\varrho_\infty,\varrho-\varrho_\infty) \\
&=\frac{\kappa_\sharp}{2} \sum\limits_{k \in \N^d, k \neq k^\sharp }\frac{\tilde{G}(k)-\tilde{W}(k)}{N_k}\left(\sum\limits_{\sigma \in \Sym(\Lambda)}|\tilde{\varrho}(\sigma(k))|^2 \right) \geq 0 \,.
\end{align}
In the above estimate we have used the fact that $\tilde{G}(k^\sharp)=\tilde{W}(k^\sharp)$ and that $\tilde{G}(k)\geq\tilde{W}(k)$
for all other $k \in \N^d$. Thus, we have the desired result.
\end{proof}
%
%
\section{Applications}\label{S:app}

\subsection{The generalised Kuramoto model}\label{ss:kura}
Let $W(x)=-w_k(x)$, for some $k \in \N, k \neq 0$, as defined in~\eqref{e:def:wk}. Then we refer to the corresponding McKean SDE given by
\begin{align}
d X_t^i&= \frac{\kappa}{N}\sum_{i=1}^N w_k'(X_t^i-X_t^j) + \sqrt{2 \beta^{-1}}dB_t^i \quad i=1, \dots,N \, ,
\end{align}
 as the generalised Kuramoto model. For $k=1$, it corresponds to the so-called noisy Kuramoto system (also referred to as the Kuramoto--Shinomoto--Sakaguchi model (cf. ~\cite{kuramoto1981rhythms,sakaguchi1988phase,ABPRS})) which models the synchronisation of noisy oscillators interacting through their phases.  For infinitely many oscillators, we obtain a mean field approximation of the underlying particle dynamics given precisely by the McKean--Vlasov equation with $W(x)=-w_1(x)$.  It is well known that this system exhibits a  phase transition for some critical, $\kappa_c$ (cf. ~\cite{bertini2010dynamical}). For $k=2$, it corresponds to the Maiers--Saupe system which is a model for the synchronization of liquid crystals (cf.~\cite{constantin2004remarks,constantin2005note}). Again, in the mean field limit we obtain the McKean--Vlasov equation with the effective interaction potential, $W(x)= -w_2(x)$.  The  system exhibits a continuous transition point which represents the nematic-isotropic phase transition as the temperature is lowered, i.e., as $\kappa$ is increased.

Finally, let us mention that there is a larger picture in the Kuramoto model when different frequency oscillators are allowed, see ~\cite{ABPRS} for a nice review of the subject and ~\cite{CCP} for recent numerical work on phase transitions for this problem.

Although it is possible to directly apply \cref{thm:spgap} to prove the existence of a continuous phase transition for this system, we employ an alternative approach that gives us more qualitative information about the structure of the nontrivial solutions.
 
\begin{prop}~\label{prop:kura}
The generalised Kuramoto model exhibits a continuous transition point at $\kappa_c=\kappa_\sharp$. Additionally, for $\kappa>\kappa_c$, the equation $F(\varrho,\kappa)=0$ has only two solutions in $\Leb^2(U)$ (up to translations). The nontrivial one, $\varrho_\kappa$ minimises $\crF_\kappa$ for $\kappa>\kappa_c$ and converges in the narrow topology as $\kappa \to \infty$ to a normalised linear sum of equally weighted Dirac measures centred at the minima of $W(x)$.
\end{prop}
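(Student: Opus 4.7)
The plan is to exploit the special structure of $W=-w_k$, whose Fourier support consists of a single frequency $k$, to reduce the fixed-point equation $F(\varrho,\kappa)=0$ to a scalar self-consistency equation that can be analysed explicitly via modified Bessel functions. Using the expansion of $W\star\varrho$ from~\autoref{S:not}, one has
$W\star\varrho(x) = -\sqrt{L/2}\bigl[\tilde\varrho(k)w_k(x)+\tilde\varrho(-k)w_{-k}(x)\bigr]$,
which is a single-frequency trigonometric polynomial that can be written as $-\sqrt{2/L}\,R\cos(2\pi k x/L - \phi)$ with $R\geq 0$ and a phase $\phi$ depending only on the two Fourier coefficients of $\varrho$. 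By the $U$-translation invariance of $F$, I may restrict to $\phi=0$, so every zero of $\cT$ has the explicit form
\begin{align}
\varrho_\mu(x)\;=\;\frac{\exp\bigl(\mu\cos(2\pi k x/L)\bigr)}{L\,I_0(\mu)},\qquad \mu:=\beta\kappa\,\tilde\varrho_\mu(k)\geq 0,
\end{align}
where $I_0$ is the modified Bessel function. Using $\int_0^{2\pi}\!\cos\theta\,e^{\mu\cos\theta}\dx\theta = 2\pi I_1(\mu)$, the self-consistency condition collapses to the scalar equation
\begin{align}\label{eq:bessel:plan}
\mu\,I_0(\mu)\;=\;\beta\kappa\sqrt{2/L}\;I_1(\mu).
\end{align}

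Next, I would analyse~\eqref{eq:bessel:plan}. The map $\mu\mapsto I_1(\mu)/(\mu I_0(\mu))$ is strictly decreasing on $(0,\infty)$ from the limit value $1/2$ (as $\mu\downarrow 0$) to $0$ (as $\mu\to\infty$): differentiation yields a numerator proportional to $\mu(I_0^2 - I_1^2) - 2I_0I_1$, which is negative thanks to Turán's inequality $I_1^2 \geq I_0 I_2$ combined with the recurrence $I_2 = I_0 - (2/\mu)I_1$. Hence~\eqref{eq:bessel:plan} admits a nontrivial solution $\mu>0$, necessarily unique, iff $\beta\kappa\sqrt{2/L}>2$, that is $\kappa>\sqrt{2L}/\beta$; a direct computation shows this threshold coincides with $\kappa_\sharp$ defined in~\eqref{eq:koc}. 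In particular the only solutions of $F(\varrho,\kappa)=0$ in $\Leb^2(U)$ are, up to translation, $\varrho_\infty$ and (for $\kappa>\kappa_\sharp$) the single $\varrho_\kappa:=\varrho_{\mu(\kappa)}$.

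For the transition-point statement, \cref{thm:dirmet} yields a minimiser of $\crF_\kappa$ which by~\cref{prop:tfae} is a fixed point of $\cT$. At $\kappa=\kappa_\sharp$ the previous step shows that $\varrho_\infty$ is the unique such fixed point and hence the unique minimiser of $\crF_{\kappa_\sharp}$; \cref{prop:CharactTP}\ref{prop:CharactTP:cont} then delivers that $\kappa_c=\kappa_\sharp$ is a continuous transition point. For $\kappa>\kappa_\sharp$, \cref{prop:tpex} excludes $\varrho_\infty$ as a minimiser, so (up to translation) the minimiser must be the unique nontrivial fixed point $\varrho_\kappa$.

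Finally, for the narrow limit as $\kappa\to\infty$: inverting~\eqref{eq:bessel:plan} gives $\mu(\kappa)\to\infty$, and then an elementary Laplace-method argument applied to $\varrho_{\mu(\kappa)}$, using the asymptotic $I_0(\mu)\sim e^\mu/\sqrt{2\pi\mu}$ and the local quadratic expansion $\cos(2\pi k x/L) \approx 1 - 2\pi^2 k^2 (x-x_j)^2/L^2$ near each maximum $x_j=jL/k$ of $\cos(2\pi k x/L)$ (which are precisely the minima of $W$), shows that $\varrho_{\mu(\kappa)}$ converges narrowly to $k^{-1}\sum_{j=0}^{k-1}\delta_{x_j}$, the equidistribution of weights being forced by the $L/k$-translation symmetry of $\varrho_{\mu(\kappa)}$. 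The most delicate point in the whole argument is the strict monotonicity underlying~\eqref{eq:bessel:plan}: a priori the relevant derivative has ambiguous sign and Turán's inequality must be invoked in the right direction; once it is available, the rest of the argument consists of explicit computation combined with the structural results of~\autoref{S:stationary} and~\autoref{S:thermodynamic}.
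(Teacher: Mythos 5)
Your proposal is correct and shares the paper's overall skeleton: reduce, by translation invariance and the single-mode structure of $W=-w_k$, to a scalar self-consistency equation in terms of modified Bessel functions; show $\varrho_\infty$ is the unique fixed point (hence, via \cref{prop:tfae} and \cref{thm:dirmet}, the unique minimiser) at $\kappa=\kappa_\sharp$ and invoke \cref{prop:CharactTP}~\ref{prop:CharactTP:cont} together with \cref{prop:tpex}; then send the concentration parameter to infinity and conclude by a Laplace-type argument, with equal weights forced by the $L/k$-periodicity. Where you genuinely diverge is in the analysis of the scalar equation. The paper keeps it in fixed-point form $a=M(a,\kappa)$ with $M(a,\kappa)=\sqrt{2/L}\,\beta\kappa\, r_0(a)$, $r_0=I_1/I_0$, and needs two separate arguments: for $\kappa\le\kappa_\sharp$ the Tur\'an inequality gives $r_0'<\tfrac12(1-r_0^2)$ and hence no nonzero intersection, while for $\kappa>\kappa_\sharp$ uniqueness requires the strict concavity $r_0''<0$, proved via the Amos monotonicity and separation bounds~\eqref{eq:rmon}--\eqref{eq:ruplo}, and divergence $a_\kappa\to\infty$ is obtained by a further monotone-comparison argument in $\kappa$. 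You instead divide by $\mu$ and show that $\mu\mapsto I_1(\mu)/(\mu I_0(\mu))$ is a strictly decreasing bijection of $(0,\infty)$ onto $(0,\tfrac12)$; this single monotonicity statement, which follows from the strict Tur\'an inequality $I_1^2>I_0I_2$ (for $\mu>0$, as in the paper's cited references --- the non-strict form you quote would only give weak monotonicity, so use the strict version) combined with the recurrences $I_2=I_0-(2/\mu)I_1$, $I_1'=\tfrac12(I_0+I_2)$, $I_0'=I_1$, simultaneously yields nonexistence of nontrivial solutions for $\kappa\le\kappa_\sharp$, existence and uniqueness for $\kappa>\kappa_\sharp$, and $\mu(\kappa)\to\infty$ by inverting the bijection. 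This buys a tangibly shorter proof of the uniqueness and divergence steps, avoiding the Amos ratio inequalities altogether, at no loss of information; the remaining ingredients and conclusions coincide with the paper's proof.
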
 
\begin{proof}
The strategy of proof is similar to that of \cref{thm:spgap}, i.e, we show that at $\kappa=\kappa_\sharp$, $\varrho_\infty$ is 
the unique minimiser of the free energy. We do this by showing that $F(\varrho,\kappa)=0$ has a unique solution at $\kappa=\kappa_\sharp$, which implies, by \cref{prop:tfae}(since $W$ satisfies Assumption~\eqref{ass:B}) , uniqueness of the minimiser. 

For $W(x)=-w_{k^\sharp}(x)$, we can explicitly compute,
\begin{align}
F(\varrho,\kappa)=\varrho - \frac{e^{\beta \kappa\sqrt{L/2}(\tilde{\varrho}(k^\sharp)w_{k^\sharp}+ \tilde{\varrho}(-k^\sharp)w_{-k^\sharp})  }}{\int_{-L/2}^{L/2} e^{\beta \kappa\sqrt{L/2}(\tilde{\varrho}(k^\sharp)w_{k^\sharp}+ \tilde{\varrho}(-k^\sharp)w_{-k^\sharp})  }}=0 
\end{align}
Since $F(\varrho,\kappa)$ is translation invariant, one can always translate $\varrho$ so that $\tilde{\varrho}(-k^\sharp)=0$.
Thus we obtain the following simplified equation, 
\begin{align}
F(\varrho,\kappa)=\varrho - \frac{e^{\beta \kappa\sqrt{L/2}\tilde{\varrho}(k^\sharp)w_{k^\sharp}  }}{\int_{-L/2}^{L/2} e^{\beta \kappa\sqrt{L/2}\tilde{\varrho}(k^\sharp)w_{k^\sharp}  }}=0 \label{eq:kmkm}
\end{align}
Taking the inner product with $w_{k^\sharp}(x)$ we obtain,
\begin{align}
\tilde{\varrho}(k^\sharp) - \frac{\int_{-L/2}^{L/2} e^{\beta \kappa\tilde{\varrho}(k^\sharp)\cos(2 \pi k^\sharp x /L)  }w_{k^\sharp} \dx{x} }{\int_{-L/2}^{L/2} e^{\beta \kappa\tilde{\varrho}(k^\sharp)\cos(2 \pi k^\sharp x /L)  } \dx{x}}=0
\end{align}
After a change of variables we obtain,
\begin{align}
\tilde{\varrho}(k^\sharp) - \sqrt{\frac{2}{L}}\frac{\int_{0}^{\pi} e^{\beta \kappa\tilde{\varrho}(k^\sharp)\cos(y)  }\cos(y) \dx{x} }{\int_{0}^{\pi} e^{\beta \kappa\tilde{\varrho}(k^\sharp)\cos(y)  } \dx{x} }=0
\end{align}
We can express the above equation in the following form,
\begin{align}
M(a,\kappa):=\sqrt{\frac{2}{L}}\beta \kappa \frac{I_1(a)}{I_0(a)}=\sqrt{\frac{2}{L}}\beta \kappa r_0(a)= a\label{eq:besfp} \ ,
\end{align}
where the $I_n$ represent modified Bessel functions of the first kind having order $n$, $r_n(a):=\frac{I_{n+1}(a)}{I_n(a)}$, and $a= \beta \kappa \tilde{\varrho}(k^\sharp)$. This equation is similar to the one derived in Section VI of~\cite{bavaud1991eq}(cf. ~\cite{messer1982statistical,battle1977phase}). It is also qualitatively similar to the self-consistency equation associated with the two-dimensional Ising model.

For $\varrho=\varrho_\infty$, we know that $\tilde{\varrho}(\kappa_\sharp)=0$. 
We argue that any nontrivial solution of $F(\varrho,\kappa)=0$ must have
$\tilde{\varrho}(k^\sharp) \neq0$. Assume this is not the case, i.e.,
there exists $ \varrho_\kappa \neq \varrho_\infty$ which satisfies $F(\varrho_\kappa,\kappa)=0$
and $\tilde{\varrho_\kappa}(k^\sharp) =0$, then from \eqref{eq:kmkm} we have that $\varrho=\varrho_\infty$. Thus $F(\varrho,\kappa)$ has non-trivial solutions if and only if \eqref{eq:besfp} has nonzero solutions. One should note 
that since $I_1$ is odd and $I_0$ is even, nonzero solutions 
to \eqref{eq:besfp} come in pairs, i.e, if $a$ is a solution
so is $-a$. However, these two solutions are simply translates of each other.

We now show that if $\kappa \leq \kappa_\sharp= \sqrt{2L}/\beta$, \eqref{eq:besfp}
has no nonzero solutions.  As mentioned earlier it is sufficient to study the 
problem on the half line. Note first, that for $a>0$, $r_0(a)$ is increasing,
i.e, $r_0'(a) >0$ (cf. ~\cite[(15)]{amos1974computation}). 
Additionally, we have that,
\begin{align}
 r_0'(a)&= \frac{1}{2} + \frac{I_0(a)I_2(a)-I_1(a)^2}{2I_0(a)^2}-\frac{r_0(a)^2}{2}  
\end{align}
and so $r_0'(0) = \frac{1}{2}$.
We can now use the so-called Turan-type inequalities (cf. ~\cite{thiruvenkatachar1951inequalities,baricz2013turan}) to assert that $I_0(a)I_2(a)-I_1(a)^2<0$ for $a>0$. This tells us that,
\begin{align}
r_0'(a)&< \frac{1}{2} -\frac{r_0(a)^2}{2}   \ ,
\end{align}
with $r_0(a)>0$ for $a>0$. Using the fact that $\kappa \leq \kappa_\sharp$, we obtain,
\begin{align}
\frac{\partial {M}}{\partial {a}}(a,\kappa)& < 1- r_0(a)^2 \, .
\end{align}
We know now that $M(a,\kappa)$ is increasing for $a>0$, $M(0,\kappa)=0$, $\frac{\partial {M}}{\partial {a}}(0,\kappa)=1$, and $\frac{\partial {M}}{\partial {a}}(a,\kappa)$ is bounded above by 1 for $a>0$. Thus the curve
$y=M(a,\kappa)$ cannot intersect $y=a$ for any $a>0$. Thus $\varrho_\infty$
is the unique minimiser for $\kappa\leq \kappa_\sharp$, which implies by \cref{prop:CharactTP}~\eqref{prop:CharactTP:cont} that $\kappa_c=\kappa_\sharp$ is a continuous transition point.

 We will now show that for  $\kappa>\kappa_\sharp$, $\eqref{eq:besfp}$ has at most one solution for $a>0$.  We know that
 \begin{align}
 \frac{\partial {M}}{\partial {a}}(0,\kappa)& >1 \, .
 \end{align}
 Also for $a$ large enough, $a>M(a,\kappa)$ (since $r_0(a) \to 1$, as $a \to \infty$, and is strictly increasing). Thus by the intermediate value theorem, there exists at least one positive $a$ such that \eqref{eq:besfp} holds for every $\kappa>\kappa_\sharp$. One can now show that $\frac{\partial {M}}{\partial {a}}(a,\kappa)$ is strictly decreasing for 
 $a>0$. This is equivalent to showing that $r_0''(a)$ is strictly negative. We have
 \begin{align}
-r_0''(a) &= \frac{3}{4}r_0 + \frac{3 }{2}r_0^2 r_1 -2 r_0^3 -\frac{1}{4}r_0r_1r_2 =r_0 \bra*{\frac{3}{4} + \frac{3 }{2}r_0 r_1 -2 r_0^2 -\frac{1}{4}r_1r_2 } \, ,
 \end{align} 
 where we have used the formula $\frac{\dx{}}{\dx{a}}I_n=\frac{1}{2}\bra*{I_{n+1}+I_{n-1}}, n\geq1$. The ratios $r_n$ enjoy the following monotonicity and separation properties (cf.~\cite[(10),(11)]{amos1974computation}):
  \begin{align}
r_n &\leq r_{n+1} \label{eq:rmon} \, ,\\
\text{and}\qquad  \frac{a}{n +1+ \sqrt{a^2 +(n+1)^2}}&\leq r_n \leq \frac{a}{n + \sqrt{a^2 +(n+2)^2}}, \qquad a \geq 0, n \geq0 \, . \label{eq:ruplo}
  \end{align}
 Using these we obtain
 \begin{align}
-r_0''(a) &\stackrel{\mathclap{\eqref{eq:rmon}}}{\ \geq\ } r_0 \bra*{\frac{3}{4} + \frac{3 }{2}r_0 r_1  -\frac{5}{4}r_0^2 } =r_0 \bra*{\frac{3}{4}-\frac{3}{4}r_0 + r_0\bra*{ \frac{3 }{2}r_1  -\frac{1}{2}r_0} } \\
&\stackrel{\mathclap{r_0<1}}{\ >\ } r_0 \bra*{ r_0\bra*{ \frac{3 }{2}r_1  -\frac{1}{2}r_0} }  \stackrel{\mathclap{\eqref{eq:ruplo}}}{\ \geq\ } \frac{r_0^2}{2} \bra*{  \frac{3a}{2+ \sqrt{a^2 +9}}  - \frac{a}{ \sqrt{a^2 +4}}} \\
&\ =\ \frac{r_0^2}{2} \bra*{\frac{ (\sqrt{9 a^2 +36} -\sqrt{a^2 +9} -2) a }{(2+ \sqrt{a^2 +9})\sqrt{a^2 +4}}} >0, \qquad\text{ for } a>0 \, .
 \end{align}
 This implies that $\frac{\partial }{\partial {a}} \left(a- M(a,\kappa)\right) =1 -\frac{\partial {M}}{\partial {a}}(a,\kappa)$ changes sign only once. Thus~\eqref{eq:besfp} has only one solution, $a_\kappa$ for $a>0$ and $\kappa>\kappa_\sharp$. Additionally, $a<M(a,\kappa)$ if and only if $0<a<a_\kappa$ and $a>M(a,\kappa)$ if and only if $a>a_\kappa$. Now let $\kappa_2>\kappa_1> \kappa_\sharp$ with $a_{\kappa_1}$ and $a_{\kappa_2}$ the solutions of~\eqref{eq:besfp} at $\kappa_1$ and $\kappa_2$ respectively. We then have
\begin{align}
\frac{\kappa_2}{\kappa_1}a_{\kappa_1} &=  \frac{\kappa_2}{\kappa_1}
M(a_{\kappa_1},\kappa_1)=
M(a_{\kappa_1},\kappa_2) 
< M\bra*{\frac{\kappa_2}{\kappa_1}a_{\kappa_1},\kappa_2} \, ,
\end{align} 
where we have used the fact that $\kappa_2>\kappa_1$, the linearity of $M(a,\kappa)$ in $\kappa$, and that $M(a,\kappa)$ is strictly increasing for positive $a$. Using previous arguments, the above inequality tells us that $0<\frac{\kappa_2}{\kappa_1}a_{\kappa_1}<a_{\kappa_2}$ which implies that $a_{\kappa} \to \infty$, as $\kappa \to \infty$. Finally, we have the following form for the solution
\begin{align}
\varrho(x,a_\kappa)=\frac{1}{L}\frac{e^{a_\kappa \cos(2 \pi k x /L)}}{ I_0(a_\kappa)}  \, .
\end{align}
Let us denote by $\varrho(\dx{x},a_\kappa)$ the measure associated to the density 
$\varrho(x,a_\kappa)$. We will now show that for $k=1$, $\varrho(\dx{x},a_\kappa)$ converges to $ \delta_0$ as $a_\kappa \to \infty$ in the narrow topology, i.e., tested against bounded, continuous functions. The argument for other $k \in \N$ is then simply an extension of the $k=1$ case. Let $A$ be a continuity set of $\delta_0$, then if $0 \notin A$ it follows that $0 \notin \partial A$. By a large deviations argument, Laplace's principle, we have that
\begin{align}
\lim_{a_\kappa \to \infty} \bra*{\frac{1}{a_\kappa}\log\bra*{\frac{\pi}{L}\frac{\int_A e^{a_\kappa \cos(2 \pi  x /L) }\dx{x}}{\int_0^\pi
e^{a_\kappa \cos(y)} \dx{y}}}} = \sup_A \cos(2 \pi x/L)-1 <0  \quad \textrm{ if } 0 \notin A \, .
\end{align}
Thus, $\varrho(\dx{x},a_\kappa)(A) \to 0$ for every Borel set not containing $0$ and thus  $\varrho(\dx{x},a_\kappa)(A) \to 1$ for $0 \in A$. By the portmanteau theorem(cf. ~\cite[Theorem 2.1]{billingsley1999convergence}), we have the desired convergence. For arbitrary $k$, one can apply the same argument on periods of the function $\cos(2 \pi k x /L)$, and due to the periodicity/symmetry of the solution the masses in each Dirac point are equal.
\end{proof}
\subsection{The noisy Hegselmann--Krause model for opinion dynamics}\label{S:Hegselmann}
The noisy Hegselmann--Krause system(cf. ~\cite{hegselmann2002opinion}) models the opinions of $N$ interacting agents such that each agent is only influenced by the opinions of its immediate neighbours. In the large $N$ limit, we obtain again the McKean--Vlasov PDE with the interaction potential $W_{\mathrm{hk}}(x)=-\frac{1}{2}\bra*{\bra*{|x|-\frac{R}{2}}_-}^2$ for some $R>0$. The ratio $R/L$ measures the range of influence of an individual agent with $R/L=1$ representing full influence, i.e., any one agent influences all others. In order to analyse this system further, we compute the Fourier transform of $W_{\mathrm{hk}}(x)$ given by
\begin{align}
\tilde{W}_{\mathrm{hk}}(k)=\frac{\left(-\pi^2 k^2 R^2+2 L^2\right) \sin \left(\frac{\pi  k R}{L}\right)-2 \pi  k L R \cos \left(\frac{\pi  k R}{L}\right)}{4 \sqrt{2} \pi
   ^3 k^3 \sqrt{\frac{1}{L}}}, \quad k \in \N, k \neq 0 \, .
\end{align}
A simple consequence of the above expression is that the model has infinitely many bifurcation points for $R/L=1$.
For the other values of $R/L$ the problem reduces to a computational one, namely checking that the conditions of~\cref{thm:c1bif} are satisfied. Also, $W_{\text{hk}}(x)$ is normalised and  decays to $0$ uniformly as $R \to 0$, i.e., as the range of influence of an agent decreases so does its corresponding strength. We could define a rescaled version of the potential, $W_{\mathrm{hk}}^R(x)=-\frac{1}{2R^3}\bra*{\bra*{|x|-\frac{R}{2}}_-}^2$ which does not lose mass as $R \to 0$.  We conclude this subsection with the following result.
\begin{prop}
For $R$ small enough, the rescaled noisy Hegselmann--Krause model possesses a discontinuous transition point.
\end{prop}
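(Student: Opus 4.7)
The approach is to verify that the rescaled family $\{W_{\mathrm{hk}}^{R}\}$ as $R \downarrow 0$ fits precisely into the framework of~\cref{cor:delta}, which then yields the discontinuous transition point for $R$ sufficiently small. The factor $1/R^3$ has been chosen so that the $L^1$-mass of $W_{\mathrm{hk}}^R$ remains constant while its support shrinks to $\{0\}$; morally, the family is a sequence of negative mollifiers converging to a negative Dirac mass at the origin.

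First I would compute $\norm{W_{\mathrm{hk}}^R}_{1}$ by a direct integration on the support $[-R/2, R/2]$:
\begin{equation*}
\norm{W_{\mathrm{hk}}^R}_{1} = \frac{1}{2R^3}\int_{-R/2}^{R/2}\Bigl(\frac{R}{2}-|x|\Bigr)^{2}\dx{x} = \frac{1}{24},
\end{equation*}
so the first hypothesis of~\cref{cor:delta} is satisfied with $C = 1/24$. Second, I would establish the distributional convergence $W_{\mathrm{hk}}^R \to -\tfrac{1}{24}\delta_0$ via the rescaling substitution $x = Ry/2$, which gives
\begin{equation*}
\intT{W_{\mathrm{hk}}^R(x)\,\varphi(x)} = -\frac{1}{16}\int_{-1}^{1}(1-|y|)^2\, \varphi(Ry/2)\dx{y} \;\longrightarrow\; -\frac{\varphi(0)}{24}
\end{equation*}
for every $\varphi \in C(U)$, by continuity of $\varphi$ at $0$ and dominated convergence.

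With both hypotheses of~\cref{cor:delta} in place, the corollary immediately produces a discontinuous transition point $\kappa_c^R < \kappa_\sharp^R$ for all $R$ sufficiently small, which is the claim. The argument is essentially a routine verification: the substantive content is supplied by~\cref{thm:dctp} and~\cref{cor:delta}, whose near-resonance mechanism~\eqref{eq:c1} is automatically activated in the $R \to 0$ limit because the Fourier coefficients of a mollifier all approach the common value $-C L^{-1/2}$, making the set $K^{\delta_*}$ in~\cref{def:del} grow to include arbitrarily many triples $(k^a,k^b,k^c)$ with $k^a = k^b + k^c$. Accordingly no substantial obstacle appears here; the only real task is the bookkeeping on the scaling.
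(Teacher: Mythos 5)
Your proposal is correct and follows essentially the same route as the paper: the paper's proof likewise observes that $\norm{W_{\mathrm{hk}}^R}_1$ is a constant $C$ independent of $R$, that $W_{\mathrm{hk}}^R \to -C\delta_0$ in the sense of distributions as $R \to 0$, and then invokes~\cref{cor:delta}. Your explicit computations (the value $C = 1/24$ and the rescaling argument for the distributional limit) are accurate and merely fill in details the paper leaves to the reader.
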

\begin{proof}
We define $C:=\norm{W_{\mathrm{hk}}^R}_1$ and note that it is independent of $R$. The proof follows from the observation that $W_{\mathrm{hk}}^R \to - C \delta_0$ as $R \to 0$ and applying~\cref{cor:delta}.
\end{proof}
\subsection{The Onsager model for liquid crystals}
In~\autoref{ss:kura}, we discussed the Maiers--Saupe model as a special case of the generalised Kuramoto model. In this subsection we discuss another model for the alignment of liquid crystals, i.e., the Onsager model which has as its interaction potential, $W(x)=\abs*{\sin\bra*{\frac{2 \pi}{L} x}}$.  As discussed in~\cite{chenstationary2010}, one can also study the potential $W_\ell(x)=\abs*{\sin\bra*{\frac{2 \pi}{L} x}}^\ell \in \Leb^2_s(U) \cap C^\infty(\bar{U})$ with $\ell \in \N, \ell \geq 1$, so that the Onsager and Maiers--Saupe potential correspond to the cases $\ell=1$ and $\ell=2$, respectively. We have the following representation of $W_\ell(x)$ in Fourier space
\begin{align}
\tilde{W_\ell}(k)=\frac{\sqrt{\pi } 2^{\frac{1}{2}-\ell} \cos \left(\frac{\pi  k}{2}\right) \Gamma (\ell+1)}{\Gamma \left(\frac{1}{2} (-k+\ell+2)\right) \Gamma
   \left(\frac{1}{2} (k+\ell+2)\right)} \, .\label{eq:onsf}
\end{align}
Any nontrivial solutions to the stationary dynamics correspond to the so-called nematic phases of the liquid crystals. We can obtain the following characterisation of bifurcations associated to the $W_\ell(x)$ and thus of the Onsager model.

\begin{prop}
We have the following results:
\begin{tenumerate}
\item The trivial branch of the Onsager model, $W_1(x)$, has infinitely many bifurcation points.
\item The trivial branch of the Maiers--Saupe model, $W_2(x)$, has exactly one bifurcation point.
\item The trivial branch of the model $W_\ell(x)$ for $\ell$ even has at least $\frac{\ell}{4}$ bifurcation points if $\frac{\ell}{2}$ is even and $\frac{\ell}{4} + \frac{1}{2}$ bifurcation points if $\frac{\ell}{2}$ is odd.
\item The trivial branch of the model $W_\ell(x)$ for $\ell$ odd has infinitely many bifurcation points if $\frac{\ell-1}{2}$ is even and at least $\frac{\ell +1}{4}$ bifurcation points if $\frac{\ell-1}{2}$ is odd.
\end{tenumerate}
\end{prop}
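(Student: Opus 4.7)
The plan is to apply the $1$-dimensional corollary of~\cref{thm:c1bif} by identifying, for each $\ell$, the set of integers $k^\sharp>0$ such that $\tilde{W}_\ell(k^\sharp)<0$ and $\tilde{W}_\ell(k^\sharp) \neq \tilde{W}_\ell(k)$ for every other $k\in\N, k>0$. Each such $k^\sharp$ produces one bifurcation point. Throughout we will work directly with the closed form~\eqref{eq:onsf}. The first reduction is cheap: since $\cos(\pi k/2)=0$ for every odd $k$, all odd Fourier modes of $W_\ell$ vanish, so one only needs to analyse $\tilde{W}_\ell(2m)$ for $m\in\N$.

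For $\ell=2j$ even, the identities $\Gamma(j-m+1)=(j-m)!$ and $\Gamma(j+m+1)=(j+m)!$ together with $\Gamma(\ell+1)=(2j)!$ reduce~\eqref{eq:onsf} to
\begin{equation*}
  \tilde{W}_{\ell}(2m) \;=\; \sqrt{\pi}\, 2^{\frac{1}{2}-\ell}\, (-1)^m \binom{\ell}{\ell/2-m}\, \infn_{\{0\le m\le j\}} \, .
\end{equation*}
In particular the support is finite, the sign is $(-1)^m$, and by strict unimodality of binomial coefficients the absolute values $|\tilde{W}_\ell(2m)|$ are strictly decreasing in $m\in\{0,\dots,j\}$; hence all these Fourier values are pairwise distinct. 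The negative modes are precisely those indexed by odd $m\in\{1,\dots,j\}$, whose cardinality is $j/2=\ell/4$ when $j$ is even, and $(j+1)/2=\ell/4+1/2$ when $j$ is odd. Applying~\cref{thm:c1bif} to each such $m$ yields parts~(ii) and~(iii). Specialising to $\ell=2$ gives the single negative mode $k=2$, confirming part~(ii).

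For $\ell=2j+1$ odd, the arguments of the Gamma functions at $k=2m$ are half-integers. Using $\Gamma(z)\Gamma(1-z)=\pi/\sin(\pi z)$ with $z=j-m+3/2$, one rewrites
\begin{equation*}
  \frac{(-1)^m}{\Gamma(j-m+\tfrac{3}{2})} \;=\; \frac{(-1)^{j+1}\, \Gamma(m-j-\tfrac{1}{2})}{\pi} \qquad \text{for }m\ge j+2,
\end{equation*}
which together with~\eqref{eq:onsf} shows that the sign of $\tilde{W}_\ell(2m)$ is $(-1)^m$ for $m\le j+1$ and $(-1)^{j+1}$ for all $m\ge j+2$. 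When $j$ is even the tail provides infinitely many negative modes (giving~(i) for $j=0$, i.e.\ $\ell=1$, and the first case of~(iv) in general); when $j$ is odd the tail is positive and only the small-$m$ odd values produce negatives, whose count inside $[1,j+1]$ equals $(j+1)/2=(\ell+1)/4$, yielding the second case of~(iv).

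The remaining ingredient, and the main technical obstacle, is verifying the uniqueness hypothesis~(1) of~\cref{thm:c1bif} for every negative mode identified. In the even case uniqueness was already an immediate consequence of the monotonicity of binomials. In the odd case one proceeds in two steps: within the tail regime $m\ge j+2$ the explicit ratio
\begin{equation*}
  \frac{|\tilde{W}_\ell(2(m+1))|}{|\tilde{W}_\ell(2m)|} \;=\; \frac{m-j-\tfrac{1}{2}}{m+j+\tfrac{3}{2}} \;<\; 1,
\end{equation*}
gives strict monotonicity (so distinctness of tail values), together with the decay $|\tilde{W}_\ell(2m)|\sim C_\ell m^{-(\ell+1)}$; within the small-$m$ regime $1\le m\le j+1$ the values can be listed explicitly and inspected. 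Cross-regime equality between a small-$m$ negative value and a tail value is then excluded by quantitative comparison using the aforementioned decay, potentially after setting aside finitely many $m$ that must be checked by hand. Once uniqueness has been verified, invoking the corollary of~\cref{thm:c1bif} at each such $k^\sharp=2m$ completes the proof of all four statements.
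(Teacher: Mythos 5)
Your overall route is the same as the paper's: read off the sign pattern of the closed-form coefficients \eqref{eq:onsf} on the even modes $k=2m$, count the negative ones, and invoke \cref{thm:c1bif} at each negative mode whose Fourier value is attained only once. Your even-$\ell$ analysis (the binomial rewriting $\tilde{W}_{2j}(2m)\propto(-1)^m\binom{2j}{j-m}$ for $m\le j$, vanishing beyond, strict monotonicity of the binomials, and the count of odd $m\le j$) is complete and is essentially the paper's factorial-monotonicity argument; your treatment of odd $\ell$ via the reflection formula, which cleanly shows the sign is $(-1)^m$ for $m\le j+1$ and constantly $(-1)^{j+1}$ for $m\ge j+2$, is a genuinely different (and correct) way to locate the negative modes — the paper instead uses the recursion $\tilde{W}_\ell(k)=-\frac{\ell(\ell-1)}{k^2-\ell^2}\tilde{W}_{\ell-2}(k)$ together with the explicit $\ell=1$ formula. (For part (b), note that ``exactly one'' also uses the standard fact that bifurcation from the trivial branch requires a nontrivial kernel of $D_\varrho\hat F(0,\kappa)$, which for $W_2$ can only occur at its single negative mode; the paper instead deduces (b) from \cref{prop:kura}.)

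The genuine gap is in the verification of condition (1) of \cref{thm:c1bif} for odd $\ell$. You prove distinctness only within the tail $m\ge j+2$, but when $\frac{\ell-1}{2}$ is odd \emph{all} the negative modes lie in the range $1\le m\le j+1$, and ``the values can be listed explicitly and inspected'' is not an argument valid for every $\ell$; likewise ``cross-regime equality \dots excluded by quantitative comparison \dots after setting aside finitely many $m$ that must be checked by hand'' is a sketch, not a proof (and as written, parts (a) and the first case of (d) would also need the easy but unstated remark that at most finitely many tail modes can coincide with the finitely many small-$m$ values, so infinitely many valid modes survive). Fortunately the fix is already contained in your own tail computation: using $\Gamma(z+1)=z\Gamma(z)$ at the half-integer arguments (which are never poles), the ratio identity holds for \emph{every} $m\ge 0$, namely $|\tilde{W}_\ell(2(m+1))|/|\tilde{W}_\ell(2m)|=\frac{|j-m+\frac12|}{j+m+\frac32}<1$, so $|\tilde{W}_\ell(2m)|$ is strictly decreasing on its whole support, across the junction $m=j+1$ included. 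Hence all nonzero coefficients have pairwise distinct absolute values, condition (1) holds at every negative mode with no case-checking, and all four statements follow.
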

\begin{proof}
The proof of (b) follows from~\cref{prop:kura} so we only need to show (a),(c), and (d). We start by noting that $\tilde{W}_\ell(0)\geq 0$ and $\tilde{W}_\ell(k)=0$ for all odd $k \in \N$. We also note that $\frac{1}{\Gamma(z)}$ is an entire function with zeroes at all nonpositive integers and $\frac{1}{\Gamma(-(2n+1)/2)}, n \in \N$ is negative for all even $n$ and positive otherwise. For the rest of the proof we will always assume that $k>0$. We will now attempt to show that all nonzero values of $\tilde{W}_\ell(k)$ for $k>0$ are distinct. Assume $l$ is even, then we have the following explicit form of $\tilde{W}_\ell(k)$:
\begin{align}
\tilde{W_\ell}(k)=\frac{\sqrt{\pi } 2^{\frac{1}{2}-\ell} \cos \left(\frac{\pi  k}{2}\right) \Gamma (\ell+1)}{ \left(\frac{1}{2} (-k+\ell)\right)! 
   \left(\frac{1}{2} (\ell+k)\right)!} \, ,
\end{align}
where $k$ is assumed to be even and $k < \ell+2$(since it is zero for $k$ odd or $k\geq l+2$). From the above expression one can check that the denominator is strictly increasing as $k$ increases from $2$ to $\ell$, thus $|\tilde{W}_\ell(k)|$ is strictly decreasing. Thus the nonzero values of $\tilde{W}_\ell(k)$ are distinct for $\ell$ even. For $\ell$ odd, we first note that by simple integration by parts we can derive the following recursion relation
\begin{align}
\tilde{W}_\ell(k)= -\frac{\ell(\ell-1)}{k^2 - \ell^2} \tilde{W}_{\ell-2}(k)\, \label{eq:onre},
\end{align}
 where again $k$ is even(and thus not equal to $\ell$). For $\ell=1$, we have the following alternative formula for $\tilde{W}_\ell(k)$ for even $k$:
 \begin{align}
\tilde{W}_1(k)=\sqrt{\frac{2}{\pi }}\frac{ (\cos (\pi  k)+1)}{1-k^2} \, \label{eq:onbif}.
 \end{align}
 It is clear now that for $\ell=1$, $\tilde{W}_1(k)$ has distinct(and in fact negative values) for $k$ even. From the recursion formula in~\eqref{eq:onre} it follows that this holds true for all odd $\ell$, i.e., $|\tilde{W}_\ell(k)|$ takes distinct values for $k$ even.

Assume now that $\ell=1$(i.e. the Onsager model), then as mentioned earlier we can deduce from~\eqref{eq:onbif} that $\tilde{W}_1(k)$ is distinct and negative for all $k$ even. It follows that $\tilde{W}_1(k)$ satisfies the conditions of~\cref{thm:c1bif} for all even $k$, thus completing the proof of (a).

Now let $\ell>2$ and even. It is clear from the expression in~\eqref{eq:onsf} that then $\tilde{W}_\ell(k)$ can be negative only if $\cos(k \pi/2)/\Gamma \left(\frac{1}{2} (-k+\ell+2)\right)$ is negative. This happens if and only if $\frac{k}{2}$ is odd and $k<\ell+2$ since if $k \geq \ell+2$, $\frac{1}{\Gamma \left(\frac{1}{2} (-k+\ell+2)\right)}$ is evaluated at a negative integer and thus $\tilde{W}_\ell(k)=0$. Since by the previous arguments each $\frac{k}{2}$ odd with $k<\ell+2$ corresponds to a distinct value of $\tilde{W}_\ell(k)$, we can apply ~\cref{thm:c1bif} to deduce that such $k$ correspond to bifurcation points. Given an $\ell>2$ and even, there are $\frac{\ell}{4} + \frac{1}{2}$ such $k$ if $\frac{\ell}{2}$ is odd and $\frac{\ell}{4}$ if $\frac{\ell}{2}$ is even.  This completes the proof of (c).

Now, we let $\ell>2$ and odd. One can check again that $\tilde{W}_\ell(k)$ is negative if and only if $\frac{k}{2}$ is odd and $k < \ell +2$ when $\frac{\ell-1}{2}$ is odd and if $k$ is even, but $\frac{k}{2}$ is odd if $k < \ell+2$, when $\frac{\ell-1}{2}$ is even. For $\frac{\ell-1}{2}$ odd there are $\frac{\ell +1}{4}$ such $k$, while for $\frac{\ell-1}{2}$ even there are infinitely many such $k$. Applying~\cref{thm:c1bif} again, gives us (d). 
\end{proof}
The above result provides us with a finer analysis to that presented in~\cite{chenstationary2010}, as we are able to count the solutions for general odd and even $\ell$, instead of just proving the existence of nontrivial solutions. The above result also generalises the work in~\cite{luciaexact2010}  which studied a truncated version of the Onsager model with only a finite number of modes and proved the existence of nontrivial solutions. It also partially recovers results from~\cite[Theorem 2]{niksirat2015stationary} in which the non-truncated Onsager model is analysed. We refer the reader to~\cite{vollmer2017critical} for an analysis of the Onsager model
in 2 dimensions, i.e., for liquid crystals that live in 3 dimensions with two degrees of freedom.

\subsection{The Barr\'e--Degond--Zatorska model for interacting dynamical networks}
The Barr\'e--Degond--Zatorska system ~\cite{barre2017kinetic} models particles that interact through a dynamical network of links. Each particle interacts with its closest neighbours through cross-links modelled by springs which are randomly created and destroyed. Taking the combined mean field and overdamped limits one obtains the McKean--Vlasov equation with the interaction potential given by
\begin{align}
W(x)=
\begin{cases}
(|x|-\ell)^2 -(R -\ell)^2 & |x| <R \\
0 & |x|\geq R
\end{cases} \, ,
\end{align}
for two positive constants $0<\ell\leq R \leq L/2$. In~\cite[Theorem 6.1]{barre2017kinetic}, using  formal asymptotic analysis, it was shown (and later numerically verified in~\cite{barre2018particle}) that one can provide conditions for continuous and discontinuous transitions for the above potential based on the values of  the Fourier modes. We restate their result using our notation for the convenience of the reader.
\begin{prop}[{Sharp characterisation of transition point by formal asymptotics ~\cite[Theorem 6.1]{barre2017kinetic}}]
Consider the Barr\'e--Degond--Zatorska model with $\ell, R,L$ chosen such that $\beta \kappa \tilde{W}(1)+ \sqrt{2L} <0$ and $\beta \kappa \tilde{W}(k)+ \sqrt{2L} >0$ for all $k \neq 1, k \in \N$. Then
\begin{tenumerate}
\item If $2 \tilde{W}(2) - \tilde{W}(1) >0$, then the system exhibits a continuous transition point.
\item If $2 \tilde{W}(2) - \tilde{W}(1) <0$, then the system exhibits a discontinuous transition point.
\end{tenumerate}
\end{prop}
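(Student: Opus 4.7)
The plan is to reduce the claim to \cref{prop:CharactTP} by means of a weakly nonlinear (Landau-style) expansion of $\crF_{\kappa_\sharp}$ about $\varrho_\infty$. The stated hypotheses are precisely that $\tilde{W}(1)/\Theta(1)$ is the unique minimum of $\tilde{W}(k)/\Theta(k)$ over $k\in\N\setminus\{0\}$, so $k^\sharp=1$ and $\kappa_\sharp=-\sqrt{2L}/(\beta\tilde{W}(1))$. Because $2=1+1$ is the natural resonance with the critical mode, the correct competitor is a two-mode state $\varrho=\varrho_\infty(1+\epsilon w_1 + c\,\epsilon^2 w_2)$, in the spirit of the proof of \cref{thm:dctp}.

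Expanding $\mathcal{H}(\varrho|\varrho_\infty)$ and $\mathcal{E}(\varrho-\varrho_\infty,\varrho-\varrho_\infty)$ to fourth order in $\epsilon$, the $O(\epsilon^2)$ contributions cancel by the criticality condition defining $\kappa_\sharp$, the $O(\epsilon^3)$ term vanishes since $\int w_1^3=0$ and $\mathcal{E}(w_1,w_2)=0$, and the remaining $O(\epsilon^4)$ term, computed via $\int w_1^2 w_2 = 1/\sqrt{2L}$, $\int w_1^4 = 3/(2L)$, and $\mathcal{E}(w_k,w_k)=\tilde{W}(k)\sqrt{L/2}$, reads
\begin{equation*}
\crF_{\kappa_\sharp}(\varrho)-\crF_{\kappa_\sharp}(\varrho_\infty) = \frac{\varrho_\infty\,\epsilon^4}{2\beta}\left(c^2\,\frac{\tilde{W}(1)-\tilde{W}(2)}{\tilde{W}(1)} - \frac{c}{\sqrt{2L}} + \frac{1}{4L}\right) + O(\epsilon^5).
\end{equation*}
Since $\Theta(1)=\Theta(2)$ and $k^\sharp=1$ we have $\tilde{W}(1)<\tilde{W}(2)$ and $\tilde{W}(1)<0$, so the coefficient of $c^2$ is strictly positive. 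Completing the square in $c$, the minimum of the bracket equals $(\tilde{W}(1)-2\tilde{W}(2))/(8L(\tilde{W}(1)-\tilde{W}(2)))$, whose denominator is strictly negative.

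For part (b), the assumption $2\tilde{W}(2)-\tilde{W}(1)<0$ makes this optimised value strictly negative, so for small $\epsilon$ the competitor undercuts $\varrho_\infty$ at $\kappa_\sharp$, and \cref{prop:CharactTP}\ref{prop:CharactTP:discont} yields a discontinuous transition point at some $\kappa_c<\kappa_\sharp$. For part (a), $2\tilde{W}(2)-\tilde{W}(1)>0$ flips the sign and the optimised quartic term is strictly positive, which is the supercritical-pitchfork signature consistent with a continuous transition at $\kappa_\sharp$.

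The main obstacle is upgrading this formal computation in part (a) to the global uniqueness of $\varrho_\infty$ as a minimiser of $\crF_{\kappa_\sharp}$, which is what \cref{prop:CharactTP}\ref{prop:CharactTP:cont} actually requires, since the two-mode ansatz only controls competitors in a two-dimensional subspace. I would close this gap along one of two routes: either verify the hypotheses of \cref{thm:spgap} for an $\alpha$-stabilisation of $W$ in which $\alpha$ is tied quantitatively to the positive spectral gaps $\tilde{W}(k)/\Theta(k)-\tilde{W}(1)/\Theta(1)$ for $k\geq 2$, or construct a comparison potential of generalised Kuramoto type (\cref{prop:kura}) and invoke \cref{lem:comp}. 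Both routes rely on the entropy-defect lower bound of \cref{lem:entdef} as the key quantitative tool needed to dominate the remaining Fourier modes of $W$ and turn the formal positivity of the leading quartic into the desired global uniqueness at $\kappa_\sharp$.
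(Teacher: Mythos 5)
First, note what the paper actually does here: it does \emph{not} prove this proposition. It is quoted verbatim (in the paper's notation) from the formal asymptotic analysis of~\cite{barre2017kinetic}, and the surrounding text only checks \emph{consistency} of conditions (a) and (b) with the paper's own rigorous criteria --- (b) via the near-resonance mechanism of \cref{thm:dctp} (which needs $\delta_*$ small), and (a) via \cref{lem:comp}/\cref{thm:spgap} (which need an $\alpha$-stabilised structure with $\alpha$ small). Your attempt at an actual proof is therefore a genuinely different route. For part (b) it succeeds, and in fact goes beyond the paper: your two-mode competitor $\varrho_\infty(1+\epsilon w_1 + c\epsilon^2 w_2)$, with the slaved mode optimised out, is a legitimate Landau reduction. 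I checked the coefficients: the $O(\epsilon^2)$ cancellation at $\kappa_\sharp$, $\int w_1^2 w_2 = 1/\sqrt{2L}$, $\int w_1^4 = 3/(2L)$, $\mathcal{E}(w_k,w_k)=\tilde{W}(k)\sqrt{L/2}$, and the optimised bracket $(\tilde{W}(1)-2\tilde{W}(2))/(8L(\tilde{W}(1)-\tilde{W}(2)))$ are all correct, and the stated hypotheses do force $k^\sharp=1$ with $\tilde{W}(1)<0$, so \cref{prop:tpex} gives a transition point and \cref{prop:CharactTP}\ref{prop:CharactTP:discont} yields a discontinuous one at some $\kappa_c<\kappa_\sharp$. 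This is a rigorous proof of (b) under the stated hypotheses, which the paper itself does not supply.

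Part (a), however, contains a genuine gap which your proposed repairs do not close. Positivity of the optimised quartic coefficient only shows that $\varrho_\infty$ cannot be undercut along the two-dimensional $(w_1,w_2)$ manifold; \cref{prop:CharactTP}\ref{prop:CharactTP:cont} requires $\varrho_\infty$ to be the \emph{unique global} minimiser of $\crF_{\kappa_\sharp}$. Neither suggested route delivers this from the stated hypotheses alone. The \cref{thm:spgap} route requires the potential to be (close to) an $\alpha$-stabilisation in the sense of \cref{defn:astable}, i.e.\ \emph{all} negative modes other than $\tilde{W}(1)$ uniformly small; the hypotheses only give $\tilde{W}(k)>-\sqrt{2L}/(\beta\kappa)$ for $k\neq 1$, so modes $k\geq 3$ may be negative and of order one. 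The \cref{lem:comp} route comparing with the generalised Kuramoto potential $\tilde{W}(1)w_1$ requires $\tilde{W}(k)\geq 0$ for all $k\neq 1$, which again is not implied. Indeed, with moderately negative higher modes one can have $2\tilde{W}(2)-\tilde{W}(1)>0$ and still a first-order transition, so (a) in this generality is precisely a formal-asymptotics statement, which is why the paper attributes it to~\cite{barre2017kinetic} rather than proving it. To make (a) rigorous you would need to add a quantitative smallness hypothesis on $\set{\tilde{W}(k)_-}_{k\geq 2}$ (as in \cref{thm:spgap}, using \cref{lem:entdef}), not merely the sign condition $2\tilde{W}(2)-\tilde{W}(1)>0$.
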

The assumptions in the proposition essentially imply a separation of the Fourier modes. It follows immediately under these assumptions that $k=1$ satisfies the conditions of~\cref{thm:c1bif} and thus  $\kappa_*=-\frac{ (2L)^{\frac{1}{2}} }{\beta \tilde{W}(1)} $ corresponds to a bifurcation point of the system. Additionally, looking at Figure ~\ref{fig:dcctp} one can see that the conditions (a) and (b) from the above proposition are consistent with our analysis for the existence of continuous and discontinuous transition points. If $\tilde{W}(1)$ and $\tilde{W}(2)$ are resonating/near-resonating then it follows that condition (b), i.e., $2 \tilde{W}(2) - \tilde{W}(1) <0$ must hold for $\delta_*$ small, where $\delta_*$ is as introduced in~\cref{def:del}. Indeed, let $k=1 ,2$ be elements
of the set $K^{\delta_*}$, then we have $2 \tilde{W}(2) - \tilde{W}(1) =\tilde{W}(1) + 2(\tilde{W}(2)- \tilde{W}(1) ) \leq \tilde{W}(1) +2\delta_* <0$, for $\delta_*$ sufficiently small. Similarly, using~\cref{lem:comp} and comparing with an $\alpha$-stabilised potential say $G_\alpha$, one can argue that  if $\tilde{W}(1)$ is the dominant mode then condition (a), i.e., $2 \tilde{W}(2) - \tilde{W}(1) >0$ must hold for $\alpha$ small, where $\alpha$ is as defined in~\cref{defn:astable}.
\subsection{The Keller--Segel model for bacterial chemotaxis}
The (elliptic-parabolic) Keller--Segel model is used to describe the motion of a group of bacteria under the effect of the concentration gradient of a chemical stimulus, whose distribution is determined by the density of the bacteria. This phenomenon is referred to as \emph{chemotaxis} in the biology literature~\cite{keller1971model}.  For this system, $\varrho(x,t)$ represents the particle density of the bacteria and $c(x,t)$ represents the availability of the chemical resource. The dynamics of the system are then described by the following system of coupled PDEs:
\begin{align}
\label{eq:ksev}
\begin{alignedat}{3}
\partial_t \varrho& =\dive \bra*{\beta^{-1}\nabla \varrho + \kappa \varrho \nabla c}  \qquad && (x,t) \in U \times (0,\infty)\, , \\
-(-\Delta)^{s} c &=\varrho \quad && (x,t) \in U \times [0,\infty) \, ,\\
\varrho(x,0) &= \varrho_0 \quad && x \in U \times \{0\} \, ,\\
\varrho(\cdot, t) &\in C^2(U) \quad && t \in [0,\infty),
\end{alignedat}
\end{align}
for $s \in(\frac{1}{2},1]$. The link between the model in~\eqref{eq:ksev} and the McKean--Vlasov equation is immediately noticed if one simply inverts $-(-\Delta)^s$ to obtain $c$. Thus, the stationary Keller--Segel equation is given by,
\begin{align}
\dive \bra*{\beta^{-1}\nabla \varrho + \kappa \varrho \nabla \Phi^s \star \varrho}=0  \quad & x \in U  \label{eq:sKS} \, ,
\end{align}
with $\varrho \in C^2(\bar{U})$ and where $\Phi^s$ is the fundamental solution of $-(-\Delta)^s$. Since $\Phi^s$ does not, in general, satisfy assumption~\eqref{ass:B}, \cref{thm:wellpsPDE} does not apply directly. However we can circumvent this issue to obtain the following result.
\begin{thm}
 Consider the stationary Keller--Segel equation~\eqref{eq:sKS}. For $d\leq 2$ and $s \in (\frac{1}{2},1]$, it has smooth solutions and its trivial branch $(\varrho_\infty,\kappa)$ has infinitely many bifurcation points.
\end{thm}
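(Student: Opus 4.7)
The strategy is to apply~\cref{thm:c1bif} (or its exchangeable-symmetry variant from~\cref{rem:l2pi} when $d=2$) to the fixed-point map $F(\varrho,\kappa) = \varrho - Z^{-1}e^{-\beta\kappa\Phi^s\star\varrho}$ with $W = \Phi^s$. Since $\Phi^s$ is the periodic Green's function of $-(-\Delta)^s$, diagonalising against the basis $\{w_k\}$ yields
\[
  \tilde{\Phi^s}(k) = -\Theta(k)L^{-d/2}\bigl(L/(2\pi|k|)\bigr)^{2s}, \qquad k\in\N^d\setminus\{\mathbf{0}\}.
\]
In particular $\|\Phi^s\|_2^2 \sim \sum_{k\neq 0}|k|^{-4s}$, which converges whenever $4s>d$, so the hypotheses $d\leq 2$ and $s>1/2$ give $\Phi^s\in\Leb^2_s(U)$ and hence $\hat F$ is well-defined and $C^1$; moreover $\Phi^s\in \Leb^2_{\mathrm{ex}}(U)$ since the operator is permutation-invariant. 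All Fourier modes are strictly negative, so condition~(2) of~\cref{thm:c1bif} is automatic and only the genericity condition~(1) needs verification.

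Because $\tilde{\Phi^s}(k)/\Theta(k)$ depends on $k$ only through $|k|$, condition~(1) reduces to: the value $|k^*|^2$ is realised by a single equivalence class of wavenumbers. In $d=1$ this holds for every $k^*\geq 1$, immediately producing the sequence of bifurcation points $\kappa_{k^*}= -L^{1/2}\Theta(k^*)/(\beta\tilde{\Phi^s}(k^*))\sim k^{*,2s}$. For $d=2$ I invoke a number-theoretic input: take $[k^*] = [(n,0)]$ with $n$ a product of primes congruent to $3\pmod 4$ (for instance $n = 3^j$, $j\geq 1$). By Fermat's two-squares theorem, $n^2$ has no representation as a sum of two strictly positive squares, so $[(n,0)]$ is the only exchangeability class with that norm; varying $j$ produces infinitely many admissible $k^*$ and hence infinitely many bifurcation points of $\hat F$.

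To upgrade these weak fixed points to smooth classical solutions of~\eqref{eq:sKS}, observe that $\varrho = \cT\varrho$ implies $\beta^{-1}\nabla\varrho + \kappa\varrho\nabla(\Phi^s\star\varrho) = 0$ pointwise. Smoothness follows from a bootstrap based on the torus multiplier estimate $\|\Phi^s\star f\|_{\SobW^{2s,p}}\lesssim\|f\|_{\Leb^p}$: starting from $\varrho\in \Leb^2(U)\cap\Prac(U)$, Sobolev embedding (which is effective here precisely because $d\leq 2$ and $s>1/2$) gives $\Phi^s\star\varrho\in C^\alpha$, hence $\varrho\in C^\alpha$, and iteration yields $\varrho\in C^\infty(\bar U)$. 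The genuine obstacle is the $d=2$ genericity step—this is the only place where the restriction $d\leq 2$ enters essentially—and I expect the argument via primes $\equiv 3\pmod 4$ to be the cleanest way to secure infinitely many isolated classes of prescribed norm.
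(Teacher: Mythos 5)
Your proposal is correct and follows essentially the same route as the paper: verify $\Phi^s\in\Leb^2_s(U)$ (square-summability of $|k|^{-2s}$ needs $4s>d$, hence $d\le 2$, $s>1/2$), apply \cref{thm:c1bif} directly in $d=1$ and its exchangeable variant from \cref{rem:l2pi} in $d=2$ where condition (1) becomes uniqueness of the class realising a given $|k^*|^2$, and upgrade fixed points of the Kirkwood--Monroe map to smooth solutions by a convolution-smoothing bootstrap that exploits $2s>1$ and Sobolev embedding in $d\le 2$. The only divergence is the $d=2$ wavenumber selection: you take $n$ a product of primes $\equiv 3\pmod 4$ (e.g.\ $n=3^j$), whereas the paper takes $p^n$ for an arbitrary prime $p$ via Jacobi's two-square count -- your choice is in fact the safer one, since for $p\equiv 1\pmod 4$ the representation of $p^{2n}$ as a sum of two squares is not unique (e.g.\ $25=3^2+4^2=5^2+0^2$), so the paper's claim really requires restricting to $p=2$ or $p\equiv 3\pmod 4$, which still yields infinitely many bifurcation points by the same underlying argument.
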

\begin{proof}
\begin{figure}[t]
\centering
\begin{minipage}[c]{0.45\textwidth}
\centering
    \includegraphics[width=\linewidth]{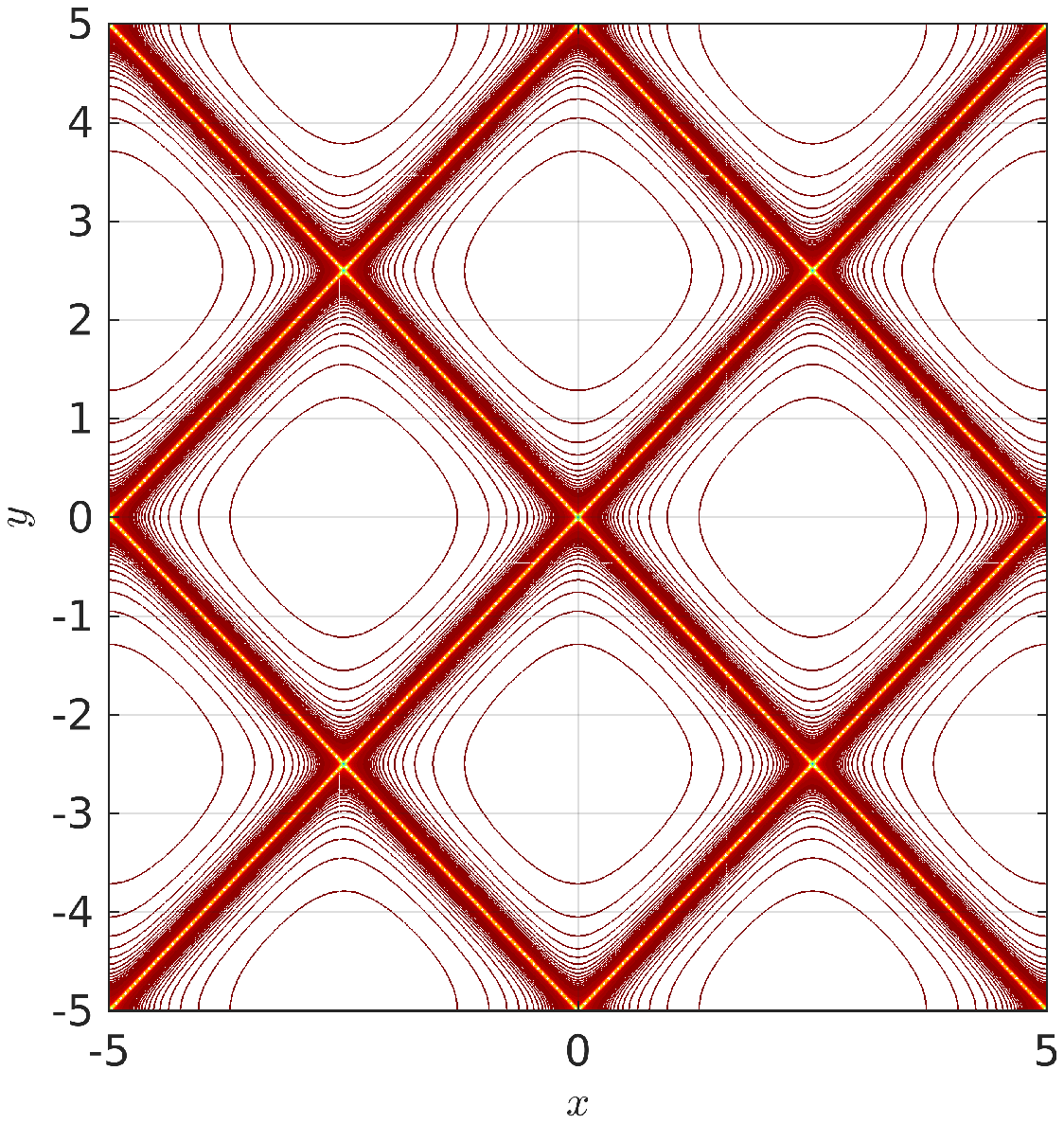}
    \caption*{(a)}
\end{minipage}
\begin{minipage}[c]{0.45\textwidth}
\centering
    \includegraphics[width=\linewidth]{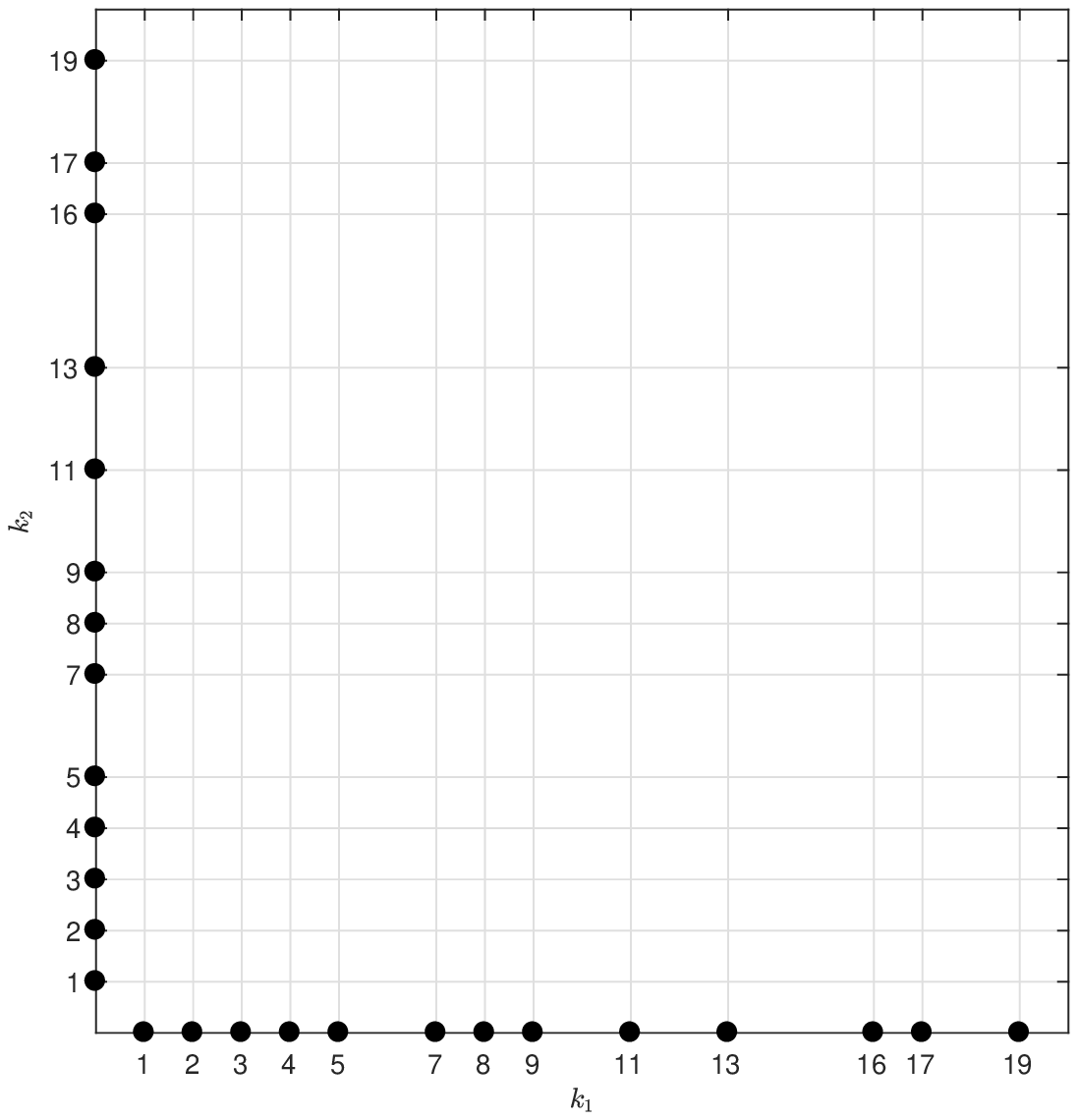}
     \caption*{(b)}
\end{minipage}
\caption{(a). Contour plot of the Keller--Segel interaction potential $\Phi^s$ for $d=2$ and $s=0.51$. The orange lines indicate the positions at which the potential  is singular (b). The associated wave numbers which correspond to bifurcation points of the stationary system}
\label{fig:ks2d}
\end{figure}
$\Phi^s$ is given by the following formal Fourier series,
\begin{align}
\Phi^s(x) = -\bra*{\frac{2 \pi}{L}}^{2s} \sum_{k \in \N^d \setminus\set{0}} \frac{N_k}{|k|^{2s}} w_k \in \cD(U)' \, .
\end{align}
The weak form of~\eqref{eq:sKS} is then given by
\begin{align}
-\beta^{-1}\intom{\nabla \varphi \cdot \nabla \varrho } -\kappa \intom{\varrho \nabla \varphi \cdot \nabla c}=0,
\quad \forall \varphi \in \SobH^1(U) \, , \label{eq:wsKS}
\end{align}
where we look for solutions $\varrho$ in $\SobH^1(U)\cap \Prac(U)$ and $c= \Phi^s \star \varrho$.
We start by noticing that any fixed point of $\cT^{ks}$ is a weak solution of~\eqref{eq:wsKS} where the map $\cT^{ks} : \Leb^2(U)\to \Leb^2(U)$ is defined as follows:
\begin{align}
\cT^{ks}\varrho=\frac{1}{Z(c,\kappa,\beta)}e^{-\beta \kappa c}, \quad\text{where}\quad Z(\varrho,\kappa,\beta)= \intom{e^{- \beta \kappa c}} \, .
\end{align}
Indeed, let $\varrho$ be such a fixed point and $0<\epsilon<s-\frac{1}{2}$, then
\begin{align}
 \sum\limits_{k \in \Z^d} |k|^{2 +2 \epsilon}|\tilde{c}(k)|^2&=\bra*{\frac{2 \pi}{L}}^{4}\sum\limits_{k \in \N^d\setminus\set{0}} \frac{1}{|k|^{4s-2 -2 \epsilon}}\sum\limits_{k \in \Sym(\Lambda)} |\tilde{\varrho}(\sigma(k))|^2 < \infty \, .
\end{align}
Thus $c \in \SobH^{1+ \epsilon}(U)$ which by the Sobolev embedding theorem for $d\leq 2$ implies that $c \in C^0(U)$. This tells us that $\varrho \in \SobH^1(U)\cap \Prac(U)$ with $\nabla \varrho  = -\beta \kappa Z^{-1} \, e^{-\beta \kappa c}\nabla c $. Plugging $\varrho$ into~\eqref{eq:wsKS}, we see immediately that it is a solution. The reverse implication follows by arguments identical to those in~\cref{thm:wellpsPDE}. 

Since $\varrho_\infty$ is a solution to $\varrho=\cT^{ks} \varrho$, for all $\kappa>0$, we need to check that any solution of the fixed point equation is smooth. Assume that $\varrho \in \SobH^\ell(U)$, i.e., $\sum_{k \in \Z^d}|k|^{2\ell} | \tilde{\varrho}(k)|^2 < \infty$. Then for $0<\epsilon<s-\frac{1}{2}$ we have that
\begin{align}
\sum\limits_{k \in \Z^d} |k|^{2\ell+2 + 2\epsilon}|\tilde{c}(k)|^2 &=\bra*{\frac{2 \pi}{L}}^{4s}\sum\limits_{k \in \N^d\setminus\set{0}} \frac{|k|^{2\ell }}{|k|^{4 s -2 -2 \epsilon}}\sum\limits_{k \in \Sym(\Lambda)} |\tilde{\varrho}(\sigma(k))|^2  \, , \nonumber \\
&<\bra*{\frac{2 \pi}{L}}^{4s}\sum\limits_{k \in \Z^d} |k|^{2\ell } |\tilde{\varrho}(k)|^2 < \infty \, . \label{e:Gamma:est}
\end{align}
 Thus $c \in \SobH^{\ell+1 +\epsilon}(U)$ and by the Sobolev embedding theorem we have that $\SobH^{\ell +1 +\epsilon}(U)$ is continuously embedded in  $C^{\ell}(U)$. Thus for all multiindices $\alpha$ such that  $|\alpha|\leq \ell$, we have that $\partial_\alpha c \in \Leb^ \infty(U)$\,. Since $\varrho = Z^{-1} e^{-\beta \kappa c}$, computing $\partial_\alpha \varrho$ with $|\alpha|=\ell+1$ gives us:
 \begin{align}
\partial_\alpha \varrho= Z^{-1} e^{-\beta \kappa c} \partial_\alpha c + F(Z^{-1},\beta \kappa,\partial_\xi c), \textrm{ for all } |\xi|\leq l \, .
 \end{align}
 Thus $\partial_\alpha c$ enters the expression for $\partial_\alpha \varrho$ linearly. Since all lower derivatives of $c(x)$ are bounded, one can then check that $\norm*{\partial_\alpha \varrho}_2 < \infty$ and thus $\varrho \in \SobH^{\ell+1}(U)$. We can then bootstrap to obtain smooth solutions. 

 Observe now that for $d \leq 2$ and $s \in (\frac{1}{2},1]$, $\Phi^s \in \Leb^2_s(U)$. For $d=1$, ~\cref{thm:c1bif} applies directly and the bifurcation points are given by:
 \begin{align}
\kappa_*= \bra*{\frac{L}{2 \pi}}^{2s} \frac{|k|^{2s} L}{\beta}, \textrm{ for } d=1 \,.
\end{align} 
For $d =2$ one can notice that $\Phi^1(x)=\Phi^1(\Pi(x))$ for any permutation $\Pi$ of the $d$ coordinates. Our
strategy will be to apply~\cref{thm:c1bif} after reducing the problem to the symmetrised space $\Leb^2_{\operatorname{ex}}(U)$  and then use the discussion in~\cref{rem:l2pi}. Then, showing that a particular $[k]$ corresponds to a bifurcation point reduces to the condition
\begin{align}
\mathrm{card}\set*{[k] : \frac{\tilde{W}([k])}{\Theta([k])}=\frac{\tilde{W}([k^*])}{\Theta([k^*])}}=\mathrm{card}\set*{ [k] : \frac{\tilde{W}([k])}{\Theta([k])}= -\bra*{\frac{2\pi}{L}}^{2s}\frac{1}{|k|^{2s}L}} =1  \ , 
\end{align}
which holds for example for  $[k]=\{(1,0),(0,1)\}$. We argue that $\kappa_*= -\frac{L^{\frac{d}{2}} \Theta(\pra{p^{n}})}{\beta \tilde{W}(\pra{p^{n}})} $, where $\pra{p^n}=\{(p^n,0),(0,p^n)\}$, $p$ is a prime, and $n \in \N$ satisfy the conditions of being a bifurcation point. We need to check that 
\begin{align}
\mathrm{card}\set*{ [p^n] : \frac{\tilde{W}([p])}{\Theta([p])}= -\bra*{\frac{2\pi}{L}}^{2s}\frac{1}{|p|^{2s}L}} =1 \, ,
\end{align}
which is equivalent to checking that given a prime $p$ there is a unique way (up to permutations) of expressing $p^{2n}$ as the sum of two squares and this is precisely $(p^n)^2 + 0^2$. Jacobi's two square theorem tells us that number of representations, $r(z)$, of a positive integer $z$ as the sum of two squares is given by the formula
\begin{align}
r(z)= (d_{1,4}(z)-d_{3,4}(z)) \, ,
\end{align}
where $d_{\ell,4}(z)$ is the number of divisors of $z$ of the form $4 k +\ell, k\in \N,\ell \geq 1$. If $p=2$, then $d_{1,4}(2^{2n})=1$ and $d_{3,4}(2^{2n})=0$ and thus $r(2^{2n})=1$. For any odd prime, $p$, we know that it is either of the form $4k+1$ or $4k+3$.
For either case, one can check that we  have $d_{1,4}(p^{2n})=1+n$ and $d_{3,4}(p^{2n})=n$ and thus $r(p^{2n})=1$. The expression for the bifurcation points then follows from the discussion in~\cref{rem:l2pi}.
\end{proof}
%

\appendix

\section{Results from bifurcation theory} \label{S:lyapschmid}
Let $X$ be a separable Hilbert space and denote by $L(X)$ the set of bounded, linear, operators on $X$. For $F:X \times \R^+ \to X$ a twice Fr\'echet-differentiable mapping, we define $N = \ker D_x F (x_0,\kappa_*) $ and $R= \Ima D_x F (x_0,\kappa_*)$. Furthermore, we  assume that, $F(x_0,\kappa_*)=0$ for some $(x_0,\kappa_*) \in X \times \R^+$. We also assume that $D_x F (x_0,\kappa_*)$ is a Fredholm operator with index zero and that $\dim N =1$ from which follows that $\codim R =1$. Then, we have the following decompositions into complementary subspaces of~$X$:
\begin{equation}\label{eq:profSpaces}
X= N \oplus X_0 \qquad\text{and}\qquad X= R \oplus Z_0  \ ,
\end{equation}
where $N= \mathrm{span}[v_0]$ and $Z_0= \mathrm{span}[z_0]$ for some $v_0, z_0 \in X$. We can also pick $X_0$ to be orthogonal to $N$ and closed, i.e., $X_0=\{x \in X: \skp*{x,v_0}_X=0\}$, where $\skp{\cdot,\cdot}_X$ denotes the
inner product on $X$. This allows us to define the following canonical projection operators:
\begin{align}\label{eq:projOps}
P:X \to N \qquad\text{and}\qquad Q:X \to Z_0 \ ,
\end{align}
which, by the closed graph theorem, are continuous.
\begin{thm}\label{thm:lyapschmid}
There is a neighbourhood $U \times V$ of $(x_0,\kappa_*)$ in $X \times \R^+$ such that the implicit equation
\begin{align}\label{eq:Fimplicit}
F(x,\kappa)=0, \qquad (x,\kappa) \in U \times V \, ,
\end{align}
is equivalent to a finite-dimensional problem, i.e., there exists $\tilde{U}\subset N$ and $\Phi: \tilde{U} \times V \to Z_0$ continuous with $\Phi(v_0,\kappa_*) = 0$ for some $(v_0,\kappa_*) \in \tilde{U} \times V$ such that~\eqref{eq:Fimplicit} is equivalent to
\begin{align}
\Phi(v,\kappa)&=0 , \qquad (v,\kappa)\in \tilde{U} \times V \subset N \times \R^+ \, . 
\end{align}
The function $\Phi$ is referred to as the bifurcation function.
\end{thm}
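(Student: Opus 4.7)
The plan is to execute the classical Lyapunov--Schmidt reduction, leveraging the direct-sum decompositions in~\eqref{eq:profSpaces} together with the continuous projections from~\eqref{eq:projOps}. First I would parametrise a neighbourhood of $x_0$ by writing $x = x_0 + v + w$ with $v = P(x-x_0) \in N$ and $w = (I-P)(x-x_0) \in X_0$, and then split the equation $F(x,\kappa) = 0$ into its components in $R$ and in $Z_0$ via the complementary projections $I-Q$ and $Q$. This yields the equivalent system
\begin{align*}
G(v,w,\kappa) &:= (I-Q)\,F(x_0 + v + w,\kappa) = 0 \quad \text{in } R,\\
H(v,w,\kappa) &:= Q\,F(x_0 + v + w,\kappa) = 0 \quad \text{in } Z_0,
\end{align*}
both vanishing at $(0,0,\kappa_*)$.

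The heart of the argument is to solve the range equation $G=0$ for $w$ in terms of $(v,\kappa)$ via the implicit function theorem. The key check is that
\[
D_w G(0,0,\kappa_*) = (I-Q)\, D_x F(x_0,\kappa_*)\big|_{X_0} \;\colon\; X_0 \longrightarrow R
\]
is a topological linear isomorphism. Injectivity follows because $\ker D_x F(x_0,\kappa_*) = N$ intersects $X_0$ only at $0$; surjectivity follows because $D_x F(x_0,\kappa_*)$ maps $X$ onto $R$ and $(I-Q)$ restricts to the identity on $R$; and boundedness of the inverse is automatic since $R$ is closed (having finite codimension) so the open mapping theorem applies. The IFT then produces neighbourhoods $\tilde U \subset N$ of $0$, $W \subset X_0$ of $0$, and $V \subset \R^+$ of $\kappa_*$, together with a continuous map $\psi \colon \tilde U \times V \to W$ with $\psi(0,\kappa_*) = 0$, such that every zero of $G$ in $\tilde U \times W \times V$ is of the form $w = \psi(v,\kappa)$. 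Setting
\[
\Phi(v,\kappa) := H\bra*{v,\psi(v,\kappa),\kappa} = Q\, F\bra*{x_0 + v + \psi(v,\kappa),\,\kappa},
\]
one obtains $\Phi(0,\kappa_*) = 0$, and~\eqref{eq:Fimplicit} is equivalent on the reassembled neighbourhood $U \times V$, with $U := x_0 + \tilde U + W$, to the finite-dimensional (in fact scalar) equation $\Phi(v,\kappa) = 0$ on $\tilde U \times V$.

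The main obstacle is really just bookkeeping rather than analysis: one must shrink $\tilde U$, $W$, and $V$ consistently so that the parametrisation $(v,w) \mapsto x_0 + v + w$ is a homeomorphism of $\tilde U \times W$ onto $U - x_0$ (this is where continuity of $P$ and $I-P$ is used) and so that the claimed equivalence between solutions of~\eqref{eq:Fimplicit} on $U\times V$ and solutions of $\Phi(v,\kappa) = 0$ on $\tilde U \times V$ holds throughout. No genuine analytic difficulty arises once the isomorphism property of $D_w G(0,0,\kappa_*)$ is secured, and the regularity of $\Phi$ inherited from $\psi$ and $F$ is automatic (in particular, $\Phi$ is at least continuous, and twice Fr\'echet differentiable under the hypotheses of the theorem).
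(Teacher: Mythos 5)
Your proposal is correct and follows essentially the same route as the paper: the standard Lyapunov--Schmidt reduction, splitting $F=0$ via the projections $P$, $Q$ into a range equation solved for $w=\Psi(v,\kappa)$ by the implicit function theorem (after verifying $(I-Q)D_xF(x_0,\kappa_*)|_{X_0}\colon X_0\to R$ is an isomorphism) and a residual bifurcation equation $\Phi(v,\kappa)=QF(x_0+v+\Psi(v,\kappa),\kappa)=0$. Your explicit injectivity/surjectivity/open-mapping check of the isomorphism is a detail the paper leaves to the reader, and your use of $Q$ (rather than the paper's typographical $I-Q$) in the definition of $\Phi$ is the correct one.
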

\begin{proof}
Using the projection operators defined in~\eqref{eq:projOps}, we can restate the bifurcation problem~\eqref{eq:Fimplicit} as follows,
\begin{align}
Q F\bra[\big]{Px + (I-P)x,\kappa}=0 \quad\text{and}\quad (I -Q) F\bra[\big]{Px + (I-P)x,\kappa}&=0  \quad \text{for } (x,\kappa) \in U \times V \, . \label{eq:eqbif}
\end{align}
Let us recall the orthogonal splitting~\eqref{eq:profSpaces} from which we obtain two open neighbourhoods $\tilde U \subset U \cap N$ and $W \subset U \cap X_0$ such that $(v_0 , w_0) = (P x_0, (I-P)x_0) \in \tilde U \times W$. we now define the operator $G : \tilde{U}\times W \times V \to R$ by
\begin{align}
G(v,w,\kappa)&=(I -Q) F(v+w,\kappa), \quad\text{with} \quad v =Px \quad\text{and}\quad w= (I -P)x , 
\end{align}
We thus have that $G(v_0,w_0,\kappa_*)=0$. Since the projection operators are continuous, we can compute $D_w G(v_0,w_0,\kappa_*)
=(I -Q)D_x F(x_0,\kappa_*):X_0 \to R$ with $R$ defined in~\eqref{eq:profSpaces}. One can check that this mapping is a homeomorphism between $X_0$ and $R$. Applying
the implicit function theorem, we see that 
\begin{align}
G(v,w,\kappa)&=0 \textrm{ in } \tilde{U}\times W \times V \, ,
\end{align}
is equivalent to
\begin{align}
w&=\Psi(v,\kappa) \textrm{ for some } \Psi:\tilde{U}\times V \to W  \, ,
\end{align}
such that
\begin{align}
w_0&= \Psi(v_0,\kappa_*) \textrm{ and } D_\gamma\Psi(v,\kappa)=-(D_w G(\Psi(v,\kappa),v,\kappa))^{-1} D_\gamma G(\Psi(v,\kappa),v,\kappa) \label{eq:zerdiv} \ ,
\end{align}
where $\gamma=(v,\kappa) \in \tilde{U} \times V$ and $D_\gamma [\cdot]=\left(D_v [\cdot] \, D_\kappa [\cdot]  \right)$. Inserting the function $\Psi$ into \eqref{eq:eqbif} we obtain,
\begin{align}
\Phi(v,\kappa)=(I -Q) F (v + \Psi(v,\kappa),\kappa)&=0 \ ,
\end{align}
which is the desired result. Finally, the continuity of $\Psi$ and $Q$ gives us the desired continuity of~$\Phi$.
\end{proof}
Since we know that the function $\Psi$ is $C^1$ we can expand about $(v_0,\kappa_*)$  to obtain
\begin{align}
\Psi(\gamma_0+h)=w_0 + D_\gamma(\gamma_0)h + r_1(h) \ , \label{eq:expansion1} 
\end{align}
where $\gamma_0=(v_0,\kappa_*)$ and $\lim\limits_{h \to 0} \norm*{r_1(h)}/\norm*{h}=0$. It should also be noted that
\begin{align}
D_v G(v_0,w_0,\kappa_*) =(I -Q)D_x F(x_0,\kappa_*):N \to R=0 \in L(N,R) \, .
\end{align}
Thus, using \eqref{eq:zerdiv}, we  have that $D_v \Psi(v_0,\kappa)=0 \in L(N,X_0)$. We now state the Crandall--Rabinowitz theorem (cf. ~\cite{nirenberg2001topics,kielhofer2006bifurcation}) for bifurcations with a one-dimensional kernel.
\begin{thm}\label{thm:cr}
	Consider a separable Hilbert space $X$ with $U \subset X$ an open neighbourhood of 0, and a nonlinear $C^2$ map, $F: U \times V \to X$, where $V$ is an open subset of $\R^+$ such that $F(0,\kappa)=0$ for all $\kappa \in V$. Assume the following conditions are satisfied for some $\kappa_* \in V$:
	\begin{enumerate}
		\item $D_x(0,\kappa_*)$ is a Fredholm operator with index zero and has a one-dimensional kernel \, .
		\item $D^2_{x\kappa}(0,\kappa_*)[\hat{v_0}] \notin \Ima(D_x(0,\kappa_*))$, where $\hat{v_0} \in \ker (D_x(0,\kappa_*)), \norm*{\hat{v_0}}
			=1$ \, .
	\end{enumerate}
	Then, there exists a nontrivial $C^1$ curve through $(0,\kappa_*)$ such that for some $\delta>0$,
	\begin{align}
           \{(x(s),\kappa(s))\ : s\in (-\delta,\delta), x(0)=0, \kappa(0)=\kappa_* \} \ ,
	\end{align}
and $F(x(s),\kappa(s))=0$. Additionally, for some neighbourhood of $(0,\kappa_*)$,
 this is the only such solution (apart from the trivial solution) and it has the following form:
\begin{equation}
x(s)=s\hat{v_0} +\Psi(s\hat{v_0},\psi(s)) \,, \qquad \kappa(s)=\psi(s) \ ,
\end{equation}
where $\Psi$ is the implicit function previously described and $\psi:(-\delta,\delta ) \to V$ is a $C^1$ function such that $\psi(0)=\kappa_*$. Additionally, every nontrivial solution of $F$ in some neighbourhood of $(0,\kappa_*)$ in $N \times \R^+$ is of the form $(s \hat{v_0},\psi(s))$. Similarly, every nontrivial solution of $F$ in some neighbourhood of $(0,\kappa_*)$ in $N \times X_0 \times \R^+$ is of the form $(s \hat{v_0},\Psi(s\hat{v_0},\psi(s)),\psi(s))$.

\end{thm}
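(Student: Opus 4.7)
The plan is to combine the Lyapunov--Schmidt reduction of~\cref{thm:lyapschmid} with a scalar application of the implicit function theorem. First I would apply~\cref{thm:lyapschmid} at $(0,\kappa_*)$ (which is allowed by hypothesis (1)) to replace the equation $F(x,\kappa)=0$ in a neighbourhood of $(0,\kappa_*)$ by the bifurcation equation $\Phi(v,\kappa)=0$ on $\tilde U\times V\subset N\times\R^+$, where $N=\operatorname{span}[\hat v_0]$ and $\Phi$ takes values in the one-dimensional complement $Z_0$. Since both $N$ and $Z_0$ are one-dimensional, parametrising $v=s\hat v_0$ turns $\Phi$ into a scalar-valued function of the two real variables $(s,\kappa)$. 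The trivial branch $F(0,\kappa)\equiv 0$, combined with the uniqueness provided by~\cref{thm:lyapschmid}, forces $\Psi(0,\kappa)\equiv 0$, hence $\Phi(0,\kappa)\equiv 0$ for $\kappa$ near $\kappa_*$.

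Next I would factor out this trivial solution by Hadamard's trick:
\begin{align*}
\Phi(s\hat v_0,\kappa) = s\int_0^1 D_v\Phi(ts\hat v_0,\kappa)[\hat v_0]\,\dx{t} =: s\,\tilde\Phi(s,\kappa),
\end{align*}
so that nontrivial zeros of $\Phi$ correspond bijectively to zeros of $\tilde\Phi$ with $s\neq 0$. I would then verify the two hypotheses of the scalar implicit function theorem for $\tilde\Phi$ at $(0,\kappa_*)$. The first is immediate:
\begin{align*}
\tilde\Phi(0,\kappa_*) = D_v\Phi(0,\kappa_*)[\hat v_0] = Q\,D_xF(0,\kappa_*)[\hat v_0] = 0,
\end{align*}
using $\hat v_0\in\ker D_xF(0,\kappa_*)$ and $D_v\Psi(0,\kappa_*)=0$ (the latter recorded immediately after the statement of~\cref{thm:lyapschmid}). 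The second amounts to showing that $\partial_\kappa\tilde\Phi(0,\kappa_*)\neq 0$, which is exactly where transversality hypothesis (2) enters.

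The main obstacle is the book-keeping for the cross-derivative $\partial_\kappa\tilde\Phi(0,\kappa_*) = D^2_{v\kappa}\Phi(0,\kappa_*)[\hat v_0]$. Differentiating $\Phi(v,\kappa)=QF(v+\Psi(v,\kappa),\kappa)$ twice by the chain rule produces several terms, and the task is to check that all of them except $QD^2_{x\kappa}F(0,\kappa_*)[\hat v_0]$ vanish at $(0,\kappa_*)$. They do: the terms involving $D_v\Psi$ and $D_\kappa\Psi$ vanish because $\Psi(0,\kappa)\equiv 0$ forces $D_\kappa\Psi(0,\kappa_*)=0$ and we noted $D_v\Psi(0,\kappa_*)=0$; the remaining term of the form $Q\,D_xF(0,\kappa_*)\circ D^2_{v\kappa}\Psi(0,\kappa_*)[\hat v_0]$ vanishes because $D_xF(0,\kappa_*)$ maps $X_0$ into $R=\ker Q$. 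This leaves
\begin{align*}
\partial_\kappa\tilde\Phi(0,\kappa_*) = Q\,D^2_{x\kappa}F(0,\kappa_*)[\hat v_0],
\end{align*}
which is nonzero in $Z_0$ precisely by hypothesis (2).

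With both hypotheses in hand, the scalar implicit function theorem produces a unique $C^1$ function $\psi:(-\delta,\delta)\to V$ with $\psi(0)=\kappa_*$ and $\tilde\Phi(s,\psi(s))=0$, and substituting back through~\cref{thm:lyapschmid} yields the bifurcating branch
\begin{align*}
x(s) = s\hat v_0 + \Psi(s\hat v_0,\psi(s)), \qquad \kappa(s)=\psi(s).
\end{align*}
Local uniqueness of $\psi$ combined with the uniqueness in~\cref{thm:lyapschmid} gives the claimed local uniqueness of the nontrivial curve, both in the reduced space $N\times\R^+$ and in the full space $N\times X_0\times\R^+$. The regularity $\kappa(\cdot)\in C^1$ is inherited directly from $\psi$, and the $C^2$ hypothesis on $F$ is used precisely to guarantee that $\tilde\Phi$ is $C^1$ so that the implicit function theorem applies.
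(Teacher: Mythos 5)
Your proposal is correct and follows exactly the route the paper intends for this statement: the paper does not reprove Crandall--Rabinowitz but cites it and sets up \cref{thm:lyapschmid} precisely for the reduction you use, and a (two-parameter) version of your argument --- Lyapunov--Schmidt, Hadamard's factorisation $\Phi(s\hat v_0,\kappa)=s\tilde\Phi(s,\kappa)$, and the scalar implicit function theorem with the transversality condition giving $\partial_\kappa\tilde\Phi(0,\kappa_*)=QD^2_{x\kappa}F(0,\kappa_*)[\hat v_0]\neq 0$ --- is carried out verbatim in the proof of \cref{lem:lbfour}. The only cosmetic remark is that the vanishing of the term $Q\,D_xF(0,\kappa_*)D^2_{v\kappa}\Psi(0,\kappa_*)[\hat v_0]$ holds because $Q$ annihilates all of $R=\Ima D_xF(0,\kappa_*)$, not just the image of $X_0$; your conclusion is unaffected.
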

Since we have an entire branch of solutions, we can check that $D_\kappa \Psi(v_0,\kappa_*)=0$. Thus we obtain a simplified expression of the form
\begin{align}
x(s)=s\hat{v_0} + r_1(s\hat{v_0},\psi(s)) \, ,
\end{align} 
such that $\lim\limits_{|s| +|\psi(s)-\kappa_*| \to 0} \frac{\norm*{r_1(s\hat{v_0},\psi(s))}}{|s| +|\psi(s)-\kappa_*|}=0$. 

We finally present the following result from~\cite[Theorem 29.1]{deimling1985nonlinear}, often referred to as the Rabinowitz alternative (cf.~\cite{rabinowitz1971some}).
\begin{thm}\label{thm:rabalt}
Let $X$ be a real Banach space, $V \subset X \times \R$ a neighbourhood of $(0,\kappa_*)$, $G: \bar{V} \to X$ completely continuous,
and $G(x, \kappa)=o(|x|)$ as $x \to 0$ uniformly in $\kappa$ on compact subsets of $\R^+$. Let $K$ be a compact, linear operator on $X$ and $\kappa_*$ be a characteristic value of $K$ having odd algebraic multiplicity with $F(\varrho,\kappa)=x-\kappa K x + G(x,\kappa)$. If $\cC_V \subset V $ is the set of nontrivial solutions of $F(x,\kappa)=0$ in $V$ and $\cC_{V,\kappa_*}$ is the connected component of $\bar{\cC_V}$ containing $(0,\kappa_*)$, then $\cC_{V,\kappa_*}$ has at least one of the following two properties:
\begin{enumerate}
\item $\cC_{V,\kappa_*} \cap \partial V \neq \emptyset$.
\item $\cC_{V,\kappa_*}$ has an odd number of trivial zeros $(0,\kappa_i) \neq (0,\kappa_*)$, where the $\kappa_i$ are characteristic values of $K$ with odd algebraic multiplicity.
\end{enumerate}
\end{thm}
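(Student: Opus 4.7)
The plan is to prove this global bifurcation alternative by combining Leray--Schauder degree theory with a topological connectedness argument of Whyburn type. First I would observe that, since $K$ is compact and $G$ is completely continuous, the map $(x,\kappa)\mapsto \kappa K x - G(x,\kappa)$ is completely continuous, so $F$ is a compact perturbation of the identity and the Leray--Schauder degree $\deg(F(\cdot,\kappa),B_r(0),0)$ is well defined whenever $0$ is not a solution on $\partial B_r(0)$. The condition $G(x,\kappa)=o(\abs{x})$ uniformly on compacts ensures that, for each $\kappa$ that is not a characteristic value of $K$, one has $F(x,\kappa)\ne 0$ for $0<\abs{x}<r(\kappa)$ small, and in that regime $F(\cdot,\kappa)$ is linearly homotopic to $I-\kappa K$ on $B_r(0)$.

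Next I would carry out the degree-jump computation. By the classical formula of Leray--Schauder (a consequence of the Jordan form of $K$ on its generalised eigenspaces),
\begin{equation*}
 \deg(I-\kappa K, B_r(0), 0) = (-1)^{\beta(\kappa)},
\end{equation*}
where $\beta(\kappa)=\sum_{\mu \in (0,\kappa)} m(\mu)$ is the sum of algebraic multiplicities $m(\mu)$ of characteristic values $\mu$ of $K$ lying strictly between $0$ and $\kappa$. Since $m(\kappa_*)$ is odd by hypothesis, the degree of $F(\cdot,\kappa)$ on a small ball changes sign as $\kappa$ crosses $\kappa_*$. By excision and homotopy invariance, this sign change must be accounted for by nontrivial zeros of $F$ in every sufficiently small neighbourhood of $(0,\kappa_*)$, which already gives the local Krasnoselski bifurcation statement.

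To upgrade this to the global alternative, I would argue by contradiction: suppose $\cC_{V,\kappa_*}$ neither touches $\partial V$ nor meets a trivial zero corresponding to an odd characteristic value other than $\kappa_*$, and in fact meets only an even number of such trivial zeros (counted with odd multiplicity). Then $\cC_{V,\kappa_*}$ is a compact continuum in $V$ that intersects the trivial branch $\{0\}\times \R$ in a finite set containing $(0,\kappa_*)$ with total degree-contribution of odd parity. Applying Whyburn's lemma to the closed bounded set $\overline{\cC_V}\cap K$ inside a suitable compactification, I would separate $\cC_{V,\kappa_*}$ from the rest of the zero set by a compact neighbourhood and then construct a bounded open set $\cO\subset V$ such that $\cC_{V,\kappa_*}\subset \cO$, $\partial\cO$ contains no nontrivial zeros of $F$, and $\partial\cO$ meets the trivial line only at $(0,\kappa_*)$ and at the admitted trivial zeros.

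Finally I would compute the Leray--Schauder degree on the slices $\cO_\kappa=\{x:(x,\kappa)\in\cO\}$ for $\kappa$ just below $\inf\{\kappa:(x,\kappa)\in\cO\}$ and just above $\sup\{\kappa:(x,\kappa)\in\cO\}$: both are zero because $\cO_\kappa=\emptyset$ at the extremes, while by homotopy invariance the total change of degree as $\kappa$ sweeps across must equal the sum of the degree jumps at each trivial zero $(0,\kappa_i)\in \partial\cO\cap(\{0\}\times\R)$. By construction this sum is $\pm 2\cdot(\text{something}) + (-1)^{\text{odd contribution at }\kappa_*}$, hence odd and nonzero, contradicting the vanishing computed from the two extremes. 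The main obstacle will be the topological bookkeeping in Step~3: one must construct $\cO$ carefully enough that the degree along every vertical slice is well defined and that the sign change forced by the odd multiplicity of $\kappa_*$ is not absorbed by crossings of even-multiplicity or non-characteristic values; this is where Whyburn's lemma, applied to the compactification of $\overline{\cC_V}$, does the essential separation work.
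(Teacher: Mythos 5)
This statement is not proved in the paper at all: it is quoted verbatim from Deimling \cite[Theorem 29.1]{deimling1985nonlinear} (the Rabinowitz alternative) and included in the appendix purely for the reader's convenience, so there is no in-paper argument to compare against. Your sketch reconstructs exactly the classical degree-theoretic proof that underlies that citation (Rabinowitz/Ize): complete continuity of $\kappa K x - G(x,\kappa)$ makes $F(\cdot,\kappa)$ an admissible compact perturbation of the identity, the condition $G(x,\kappa)=o(\norm{x})$ isolates the trivial solution away from characteristic values, the Leray--Schauder index formula $\deg(I-\kappa K,B_r(0),0)=(-1)^{\beta(\kappa)}$ gives the sign flip across an odd-multiplicity characteristic value, and Whyburn's separation lemma lets you enclose $\cC_{V,\kappa_*}$ in a bounded open set $\cO$ whose boundary carries no nontrivial zeros, after which the generalised homotopy invariance of the degree on the slices $\cO_\kappa$ yields the contradiction. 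So the route is the standard one, and as a plan it is sound.

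Two points deserve correction or care. First, your final bookkeeping is off: the jump of $\deg(F(\cdot,\kappa),\cO_\kappa,0)$ across a trivial zero $(0,\kappa_i)$ is $0$ if $\kappa_i$ has even algebraic multiplicity and $\pm 2$ if odd (the index flips between $+1$ and $-1$), so under the contradiction hypothesis the total variation of the slice degree is a sum of an odd number of $\pm 2$ terms (the one at $\kappa_*$ plus an even number of others). Such a sum is congruent to $2 \pmod 4$, hence nonzero, which contradicts the vanishing of the degree at the two extreme values of $\kappa$; it is not ``odd'', and the contribution at $\kappa_*$ is a jump of $\pm 2$, not $(-1)^{\mathrm{odd}}$. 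Second, be careful that the generalised homotopy invariance you invoke requires the construction of $\cO$ to ensure that near each trivial zero $(0,\kappa_i)$ contained in it, $\cO$ contains a full small cylinder $B_r(0)\times(\kappa_i-\delta,\kappa_i+\delta)$, so that the only degree changes come from the local index of the trivial branch; this is where the Whyburn separation must be done quantitatively (also note your set ``$\overline{\cC_V}\cap K$'' clashes notationally with the operator $K$ and should be a suitably chosen compact set of solutions). With these repairs your argument is the proof given in the cited reference.
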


\bibliographystyle{myalpha}

\bibliography{refs}

\end{document}